\theoremstyle{plain}
\newtheorem{theorem}{Theorem}[section]
\newtheorem*{theorem*}{Theorem}
\newtheorem{maintheorem}{Theorem}
\newtheorem{question}[theorem]{Question}
\newtheorem{proposition}[theorem]{Proposition}
\newtheorem{lemma}[theorem]{Lemma}
\newtheorem*{lemma*}{Lemma}
\newtheorem{claim}[theorem]{Claim}
\newtheorem{corollary}[theorem]{Corollary}
\theoremstyle{definition}
\newtheorem{definition}[theorem]{Definition}
\newtheorem*{convention*}{Convention}
\renewcommand{\epsilon}{\varepsilon}
\newcommand{\dmo}{\DeclareMathOperator}
\newcommand{\ZZ}{\mathbb{Z}}
\dmo{\Span}{span}
\dmo{\Diff}{Diff}
\dmo{\Homeo}{Homeo}
\dmo\im{im}
\dmo\tw{tw}
\dmo\id{id}
\dmo\Fix{Fix}
\dmo\Stab{Stab}
\dmo\Mcg{Mcg}
\dmo{\inte}{Inte}
\newcommand{\cd}{\mathcal{C}^\dagger}
\title[Rotation sets and axes in the fine curve graph]{Rotation sets and axes in the fine curve graph for torus homeomorphisms}
\author{Sebastian Hensel}
\author{Frédéric Le Roux}
\begin{document} 
\sloppy

\maketitle

\begin{abstract}  
  We expand the dictionary between the action of a torus homeomorphism
  on the fine curve graph and its rotation set.  More precisely, we
  show that the fixed points at infinity of a loxodromic element
  determine the rotation set up to scale.
  A key ingredient is a metric version of the classical WPD property
  from geometric group theory.

  As a consequence we find new stable criteria for positive scl, and
  for two homeomorphisms to generate a free group, and we provide a
  Tits alternative for groups of torus homeomorphisms.
\end{abstract}

\footnotesize
\tableofcontents
\normalsize

\section{Introduction}

\subsection{Rotation sets and algebra}
The aim of the present paper concerns the group $\mathrm{Homeo}_0(T^2)$ of homeomorphisms of the two-dimensional torus that are isotopic to the identity.
Our results relate the \emph{rotation set}, a classical invariant from dynamical systems, to the algebra of this group, via its action on the \emph{fine curve graph}. We will recall the definition of the rotation set in section~\ref{sec:rotation-set} below; for the moment, let us just say that it is a compact and convex subset of the plane, associated to a given element of our group $\mathrm{Homeo}_0(T^2)$, which is defined up to integer translations. 
We state three results relating the rotation set to the algebra of the group. The first theorem is a kind of Tits alternative; the second gives a criterion for two elements to generate a free group; the third one for positive stable commutator length.

\begin{maintheorem}
\label{thm:tits}
Let $G$ be a subgroup of $\mathrm{Homeo}_0(T^2)$ which contains some element $f$ whose rotation set $\mathrm{Rot}(f)$ has interior. Then one of the following possibilities occurs:

\begin{enumerate}
\item $G$ contains a free group, generated by some power $f^n$ of $f$ and some conjugate of $f^n$;
\item $G$ or a subgroup of index $2$ of $G$ fixes every backward shadowing equivalence class of $f$, or every forward shadowing class of $f$.
\end{enumerate}
\end{maintheorem}
The \emph{forward shadowing class} of a point $x$ is the set of points whose positive orbit stays a bounded distance from the positive orbit of $f$ in the universal cover. This is a non-trivial equivalence relation, in the sense that it contains uncountably many distinct equivalence classes, see section \ref{sec:tits} below for more details.

We mention that there are other Tits alternatives for homeomorphism
groups of surfaces \cite{HX, Dagger3}, which assume much stronger
conditions on an element of the group -- namely being a (pseudo-)Anosov --
but also find stronger constraints in the case where no free group is
found -- namely fixing a (singular) foliation up to index $2$.

\begin{maintheorem}
\label{thm:free-group}
  Suppose $f,g \in \mathrm{Homeo}_0(T^2)$ so that the rotation sets $\mathrm{Rot}(f), \mathrm{Rot}(g)$ 
  have interior, and are not homothetic. Then there exists $N>0$ such that $f^N$ and 
  $g^N$ generate a free group, all elements of which have a rotation set with non-empty interior.
\end{maintheorem}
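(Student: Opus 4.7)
The plan is to translate the hypotheses into the fine curve graph picture, run a ping-pong argument powered by the metric WPD property, and then convert the output back to the rotation set language.

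First, I would use the (by now standard) implication, established by Bowden--Hensel--Webb and their successors, that if $\mathrm{Rot}(h)$ has non-empty interior then $h$ acts loxodromically on $\cd$. Thus both $f$ and $g$ are loxodromic, with well-defined pairs of fixed points at infinity $\xi_f^\pm, \xi_g^\pm \in \partial \cd$. The main theorem of the paper asserts that the rotation set of a loxodromic element is determined up to positive scale by its fixed points at infinity; since $\mathrm{Rot}(f)$ and $\mathrm{Rot}(g)$ are not homothetic, their boundary pairs must be distinct (and in fact disjoint as unordered pairs), so $f$ and $g$ have genuinely independent axes in $\cd$.

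Next I would invoke the metric WPD property established earlier in the paper. Following the Bestvina--Fujiwara recipe adapted to the metric WPD setting, one obtains a Schottky-type ping-pong lemma: for all sufficiently large $N$, the elements $f^N$ and $g^N$ generate a free subgroup of rank $2$, every non-trivial element $w$ of which is itself loxodromic on $\cd$, with fixed points at infinity arising as iterated $f^{\pm N}, g^{\pm N}$-images of the basepoints $\xi_f^\pm, \xi_g^\pm$.

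The main obstacle is the final clause: every such $w$ must have rotation set with non-empty interior. The main theorem determines $\mathrm{Rot}(w)$ up to positive scale from the fixed points at infinity of $w$, so what remains is to exclude the degenerate options --- a single rotation vector or a line segment. I expect this to go via one of two routes: either (a) isolate an open condition on boundary pairs of loxodromic elements that is equivalent to ``rotation set has interior'' and is preserved by the ping-pong iteration (noting that the basepoints $\xi_f^\pm, \xi_g^\pm$ already satisfy it by hypothesis); or (b) argue directly on the torus, using the interior rotation set assumption on $f$ and $g$ to produce periodic orbits of suitable words $w$ whose rotation vectors span a two-dimensional region. Executing one of these strategies carefully, and verifying that the output of the ping-pong construction lands in the ``good'' region, is where I expect the real technical work to reside.
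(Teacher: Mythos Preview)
Your overall architecture is right, but you have misidentified where the difficulty lies, and as a result you propose unnecessary machinery for the easy step and extra work for a step that is already free.

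First, the ping-pong step does not require the metric WPD property. Once $f$ and $g$ are loxodromic with disjoint fixed-point pairs in $\partial\cd(T^2)$, the classical ping-pong lemma for isometries of a Gromov hyperbolic space (as in \cite{delaHarpe}) already yields that sufficiently high powers $f^N, g^N$ generate a free group in which every nontrivial element acts loxodromically. The paper invokes exactly this classical argument; metric WPD plays no role here.

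Second, and more importantly, your ``main obstacle'' is not an obstacle. You correctly used one direction of the Bowden--Hensel--Mann--Militon--Webb theorem (interior rotation set $\Rightarrow$ loxodromic) at the start, but you seem to have forgotten that this theorem is an \emph{equivalence}. Once ping-pong tells you that every nontrivial $w \in \langle f^N, g^N\rangle$ is loxodromic, the reverse implication immediately gives that $\mathrm{Rot}(w)$ has nonempty interior. There is nothing to exclude and no need for either of your routes (a) or (b). The paper's entire proof of this theorem is the single sentence ``follows easily from Theorem~\ref{thm:boundary-determines-rotation-set} by the classical ping-pong argument''; the equivalence does all the remaining work.
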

Here two sets $C,C'$ are said to be homothetic if there exist $\lambda \in \mathbb{R} \setminus\{0\}$ and $v \in \mathbb{R}^2$ such that $C'=\lambda C + v$.
In the next theorem the set $C$ is said to have a point symmetry if there is some vector $v$ such that $C = -C + v$.
\begin{maintheorem}\label{thm:scl}
  Suppose $f \in \mathrm{Homeo}_0(T^2)$ so that the rotation set $\mathrm{Rot}(f)$ has
  interior, and $\mathrm{Rot}(f)$ has no point symmetry. Then $\mathrm{scl}(f) > 0$ in $\mathrm{Homeo}_0(T^2)$.
\end{maintheorem}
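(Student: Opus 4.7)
The plan is to prove the contrapositive by combining the paper's main dynamical theorem (the oriented axis of a loxodromic element in $\mathcal{C}^\dagger$ determines its rotation set up to positive scale and translation) with the Bestvina--Fujiwara construction of quasi-morphisms for actions on hyperbolic spaces satisfying the metric WPD property that is the other key input of this paper.

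By the hypothesis that $\mathrm{Rot}(f)$ has non-empty interior, one first invokes the earlier parts of the paper to conclude that $f$ acts loxodromically on $\mathcal{C}^\dagger$ and that this action satisfies metric WPD. The Bestvina--Fujiwara machinery then yields the standard dichotomy: either $\mathrm{scl}(f)>0$, or some $h\in\mathrm{Homeo}_0(T^2)$ inverts the oriented axis of $f$, in the sense that $hfh^{-1}$ is loxodromic with the same pair of fixed points at infinity as $f$ but with the attracting and repelling roles interchanged, so that $hfh^{-1}$ and $f^{-1}$ have the same oriented axis.

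Suppose for contradiction that $\mathrm{scl}(f)=0$, so such an $h$ exists. The main theorem of the paper, applied to $hfh^{-1}$ and $f^{-1}$, supplies $\lambda>0$ and $v\in\mathbb{R}^2$ with $\mathrm{Rot}(hfh^{-1})=\lambda\,\mathrm{Rot}(f^{-1})+v$. Two basic properties of the rotation set, namely conjugation invariance under $\mathrm{Homeo}_0(T^2)$, i.e.\ $\mathrm{Rot}(hfh^{-1})=\mathrm{Rot}(f)$, and $\mathrm{Rot}(f^{-1})=-\mathrm{Rot}(f)$, turn this into
\[
\mathrm{Rot}(f) \;=\; -\lambda\,\mathrm{Rot}(f) + v.
\]
Thus $\mathrm{Rot}(f)$ is invariant under the affine map $x\mapsto -\lambda x+v$, whose linear part has determinant $\lambda^2$. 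Since $\mathrm{Rot}(f)$ has positive Lebesgue measure (non-empty interior) and is bounded, this forces $\lambda^2=1$, and together with $\lambda>0$ gives $\lambda=1$. The relation becomes $\mathrm{Rot}(f)=-\mathrm{Rot}(f)+v$, a point symmetry, contradicting the hypothesis.

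The main obstacle is the first step: upgrading the metric WPD property introduced in this paper into a Bestvina--Fujiwara counting quasi-morphism on $\mathrm{Homeo}_0(T^2)$ that is unbounded on $\langle f\rangle$ whenever no group element inverts the oriented axis of $f$. For classical WPD this is directly available in the literature, but the metric variant needs the separation estimates along translates of the axis of $f$ to be revisited and recast in quantitative form, sufficient for the counting functional to give a non-trivial homogeneous quasi-morphism detecting $f$. The remaining steps -- the homothety consequence of the main theorem and the affine area argument -- are short and routine.
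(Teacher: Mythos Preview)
Your overall strategy is the same as the paper's, but the dichotomy you invoke is too strong, and this is a genuine gap. The Bestvina--Fujiwara criterion (\cite[Proposition~5]{BF}, as quoted in Section~\ref{ssec:Fujiwara}) gives: if $f$ is loxodromic and $f\not\sim f^{-1}$, then $\mathrm{scl}(f)>0$. Contraposing, $\mathrm{scl}(f)=0$ yields only $f\sim f^{-1}$---conjugates of the oriented axis of $f^{-1}$ accumulate on that of $f$---and \emph{not} a single $h$ that exactly swaps $\xi^\pm(f)$. In the discrete WPD setting one can indeed upgrade $\sim$ to a conjugacy of positive powers (hence an exact axis inversion), but metric WPD replaces ``finite'' by ``bounded'' and no such extraction is available; the paper neither proves nor uses such an $h$. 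So as written, you cannot feed an exact pair $hfh^{-1},\,f^{-1}$ into Theorem~\ref{thm:boundary-determines-rotation-set}.

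This is precisely why the paper proves and uses Theorem~\ref{thm:equivalence} (if $f\sim g$ then the scaled rotation sets are translates), which is strictly stronger than Theorem~\ref{thm:boundary-determines-rotation-set}. The paper's argument then runs in two lines: since $\mathrm{Rot}(f^{-1})=-\mathrm{Rot}(f)$ and $\mathrm{tl}(f^{-1})=\mathrm{tl}(f)$, absence of point symmetry means the scaled rotation sets of $f$ and $f^{-1}$ are not translates; Theorem~\ref{thm:equivalence} forces $f\not\sim f^{-1}$; Bestvina--Fujiwara concludes. Your area argument for $\lambda=1$ is correct but superfluous, since $\lambda=\mathrm{tl}(hfh^{-1})/\mathrm{tl}(f^{-1})=1$ by conjugacy-invariance of $\mathrm{tl}$. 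And the obstacle you flag is misdiagnosed: the paper cites the BF criterion as a black box requiring no WPD hypothesis; metric WPD enters instead in the proof of Theorem~\ref{thm:equivalence} (via the semi-continuity Theorem~\ref{thm:semi-continuity}), which is where the real work lies and which absorbs exactly the difficulty you were trying to push into the BF step.
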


Here is the definition of the stable commutator length ($\mathrm{scl}$).
Given a group $G$ generated by a subset $\mathcal{S}$, the \emph{word length} $\ell_\mathcal{S}(g)$ of $g\in  G$ is the smallest number of elements of $\mathcal{S}$ whose product is $g$. The \emph{stable word length} is 
\[
s\ell_\mathcal{S}(g) = \lim \frac{1}{n} \ell_\mathcal{S}(g^n)
\]
where the limit always exists by sub-additivity. If the group $G$ is \emph{perfect}, \emph{i.e.} generated by the set $\mathcal{C}$ of commutators, then $\ell_\mathcal{C}(g)$ and $s\ell_\mathcal{C}(g)$ are denoted $\mathrm{cl}(g), \mathrm{scl}(g)$ and called the (stable) commutator length. By Bavard duality, positivity of $\mathrm{scl}(g)$ is equivalent to the existence of a \emph{homogeneous quasimorphism} $\Phi$ such that $\Phi(g) \neq 0$ (\cite{bavard}). The identity component of the group of homeomorphisms of any compact manifold is known to be perfect (\cite{Anderson}), in particular the definition applies to our group $\mathrm{Homeo}_0(T^2)$.

 By Kwapisz realization theorem \cite{kwapisz_realise}, every polygon with all vertices having rational coordinates (for short, \emph{rational polygon}) can be realized as the rotation set of 
some element of $\mathrm{Homeo}_0(T^2)$, and the proof gives an explicit construction as an axiom A diffeomorphism. This provides plenty of examples to illustrate both theorems. Note that a symmetric polygon has an even number of sides; in particular, triangles are never symmetric.
The easiest example satisfying Theorem~\ref{thm:scl} is obtained for a homeomorphism whose rotation set is the triangle $(0,0), (1,0), (0,1)$; we will give a construction close to that of Kwapisz in section~\ref{ssec:triangle}. On the other hand every symmetric quadrilateral $Q$ (parallelogram!) which is rational is the rotation set of some $f$ which has a power conjugate to its inverse, which easily implies $\mathrm{scl}(f) = 0$, see section~\ref{ssec:parallelograms}. This shows some kind of optimality for Theorem~\ref{thm:scl}.
Also note that by the continuity property of the rotation set, Theorem~\ref{thm:scl} describes an open set in $\mathrm{Homeo}_0(T^2)$, and Theorem~\ref{thm:free-group} an open set of its cartesian square.

The existence of elements with positive scl is not new: in~\cite{Dagger1}, Bowden, Hensel and Webb proved this in $\mathrm{Homeo}_0(\Sigma)$ for every surface $\Sigma$ of genus at least one. This was the original motivation for the introduction of the fine curve graph (see below), the elephant in the room in Theorem~\ref{thm:free-group} and~\ref{thm:scl}. The proof in~\cite{Dagger1} was rather constructive but did not provide any dynamical feature of the exhibited elements.
In~\cite{Dagger3}, the same authors provide more specific examples, namely (relative) pseudo-Anosov maps. We refer the reader to these previous papers for more context and motivation.
The following corollary is new.
\begin{corollary}\label{coro:positive_scl_id}
Positive scl happens arbitrarily close to the identity in $\mathrm{Homeo}_0(T)$.
\end{corollary}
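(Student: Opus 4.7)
My strategy is to apply Theorem~\ref{thm:scl} to maps with very small rotation sets. Since both the condition of having interior and the absence of point symmetry are invariant under scaling a rotation set by any nonzero factor, it suffices to produce, for every $\varepsilon > 0$, an $F \in \mathrm{Homeo}_0(T^2)$ that is $\varepsilon$-close to the identity in $C^0$ and whose rotation set is a triangle with nonempty interior (triangles have no point symmetry).

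The starting point is any $f \in \mathrm{Homeo}_0(T^2)$ whose rotation set is the triangle $T_0$ with vertices $(0,0),(1,0),(0,1)$; existence follows from Kwapisz's realization theorem, and an explicit construction is outlined in Section~\ref{ssec:triangle}. The key step is a rescaling via finite covers. For each integer $n\geq 1$ I would set $\hat T^2 := \mathbb{R}^2/n\mathbb{Z}^2$, lift $f$ to $\hat f \in \mathrm{Homeo}_0(\hat T^2)$ by lifting an isotopy from the identity to $f$, and conjugate by the diffeomorphism $\sigma : \hat T^2 \to T^2$, $[x]_{n\mathbb{Z}^2}\mapsto [x/n]_{\mathbb{Z}^2}$, to obtain
\[
F_n \;=\; \sigma \circ \hat f \circ \sigma^{-1} \;\in\; \mathrm{Homeo}_0(T^2).
\]
A direct computation with lifts is then straightforward: if $\tilde f$ is a lift of $f$ to $\mathbb{R}^2$, then $x\mapsto \tilde f(nx)/n$ is a lift $\tilde F_n$ of $F_n$, and an induction gives $\tilde F_n^k(x) = \tilde f^k(nx)/n$. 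This simultaneously yields $\mathrm{Rot}(F_n) = (1/n)T_0$ and the pointwise bound $\|\tilde F_n - \id\|_\infty \leq \|\tilde f - \id\|_\infty / n$, so $d_{C^0}(F_n,\id) = O(1/n)$.

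Taking $n$ large, $F_n$ is as close to the identity as desired, while its rotation set is a small triangle with nonempty interior and no point symmetry; Theorem~\ref{thm:scl} then delivers $\mathrm{scl}(F_n) > 0$. I do not anticipate any substantial obstacle: the only points that require care are verifying that the chosen lift $\hat f$ actually lies in $\mathrm{Homeo}_0(\hat T^2)$ (standard, by lifting an isotopy starting at $\id_{\hat T^2}$) and bookkeeping the identification of the $n^2$-fold cover of $T^2$ with $T^2$ itself. All of the real content is, of course, packed into Theorem~\ref{thm:scl}.
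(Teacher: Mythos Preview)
Your argument is correct and is essentially the same as the paper's: the paper defines the rescaling $f_m = B_m^{-1} f B_m$ with $B_m = m\,\mathrm{Id}$ (via Lemma~\ref{lem:rescaling}), whose lift is exactly your map $x\mapsto \tilde f(mx)/m$, and then observes that $\mathrm{Rot}(f_m)=\tfrac1m\mathrm{Rot}(f)$ while $f_m\to\mathrm{id}$ in $C^0$. Your phrasing via the $n^2$-fold cover and the identification $\sigma$ is just an unpacking of this same conjugation.
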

\begin{proof}[Proof of Corollary~\ref{coro:positive_scl_id}]
Take any $f$ satisfying the hypotheses of Theorem~\ref{thm:scl}, e.g. whose rotation set is a rational triangle. For $m>0$, let $B_m = m\mathrm{Id}$, and consider the rescaling $f_m = B_m^{-1} f B_m$ (see Lemma~\ref{lem:rescaling} below). 
The sequence $(f_m)$ converges uniformly to the identity.
On the other hand the rotation set of $f_m$ is homothetic to that of $f$, thus it also satisfies the hypotheses, and the conclusion, of Theorem~\ref{thm:scl}, namely $\mathrm{scl}(f_m) >0$.
\end{proof}

Finally, note that Theorem~\ref{thm:scl}, together with Kwapisz' construction, also provides $C^\infty$-diffeomorphisms whose stable commutator length is positive within the group of $C^\infty$-diffeormophisms, since the commutator length in a subgroup is obviously not less than the commutator length in the group.

\subsection{Rotation sets and the fine curve graph}
The \emph{fine curve graph} $\cd(\mathbb{T}^2)$, introduced by Bowden, Hensel and Webb in~\cite{Dagger1}, is the graph whose vertices are essential simple closed curves, and edges join curves that are either disjoint or have only one intersection point, at which they cross transversely. This graph is Gromov-hyperbolic (\cite[Theorem 3.10]{Dagger1}). 
The group $\mathrm{Homeo}_0(T^2)$ acts naturally on this graph, and identifies with its automorphism group (\cite{LMPVY,LW}). The \emph{translation length} $\mathrm{tl}(f)$ of $f \in \mathrm{Homeo}_0(T^2)$ is defined as
\[
\mathrm{tl}(f) = \lim \frac{1}{n} d_{\cd(T^2)}(\alpha, f^n \alpha),
\]
where $d_{\cd(T^2)}$ denotes the distance in the graph, the limit exists by sub-additivity and does not depend on the choice of $\alpha$. An element $f$ is said to \emph{act loxodromically} if $\mathrm{tl}(f)>0$. The starting point for our work is the following result by Bowden, Hensel, Mann, Militon and Webb, who discovered that the action on the fine curve graph is intimately related to the rotation set.
\begin{theorem*}[{\cite[Theorem~1.3]{Dagger2}}]
	Let $f: T \to T$ be a torus homeomorphism which is isotopic to the identity. Then the following are equivalent:
	\begin{enumerate}[i)]
		\item $f$ acts loxodromically on the fine curve graph $\cd(T)$, and
		\item The rotation set $\mathrm{Rot}(f)$ has nonempty interior.
	\end{enumerate}
\end{theorem*}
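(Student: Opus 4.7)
The plan is to prove the two implications separately, translating between the large-scale geometry of the fine curve graph and two-dimensional rotational dynamics. For direction (ii) $\Rightarrow$ (i), I would start from a triple of periodic points $p_1, p_2, p_3$ whose rotation vectors span a triangle inside $\mathrm{Rot}(f)$; their existence, given the interior hypothesis, is classical (Franks, Misiurewicz--Ziemian). After passing to a common iterate and composing with an integer translation, I may assume that each $p_i$ is fixed and that the chosen lifts of the $p_i$ are displaced by integer vectors $w_1, w_2, w_3 \in \mathbb{Z}^2$ in general position. Pick an essential simple closed curve $\alpha$ disjoint from the $p_i$. The goal is the lower bound $d_{\cd(T^2)}(\alpha, f^n\alpha) \gtrsim n$. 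The key mechanism is that a single edge in the fine curve graph changes the winding of a curve around each $p_i$ by a bounded amount, whereas $f$ genuinely twists curves around each $p_i$ by $w_i$ per iterate; with the $w_i$ in general position, the resulting vector of winding changes under $f^n$ grows linearly in $n$, yielding the desired lower bound on distance.

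For direction (i) $\Rightarrow$ (ii), I would argue by contraposition. If $\mathrm{Rot}(f)$ has empty interior it is either a single point, a segment of rational slope, or a segment of irrational slope. If it is a rational point, an iterate has rotation set $\{0\}$ and Franks' fixed point theorem supplies a fixed point $p$; a Brouwer-style argument then produces an essential simple closed curve $\alpha$ near $p$ whose $f$-orbit has bounded diameter in $\cd(T^2)$, so $\mathrm{tl}(f) = 0$. If $\mathrm{Rot}(f)$ lies in a rational line, a change of basis puts the segment on a horizontal line with rational endpoints, and the theory of annular structures for torus homeomorphisms (J\"ager, D\'avalos, and others) yields an invariant family of essential annuli; taking $\alpha$ parallel to a boundary of such an annulus gives a bounded orbit in $\cd(T^2)$. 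The irrational-slope case is handled analogously via J\"ager's semi-conjugacy to an irrational circle rotation on the universal annular cover: curves adapted to this semi-conjugacy are approximately permuted, which forces $\mathrm{tl}(f)=0$.

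The hardest step is the backward direction when $\mathrm{Rot}(f)$ is a segment of irrational slope, where there is no honestly invariant foliation or annular decomposition and one must upgrade J\"ager's measurable semi-conjugacy into concrete essential simple closed curves with sub-linearly growing displacement in $\cd(T^2)$. More generally, both directions share the technical burden that the fine curve graph distinguishes actual curves rather than isotopy classes, so all the geometric mechanisms --- winding estimates, invariant annuli, semi-conjugacies --- must be realized by genuine curves with controlled intersection numbers rather than by standard homotopy-level arguments.
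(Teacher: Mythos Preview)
This theorem is not proved in the present paper. It is quoted as a background result from \cite{Dagger2} (Bowden--Hensel--Mann--Militon--Webb), and the paper uses it as a starting point without reproving it. So there is no ``paper's own proof'' to compare your proposal against.

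That said, your sketch is broadly in the spirit of the original argument in \cite{Dagger2}. For (ii) $\Rightarrow$ (i), the idea of using three periodic points with affinely independent rotation vectors and tracking how curves wind around them is essentially correct; in \cite{Dagger2} this is implemented via a covering argument and the observation that adjacent curves in $\cd(T)$ have bounded relative displacement in the universal cover. For (i) $\Rightarrow$ (ii), the contrapositive structure you describe (point, rational segment, irrational segment) matches the case analysis in \cite{Dagger2}, and you have correctly identified the hardest case: when $\mathrm{Rot}(f)$ is a segment of irrational slope. There, \cite{Dagger2} does not rely on J\"ager's semi-conjugacy in the way you suggest; instead the argument shows that such an $f$ acts \emph{parabolically} on $\cd(T)$ by exhibiting curves that are moved arbitrarily far but with sublinear growth, using the structure of the rotation set more directly. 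Your plan to ``upgrade a measurable semi-conjugacy to genuine curves'' is vaguer than what is actually needed and would likely not work as stated, since the semi-conjugacy gives no control on the regularity of level sets.
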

We remark that the theorem is still true in higher genus if one replaces the rotation set by the ergodic rotation set (\cite{GM}), and that both conditions are equivalent to $f$ defining a pseudo-Anosov mapping class relative to a finite set of periodic points. Neither of these are relevant for the current work.

A Gromov hyperbolic space $X$ has a natural boundary $\partial_\infty X$, which roughly speaking represents the different ways to go to infinity (see section~\ref{sec:hyperbolic}). A loxodromic element $f$ acting on the hyperbolic space $X$ has exactly two fixed points at infinity, an attracting fixed point $\xi^+(f)$ that attracts all points of $X$ and a repulsive fixed point $\xi^-(f)$ that attract all points in the past.
We will prove that the attracting fixed point determines the rotation set up to homothety.
\begin{maintheorem}\label{thm:boundary-determines-rotation-set}
Let $f,g \in \mathrm{Homeo}_0(T^2)$ be two homeomorphisms acting loxodromically on the fine curve graph. Assume $\xi^+(f)= \xi^+(g)$. Then the \emph{scaled rotation sets}
\[ \frac{1}{\mathrm{tl}(f)}\mathrm{Rot}(f) \text{ and } \frac{1}{\mathrm{tl}(g)}\mathrm{Rot}(g) \]
are translates of each other.
\end{maintheorem}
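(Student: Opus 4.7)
The plan is to leverage the shared attracting fixed point, via the metric WPD property advertised in the abstract, to force a dynamical matching between appropriate powers of $f$ and $g$ from which the theorem follows. First I would reduce to the case of asymptotically matching translation lengths: set $a=\mathrm{tl}(f)$ and $b=\mathrm{tl}(g)$, and choose positive integers $p_n,q_n\to\infty$ with $p_n a - q_n b \to 0$, for instance from good rational approximations of $b/a$. Because $\mathrm{Rot}(f^p)=p\,\mathrm{Rot}(f)$ and $\mathrm{tl}(f^p)=p\,\mathrm{tl}(f)$, the scaled rotation set is invariant under taking powers, so it suffices to show that $\mathrm{Rot}(f^{p_n})$ and $\mathrm{Rot}(g^{q_n})$ differ by a translation vector of norm $o(p_n)$, and then pass to the limit.

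Second, I would examine the element $h_n = f^{p_n}g^{-q_n}$. Since $f$ and $g$ share the attracting fixed point $\xi$, their quasi-axes in $\cd(T^2)$ fellow-travel towards $\xi$ by the Morse lemma; combined with $p_n a\approx q_n b$, this forces the distance $d_{\cd}(\alpha,h_n\alpha)$ to remain uniformly bounded in $n$, so $\mathrm{tl}(h_n)\to 0$. By the Bowden--Hensel--Mann--Militon--Webb theorem, $\mathrm{Rot}(h_n)$ has empty interior. The metric WPD property should upgrade this coarse information to a topological statement: elements acting with bounded orbits on $\cd(T^2)$ and fixing the attracting endpoint $\xi$ of a loxodromic element lie in a controlled family, concretely, they are $C^0$-close modulo $\mathbb{Z}^2$ to a translation of $\mathbb{R}^2$. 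Granting this, there exists $v_n\in\mathbb{R}^2$ with $h_n$ uniformly $C^0$-close to translation by $v_n$; by continuity of the rotation set under $C^0$-perturbation, one deduces $\mathrm{Rot}(f^{p_n})=\mathrm{Rot}(g^{q_n})+v_n$ up to an error of size $o(p_n)$.

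Dividing by $p_n a\sim q_n b$ and letting $n\to\infty$ then yields the stated translate relation between the scaled rotation sets, with limiting translation vector $\lim_n v_n/(p_n a)$. The main obstacle is the second step: converting the coarse fact that $h_n$ has bounded orbit on the fine curve graph into the much stronger topological assertion that $h_n$ is $C^0$-close to a single translation of $\mathbb{R}^2$ modulo $\mathbb{Z}^2$. This passage is exactly what the metric WPD is designed to deliver, and it must be combined with a careful analysis of the stabilizer of the attracting endpoint $\xi$ to rule out non-translational behaviour, which would otherwise allow $\mathrm{Rot}(h_n)$ to persist as a nontrivial line segment and spoil the translate relation between $\mathrm{Rot}(f^{p_n})$ and $\mathrm{Rot}(g^{q_n})$.
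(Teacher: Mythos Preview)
Your overall strategy --- use the shared attracting fixed point to get fellow-traveling in $\cd(T^2)$, promote this via metric WPD to a bound in the group $\mathrm{Homeo}_0(T^2)$, and then read off the rotation set relation --- is exactly the paper's approach. But two steps in your execution break down.

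First, with $h_n=f^{p_n}g^{-q_n}$ you have $d_{\cd}(\alpha,h_n\alpha)=d_{\cd}(f^{-p_n}\alpha,g^{-q_n}\alpha)$, and these points converge to $\xi^-(f)$ and $\xi^-(g)$, which are \emph{not} assumed equal; so this distance is typically unbounded. The element you want is $g^{-q_n}f^{p_n}$ (or equivalently $f^{-p_n}g^{q_n}$), for which the analogous distance is $d_{\cd}(g^{q_n}\alpha,f^{p_n}\alpha)$ and the shared $\xi^+$ does control it. Second, and more seriously, even after this fix the metric WPD does \emph{not} say that an element moving $\alpha$ a bounded amount and fixing $\xi^+$ is bounded in the group: it requires that the element move \emph{two} well-separated axis points a bounded amount (the set $Z_r(\alpha,f^{n_0}\alpha)$). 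Your appeal to ``the stabilizer of $\xi^+$'' does not supply this second condition, and there is no direct argument that it should. The paper handles this by a midpoint trick: from fellow-traveling one first arranges $d(f^{2n}\alpha,g^{2m_n}\alpha)\le b$, and then a hyperbolic-geometry lemma (Corollary~\ref{cor:midpoints}) forces $d(f^{n}\alpha,g^{m_n}\alpha)\le b'$ as well; these two bounds together put $g^{-m_n}f^{n}$ in $Z_{2b'}(\alpha,f^n\alpha)$, and WPD then yields a uniform bound on $\widetilde d(f^{-n},g^{-m_n})$. From there the conclusion follows from Lemma~\ref{lem:uc-distance} directly --- no statement about $h_n$ being ``close to a translation'' is needed or obtained, and your side remark that $\mathrm{Rot}(h_n)$ has empty interior is both unjustified (you only have $\mathrm{tl}(h_n)\to 0$, not $=0$) and unnecessary.
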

The existence of free groups claimed by Theorem~\ref{thm:free-group} follows easily from theorem~\ref{thm:boundary-determines-rotation-set} by the classical ping-pong argument (see~\cite{delaHarpe}).
In section~\ref{sec:examples} we show how to determine the scaled rotation sets on some interesting examples.  In particular, we also get a free group generated by two elements having the same rotation set but distinct scaled rotation sets. By using a different invariant (the \emph{twist number}, see section~\ref{ssec:ingredients-WPD} below), we also get free groups generated by elements sharing the same scaled rotation sets.

To get positive stable commutator length, we need a stronger property.
In \cite{BF}, Bestvina and Fujiwara define an equivalence relation on loxodromic elements of the isometry group of a Gromov-hyperbolic space, namely $f \sim g$ whenever $g$ has conjugate elements whose repulsive and attracting fixed points in $\partial_\infty X$ are arbitrarily close respectively to the repulsive and attracting fixed points of $f$ (see section~\ref{ssec:Fujiwara} for more details).
The following statement generalizes Theorem~\ref{thm:boundary-determines-rotation-set}.
\begin{maintheorem}\label{thm:equivalence}
Let $f,g \in \mathrm{Homeo}_0(T^2)$ be two homeomorphisms acting loxodromically on the fine curve graph. Assume $f \sim g$. Then the scaled rotation sets of $f$ and $g$
are translates of each other.
\end{maintheorem}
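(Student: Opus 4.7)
The plan is to reduce Theorem~\ref{thm:equivalence} to Theorem~\ref{thm:boundary-determines-rotation-set} via a limit argument. First, by the definition of $\sim$, I pick elements $h_n \in \mathrm{Homeo}_0(T^2)$ such that the conjugates $g_n := h_n g h_n^{-1}$ satisfy $\xi^{\pm}(g_n) \to \xi^{\pm}(f)$ in $\partial_\infty \cd(T^2)$. Since the $\mathrm{Homeo}_0(T^2)$-action on $\cd(T^2)$ is isometric, $\mathrm{tl}(g_n) = \mathrm{tl}(g)$. Moreover, conjugation within $\mathrm{Homeo}_0(T^2)$ preserves the rotation set up to translation: any lift $\tilde h_n$ of $h_n$ is at bounded distance from the identity in $\mathbb{R}^2$, so $(\tilde h_n \tilde g^k \tilde h_n^{-1}(x) - x)/k$ and $(\tilde g^k \tilde h_n^{-1}(x) - \tilde h_n^{-1}(x))/k$ differ by a term vanishing as $k \to \infty$. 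Hence the scaled rotation sets $\frac{1}{\mathrm{tl}(g_n)}\mathrm{Rot}(g_n)$ all coincide with $\frac{1}{\mathrm{tl}(g)}\mathrm{Rot}(g)$ up to translation.

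It then suffices to establish the following continuity claim, which I take as the main new ingredient: if $(f_n)$ is a sequence of loxodromic elements of $\mathrm{Homeo}_0(T^2)$ with $\xi^{\pm}(f_n) \to \xi^{\pm}(f)$, then $\frac{1}{\mathrm{tl}(f_n)}\mathrm{Rot}(f_n)$ converges, in the Hausdorff topology modulo translation, to $\frac{1}{\mathrm{tl}(f)}\mathrm{Rot}(f)$. Applied to $(g_n)$, whose scaled rotation sets are already known (by the previous paragraph) to be constant up to translation, this forces the scaled rotation sets of $f$ and $g$ to coincide up to translation, which is the desired conclusion.

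The main obstacle is proving this continuity statement; it should be viewed as a quantitative enhancement of Theorem~\ref{thm:boundary-determines-rotation-set}. I expect that theorem to be proved by extracting each extremal scaled rotation vector of $\mathrm{Rot}(f)$ from the way $f$ moves curves lying near its (coarsely defined) axis in $\cd(T^2)$, exploiting that any $g$ with $\xi^+(g)=\xi^+(f)$ has an axis fellow-travelling the axis of $f$ on arbitrarily long stretches. In our setting $\xi^{\pm}(f_n) \to \xi^{\pm}(f)$ gives fellow-travelling on longer and longer segments as $n$ grows, so the same extraction procedure applied to $f_n$ yields approximations of the extremal scaled rotation vectors of $f$ with error tending to zero. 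Careful bookkeeping of this error through the normalisation by $\mathrm{tl}(f_n)$ and through the choice of translation representative should then produce the required Hausdorff convergence; the metric WPD machinery promised in the abstract is a plausible tool for this bookkeeping.
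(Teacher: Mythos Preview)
Your reduction strategy---conjugate $g$ to get $g_n$ with $\xi^\pm(g_n)\to\xi^\pm(f)$, observe that the scaled rotation set of $g_n$ is constant, and then appeal to a limit statement---is exactly one of the two routes the paper takes. The gap is in the limit statement itself. The full Hausdorff continuity you propose is \emph{not} proved in the paper and is explicitly left open there as a question. What the paper establishes is only \emph{semi}-continuity (Theorem~\ref{thm:semi-continuity}): under hypotheses automatically satisfied by your conjugate sequence, the scaled rotation set of $g_n$ is eventually contained, up to translation, in any neighborhood of that of $f$. This gives only one inclusion. The reverse inclusion does not come from sharpening the estimate for the pair $(f,(g_n))$---the paper cannot do that---but from the symmetry of the Bestvina--Fujiwara relation: since $g\sim f$ as well, one runs the same semi-continuity argument with the roles of $f$ and $g$ exchanged.

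Your final paragraph also misidentifies the mechanism. The paper does not extract extremal rotation vectors from curve configurations near the axis. Rather, the fellow-travelling of axes is fed into the metric WPD property (Corollary~\ref{cor:metric-WPD}) to produce a uniform bound $\widetilde d(f^{-n}, g_k^{-m_{k,n}})\le D$ in the \emph{group} $\mathrm{Homeo}_0(T^2)$; then Lemma~\ref{lem:uc-distance} converts this group-metric bound directly into the rotation-set inclusion. So the metric WPD is not error bookkeeping on top of a curve-graph argument; it is the bridge from curve-graph proximity to $C^0$-proximity of the homeomorphisms, and the rotation set is read off from the latter.
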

We will actually get the stronger result that, roughly speaking, the rotation set depends semi-continuously on the attracting fixed point. For a precise statement, see Theorem~\ref{thm:semi-continuity} below. We can now get positive scl.
\begin{proof}[Theorem~\ref{thm:scl} from Theorem~\ref{thm:equivalence}]
Let $f \in \mathrm{Homeo}_0(T^2)$ whose rotation set has non-empty interior and no point symmetry. Since $\mathrm{Rot}(f^{-1}) = -\mathrm{Rot}(f)$, and $\mathrm{tl}(f^{-1}) = \mathrm{tl}(f)$, the absence of point symmetry yields that
the scaled rotation sets of $f$ and $f^{-1}$ are not translates of each other.
From Theorem~\ref{thm:equivalence} we get that $f \not \sim f^{-1}$. The result now follows from the work of Bestvina-Fujiwara \cite{BF}, see section~\ref{ssec:Fujiwara} below.
\end{proof}

\subsection{The metric WPD property}
The heart of our argument is the discovery that loxodromic elements of the fine curve graph share some kind of weak proper discontinuity property that we describe now.
This property is a natural extension of Bestvina-Fujiwara WPD property (\cite{BF}, see also \cite{osin}) to the setting of Rosendal's geometric group theory \cite{Rosendal}, which enlarges geometric group theory to enclose some non locally compact groups.

Let $G$ be a topological group, which for simplicity we assume to be connected. 
Call a subset $A$ of $G$ \emph{bounded} if for every neighborhood $V$ of the identity, there exists $n$ such that $A \subseteq V^n$, where $V^n$ denotes the set of products of at most $n$ elements of $V$.\footnote{These sets are called \emph{relatively (OB)} by Rosendal.}
Assume $G$ acts by isometries on a metric space $X$. For every $x,y \in X$ and $r>0$, we define the set $Z_r(x,y)$ of elements $h$ of $G$ such that $d(x, hx) \leq r$ and $d(y, hy) \leq r$.

Let $f\in G$. Assume $f$ acts loxodromically, that is, 
\[
\lim \frac{1}{n} d(x, f^nx) > 0
\]
for some (every) $x \in X$. 
\begin{definition}
We say that $f$ \emph{satisfies the metric WPD property} if for every $x\in X$ and every $r>0$, there exists $n_0$ such that the set
$Z_r(x, f^{n_0} x)$ is bounded in $G$.
\end{definition}

This definition is particularly relevant when the group $G$ is "locally (OB)", which means that there exists a neighborhood $O$ of the identity which is bounded. Then the word distance for the generating set $O$ is \emph{maximal} in the sense of Rosendal: a subset of $G$ is bounded (according to the above definition) if and only if it is bounded for this distance, if and only if it is bounded for every right-invariant distance which is compatible with the topology.
In this context, Rosendal's result is that these bounded neighborhoods of the identity can compensate for the absence of compact neighborhoods of the identity, and plays their role as "small" generating sets of $G$.
Note that when $G$ is a finitely generated group endowed with the discrete topology, the maximal metric is given by the usual word length, and bounded sets are just finite sets. Thus in this case our metric WPD property, with the appropriate definition of bounded sets, amounts to the classical WPD property.

Mann-Rosendal's theorem states that for every compact manifold, the identity component of the group of homeomorphisms is locally (OB), and the fragmentation norm with respect to any given cover by open balls is quasi-isometric to the maximal metric \cite{MR}. Back to our context, we propose to use the following concrete incarnation of the maximal metric.
For $f,g \in \mathrm{Homeo}_0(T^2)$, define
$$
\widetilde d(f, g) = \min \{d_{\mathcal{C}^0}(\widetilde{f}, \widetilde{g}) \}
$$
where the minimum is taken among all pairs $(\widetilde{f}, \widetilde{g})$ of lifts of $f$ and $g$ to the plane, and $d_{\mathcal{C}^0}$ denotes the usual sup distance in the plane.
It is easy to check that this defines a right-invariant distance on $\mathrm{Homeo}_0(S)$ which we call the \emph{universal cover distance}. This distance coincides with the usual $\mathcal{C}^0$ distance when $f$ and $g$ are close, but its large scale geometry is very different, since the usual distance on a compact manifold is obviously bounded, and $\widetilde d$ is not. Actually it follows from a theorem of Militon \cite{Militon} that $\widetilde d$ is quasi-isometric to the Mann-Rosendal maximal metric on $\mathrm{Homeo}_0(S)$ (see section~\ref{sec:marking} for more details).

\medskip
Our theorems will all be consequences of the following one. 
\begin{maintheorem}\label{thm:metric-WPD}
Every element of $\mathrm{Homeo}_0(S)$ acting loxodromically satisfies the metric WPD property.
\end{maintheorem}

Another consequence of this metric WPD property is
the following characterization of attracting fixed points of loxodromic elements.
\begin{corollary}[Metric characterization of fixed points of loxodromic]\label{cor:fixed-point-charac}
Let $f,g \in \mathrm{Homeo}_0(T^2)$ be acting loxodromically on the fine curve graph. Then $\xi^-(f) = \xi^-(g)$ if and only if  
there exists a sequence of positive numbers $(m_n)_{n \geq 0}$ such that the sequence 
$(f^{n} g^{-m_n} )_{n \geq 0}$ is bounded in the group $\mathrm{Homeo}_0(T^2)$.
\end{corollary}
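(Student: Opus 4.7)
The proof splits into two directions, with the forward implication being the delicate one, leaning on Proposition~\ref{prop:metric-WPD}.

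For the converse $(\Leftarrow)$, suppose $h_n := f^{-n}g^{m_n}$ is bounded in $G := \mathrm{Homeo}_0(T^2)$. Because $G$ acts by isometries on $\cd(T^2)$ and the orbit map at a fixed curve $\alpha$ is coarsely Lipschitz from $(G,\widetilde d)$ to the fine curve graph, $\{h_n\alpha\}$ has bounded diameter, so $d(f^n\alpha, g^{m_n}\alpha) = d(\alpha, h_n\alpha) \leq C$ for some $C$. Loxodromicity of $f$ gives $d(\alpha, f^n\alpha) \to \infty$; this forces every subsequence of $(m_n)$ to be unbounded (otherwise the corresponding $g^{m_n}\alpha$ would stay in a bounded set), hence $m_n \to \infty$. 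Then $g^{m_n}\alpha \to \xi^+(g)$ and $f^n\alpha \to \xi^+(f)$ at bounded distance, so $\xi^+(f) = \xi^+(g)$.

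For the forward direction $(\Rightarrow)$, assume $\xi^+(f) = \xi^+(g) =: \xi$ and fix a curve $\alpha$. The positive $f$- and $g$-orbits of $\alpha$ are quasi-geodesic rays in $\cd(T^2)$ with common endpoint $\xi$, and standard fellow-traveling in Gromov-hyperbolic spaces produces positive integers $m_n$ with $d(f^n\alpha, g^{m_n}\alpha) \leq C_0$ for $n \geq N_0$. Setting $h_n := f^{-n}g^{m_n}$ gives $d(\alpha, h_n\alpha) \leq C_0$. The essential step is to upgrade this into a \emph{uniform} displacement bound along the whole $f$-orbit of $\alpha$:
\[
d(f^N\alpha, h_n f^N\alpha) \;=\; d(f^{n+N}\alpha, g^{m_n} f^N\alpha) \;\leq\; D
\]
for every $N \geq N_0$ and all large $n$, with $D$ independent of both $N$ and $n$. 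To achieve this, apply fellow-traveling at the basepoint $\beta := f^N\alpha$: there is $\phi_N(n)$ realizing $d(f^{n+N}\alpha, g^{\phi_N(n)}\beta) \leq C_0'$, with $C_0'$ uniform since the quasi-geodesic constants of $f$- and $g$-orbits depend only on $f$ and $g$. Because $\beta$ stays within uniformly bounded distance of a $g$-quasi-axis for $N \geq N_0$ (the $f$- and $g$-axes fellow-travel on the $\xi$ side), we have $d(\beta, g^k\beta) = |k|\,\mathrm{tl}(g) + O(1)$ with uniform $O(1)$, and a direct comparison of this displacement formula at $\beta$ with the analogous formula for $m_n$ at $\alpha$ shows that $|m_n - \phi_N(n)|$ is bounded uniformly in $N, n$. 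Hence $d(g^{m_n}\beta, g^{\phi_N(n)}\beta)$ is also uniformly bounded, proving the claimed bound $D$.

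Given this, apply Proposition~\ref{prop:metric-WPD} at $x = \alpha$ and $r = D$ to obtain $n_0$ with $Z_r(\alpha, f^{n_0}\alpha)$ bounded in $G$. For all sufficiently large $n$, taking $N = 0$ and $N = n_0$ in the uniform bound places $h_n$ in $Z_r(\alpha, f^{n_0}\alpha)$; the finitely many remaining $h_n$ form a bounded set. Thus $(h_n)_{n \geq 0}$ is bounded in $G$, as required. The main obstacle is this uniform displacement estimate: the naive triangle inequality yields only a bound of order $N\,\mathrm{tl}(f)$, which is useless since Proposition~\ref{prop:metric-WPD} provides no quantitative control on the size of the selected $n_0$ relative to $r$.
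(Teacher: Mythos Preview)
Your proof is correct in both directions, but the forward implication takes a different and more laborious route than the paper's.

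For the converse, your argument matches the paper's (Section~\ref{subsec:metric-axis}): boundedness in the group gives boundedness of $d(f^n\alpha,g^{m_n}\alpha)$ via the coarse Lipschitz orbit map (this is exactly Claim~\ref{claim:d-fine-from-d-tilde}), and then the common limit point follows.

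For the forward direction, the paper does \emph{not} establish a uniform-in-$N$ displacement estimate. Instead, Lemma~\ref{lem:metric-from-axis} uses a doubling trick: apply Lemma~\ref{lem:fellow-travel-tech} to the quasi-axes $(f^{2n}\alpha)$ and $(g^{2n}\alpha)$ to produce $m_n$ with $d(f^{2n}\alpha,g^{2m_n}\alpha)\le b$; then Corollary~\ref{cor:midpoints} automatically gives $d(f^{n}\alpha,g^{m_n}\alpha)\le M'$. Now one has three pairs of close points at levels $0,n,2n$, which is precisely the input to Corollary~\ref{cor:metric-WPD} (the three-point packaging of metric WPD). This bypasses the need for any uniformity in an auxiliary parameter $N$: the doubling supplies the second displacement bound for free.

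Your approach---proving $d(f^N\alpha,h_nf^N\alpha)\le D$ uniformly in $N$ via fellow-traveling at the shifted basepoint $\beta=f^N\alpha$ and then comparing $m_n$ with $\phi_N(n)$---is valid, and your sketch of why $|m_n-\phi_N(n)|$ is uniformly bounded (both satisfy $k\,\mathrm{tl}(g)=n\,\mathrm{tl}(f)+O(1)$ with uniform error once $\beta$ is uniformly close to the $g$-axis) is sound. It just costs several extra hyperbolic-geometry steps that the doubling trick avoids.

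One minor gap: you invoke Proposition~\ref{prop:metric-WPD} to obtain $n_0$, then apply your uniform bound at $N=n_0$, but the uniform bound is only claimed for $N\ge N_0$. Nothing guarantees $n_0\ge N_0$. This is easily repaired either by invoking the strong form Lemma~\ref{lem:strong-WPD} (which lets you increase $n_0$), or by noting that the finitely many $N<N_0$ can be handled separately with possibly worse constants and then taking the maximum.
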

Theorem~\ref{thm:boundary-determines-rotation-set} above is actually an easy consequence of this corollary.

The following corollaries provides dynamical interpretation; the last one is the key to our Tits alternative.
\begin{corollary}\label{cor:fixed-point-dyn}
Let $f,g \in \mathrm{Homeo}_0(T^2)$ be acting loxodromically on the fine curve graph.
 Assume $\xi^-(f) = \xi^-(g)$. Then the forward shadowing classes of $f$ and $g$ coincide.
\end{corollary}

\begin{corollary}\label{cor:fixed-point-stab}
Let $f \in \mathrm{Homeo}_0(T^2)$ be acting loxodromically on the fine curve graph.
Let $h \in \mathrm{Homeo}_0(T^2)$, and choose lifts $\widetilde f, \widetilde h$ of $h$ and $f$ to the universal cover. Then the following are equivalent:
\begin{enumerate}
\item $h(\xi^-(f)) = \xi^-(f)$,
\item the sequence of sup norms 
\[
\left(\|\widetilde f^n - \widetilde f^n \widetilde h \|_\infty\right)_{n \geq 0}
\]
is bounded.
\end{enumerate}
Under these conditions, $h$ preserves each forward shadowing class of $f$.
\end{corollary}

In the previous corollaries, it may seems paradoxical that the \emph{repulsive} fixed point on the fine curve graph is related to the \emph{forward} shadowing relation on the torus. This comes from the fact that the action on the fine curve graph is a left action, whereas the natural metric on the group is right invariant. Namely, if we have a bound, say, on the $C^0$ distance between $f^{n}$ and $g^{m}$ for large positive numbers $m$ and $n$, then we have the same bound on $f^{n}g^{-m}$, this will translate into a bound in the fine curve graph between a given curve $\alpha$ and $f^{n}g^{-m} \alpha$, and thus also between $f^{-n} \alpha$ and $g^{-m}\alpha$.

Inhyeok Choi has independently proved that relative pseudo-Anosov
homeomorphisms have metric WPD in every genus, see \cite{Choi}. His
proof uses very different methods than our approach, using an explicit
description of the fixed points at infinity as singular foliations
on the surface from \cite{Dagger3}.

\subsection{Ingredients for metric WPD}\label{ssec:ingredients-WPD}

The strategy for proving the metric WPD property goes through the \emph{fine marking graph}, whose vertices are fundamental domains of the torus homeomorphic to the unit square, and whose edges correspond to moving one side of the square in the fine curve graph. This graph is a combinatorial model for our group $\mathrm{Homeo}(T^2)$: indeed,  the orbit map of the natural group action is a quasi-isometry. Thus, bounding the distance in the group, as required by the WPD property, amounts to bounding the distance in the fine marking graph.

Inspired by the marking graph hierarchy formula of Masur-Minsky \cite{MM2} in the classical curve graph setting, we get an upper bound for the distance in the fine marking graph, in terms of (1) the distance in the fine curve graph between the curves corresponding to the sides of the fine markings, and (2) the sum of the \emph{twist numbers} in suitable annuli  (Lemma~\ref{lem:good-marking-bound}). Here the twist number in an annulus $A$ between two curves that cross $A$ is the distance in the fine arc graph of $A$ between the pieces of the curves that cross $A$. In other words, this is the fine version, in genus 1, of \emph{subsurface projections}. Another key ingredient is a fine version of the Bounded Geodesic Image Theorem (Lemmas~\ref{lem:bgit1} and~\ref{lem:bgit2}), from which we deduce that twist numbers between two curves along a given loxodromic axis are bounded independently of the annulus where we project (Lemma~\ref{lem:thick-axes}). The WPD property is then an easy consequence of these ingredients.

The twists numbers seem to be of independent interest: indeed, taking the supremum on all annuli provides a new conjugacy invariant on $\mathrm{Homeo}_0(T^2)$	that allows to distinguish elements sharing both the same rotation sets and translation length. We will derive the first properties of this invariant, including some semi-continuity, and show that it can take essentially all possible values by computing it on a classical family of examples.

\medskip A first proof of Theorem~\ref{thm:boundary-determines-rotation-set} involved \emph{train tracks} via Masur-Minsky nesting lemma and the fact that iterates of a given curve determines the rotation set (see~\cite{Videos}). We were not able to prove positive stable commutator length along the same lines.

\subsection{Organization of the paper}
Sections~\ref{sec:rotation-set},~\ref{sec:hyperbolic} and~\ref{sec:fine-cg} are preliminaries dealing respectively with the rotation set, hyperbolic geometry and the fine curve graph.
The fine marking graph and its relation with the group $\mathrm{Homeo}(T^2)$ is described in section~\ref{sec:marking}.
In section~\ref{sec.twist-numbers} we introduce the fine twist numbers, prove the bounded geodesic image theorems, and deduce that twist numbers are bounded along loxodromic axes.
In Section~\ref{sec:c-dagger-alpha} we discuss a variant of the fine curve graph and prove technical distance bounds which are the main ingredient for a distance formula in the fine marking graph that is proved in Section~\ref{sec:marking-distance-bound}.
We prove the metric WPD property in section~\ref{sec:WPD}.
The theorems relating the rotation set and the axis are proved in section~\ref{sec:main}.
The forward shadowing relation is studied in section~\ref{sec:tits}, which also contains the proof of the Tits alternative.
The last section~\ref{sec:examples} is dedicated to describing several examples, and also contains the definition of the twist number of a homeomorphism.

\section{The rotation set}\label{sec:rotation-set}

\subsection{Definition}\label{subsec:definition-rotation-set}
We recall the definition and main properties of the rotation set, following~\cite{MZ}.
Here the torus is $T^2 = \mathbb{R}^2 /\mathbb{Z}^2$.
Let $f\in \mathrm{Homeo}_0(T^2)$, and fix a lift $\widetilde f$ of $f$: this is an element of the group  $\mathrm{Homeo}(\mathbb{R}^2, \mathbb{Z}^2)$ of homeomorphisms of the plane that commutes with integer translations. Consider the function $D({\widetilde f})$: 
\[
\begin{array}{rcl}
\mathbb{R}^2 & \longrightarrow & \mathbb{R}^2 \\
 \widetilde x & \longmapsto & \widetilde{f}(\widetilde x) - \widetilde x.
\end{array}
\]
This function descends to a function $T^2 \to \mathbb{R}^2$ that we still denote $D(\widetilde f)$. 
\begin{definition}
The rotation set $\mathrm{Rot}(\widetilde f)$ is the set of vectors $v$ for which there exist a sequence $(x_k)$ of points in the torus, and a sequence $(n_k)$ of integers tending to $+\infty$, such that 
\[
v = \lim \frac{1}{n_k} D(\widetilde f^{n_k}) (x_k).
\]
\end{definition}
In this definition, whenever the sequence $(x_k)$ is constantly equal to some point $x$, and the sequence $(n_k)$ may be taken to be the sequence of all integers, then $v$ is also said to be the rotation vector of $x$, and we write $\mathrm{Rot}_{\widetilde f}(x) = v$.
In particular, a fixed point $x$ of $f$ has a rotation vector $\mathrm{Rot}_{\widetilde f}(x) = D(\widetilde f)(x)$ which is an integer.
Choosing another lift $\widetilde f + (p,q)$ of $f$ has the effect of translating the rotation set by $(p,q)$. Thus the rotation set $\mathrm{Rot}(f)$ is defined as a subset of the plane, defined modulo integer translations.

The rotation set has the following properties. 

\begin{theorem*}[Misiurewicz-Ziemian, \cite{MZ}]
The rotation set $\mathrm{Rot}(\widetilde f)$ is a convex and compact subset of the plane. Furthermore, for every compact $\Delta \subset \mathbb{R}^2$ that projects onto the torus, the sequence 
$$
\frac{1}{n}\widetilde{f}^n(\Delta)
$$
converges to $\mathrm{Rot}(\widetilde f)$ for the Hausdorff distance.
\end{theorem*}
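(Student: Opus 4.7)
The plan is to first establish boundedness, then to identify $\mathrm{Rot}(\widetilde f)$ with the image of the space of $f$-invariant probability measures under integration against $D(\widetilde f)$; convexity and Hausdorff convergence then follow quickly from this identification.

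For boundedness, $D(\widetilde f)$ descends to a continuous function on the compact space $T^2$, so $M := \|D(\widetilde f)\|_\infty < \infty$. The telescoping identity $\widetilde f^n(\widetilde x) - \widetilde x = \sum_{i=0}^{n-1} D(\widetilde f)(f^i(x))$ gives $\|\tfrac{1}{n}D(\widetilde f^n)\| \leq M$ pointwise, so $\mathrm{Rot}(\widetilde f)$ lies in the ball of radius $M$, and closedness of $\mathrm{Rot}(\widetilde f)$ follows from a standard diagonal extraction on the defining sequences.

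The heart of the proof is the measure-theoretic reformulation. Attach to any pair $(x,n)$ the empirical measure $\mu_{x,n} := \tfrac{1}{n}\sum_{i=0}^{n-1}\delta_{f^i(x)}$ on $T^2$, so that the telescoping identity reads
\[ \tfrac{1}{n}D(\widetilde f^n)(x) = \int_{T^2} D(\widetilde f)\,d\mu_{x,n}. \]
Given a defining sequence $(x_k,n_k)$ for a rotation vector $v$, weak-$*$ compactness of probability measures on $T^2$ extracts a subsequential limit $\mu$; a one-line telescoping estimate $|\mu_{x,n}(\phi\circ f) - \mu_{x,n}(\phi)| \leq 2\|\phi\|_\infty/n$ for every continuous $\phi$ forces $\mu$ to be $f$-invariant, and $v = \int D(\widetilde f)\,d\mu$ by weak-$*$ continuity. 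Conversely, Birkhoff's ergodic theorem applied to each coordinate of $D(\widetilde f)$ produces, for every $f$-invariant Borel probability $\mu$, a $\mu$-generic point whose forward time average equals $\int D(\widetilde f)\,d\mu$. Thus $\mathrm{Rot}(\widetilde f)$ coincides with the image of the convex compact space of $f$-invariant probabilities under a continuous linear map, which is both convex and compact.

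For the Hausdorff convergence, the inclusion $\limsup_n \tfrac{1}{n}\widetilde f^n(\Delta)\subseteq \mathrm{Rot}(\widetilde f)$ is essentially tautological: for bounded $y_n \in \Delta$ the correction $\tfrac{1}{n}y_n$ vanishes, so any accumulation point of $\tfrac{1}{n}\widetilde f^n(y_n)$ is a rotation vector. For the reverse inclusion I would fix $v = \int D(\widetilde f)\,d\mu \in \mathrm{Rot}(\widetilde f)$, pick a $\mu$-generic point $x$ via Birkhoff, and translate it by a $\mathbb{Z}^2$-vector into $\Delta$ (possible because $\Delta$ projects onto $T^2$) without changing its displacement, producing a point of $\Delta$ whose time average under $\widetilde f$ equals $v$. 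The main obstacle I expect is upgrading this pointwise statement to genuine Hausdorff convergence of the compact sets: this is handled by covering the compact set $\mathrm{Rot}(\widetilde f)$ by finitely many $\epsilon/2$-balls, choosing such a point in $\Delta$ for each center, and taking $N$ large enough that all of the finitely many time averages are simultaneously within $\epsilon/2$ of their targets.
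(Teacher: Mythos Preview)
The paper does not prove this theorem; it is quoted from Misiurewicz--Ziemian as a background result in Section~\ref{sec:rotation-set}. So there is no proof in the paper to compare against, and I will comment on the correctness of your argument.

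Your outline is the standard ergodic-theoretic reformulation, and most steps are fine, but there is a genuine gap in the Birkhoff step. You claim that for \emph{every} $f$-invariant probability $\mu$ there is a $\mu$-generic point whose time average of $D(\widetilde f)$ equals $\int D(\widetilde f)\,d\mu$. This is only true when $\mu$ is ergodic: for non-ergodic $\mu$, Birkhoff's theorem gives convergence to the conditional expectation with respect to the invariant $\sigma$-algebra, which is not constant. Your argument therefore establishes only
\[
\Bigl\{\textstyle\int D(\widetilde f)\,d\nu : \nu \text{ ergodic}\Bigr\}\ \subseteq\ \mathrm{Rot}(\widetilde f)\ \subseteq\ \Bigl\{\textstyle\int D(\widetilde f)\,d\mu : \mu \text{ invariant}\Bigr\},
\]
and hence that the closed convex hull of $\mathrm{Rot}(\widetilde f)$ equals the right-hand side; it does \emph{not} show that $\mathrm{Rot}(\widetilde f)$ itself is convex. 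Abstractly this cannot be deduced: a closed set sandwiched between the extreme points of a convex body and the body itself need not be convex (think of the boundary of a triangle). The same gap reappears in your Hausdorff-convergence argument, where you again invoke a $\mu$-generic point for an arbitrary invariant $\mu$.

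The missing ingredient is precisely the main idea of Misiurewicz--Ziemian: given two long orbit segments realising $v_1,v_2$ approximately, one concatenates them---using that any point of $\mathbb{R}^2$ can be $\mathbb{Z}^2$-translated into a fixed fundamental domain at bounded cost---to produce a single orbit segment realising any prescribed convex combination $tv_1+(1-t)v_2$. This concatenation trick simultaneously proves convexity and the lower Hausdorff inclusion, and is logically independent of the measure-theoretic identification. Once convexity is established by this direct argument, your measure description and the remainder of your proof go through.
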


 The rotation set  is \emph{homogeneous}, that is, the formula $\mathrm{Rot}(\widetilde f^n) = n \mathrm{Rot}(\widetilde f)$ holds for every integer $n$.
It is invariant under conjugacy in the group $\mathrm{Homeo}(\mathbb{R}^2, \mathbb{Z}^2)$. If $A \in \mathrm{SL}_2(\mathbb{Z})$, then
\[
\mathrm{Rot}(A \widetilde{f} A^{-1}) = A (\mathrm{Rot}(\widetilde{f})).
\]
Let $B$ be a matrix with integer coefficients which is invertible as a real matrix, but with determinant maybe not equal to $1$. The homeomorphism $B^{-1} \widetilde{f} B$ still belongs to the group $\mathrm{Homeo}(\mathbb{R}^2, \mathbb{Z}^2)$, and thus it induces an element of $\mathrm{Homeo}_0(T^2)$; beware that this element depends on the choice of the lift $\widetilde f$.
 When applied with $B = B_m = m\mathrm{Id}$ for some $m>1$, we call this element a \emph{rescaling} of $f$ of order $m$. The previous formula still holds for $A = B^{-1}$.
\begin{lemma}[\cite{kwapisz_realise}]\label{lem:rescaling}
For every matrix $B$ with integer coefficients and non-zero determinant,
\[\mathrm{Rot}(B^{-1} \widetilde{f} B) = B^{-1} (\mathrm{Rot}(\widetilde{f})).\]
\end{lemma}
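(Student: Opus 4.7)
The plan is to reduce the claim to a single algebraic identity between displacement functions. Because $D(\widetilde g)$ is $\mathbb{Z}^2$-equivariant in the source (i.e.\ descends to $T^2$) for any $\widetilde g \in \mathrm{Homeo}(\mathbb{R}^2, \mathbb{Z}^2)$, it suffices to work at the level of lifts.

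First, I would observe that conjugation commutes with iteration, so $(B^{-1}\widetilde f B)^n = B^{-1}\widetilde f^n B$, and therefore
\[
D\bigl((B^{-1}\widetilde f B)^n\bigr)(\widetilde x) \;=\; B^{-1}\widetilde f^n(B\widetilde x) - \widetilde x \;=\; B^{-1}\bigl(D(\widetilde f^n)(B\widetilde x)\bigr).
\]
This is the only computation in the proof; it uses that $B$ is linear so commutes with subtraction, together with $B^{-1}(B\widetilde x)=\widetilde x$. I would also note that $B^{-1}\widetilde f B$ does lie in $\mathrm{Homeo}(\mathbb{R}^2,\mathbb{Z}^2)$, since for $v \in \mathbb{Z}^2$ one has $Bv \in \mathbb{Z}^2$ (using that $B$ has integer entries), so $\widetilde f$ commutes with translation by $Bv$, and conjugating by $B^{-1}$ turns this into commutation with translation by $v$.

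For the inclusion $\mathrm{Rot}(B^{-1}\widetilde f B) \subseteq B^{-1}\mathrm{Rot}(\widetilde f)$: given $v \in \mathrm{Rot}(B^{-1}\widetilde f B)$ realized by sequences $(x_k)$ in $T^2$ and $n_k \to \infty$, lift to $\widetilde x_k \in \mathbb{R}^2$ and apply the identity above to get
\[
Bv \;=\; \lim_k \frac{1}{n_k}\, D(\widetilde f^{n_k})(B\widetilde x_k).
\]
Since $D(\widetilde f^{n_k})$ descends to $T^2$, the right-hand side only depends on the projection $y_k \in T^2$ of $B\widetilde x_k$, witnessing $Bv \in \mathrm{Rot}(\widetilde f)$. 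Conversely, given $v \in \mathrm{Rot}(\widetilde f)$ with witnesses $(\widetilde y_k, n_k)$, I would set $\widetilde x_k := B^{-1}\widetilde y_k$; this is well-defined because $B$ is invertible as a real matrix, and then $B\widetilde x_k = \widetilde y_k$ gives
\[
\frac{1}{n_k}\,D\bigl((B^{-1}\widetilde f B)^{n_k}\bigr)(\widetilde x_k) \;=\; \frac{1}{n_k}\,B^{-1}\,D(\widetilde f^{n_k})(\widetilde y_k) \;\longrightarrow\; B^{-1}v,
\]
so $B^{-1}v \in \mathrm{Rot}(B^{-1}\widetilde f B)$.

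There is no real obstacle; the only subtlety is the bookkeeping that justifies freely passing between $\mathbb{R}^2$-level and $T^2$-level formulations of the rotation set, which is handled once the descent of $D(\widetilde f^n)$ to $T^2$ is invoked together with $B\mathbb{Z}^2 \subseteq \mathbb{Z}^2$. The result then follows immediately from the identity $D((B^{-1}\widetilde f B)^n) = B^{-1} \circ D(\widetilde f^n) \circ B$ and the definition of $\mathrm{Rot}$.
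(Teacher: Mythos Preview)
Your proof is correct. The paper does not give its own proof of this lemma; it is stated with a citation to Kwapisz \cite{kwapisz_realise} and used as a black box, so there is nothing to compare against beyond noting that your direct argument via the displacement identity $D\bigl((B^{-1}\widetilde f B)^n\bigr) = B^{-1}\circ D(\widetilde f^n)\circ B$ is exactly the natural verification.
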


The following lemma states that the set of realizable rotation sets is invariant under the rational affine group. It is probably well known, we will use it to construct examples in section~\ref{sec:examples}.
\begin{lemma}
\label{lem:GAQ}
The set 
\[
\{\mathrm{Rot}(\widetilde f), \widetilde f \text{ lift  of some } f\in \mathrm{Homeo}_0(T^2) \}
\]
is invariant under the action of the affine group $\mathrm{GA}_2(\mathbb{Q})$.

More precisely, given any lift $\widetilde f$ of some $f \in \mathrm{Homeo}_0(T^2)$, and $\Phi \in \mathrm{GA}_2(\mathbb{Q})$, we can find an invertible matrix $A$ with integer coefficients, a vector $v \in \mathbb{Q}^2$ and a positive integer $p$ such that the map $\widetilde g = A^{-1}\widetilde f^p A$ commutes with the translation by vector $v$, and
$\mathrm{Rot}(\widetilde g + v) = \Phi(\mathrm{Rot}(\widetilde f))$.
\end{lemma}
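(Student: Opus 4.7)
The strategy is to decompose $\Phi$ into its linear and translational parts and realize each separately, using Lemma~\ref{lem:rescaling} for the linear part and the elementary observation that adding a constant vector $v$ to a lift shifts the rotation set by $v$. The first claim of the lemma (invariance of the realizable family under $\mathrm{GA}_2(\mathbb{Q})$) will follow once the "more precisely" statement is established, since $\widetilde g + v$ is automatically a lift of a torus homeomorphism: it commutes with $\mathbb{Z}^2$-translations as $\widetilde g$ does.

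Write $\Phi(x) = Mx + w$ with $M \in \mathrm{GL}_2(\mathbb{Q})$ and $w \in \mathbb{Q}^2$. Since $M^{-1}$ and $M^{-1}w$ have rational entries, we may choose a positive integer $p$ (a common denominator) such that $A := pM^{-1}$ is an integer matrix and $Aw = p M^{-1} w \in \mathbb{Z}^2$. The matrix $A$ is invertible as a real matrix because $M$ is. Set $\widetilde g := A^{-1} \widetilde f^p A$ and $v := w$. Then $\widetilde g \in \mathrm{Homeo}(\mathbb{R}^2, \mathbb{Z}^2)$ since $A$ preserves $\mathbb{Z}^2$, so $\widetilde g + v$ is also a lift of a torus homeomorphism.

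For the commutativity with $T_v$, I use the general principle that $A^{-1}\widetilde f^p A$ commutes with the translation $T_u$ if and only if $\widetilde f^p$ commutes with $T_{Au}$, which in turn holds whenever $Au \in \mathbb{Z}^2$. Applying this with $u = v$ and using $Av = Aw \in \mathbb{Z}^2$ by the choice of $p$, I obtain $\widetilde g \, T_v = T_v \, \widetilde g$. For the rotation set, Lemma~\ref{lem:rescaling} applied to $\widetilde f^p$ and homogeneity yield
\[
\mathrm{Rot}(\widetilde g) \;=\; A^{-1}\mathrm{Rot}(\widetilde f^p) \;=\; p A^{-1}\mathrm{Rot}(\widetilde f) \;=\; M\,\mathrm{Rot}(\widetilde f),
\]
where the last equality is the identity $p A^{-1} = p(pM^{-1})^{-1} = M$. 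Finally, since $D(\widetilde g + v)(x) = D(\widetilde g)(x) + v$ pointwise, the definition of the rotation set gives $\mathrm{Rot}(\widetilde g + v) = \mathrm{Rot}(\widetilde g) + v = M\,\mathrm{Rot}(\widetilde f) + w = \Phi(\mathrm{Rot}(\widetilde f))$, as required.

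There is no genuine obstacle here; the only mild subtlety is making sure that the same $p$ can be chosen to clear denominators both in the matrix $M^{-1}$ (so that $A$ is an integer matrix making $\widetilde g$ a valid element of $\mathrm{Homeo}(\mathbb{R}^2,\mathbb{Z}^2)$) and in the vector $M^{-1}w$ (so that $\widetilde g$ commutes with the extra translation $T_v$). Both conditions are linear-in-$p$ requirements of the same rational nature, so a single common denominator suffices.
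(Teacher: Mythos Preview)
Your proof is correct and in fact more streamlined than the paper's. The paper treats the translational and linear parts of $\mathrm{GA}_2(\mathbb{Q})$ separately: for a rational translation it takes $A=m\mathrm{Id}$ and power $m$; for the linear part it further reduces to generators of $\mathrm{GL}_2(\mathbb{Q})$ (namely $\mathrm{SL}_2(\mathbb{Z})$ together with rational diagonal matrices), realizing each generator by an explicit choice of integer matrix and power. You instead handle an arbitrary $\Phi(x)=Mx+w$ in a single stroke via $A=pM^{-1}$ and power $p$, with $p$ a common denominator for both $M^{-1}$ and $M^{-1}w$; Lemma~\ref{lem:rescaling} plus homogeneity then gives $\mathrm{Rot}(\widetilde g)=M\,\mathrm{Rot}(\widetilde f)$ directly. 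This avoids the generator decomposition entirely.

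One small expositional point: your final sentence invokes $D(\widetilde g+v)=D(\widetilde g)+v$, but the rotation set is built from $\frac{1}{n}D((\widetilde g+v)^n)$, so strictly you need $(\widetilde g+v)^n=\widetilde g^n+nv$. This is exactly where the commutation $\widetilde g\,T_v=T_v\,\widetilde g$ you established is used (it gives $(T_v\widetilde g)^n=T_{nv}\widetilde g^n$), so the argument is complete---just make the dependence explicit.
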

\begin{proof}
Fix a lift $\widetilde f$ of some element $f\in \mathrm{Homeo}_0(T^2)$.
Given a positive integer $m$, let $B = m\mathrm{Id}$. By Lemma~\ref{lem:rescaling} and the homogeneity, the map $\widetilde g = B^{-1} \widetilde f^{m} B$ has the same rotation set as $\widetilde f$, and if we pick some vector $v = \frac{1}{m}(p,q)$, then $\widetilde g$  commutes with the translation by $v$. We deduce that $\mathrm{Rot}(\widetilde g + v) = \mathrm{Rot}(\widetilde f) + v$. Thus the set of realizable rotation sets is invariant under the action of rational translations.

It remains to check invariance under the linear group $\mathrm{GL}_2(\mathbb{Q})$.
Let $A$ be a diagonal matrix with diagonal entries $\frac{p}{q}, \frac{p'}{q}$, and $B$ the diagonal matrix with diagonal entries $p'q, pq$. Then 
\[
\mathrm{Rot}(B^{-1} \widetilde f^{pp'}B) = A(\mathrm{Rot}(\widetilde f)).
\]
 Since $\mathrm{GL}_2(\mathbb{Q})$ is generated by $\mathrm{SL}_2(\mathbb{Z})$ together with such matrices $A$, the result follows.
\end{proof}

\subsection{Rotation set bounds}
We start with a well known lemma giving an easy upper bound for the rotation set. Here, and throughout, we denote by $\mathrm{Conv}(X)$ the convex hull of a set $X$ in the plane.
\begin{lemma}[Upper bound lemma]\label{lem:upper-bound-rotation-set}
For every lift $\widetilde f$ of $f \in \mathrm{Homeo}_0(T^2)$, for every $n>0$, we have
\[
\mathrm{Rot}(\widetilde f) \subset \mathrm{Conv}\left(\frac{1}{n} (\widetilde f^n- \mathrm{Id} )([0,1]^2) \right).
\]
\end{lemma}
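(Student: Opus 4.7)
The plan is to take an arbitrary vector $v \in \mathrm{Rot}(\widetilde f)$, unwind the long-time displacement $\widetilde f^{n_k}(\widetilde x_k)-\widetilde x_k$ into blocks of length $n$, and recognize the result as a Cesàro average of vectors lying in the target set.

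First, fix $n>0$, and let $v = \lim_k \frac{1}{n_k} D(\widetilde f^{n_k})(x_k)$ with $n_k \to \infty$. Write the Euclidean division $n_k = q_k n + r_k$ with $0 \leq r_k < n$, and telescope:
\[
\widetilde f^{n_k}(\widetilde x_k) - \widetilde x_k = \sum_{j=0}^{q_k-1} \bigl(\widetilde f^{n} - \mathrm{Id}\bigr)\bigl(\widetilde f^{jn}(\widetilde x_k)\bigr) + \bigl(\widetilde f^{r_k} - \mathrm{Id}\bigr)\bigl(\widetilde f^{q_k n}(\widetilde x_k)\bigr).
\]
Since $\widetilde f$ commutes with integer translations, the map $D(\widetilde f^m) = \widetilde f^m - \mathrm{Id}$ is $\mathbb{Z}^2$-invariant for every $m$, so its value at a point depends only on the class modulo $\mathbb{Z}^2$. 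In particular, for each $j$ we can pick $\widetilde y_{k,j} \in [0,1]^2$ congruent to $\widetilde f^{jn}(\widetilde x_k)$ modulo $\mathbb{Z}^2$ and replace the $j$-th term by $(\widetilde f^n - \mathrm{Id})(\widetilde y_{k,j})$, which lies in $(\widetilde f^n - \mathrm{Id})([0,1]^2)$.

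Second, I would handle the two terms separately after dividing by $n_k$. For the remainder, the $\mathbb{Z}^2$-invariance of $\widetilde f^{r_k}-\mathrm{Id}$ and the fact that $r_k$ ranges over the finite set $\{0,\ldots,n-1\}$ shows that $\|(\widetilde f^{r_k}-\mathrm{Id})(\cdot)\|$ is bounded by a constant depending only on $f$ and $n$; so this term contributes $O(1/n_k) \to 0$. For the main term, rewrite
\[
\frac{1}{n_k}\sum_{j=0}^{q_k-1}\bigl(\widetilde f^n-\mathrm{Id}\bigr)(\widetilde y_{k,j}) = \frac{q_k n}{n_k}\cdot \frac{1}{q_k}\sum_{j=0}^{q_k-1}\frac{1}{n}\bigl(\widetilde f^n-\mathrm{Id}\bigr)(\widetilde y_{k,j}).
\]
The inner Cesàro average lies in the convex set $C := \mathrm{Conv}\bigl(\frac{1}{n}(\widetilde f^n-\mathrm{Id})([0,1]^2)\bigr)$, and $C$ is compact because $[0,1]^2$ is compact and $\widetilde f^n-\mathrm{Id}$ is continuous. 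As $k\to\infty$, the scalar $\frac{q_k n}{n_k}$ tends to $1$, so the product stays in $C$ up to a vanishing error. Passing to the limit, $v \in C$, which is the desired inclusion.

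The only point requiring any care is the remainder: one must use the $\mathbb{Z}^2$-equivariance to bound $\widetilde f^{r_k}-\mathrm{Id}$ globally, rather than only on $[0,1]^2$, since the intermediate iterate $\widetilde f^{q_k n}(\widetilde x_k)$ is not a priori confined to any compact set. Once this observation is made, the rest is a formal passage to the limit in a compact convex set.
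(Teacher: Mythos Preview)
Your proof is correct and uses the same essential ingredients as the paper: telescoping the displacement, $\mathbb{Z}^2$-invariance of $D(\widetilde f^m)$, and closedness/compactness of the convex hull. The paper, however, first observes that by homogeneity ($\mathrm{Rot}(\widetilde f^n) = n\,\mathrm{Rot}(\widetilde f)$) it suffices to prove the case $n=1$ and then apply the result to $\widetilde f^n$. This reduction eliminates the remainder term and the scalar $\frac{q_k n}{n_k}$ entirely: when the block length is $1$, every $n_k$ divides evenly, and the average is immediately a convex combination of points in $D(\widetilde f)([0,1]^2)$. Your direct approach for general $n$ works fine but carries a bit more bookkeeping; the paper's shortcut via homogeneity is worth noting.
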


\begin{proof}
 By homogeneity it suffices to prove this inclusion for $n=1$, and then apply it to $f^n$. 
We use the function $D(\widetilde f) = \widetilde f -\mathrm{Id}$ from section~\ref{subsec:definition-rotation-set}.
By definition, the rotation set $\mathrm{Rot}(\widetilde f)$ is made of limits of vectors of the form
\[
\frac{1}{n} D(\widetilde f^{n}) (x) = \frac{1}{n}\sum_{k=0}^n D(\widetilde f)(\widetilde f^k(x)).
\]
By $\mathbb{Z}^2$ invariance, each term of the sum belongs to $D(\widetilde f)([0,1]^2)$. Thus this vector is included in the convex hull of $D(\widetilde f)([0,1]^2)$, and so is $\mathrm{Rot}(\widetilde f)$.
\end{proof}

The following corollary will be useful for constructing examples.
\begin{corollary}
For every respective lifts $\widetilde f, \widetilde g$ of $f,g \in \mathrm{Homeo}_0(T^2)$, we have
\[
\limsup \frac{1}{n}\mathrm{Rot}(\widetilde f \widetilde g^n) \subseteq \mathrm{Rot}(\widetilde g).
\]
\end{corollary}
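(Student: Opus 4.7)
The plan is to apply the Upper bound lemma (Lemma~\ref{lem:upper-bound-rotation-set}) to the homeomorphism $\widetilde f \widetilde g^n$ with exponent $1$, which immediately yields
\[
\frac{1}{n}\mathrm{Rot}(\widetilde f \widetilde g^n) \subseteq \mathrm{Conv}\left(\frac{1}{n}(\widetilde f \widetilde g^n - \mathrm{Id})([0,1]^2)\right).
\]
So it suffices to show that the right-hand side is eventually contained in any prescribed neighborhood of $\mathrm{Rot}(\widetilde g)$.

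The key step is the additive decomposition, for $x \in [0,1]^2$,
\[
(\widetilde f \widetilde g^n - \mathrm{Id})(x) = (\widetilde f - \mathrm{Id})(\widetilde g^n(x)) + (\widetilde g^n - \mathrm{Id})(x).
\]
Since $\widetilde f - \mathrm{Id}$ descends to a continuous function on the torus, it is bounded on $\mathbb{R}^2$, so $\frac{1}{n}(\widetilde f - \mathrm{Id})(\widetilde g^n(x))$ tends uniformly to $0$ as $n \to \infty$. For the second summand, the Misiurewicz--Ziemian theorem applied to $\Delta = [0,1]^2$ tells us that $\frac{1}{n}\widetilde g^n([0,1]^2)$ converges to $\mathrm{Rot}(\widetilde g)$ for the Hausdorff distance, hence so does $\frac{1}{n}(\widetilde g^n - \mathrm{Id})([0,1]^2)$, since $\frac{1}{n}[0,1]^2 \to \{0\}$.

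Combining the two estimates, the set $\frac{1}{n}(\widetilde f \widetilde g^n - \mathrm{Id})([0,1]^2)$ is contained in an arbitrarily small neighborhood of $\mathrm{Rot}(\widetilde g)$ for $n$ large. Because $\mathrm{Rot}(\widetilde g)$ is convex (by Misiurewicz--Ziemian), any neighborhood of it contains the convex hull of any subset it contains (up to enlarging the neighborhood by an arbitrarily small amount), so the same conclusion holds after taking convex hulls. This yields $\limsup_n \frac{1}{n}\mathrm{Rot}(\widetilde f \widetilde g^n) \subseteq \mathrm{Rot}(\widetilde g)$.

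There is no real obstacle here: the argument is a short manipulation combining the Upper bound lemma with the Misiurewicz--Ziemian convergence. The only minor point to be careful about is to state once what is meant by $\limsup$ of a sequence of compact subsets of the plane (either as accumulation points of sequences $v_n \in \frac{1}{n}\mathrm{Rot}(\widetilde f \widetilde g^n)$, or equivalently via $\bigcap_n \overline{\bigcup_{k \ge n} (\cdots)}$), and to note that the convex hull of a set in an $\varepsilon$-neighborhood of a convex compact set lies in the same $\varepsilon$-neighborhood.
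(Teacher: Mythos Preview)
Your proof is correct and follows essentially the same approach as the paper's: apply the Upper bound lemma with exponent $1$, use that $\widetilde f-\mathrm{Id}$ is bounded so that $\frac{1}{n}(\widetilde f\widetilde g^n-\mathrm{Id})([0,1]^2)$ lies in any $\varepsilon$-neighborhood $N_\varepsilon$ of $\mathrm{Rot}(\widetilde g)$ for large $n$ by Misiurewicz--Ziemian, and then note that $N_\varepsilon$ is convex so it absorbs the convex hull. Your additive decomposition of $(\widetilde f\widetilde g^n-\mathrm{Id})$ is exactly the unpacking of the paper's one-line remark that ``$\widetilde f\widetilde g^n$ is at bounded distance from $\widetilde g^n$''.
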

\begin{proof}
Let $\varepsilon>0$. Note that $\widetilde f$ is at bounded distance from the identity, and thus $\widetilde{f} \widetilde{g}^n$ is at bounded distance from $\widetilde g^n$.
By the Misiurewicz-Ziemian theorem above, for every $n$ large enough, the set
\[
E_n = \frac{1}{n}(\widetilde f \widetilde g^n - \mathrm{Id}) ([0,1]^2)
\]
is included in the $\varepsilon$-neighborhood $N_\varepsilon$ of $\mathrm{Rot}(g)$. 
Consider such an integer $n$. Since the rotation set is convex, $N_\varepsilon$ is also convex, and thus it also contains $\mathrm{Conv}(E_n)$.
We apply the lemma to $fg^n$ to get 
\[
\frac{1}{n} \mathrm{Rot}(\widetilde f \widetilde g^n) \subseteq \mathrm{Conv}(E_n) \subseteq N_\varepsilon.
\]
\end{proof}

Unfortunately we do not know if the equality holds in the corollary.
\begin{question}\label{ques:projective-cv}
Let $f,g \in \mathrm{Homeo}_0(T^2)$ and assume their rotation sets have non empty interior. Do we always have that 
\[
\lim \frac{1}{n} \mathrm{Rot}(fg^n) = \mathrm{Rot}(g) ?
\]
\end{question}
The difficulty to answer this question positively comes from the lack of reasonable lower bounds. This also explains why we could only prove semi-continuity, and not continuity, in our last theorem (see section~\ref{subsec:semi-continuity} and the question inside).

\bigskip

The next lemma will be useful to control the rotation sets in the main proofs.
Remember from the introduction that a sequence in $\mathrm{Homeo}_0(T^2)$ is said to be \emph{bounded} if it is bounded in the sense of Mann-Rosendal, or equivalently, bounded for the distance $\widetilde d$ defined as the minimal $C^0$ distance between lifts to the plane $\mathbb{R}^2$.
The next lemma shows how such boundedness may provide information on the rotation set up to translation. We emphasize that the translations in this statement are by any vector in $\mathbb{R}^2$, and not necessarily integer vectors. 
\begin{lemma}[Boundedness and the rotation set]~
\label{lem:uc-distance}

\begin{enumerate}[i)]
\item Let $f \in \mathrm{Homeo}_0(T^2)$.
Let $\varepsilon>0$, $D>0$.
Then there is $n_0>0$ such that for every $n \geq n_0$, for every $g \in \mathrm{Homeo}_0(T^2)$ such that $\widetilde d(f^n, g) \leq D$, the set $\frac{1}{n}\mathrm{Rot}(g)$ has a translate included in the $\varepsilon$-neighborhood of $\mathrm{Rot}(f)$. 
  \item Let $f,g \in \mathrm{Homeo}_0(T^2)$.
Assume that there is a sequence  $(m_n)_{n \geq 0}$ in $\mathbb{N}$ such that the sequence  $(f^ng^{-m_n})_{n \geq 0}$ is bounded. Assume furthermore that $\lim \frac{m_n}{n}$ exists and is a positive number, and denote it $\lambda$.
  Then $\lambda \mathrm{Rot}(g)$ and $\mathrm{Rot}(f)$ are translates of each other.
  \end{enumerate}
\end{lemma}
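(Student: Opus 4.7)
The plan is to reduce both parts to the upper bound Lemma~\ref{lem:upper-bound-rotation-set} combined with the Misiurewicz--Ziemian convergence theorem, exploiting right-invariance of the universal cover distance $\widetilde d$.

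For part (i), I would first unwind the definition of $\widetilde d$: the hypothesis $\widetilde d(f^n, g) \leq D$ lets me choose a lift $\widetilde g$ of $g$ so that $d_{\mathcal{C}^0}(\widetilde f^n, \widetilde g) \leq D$ on the nose, absorbing any integer ambiguity of lift choice into the free real translation permitted by the conclusion. The displacements $D(\widetilde f^n)$ and $D(\widetilde g)$ then differ pointwise by at most $D$ on $[0,1]^2$, so the upper bound lemma yields
\[
\mathrm{Rot}(\widetilde g) \subseteq \mathrm{Conv}\!\left(D(\widetilde f^n)([0,1]^2)\right) + B(0,D).
\]
Dividing by $n$ and invoking Misiurewicz--Ziemian (together with Hausdorff continuity of convex hull) to place $\tfrac{1}{n}\mathrm{Conv}(D(\widetilde f^n)([0,1]^2))$ within $\varepsilon/2$ of $\mathrm{Rot}(\widetilde f)$, then taking $n_0$ so that $D/n_0 < \varepsilon/2$, concludes part (i).

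For part (ii), right-invariance of $\widetilde d$ converts the hypothesis into $\widetilde d(f^n, g^{m_n}) \leq C$ for some constant $C$ and all $n$. I then apply part (i) twice, swapping the roles of $f$ and $g$: using homogeneity $\mathrm{Rot}(g^{m_n}) = m_n\mathrm{Rot}(g)$, I obtain translations $v_n \in \mathbb{R}^2$ with $\frac{m_n}{n}\mathrm{Rot}(g) + v_n \subseteq N_\varepsilon(\mathrm{Rot}(f))$, and symmetrically $w_n \in \mathbb{R}^2$ with $\frac{n}{m_n}\mathrm{Rot}(f) + w_n \subseteq N_\varepsilon(\mathrm{Rot}(g))$. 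Since $m_n/n \to \lambda$, both left-hand sides stay uniformly bounded, hence so do the translates. A double compactness extraction (first in $n \to \infty$, then along a sequence $\varepsilon \to 0$) yields real vectors $v_\infty, w_\infty$ with
\[
\lambda \mathrm{Rot}(g) + v_\infty \subseteq \mathrm{Rot}(f) \quad\text{and}\quad \mathrm{Rot}(f) + \lambda w_\infty \subseteq \lambda\mathrm{Rot}(g).
\]
Chaining the two inclusions gives $\mathrm{Rot}(f) + (\lambda w_\infty + v_\infty) \subseteq \mathrm{Rot}(f)$, and iteration together with compactness of $\mathrm{Rot}(f)$ forces $\lambda w_\infty + v_\infty = 0$, which upgrades both inclusions to equalities and proves that $\lambda \mathrm{Rot}(g)$ is a translate of $\mathrm{Rot}(f)$.

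The main care point is the bookkeeping of translations. Part (i) must absorb the integer lift ambiguity into a single real translation, and part (ii) requires the double compactness extraction above before the elementary "$K + u \subseteq K$ with $K$ compact implies $u = 0$" observation can promote the nested inclusions to an equality. The rest is a quantitative, bounded-error version of the upper bound lemma.
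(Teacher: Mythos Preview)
Your proposal is correct and follows essentially the same approach as the paper: part~(i) via the upper bound Lemma~\ref{lem:upper-bound-rotation-set} combined with Misiurewicz--Ziemian convergence, and part~(ii) by applying (i) symmetrically to obtain mutual translate-inclusions. You are more explicit than the paper about the compactness extraction of the translation vectors and the final ``$K + u \subseteq K$ forces $u=0$'' step; the paper leaves this implicit here (and states it in passing later, in the proof of Theorem~\ref{thm:equivalence} from Theorem~\ref{thm:semi-continuity}).
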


\begin{proof}[Proof of Lemma~\ref{lem:uc-distance}]
Let us prove i).
Choose some lifts $\widetilde f$ of $f$, and let $n_0$ be large enough so that $\frac{D}{n_0}< \varepsilon$ and for every $n \geq n_0$ the set
$$
\frac{1}{n} ({\widetilde f}^{n}- \mathrm{Id} )([0,1]^2)
$$
is included in the $\varepsilon$-neighborhood of $\mathrm{Rot}(\widetilde f)$. Let $n\geq n_0$, and consider some $g \in \mathrm{Homeo}_0(T^2)$ such that $\widetilde d(f^n, g) \leq D$: there is some lift $\widetilde g$ of $g$ which is within distance $D$ from $\widetilde f^n$. By the above upper bound lemma,
$$
\frac{1}{n}\mathrm{Rot}(\widetilde g) \subset \mathrm{Conv}\left(\frac{1}{n} (\widetilde g- \mathrm{Id} )([0,1]^2) \right).
$$
By the distance estimate, this last set is included in the $\frac{D}{n}$-neighborhood of 
$$
 \mathrm{Conv}\left(\frac{1}{n} (\widetilde f^n- \mathrm{Id} )([0,1]^2) \right)
$$
which is itself included in the $\varepsilon$-neighborhood of $\mathrm{Rot}(\widetilde f)$ (note that the $\varepsilon$-neighborhood of a convex set is convex).

Now let us prove ii). Denote $D$ the bound for the $\widetilde d$-distance from 
$(f^ng^{-m_n})_{n \geq 0}$ to the identity in $\mathrm{Homeo}_0(T^2)$. Consider some $\varepsilon>0$. Let $n_0$ be given by i): for every $n\geq n_0$, since $\widetilde d(f^n, g^{m_n}) \leq D$, we get that $\frac{1}{n}\mathrm{Rot}(g^{m_n}) = \frac{m_n}{n}\mathrm{Rot}(g)$ has a translate included in the $\varepsilon$-neighborhood of $\mathrm{Rot}(f)$. Letting $n$ tend to $+\infty$, and then $\varepsilon$ to $0$, we get that $\lambda \mathrm{Rot}(g)$ has a translate included in $\mathrm{Rot}(f)$.
For the reverse inclusion, we apply i) with the roles of $f$ and $g$ exchanged. We get some $n_0$ so that, for every $n \geq n_0$, $\frac{1}{m_n}\mathrm{Rot}(f^{n}) = \frac{n}{m_n}\mathrm{Rot}(f)$ has a translate included in the $\varepsilon$-neighborhood of $\mathrm{Rot}(g)$. We conclude as above that $\frac{1}{\lambda} \mathrm{Rot}(f)$ has a translate included in $\mathrm{Rot}(g)$.
\end{proof}

\section{Hyperbolic preliminaries}\label{sec:hyperbolic}
In this section, we collect some preliminaries on Gromov hyperbolic
spaces. We refer the reader to e.g. \cite{BH} for details on hyperbolic spaces;
we will only collect the required results. Most of them are
well-known, but especially in Subsections~\ref{sec:bds-conv}
or~\ref{sec:tl-estimates} we discuss results specifically adapted to our
situation.

\subsection{Basic definitions and the boundary}\label{ssec:hyperbolic-boundary}
Recall that a geodesic metric space $X$ is called
\emph{$\delta$--hyperbolic} if every geodesic triangle is
$\delta$-thin, i.e. if every side is contained in the $\delta$-neighbourhood of the other two sides.

For a basepoint $b \in X$, the \emph{Gromov product} is defined as
\[ (x\cdot y)_b = \frac{1}{2}\left( d(b,x) + d(b,y) - d(x,y)
  \right). \]
If $X$ is $\delta$--hyperbolic, and $\alpha$ is a geodesic joining $x$ to $y$, then
\[ (x \cdot y)_b \leq d(b, \alpha) \leq (x\cdot y)_b + 3\delta. \]
One can also think of the Gromov product as the distance where
geodesics from $b$ to $x,y$ start to diverge.

\smallskip Using the Gromov product, we can define the boundary at
infinity of a hyperbolic space. Namely, call a sequence $(x_i)$ \emph{admissible} if
\[ (x_i\cdot x_j)_b \to \infty, \]
and two sequences $(x_i), (y_i)$ equivalent if 
\[ (x_i\cdot y_j)_b \to \infty. \] Then the \emph{Gromov boundary}
$\partial_\infty X$ is the set of equivalence classes of admissible sequences.
An admissible sequence $(x_i)$ defining the boundary point $\xi$ is said to converge to $\xi$.

If $X$ is proper, then $\partial_\infty X$ can be identified with the
set of geodesic rays up to bounded distance -- but since the spaces we
deal with are very much not proper, it is not clear if every boundary
point is reached by an actual geodesic ray in our setting. See e.g. \cite[Section~2.A]{LongTan} for discussion.

\subsection{Geodesics and quasigeodesics}\label{sec:bds-conv}
A central property of geodesics in hyperbolic spaces is the following
stability (see e.g. \cite[Chapter III.H, Theorem 1.7]{BH})
\begin{lemma*}[Morse lemma]
  Suppose $X$ is a $\delta$--hyperbolic space. Then for any $K>0$ there is a
  $B=B(K,\delta)$ with the following property. Suppose that $\gamma$ a
  (finite) geodesic, and $q$ is a $K$--quasigeodesic with the same
  endpoints. Then the Hausdorff distance between $\gamma, q$ is at
  most $B$.
\end{lemma*}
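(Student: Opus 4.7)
The plan is to reduce to continuous (piecewise-geodesic) quasigeodesics and then establish the Hausdorff bound in two inclusions. The hard direction is that $\gamma$ lies in a bounded neighbourhood of $q$; this relies on the exponential divergence of geodesics in a hyperbolic space. The reverse inclusion, that $q$ lies in a bounded neighbourhood of $\gamma$, then follows by a short quasi-isometric argument.

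For the taming step, I would replace $q$ by the piecewise-geodesic path $q'$ obtained by concatenating $X$-geodesics between successive integer-parameter points of $q$. This produces a continuous $K'$-quasigeodesic (with $K'$ depending only on $K$) whose Hausdorff distance to $q$ is at most a constant $C(K)$, so it suffices to prove the lemma for $q'$ and inflate the bound by $C(K)$ at the end.

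For the hard direction, pick $x_0 = \gamma(t_0)$ realising the maximum $D$ of $d(\gamma(\cdot), q')$, and choose $\gamma(t_\pm)$ with $|t_\pm - t_0| \approx 2D$ (truncating at the endpoints of $\gamma$ if necessary). Let $y_\pm \in q'$ be nearest points to $\gamma(t_\pm)$. The subpath of $q'$ from $y_-$ to $y_+$, concatenated with the short arcs joining $\gamma(t_\pm)$ to $y_\pm$, is a continuous path in the complement of the open ball $B(x_0, D)$ joining two points at distance roughly $4D$. The central hyperbolic input is that any such path must have length at least $c\cdot 2^{D/\delta}$: this is established by iteratively applying the $\delta$-thin triangle axiom to midpoint subdivisions of the path, each step halving the forbidden radius at the cost of doubling length. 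Comparing this exponential lower bound to the quasigeodesic upper bound of roughly $4KD + K$ on the length of $q'|_{[y_-,y_+]}$ forces $D \leq B(K,\delta)$.

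The reverse inclusion is quick. Given $q'(s)$ with $d(q'(s),\gamma) > B'$, let $s_- < s < s_+$ be the first times on either side at which $q'$ enters the $B$-neighbourhood of $\gamma$. By the first inclusion, the nearest-point projections of $q'(s_\pm)$ onto $\gamma$ lie within $2B + O(\delta)$ of each other, so the quasigeodesic estimate caps $|s_+ - s_-|$ in terms of $K$, $B$, $\delta$, which in turn caps $B'$. The main obstacle is the exponential-divergence lemma in the hard direction; once it is established, the remainder is bookkeeping, and the restoration of the original quasigeodesic $q$ from $q'$ costs only the additive constant $C(K)$.
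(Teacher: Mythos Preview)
The paper does not give its own proof of the Morse lemma; it is stated as a classical background fact with a reference to \cite[Chapter III.H, Theorem 1.7]{BH}. Your sketch is essentially the standard proof found there: tame the quasigeodesic, use exponential divergence of geodesics to bound the distance from $\gamma$ to $q'$, and then a connectivity argument for the reverse inclusion. So there is nothing to compare, and your outline is correct in spirit.

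One small remark on the reverse inclusion: the claim that ``the nearest-point projections of $q'(s_\pm)$ onto $\gamma$ lie within $2B + O(\delta)$ of each other'' is not quite what the argument gives, and in fact need not be true as stated. What the first inclusion actually yields is that $\gamma$ is covered by the $B$-neighbourhoods of $q'|_{(-\infty,s_-]}$ and $q'|_{[s_+,\infty)}$, and since $\gamma$ is connected there is a point $\gamma(t_*)$ within $B$ of both pieces. This bounds $d(q'(s'),q'(s''))$ for some $s'\le s_-$, $s''\ge s_+$, hence bounds $|s_+-s_-|$ via the quasigeodesic inequality, and then $d(q'(s),\gamma)$. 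The conclusion is the same, but the intermediate step should be phrased in terms of this connectedness argument rather than nearest-point projections of the endpoints.
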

Note that the Morse lemma implies that two quasigeodesics with the same endpoints are bounded Hausdorff distance apart (with a bound depending only on the hyperbolicity and quasigeodesic constants).

Another result we will use several times is the following
local-to-global property for quasigeodesics. Variants of this result
are well-known; see e.g. \cite[Lemma~4.2]{Min05} for a version
immediately implying our required result.
We adopt the usual convention that $[xy]$ denotes any geodesic joining two given points $x$ and $y$. 
\begin{lemma}[Local-to-global]\label{lem:local-to-global}
  Let $X$ be a $\delta$-hyperbolic space. For every $B$ there are
  constants $L,K$ so that the following is true: if
  \[ g = [x_1 x_2] \ast \cdots \ast [x_k x_{k+1}] \]
  is a concatenation of geodesics, so that for every $1\leq i\leq k$ we have
  \begin{enumerate}
  \item $d(x_i, x_{i+1}) \geq L$, and
  \item $(x_i, x_{i+2})_{x_{i+1}} \leq B$,
  \end{enumerate}
  then $g$ is a $K$--quasigeodesic.
\end{lemma}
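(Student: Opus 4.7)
The plan is to argue by induction on $k$, seeking a constant $K$ (depending only on $B$ and $\delta$) and then a threshold $L$ (depending on $B, \delta, K$) under which the concatenation $g$ of any $k$ segments is a $K$-quasigeodesic. Since any subpath of $g$ meets the same hypotheses with the same $L, B$, it is enough to bound $d(x_1, x_{k+1})$ from below; I will maintain the precise estimate
\[
d(x_1, x_{k+1}) \geq \tfrac{1}{2} \sum_{i=1}^k \ell_i,
\]
where $\ell_i = d(x_i, x_{i+1})$. This makes $g$ a $(2, C)$-quasigeodesic (with $C$ absorbing non-vertex parametrization slack) for a single $K$ independent of $k$.

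The heart of the argument is the inductive step. Assume $g' = [x_1 x_2] \ast \cdots \ast [x_{k-1} x_k]$ is already a $K$-quasigeodesic. By the Morse lemma there is $B_0 = B(K, \delta)$ with $g'$ within Hausdorff distance $B_0$ of any geodesic $[x_1 x_k]$. Picking a point $p \in [x_1 x_k]$ within $B_0$ of $x_{k-1}$ and summing triangle inequalities along $p$ gives
\[
d(x_1, x_k) \geq d(x_1, x_{k-1}) + d(x_{k-1}, x_k) - 2 B_0,
\]
which rearranges to $(x_1 \cdot x_{k-1})_{x_k} \geq \ell_{k-1} - B_0$. Now apply the standard four-point hyperbolicity inequality at basepoint $x_k$:
\[
\min\bigl\{(x_{k+1} \cdot x_1)_{x_k},\, (x_1 \cdot x_{k-1})_{x_k}\bigr\} \leq (x_{k+1} \cdot x_{k-1})_{x_k} + \delta \leq B + \delta.
\]
If $L$ satisfies $L - B_0 > B + \delta$, the second term in the min exceeds $B + \delta$, forcing $(x_1 \cdot x_{k+1})_{x_k} \leq B + \delta$. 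Substituting into $d(x_1, x_{k+1}) = d(x_1, x_k) + \ell_k - 2(x_1 \cdot x_{k+1})_{x_k}$ and using the inductive estimate yields
\[
d(x_1, x_{k+1}) \geq \tfrac{1}{2} \textstyle\sum_{i=1}^{k-1} \ell_i + \ell_k - 2(B+\delta),
\]
which is $\geq \tfrac{1}{2} \sum_{i=1}^k \ell_i$ as soon as $\ell_k \geq 4(B+\delta)$. Hence both constraints on $L$ are met by choosing $L = \max\bigl\{ 4(B+\delta),\ B + \delta + B_0 + 1 \bigr\}$.

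The main obstacle is the apparent circularity: $L$ is chosen using the Morse constant $B_0$, which depends on the quasigeodesic constant $K$, while $K$ is itself what the induction aims to produce. The resolution is to commit at the outset to the target constant $K = 2$ (adjusted with an additive term to fit the definition of quasigeodesic), which depends only on the implicit choice of slope $\tfrac{1}{2}$; then $B_0 = B(K, \delta)$ is pinned down, and $L$ is chosen last. The base cases $k = 1$ (trivial, since $g$ is a single geodesic) and $k = 2$ (direct: $d(x_1, x_3) \geq \ell_1 + \ell_2 - 2B \geq \tfrac{1}{2}(\ell_1 + \ell_2)$ whenever $L \geq 4B$) start the induction, and the induction propagates with the above choice of $L$, completing the proof.
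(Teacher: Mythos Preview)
The paper does not give its own proof of this lemma; it simply cites \cite[Lemma~4.2]{Min05} as a known result. So I will evaluate your argument on its own merits.

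Your overall strategy (induction on $k$ via the four-point inequality) is sound, but the circularity you identify is \emph{not} actually resolved by ``committing to $K=2$''. The Morse constant $B_0$ depends on the full quasigeodesic data $(\lambda,C)$, not just the multiplicative constant $\lambda=2$. You never pin down the additive constant $C$: the phrase ``$C$ absorbing non-vertex parametrization slack'' hides the issue. The vertex estimate $d(x_a,x_b)\ge\tfrac12\sum\ell_l$ alone does \emph{not} yield a $(2,C)$-quasigeodesic with $C$ independent of the segment lengths---for points $p\in[x_i,x_{i+1}]$, $q\in[x_j,x_{j+1}]$ the obvious triangle-inequality bounds all lose a term comparable to $\ell_i+\ell_j$. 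So you cannot invoke the Morse lemma on $g'$ with a predetermined $B_0$, and the induction does not close.

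The fix is to drop the Morse lemma entirely and strengthen the inductive hypothesis to exactly the inequality you \emph{derive} from Morse, namely
\[
d(x_1,x_k)\ \ge\ d(x_1,x_{k-1})+\ell_{k-1}-2(B+\delta),
\]
equivalently $(x_1\cdot x_{k-1})_{x_k}\ge \ell_{k-1}-(B+\delta)$. This is precisely what your four-point step needs, and it propagates: from $(x_1\cdot x_{k-1})_{x_k}\ge \ell_{k-1}-(B+\delta)>B+\delta$ (which holds once $L>2(B+\delta)$) and $(x_{k-1}\cdot x_{k+1})_{x_k}\le B$, the four-point inequality gives $(x_1\cdot x_{k+1})_{x_k}\le B+\delta$, hence $d(x_1,x_{k+1})\ge d(x_1,x_k)+\ell_k-2(B+\delta)$. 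Summing, $d(x_1,x_{k+1})\ge\sum\ell_i-2k(B+\delta)\ge(1-\tfrac{2(B+\delta)}{L})\sum\ell_i$, which is $\ge\tfrac12\sum\ell_i$ once $L\ge4(B+\delta)$. For non-vertex points $p\in[x_i,x_{i+1}]$ you already observed $(p\cdot x_{i+2})_{x_{i+1}}\le B$, so the same induction runs starting from $p$ (only the first segment may be short, which is harmless in this formulation); this gives $d(p,q)\ge\tfrac12(\text{path length})-2(B+\delta)$ uniformly. Now $K$ and $L$ are determined with no circularity.
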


\begin{lemma}[Fellow traveling geodesics]\label{lem:ftg}
  Let  $\alpha, \alpha',\beta,\beta'$ be four points in a $\delta$-hyperbolic space, and let $b$ be such that
$$
d(\alpha, \beta) \leq b, \quad d(\alpha', \beta') \leq b.
$$
Let $\ell>0$, and $\alpha_\ell, \beta_\ell$ respectively denote points of the geodesics $[\alpha, \alpha']$, $[\beta,\beta']$ that are distance $\ell$ from $\alpha, \beta$.
Then $d(\alpha_\ell, \beta_\ell) \leq 5b+4\delta$.
\end{lemma}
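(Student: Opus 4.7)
The plan is to exploit the fact that in a $\delta$-hyperbolic space a geodesic quadrilateral is $2\delta$-thin. Concretely I would form the geodesic quadrilateral $Q$ with vertices $\alpha, \alpha', \beta', \beta$ in cyclic order and sides $[\alpha,\alpha'], [\alpha',\beta'], [\beta',\beta], [\beta,\alpha]$. Adding a diagonal and invoking $\delta$-thinness on each of the two resulting triangles shows that every point on a side of $Q$ lies within $2\delta$ of the union of the other three sides. Applied to $\alpha_\ell \in [\alpha,\alpha']$, this produces a point $p$ on one of the three remaining sides with $d(\alpha_\ell, p) \leq 2\delta$. The proof then splits into three cases according to which side contains $p$.

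The ``central'' case is $p \in [\beta,\beta']$: comparing the distance from $\beta$ to $\alpha_\ell$ via $\alpha$ (at cost at most $\ell + b$) with the distance via $p$ (at cost $d(\beta,p) + 2\delta$) yields $|d(\beta,p) - \ell| \leq b + 2\delta$. Since $\beta_\ell$ sits at arc distance $\ell$ from $\beta$ on the same geodesic, this forces $d(p, \beta_\ell) \leq b + 2\delta$, and hence $d(\alpha_\ell, \beta_\ell) \leq b + 4\delta$. In the ``near'' case $p \in [\beta,\alpha]$, one has $d(p,\alpha) \leq b$, so $\ell \leq b + 2\delta$; the triangle inequality $d(\alpha_\ell, \beta_\ell) \leq d(\alpha_\ell, \alpha) + d(\alpha,\beta) + d(\beta, \beta_\ell)$ then yields at most $3b + 4\delta$.

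The heaviest case, and the one dictating the final constant, is the ``far'' case $p \in [\alpha',\beta']$. Here $d(p,\alpha') \leq b$ gives $d(\alpha_\ell, \alpha') \leq b + 2\delta$, so $\ell \geq d(\alpha,\alpha') - b - 2\delta$; estimating $d(\beta,\beta') \leq d(\alpha,\alpha') + 2b$ via the short sides yields $d(\beta_\ell, \beta') \leq 3b + 2\delta$, and chaining $\alpha_\ell \to \alpha' \to \beta' \to \beta_\ell$ produces $d(\alpha_\ell, \beta_\ell) \leq (b+2\delta) + b + (3b+2\delta) = 5b + 4\delta$. The only mild bookkeeping point is the implicit assumption $\ell \leq \min(d(\alpha,\alpha'), d(\beta,\beta'))$, which is built into the hypothesis that $\alpha_\ell$ and $\beta_\ell$ lie on their respective geodesics. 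The main obstacle is simply this far-side case: it is the detour forced through \emph{both} $\alpha'$ and $\beta'$ that produces the factor $5$, and any clean write-up should highlight it as the binding constraint.
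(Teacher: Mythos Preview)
Your proof is correct and follows essentially the same approach as the paper: both arguments invoke $2\delta$-thinness of the geodesic quadrilateral $\alpha\alpha'\beta'\beta$ and split according to whether $\alpha_\ell$ is close to an endpoint or to the opposite side $[\beta,\beta']$. The only cosmetic difference is the case organization---the paper first disposes of the cases $\ell \leq 2\delta+b$ and $d(\alpha_\ell,\alpha') \leq 2\delta+b$ directly and then argues that the $2\delta$-close point must land on $[\beta,\beta']$, whereas you apply thinness first and branch on which side contains $p$; both routes arrive at the same binding constant $5b+4\delta$ via the far-side case.
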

\begin{proof}
  \begin{figure}
  \centering
  \def\svgwidth{0.5\textwidth}
\begingroup%
  \makeatletter%
  \providecommand\color[2][]{%
    \errmessage{(Inkscape) Color is used for the text in Inkscape, but the package 'color.sty' is not loaded}%
    \renewcommand\color[2][]{}%
  }%
  \providecommand\transparent[1]{%
    \errmessage{(Inkscape) Transparency is used (non-zero) for the text in Inkscape, but the package 'transparent.sty' is not loaded}%
    \renewcommand\transparent[1]{}%
  }%
  \providecommand\rotatebox[2]{#2}%
  \newcommand*\fsize{\dimexpr\f@size pt\relax}%
  \newcommand*\lineheight[1]{\fontsize{\fsize}{#1\fsize}\selectfont}%
  \ifx\svgwidth\undefined%
    \setlength{\unitlength}{406.20650644bp}%
    \ifx\svgscale\undefined%
      \relax%
    \else%
      \setlength{\unitlength}{\unitlength * \real{\svgscale}}%
    \fi%
  \else%
    \setlength{\unitlength}{\svgwidth}%
  \fi%
  \global\let\svgwidth\undefined%
  \global\let\svgscale\undefined%
  \makeatother%
  \begin{picture}(1,0.336463)%
    \lineheight{1}%
    \setlength\tabcolsep{0pt}%
    \put(0,0){\includegraphics[width=\unitlength,page=1]{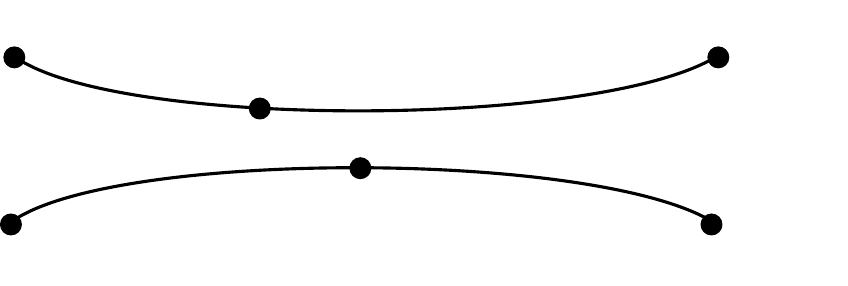}}%
    \put(0.00586679,0.29598495){\makebox(0,0)[lt]{\lineheight{1.25}\smash{\begin{tabular}[t]{l}$\alpha$\end{tabular}}}}%
    \put(0.00184629,0.0065091){\makebox(0,0)[lt]{\lineheight{1.25}\smash{\begin{tabular}[t]{l}$\beta$\end{tabular}}}}%
    \put(0.82805835,0.30804647){\makebox(0,0)[lt]{\lineheight{1.25}\smash{\begin{tabular}[t]{l}$\alpha'$\end{tabular}}}}%
    \put(0.83207875,0.01857068){\makebox(0,0)[lt]{\lineheight{1.25}\smash{\begin{tabular}[t]{l}$\beta'$\end{tabular}}}}%
    \put(0.30740408,0.23768774){\makebox(0,0)[lt]{\lineheight{1.25}\smash{\begin{tabular}[t]{l}$\alpha_\ell$\end{tabular}}}}%
    \put(0.41193703,0.0808884){\makebox(0,0)[lt]{\lineheight{1.25}\smash{\begin{tabular}[t]{l}$\beta_\ell$\end{tabular}}}}%
    \put(0.29333235,0.0788782){\makebox(0,0)[lt]{\lineheight{1.25}\smash{\begin{tabular}[t]{l}$\beta_{\overline{\ell}}$\end{tabular}}}}%
    \put(0,0){\includegraphics[width=\unitlength,page=2]{lem_3_2.pdf}}%
  \end{picture}%
\endgroup%

  \caption{Lemma~\ref{lem:ftg}}
\end{figure}
Note that the geodesics $[\alpha, \alpha']$, $[\beta,\beta']$ have $b$-close end-points, so their length differ by no more than $2b$.
If $\ell \leq 2\delta+b$, then the lemma follows at once from the triangular inequality. Similarly, we also get the result if $d(\alpha_\ell, \alpha') \leq 2\delta + b$, since in this case the above remark on the length of the geodesics gives $d(\beta_\ell, \beta') \leq 2\delta + 3b$.

From now on we assume that the distance from $\alpha_\ell$ to both endpoints $\alpha, \alpha'$ is more than $2\delta+b$.
Consider the geodesic quadrilateral with vertices $\alpha, \beta, \beta', \alpha'$.
Quadrilaterals in a $\delta$-hyperbolic space are $2\delta$ thin. Thus the point $\alpha_\ell \in [\alpha, \alpha']$ is $2\delta$ close to some point in the union of the three other sides. Since $\alpha_\ell$ is not too close to $\alpha$ neither to $\alpha'$, this point has to be on the geodesic $[\beta, \beta']$. Let us denote it $\beta_{\bar \ell}$, with $\bar \ell = d(\beta, \beta_{\bar \ell})$; and we remember that $d(\alpha_\ell,\beta_{\bar \ell}) \leq 2\delta$. From the bound on end-points of the geodesics $[\alpha, \alpha_\ell]$ and $[\beta, \beta_{\bar \ell}]$, we get that $\ell$ and $\bar \ell$ differ by no more than $b+2\delta$. Thus $d(\beta_{\bar \ell}, \beta_\ell) \leq b + 2 \delta$, and now the triangular inequality gives $d(\alpha_\ell, \beta_\ell) \leq b + 4\delta$.
\end{proof}

\begin{lemma}[Fellow traveling quasigeodesics]\label{lem:mid-quasigeodesics}
Let $b>0$.
Let $\alpha, \alpha',\beta,\beta'$ be four points in a $\delta$-hyperbolic space, and assume that
\[
d(\alpha, \beta) \leq b,\quad d(\alpha', \beta') \leq b.
\]
Consider a first $K$-quasigeodesic from $\alpha$ to $\alpha'$, and
a second one from $\beta$ to $\beta'$, and denote $M=M(K)$ the Morse constant.
Finally, let $\hat \alpha, \hat \beta$ be two points respectively on the first and second quasigeodesic, and denote
 $k = d(\alpha, \hat \alpha )$,  $\ell = d(\beta, \hat \beta)$.
Then $d(\hat \alpha, \hat \beta) \leq M' + |\ell-k|$, where $M'= 5b + 4\delta + 4M$.
\end{lemma}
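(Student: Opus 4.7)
The plan is to reduce to the geodesic case established in Lemma~\ref{lem:ftg} via the Morse lemma. Fix geodesics $\gamma_1 = [\alpha, \alpha']$ and $\gamma_2 = [\beta, \beta']$. By the Morse lemma applied to each quasigeodesic, there exist points $\tilde\alpha \in \gamma_1$ with $d(\hat\alpha, \tilde\alpha) \leq M$ and $\tilde\beta \in \gamma_2$ with $d(\hat\beta, \tilde\beta) \leq M$. Set $k' = d(\alpha, \tilde\alpha)$ and $\ell' = d(\beta, \tilde\beta)$ (well-defined because these points lie on geodesics emanating from $\alpha$ and $\beta$ respectively). The triangle inequality, together with $d(\alpha, \hat\alpha) = k$ and $d(\beta, \hat\beta) = \ell$, gives $|k' - k| \leq M$ and $|\ell' - \ell| \leq M$.

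The next step is to bring $\tilde\alpha$ and $\tilde\beta$ into the common framework of Lemma~\ref{lem:ftg}. Let $\beta_{k'}$ denote the point on $\gamma_2$ at distance $k'$ from $\beta$ (if $k'$ exceeds the length of $\gamma_2$, set $\beta_{k'} = \beta'$; the triangle inequality handles this edge case with no worse bound, since $|d(\alpha,\alpha') - d(\beta,\beta')| \leq 2b$). Applying Lemma~\ref{lem:ftg} with parameter $\ell := k'$ yields
\[ d(\tilde\alpha, \beta_{k'}) \leq 5b + 4\delta. \]
Since $\beta_{k'}$ and $\tilde\beta$ both lie on the geodesic $\gamma_2$, at arclength parameters $k'$ and $\ell'$ from $\beta$, we have
\[ d(\beta_{k'}, \tilde\beta) = |k' - \ell'|. \]

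Finally, combining the four estimates via the triangle inequality:
\[
d(\hat\alpha, \hat\beta) \leq d(\hat\alpha, \tilde\alpha) + d(\tilde\alpha, \beta_{k'}) + d(\beta_{k'}, \tilde\beta) + d(\tilde\beta, \hat\beta)
  \leq M + (5b + 4\delta) + |k' - \ell'| + M.
\]
Using $|k' - \ell'| \leq |k - \ell| + 2M$, the right-hand side is bounded by $4M + 5b + 4\delta + |\ell - k| = M' + |\ell - k|$, as required.

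The argument is essentially routine once one has the right bookkeeping; the only subtlety is keeping track of the discrepancy between the arclength parameters $k, \ell$ on the quasigeodesics and the corresponding parameters $k', \ell'$ on their Morse-shadow geodesics, which is where the extra factor of $2M$ (on top of the $2M$ coming from the two direct projections) enters the constant $M' = 5b + 4\delta + 4M$.
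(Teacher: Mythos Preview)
Your proof is correct and follows essentially the same approach as the paper's: project $\hat\alpha, \hat\beta$ to nearby points on the geodesics $[\alpha,\alpha'], [\beta,\beta']$ via the Morse lemma, apply Lemma~\ref{lem:ftg} at the common parameter, and collect the error terms by the triangle inequality. The paper's notation differs slightly ($\alpha_{\bar k}, \beta_{\bar\ell}$ for your $\tilde\alpha,\tilde\beta$ and $\bar k,\bar\ell$ for your $k',\ell'$), but the argument and the bookkeeping of the constant $M' = 5b + 4\delta + 4M$ are identical.
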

\begin{proof}
  \begin{figure}
  \centering
  \def\svgwidth{0.5\textwidth}
  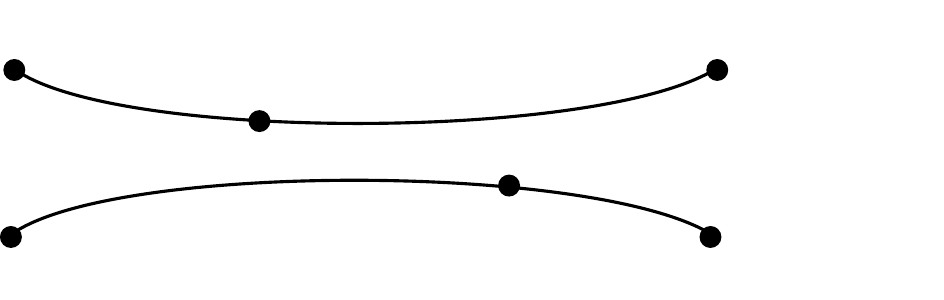
  \caption{Lemma~\ref{lem:mid-quasigeodesics}}
\end{figure}
Consider a geodesic $[\alpha, \alpha']$.
By the Morse lemma, the point $\hat \alpha$ is distance at most $M$ from some point $\alpha_{\bar k}$ on this geodesic, where $\bar k = d(\alpha, \alpha_{\bar k})$. Let $\beta_{\bar k}$ denote the point on a geodesic $[\beta, \beta']$ which is distance $\bar k$ from $\beta$. The previous lemma tells us that $d(\alpha_{\bar k}, \beta_{\bar k}) \leq 5b + 4\delta$.
On the other hand, we may also apply the Morse lemma to the point $\hat \beta$ and get some point $\beta_{\bar \ell}$ on the geodesic $[\beta, \beta']$, which is distance at most $M$ from $\hat \beta$, where $\bar \ell= d(\beta, \beta_{\bar \ell})$.
Note that both points $\beta_{\bar k}, \beta_{\bar \ell}$ lies on the geodesic $[\beta, \beta']$, and thus $d(\beta_{\bar k}, \beta_{\bar \ell}) = |\bar \ell - \bar k|$.
Furthermore $\bar k$ differ from $k$ by no more than $M$, and similarly for $\bar \ell$ and $\ell$, and thus $ |\bar \ell - \bar k| \leq |\ell -k| + 2M$.
Now the result follows from the triangular inequality by relating $\hat \alpha$ to $\hat \beta$ successively through the points $\alpha_{\bar k}, \beta_{\bar k}, \beta_{\bar \ell}$.
\end{proof}

\begin{corollary}\label{cor:midpoints}
For every $\delta, K, b$ there is $M'$ such that the following holds.
Let $f,g$ be two isometries having invariant quasi-axes which are $K$-quasigeodesic paths respectively through points $\alpha,\beta$.
Let $n, m$ be positive integers such that 
$$
d(\alpha,\beta) \leq b, \quad d(f^{2n} \alpha, g^{2m} \beta) \leq b.
$$
Then $d(f^{n} \alpha, g^{m} \beta) \leq M'$.
\end{corollary}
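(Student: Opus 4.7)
The plan is to deduce the corollary directly from Lemma~\ref{lem:mid-quasigeodesics}. Set $\alpha' = f^{2n}\alpha$ and $\beta' = g^{2m}\beta$, and take as the two quasigeodesics the pieces of the $f$- and $g$-invariant quasi-axes from $\alpha$ to $\alpha'$ and from $\beta$ to $\beta'$ (both of which are $K$-quasigeodesics by hypothesis). With $\hat\alpha = f^n\alpha$, $\hat\beta = g^m\beta$, $k = d(\alpha, f^n\alpha)$, $\ell = d(\beta, g^m\beta)$, Lemma~\ref{lem:mid-quasigeodesics} gives
\[
d(f^n\alpha, g^m\beta) \leq M'_0 + |k-\ell|,
\]
where $M'_0 = 5b + 4\delta + 4M$ and $M = M(K,\delta)$ is the Morse constant for $K$-quasigeodesics. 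So the whole task reduces to bounding $|k-\ell|$ by a constant depending only on $\delta, K, b$.

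The key auxiliary step is the following "approximate midpoint" property on an invariant quasi-axis, which I expect to be the main (and only) real content of the proof: on a $K$-quasigeodesic axis preserved by an isometry $f$, the iterate $f^n\alpha$ is the metric midpoint of $\alpha$ and $f^{2n}\alpha$ up to an error bounded by $M$, that is,
\[
\frac{1}{2}\, d(\alpha, f^{2n}\alpha) \leq d(\alpha, f^n\alpha) \leq \frac{1}{2}\, d(\alpha, f^{2n}\alpha) + M.
\]
The lower bound is immediate from the triangle inequality and the identity $d(\alpha, f^n\alpha) = d(f^n\alpha, f^{2n}\alpha)$ (true because $f$ is an isometry). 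For the upper bound I would use the Morse lemma to pick a geodesic $[\alpha, f^{2n}\alpha]$ lying within Hausdorff distance $M$ of the axis; choosing a point $p$ on this geodesic within $M$ of $f^n\alpha$ and writing the triangle inequality for each half of the geodesic, together once more with $d(\alpha, f^n\alpha) = d(f^n\alpha, f^{2n}\alpha)$, forces $d(\alpha, f^n\alpha) \leq \frac{1}{2} d(\alpha, f^{2n}\alpha) + M$. The identical estimate holds for $g, \beta$.

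To finish, the two hypotheses $d(\alpha, \beta) \leq b$ and $d(\alpha', \beta') \leq b$ combined with the triangle inequality give $|d(\alpha, f^{2n}\alpha) - d(\beta, g^{2m}\beta)| \leq 2b$, and applying the midpoint estimate on both sides yields $|k - \ell| \leq b + M$. Plugging back into the bound from Lemma~\ref{lem:mid-quasigeodesics} gives
\[
d(f^n\alpha, g^m\beta) \leq M'_0 + b + M = 6b + 4\delta + 5M,
\]
which is the desired $M'$ depending only on $\delta, K, b$. There is no conceptual difficulty beyond the midpoint estimate itself, which is a standard consequence of the Morse lemma combined with the isometry property of $f$ along its quasi-axis.
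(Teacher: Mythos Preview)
Your proof is correct and follows essentially the same route as the paper: apply Lemma~\ref{lem:mid-quasigeodesics} to the two pieces of axis and reduce to bounding $|k-\ell|$ via the Morse lemma and the isometry identity $d(\alpha,f^n\alpha)=d(f^n\alpha,f^{2n}\alpha)$. Your explicit midpoint estimate even yields a slightly sharper constant ($|k-\ell|\leq b+M$ versus the paper's $2M+b$), but the argument is the same.
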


\begin{figure}
  \centering
  \def\svgwidth{0.5\textwidth}
\begingroup%
  \makeatletter%
  \providecommand\color[2][]{%
    \errmessage{(Inkscape) Color is used for the text in Inkscape, but the package 'color.sty' is not loaded}%
    \renewcommand\color[2][]{}%
  }%
  \providecommand\transparent[1]{%
    \errmessage{(Inkscape) Transparency is used (non-zero) for the text in Inkscape, but the package 'transparent.sty' is not loaded}%
    \renewcommand\transparent[1]{}%
  }%
  \providecommand\rotatebox[2]{#2}%
  \newcommand*\fsize{\dimexpr\f@size pt\relax}%
  \newcommand*\lineheight[1]{\fontsize{\fsize}{#1\fsize}\selectfont}%
  \ifx\svgwidth\undefined%
    \setlength{\unitlength}{456.8241954bp}%
    \ifx\svgscale\undefined%
      \relax%
    \else%
      \setlength{\unitlength}{\unitlength * \real{\svgscale}}%
    \fi%
  \else%
    \setlength{\unitlength}{\svgwidth}%
  \fi%
  \global\let\svgwidth\undefined%
  \global\let\svgscale\undefined%
  \makeatother%
  \begin{picture}(1,0.32224698)%
    \lineheight{1}%
    \setlength\tabcolsep{0pt}%
    \put(0,0){\includegraphics[width=\unitlength,page=1]{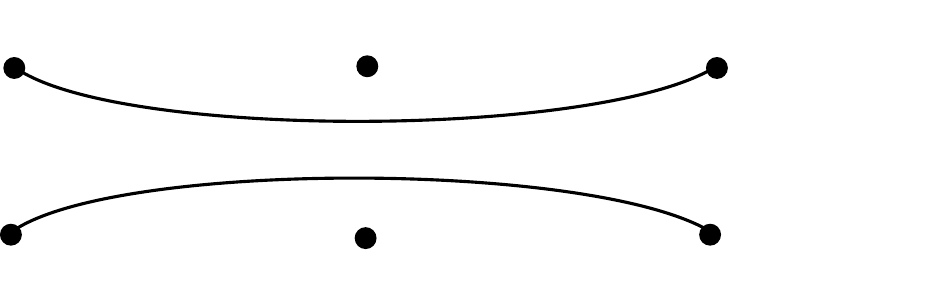}}%
    \put(0.00521674,0.27495597){\makebox(0,0)[lt]{\lineheight{1.25}\smash{\begin{tabular}[t]{l}$\alpha$\end{tabular}}}}%
    \put(0.00164173,0.01755504){\makebox(0,0)[lt]{\lineheight{1.25}\smash{\begin{tabular}[t]{l}$\beta$\end{tabular}}}}%
    \put(0.73630666,0.28568104){\makebox(0,0)[lt]{\lineheight{1.25}\smash{\begin{tabular}[t]{l}$f^{2n}\alpha$\end{tabular}}}}%
    \put(0.72021906,0.01934265){\makebox(0,0)[lt]{\lineheight{1.25}\smash{\begin{tabular}[t]{l}$g^{2n}\beta$\end{tabular}}}}%
    \put(0.34752414,0.29729976){\makebox(0,0)[lt]{\lineheight{1.25}\smash{\begin{tabular}[t]{l}$f^{n}\alpha$\end{tabular}}}}%
    \put(0.35109911,0.00683002){\makebox(0,0)[lt]{\lineheight{1.25}\smash{\begin{tabular}[t]{l}$g^{n}\beta$\end{tabular}}}}%
    \put(0,0){\includegraphics[width=\unitlength,page=2]{cor_3_4_2.pdf}}%
  \end{picture}%
\endgroup%

  \caption{Corollary~\ref{cor:midpoints}}
\end{figure}
\begin{proof}
Denote $M=M(K)$ the Morse constant.
Let $k = d(\alpha, f^{n} \alpha) = d(f^{n} \alpha, f^{2n} \alpha)$, $\ell = d(\beta, g^{m}\beta) = d(g^m\beta, g^{2m} \beta)$. By the distance hypotheses, the lengths of the two geodesics $[\alpha, f^{2n}\alpha]$, $[\beta, g^{2m}\beta]$ differ by at most $2b$.
Using this estimate and the closest point projection of $f^n\alpha$ on $[\alpha, f^{2n}\alpha]$, and likewise for $\beta$, we get $|\ell - k| \leq 2M+b$.
Now we may apply the previous lemma to get the wanted estimate, with $M' = 5b + 4\delta + 4M + (2M+b)$.
\end{proof}

We next exploit the fact that in hyperbolic space (quasi-)geodesics
fellow-travel uniformly close before diverging. The toy example is the
following situation.  Suppose that $g_1, g_2$ are two geodesics
starting in a common point $p$. Now suppose that the geodesic $\alpha$
between $g_1(s), g_2(t)$ stays distance at least $L$ from
$p$. Consider the first point $g_1(r)$ on $g_1$ which is \emph{not}
$\delta$--close to $g_2$. Such a point is (by $\delta$--hyperbolicity)
$\delta$--close to the geodesic $\alpha$ and thus $r \geq L -
\delta$. In other words, initial segments of $g_1, g_2$ of length
$L-\delta$ are $\delta$--fellow-traveling. In particular we have that
$d(g_1(s), g_2(s)) \leq 2 \delta$ for all $s \leq L-\delta$.

The next lemma is a somewhat technical fellow-traveling statement for
quasigeodesics generalizing this situation.
\begin{figure}
  \centering
  \def\svgwidth{0.5\textwidth}
\begingroup%
  \makeatletter%
  \providecommand\color[2][]{%
    \errmessage{(Inkscape) Color is used for the text in Inkscape, but the package 'color.sty' is not loaded}%
    \renewcommand\color[2][]{}%
  }%
  \providecommand\transparent[1]{%
    \errmessage{(Inkscape) Transparency is used (non-zero) for the text in Inkscape, but the package 'transparent.sty' is not loaded}%
    \renewcommand\transparent[1]{}%
  }%
  \providecommand\rotatebox[2]{#2}%
  \newcommand*\fsize{\dimexpr\f@size pt\relax}%
  \newcommand*\lineheight[1]{\fontsize{\fsize}{#1\fsize}\selectfont}%
  \ifx\svgwidth\undefined%
    \setlength{\unitlength}{435.82620936bp}%
    \ifx\svgscale\undefined%
      \relax%
    \else%
      \setlength{\unitlength}{\unitlength * \real{\svgscale}}%
    \fi%
  \else%
    \setlength{\unitlength}{\svgwidth}%
  \fi%
  \global\let\svgwidth\undefined%
  \global\let\svgscale\undefined%
  \makeatother%
  \begin{picture}(1,0.60692391)%
    \lineheight{1}%
    \setlength\tabcolsep{0pt}%
    \put(0,0){\includegraphics[width=\unitlength,page=1]{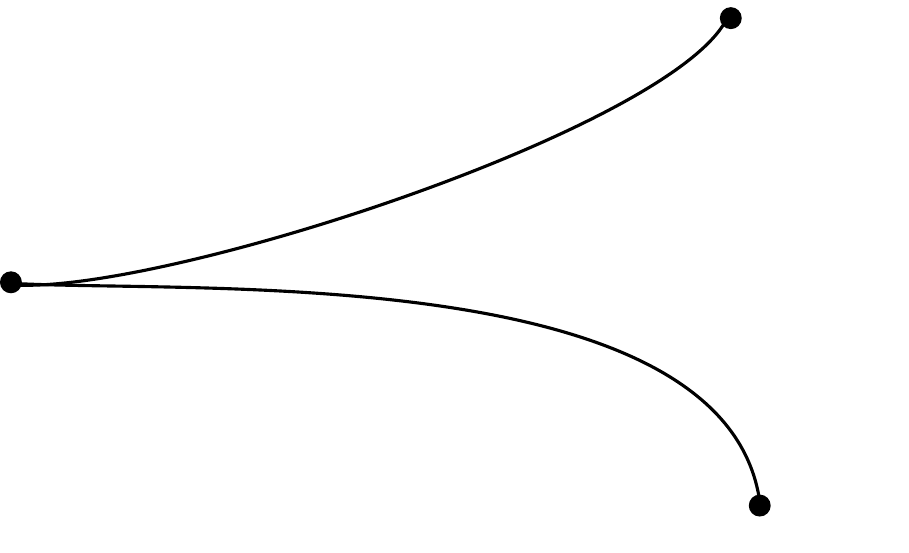}}%
    \put(0.39823178,0.40522691){\makebox(0,0)[lt]{\lineheight{1.25}\smash{\begin{tabular}[t]{l}$g_1$\end{tabular}}}}%
    \put(0.38768807,0.23194401){\makebox(0,0)[lt]{\lineheight{1.25}\smash{\begin{tabular}[t]{l}$g_2$\end{tabular}}}}%
    \put(0.82986413,0.58043865){\makebox(0,0)[lt]{\lineheight{1.25}\smash{\begin{tabular}[t]{l}$g_1(s)$\end{tabular}}}}%
    \put(0.84860036,0.00710871){\makebox(0,0)[lt]{\lineheight{1.25}\smash{\begin{tabular}[t]{l}$g_1(t)$\end{tabular}}}}%
    \put(0,0){\includegraphics[width=\unitlength,page=2]{lem_3_5.pdf}}%
    \put(0.51916352,0.3431309){\makebox(0,0)[lt]{\lineheight{1.25}\smash{\begin{tabular}[t]{l}$\delta$\end{tabular}}}}%
    \put(0,0){\includegraphics[width=\unitlength,page=3]{lem_3_5.pdf}}%
  \end{picture}%
\endgroup%

  \caption{The motivating example for Lemma~\ref{lem:fellow-travel-tech}}
\end{figure}
\begin{lemma}\label{lem:fellow-travel-tech}
Fix some $\delta, K, b_0$.
There exists $b=b(K, \delta, b_0)$ such that the following holds. For every $N$, there exists $L$ such that for every $K$-quasigeodesics $(\alpha_i)_{i=0, \dots, i_0}$, $(\beta_i)_{i=0, \dots, i_0}$, if 
\begin{enumerate}
\item $d(\alpha_0, \beta_0) \leq b_0$,
\item $d(\alpha_0, [\alpha_{i_0} \beta_{i_0}]) \geq L$
\end{enumerate}
then $N \leq i_0$, and there exists $N' \leq i_0$ such that $d(\alpha_N, \beta_{N'}) \leq b$.
\end{lemma}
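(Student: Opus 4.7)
The plan is to use the Morse lemma to reduce to two geodesics $\gamma_\alpha = [\alpha_0 \alpha_{i_0}]$ and $\gamma_\beta = [\beta_0 \beta_{i_0}]$ (with Morse constant $M = M(K, \delta)$), to fellow-travel these by exploiting the $2\delta$-thinness of the geodesic quadrilateral $\alpha_0, \beta_0, \beta_{i_0}, \alpha_{i_0}$ exactly as in the motivating example preceding the lemma, and then to transfer the conclusion back to the quasigeodesics. The constant $b$ will depend only on $\delta, K, b_0$, while $L$ will be chosen to grow with $N$.

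First I would establish fellow-traveling of the two geodesics. Parameterize $\gamma_\alpha, \gamma_\beta$ by arc length. Any point $p = \gamma_\alpha(r)$ is $2\delta$-close to one of the other three sides of the quadrilateral. If $p$ is $2\delta$-close to $[\alpha_0\beta_0]$ (a side of length at most $b_0$), then $r \leq b_0 + 2\delta$. If $p$ is $2\delta$-close to $[\alpha_{i_0}\beta_{i_0}]$, hypothesis (ii) forces $r \geq L - 2\delta$. So for every $r \in (b_0 + 2\delta, L - 2\delta)$ the point $\gamma_\alpha(r)$ is $2\delta$-close to some $\gamma_\beta(s)$, and two applications of the triangle inequality (using $d(\alpha_0, \beta_0) \leq b_0$) give $|r-s| \leq b_0 + 2\delta$.

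Next I would transfer this to the quasigeodesics. By the Morse lemma, $\alpha_N$ lies within $M$ of some $\gamma_\alpha(r)$, with $r \leq d(\alpha_0, \alpha_N) + M \leq KN + K + M$. I choose $L$ large enough that this upper bound is less than $L - 2\delta$. Two cases arise: either $r \leq b_0 + 2\delta$, in which case $\alpha_N$ is within $M + 2b_0 + 2\delta$ of $\beta_0$ and I take $N' = 0$; or $b_0 + 2\delta < r < L - 2\delta$, in which case $\gamma_\alpha(r)$ is $2\delta$-close to some $\gamma_\beta(s)$ with $|r-s| \leq b_0 + 2\delta$, and a second application of Morse produces $\beta_{N'}$ within $M$ of $\gamma_\beta(s)$, yielding $d(\alpha_N, \beta_{N'}) \leq 2M + 2\delta$. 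Setting $b := \max(M + 2b_0 + 2\delta,\, 2M + 2\delta)$ gives the required distance bound, depending only on $\delta, K, b_0$.

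The main bookkeeping obstacle is verifying that the indices $N$ and $N'$ actually fall in $\{0, \dots, i_0\}$, and this is exactly what forces $L$ to depend on $N$. Hypothesis (ii) gives in particular $d(\alpha_0, \alpha_{i_0}) \geq L$, and the $K$-quasigeodesic inequality then yields $i_0 \geq (L-K)/K$. For $N'$, the estimate $d(\beta_0, \beta_{N'}) \leq s + M \leq KN + K + 2M + b_0 + 2\delta$ translates via the same inequality into an upper bound of the form $N' \leq K^2 N + C(K, \delta, b_0)$. Choosing $L$ larger than, say, $K(K^2 N + C) + K$ ensures both $N \leq i_0$ and $N' \leq i_0$. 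Since $b$ is independent of $N$, this completes the proof.
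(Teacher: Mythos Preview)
Your proof is correct and follows essentially the same approach as the paper's: reduce to geodesics via the Morse lemma, use thinness to get fellow-traveling of initial segments, then transfer back. The only cosmetic difference is that the paper first extends the $\beta$-quasigeodesic by setting $\beta_{-1}=\alpha_0$ so that both geodesics emanate from the common point $\alpha_0$, reducing to a thin \emph{triangle} (literally the motivating example before the lemma), whereas you work directly with the thin \emph{quadrilateral} $\alpha_0,\beta_0,\beta_{i_0},\alpha_{i_0}$; the resulting constants differ only by an additive $b_0$. Your bookkeeping for $N'\leq i_0$ is in fact slightly more careful than needed, since the Morse lemma already returns a point of the finite quasigeodesic $(\beta_i)_{0\le i\le i_0}$.
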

\begin{proof} Let us fix $\delta, K, b_0, N$, and consider $K$-quasigeodesics $(\alpha_i)_{i=0, \dots, i_0}$, $(\beta_i)_{i=0, \dots, i_0}$ with $d(\alpha_0, \beta_0) \leq b_0$.
	Extend $\beta_i$ by setting $\beta_{-1} = \alpha_0$. The resulting path is a $K'$--quasigeodesic for a $K'$ depending only on $K$ and $b_0$. Let $M = M(\delta, K, b_0)$ be the Morse constant for $K'$-quasigeodesic. Hence, $(\alpha_i), (\beta_i), i \leq i_0$ are $M$-close to geodesics $g_1, g_2$ joining $\alpha_0$ to $\alpha_{i_0}, \beta_{i_0}$. There is a constant $\hat{N}$ so that any $\alpha_i, \beta_i$ with $i \leq N$ is $M$-close to a point on $g_j$ which has distance at most $\hat{N}$ from $\alpha_0$.
	
	Choosing $L$ large enough, we can guarantee (as in the discussion before the lemma) that initial segments of length $\hat{N}$ of $g_1, g_2$ stay $2\delta$--close. This also implies that any $\alpha_N$ is $2\delta+2M$ from some $\beta_{N'}$. Thus $b=2\delta+2M$ has the desired property.
\end{proof}

\subsection{Isometry types and translation length}\label{sec:tl-estimates}
We next turn to isometries of a $\delta$-hyperbolic space. The key quantity here is the \emph{(asymptotic) translation length} of an isometry $\phi$:
\[ \mathrm{tl}(\phi) = \lim_{n\to \infty}\frac{d(x_0, \phi^nx_0)}{n}\]
for some basepoint $x_0$. It is immediate that $\mathrm{tl}(\phi)$ does not depend on the choice of $x_0$. We then say that
\begin{enumerate}
	\item $\phi$ is \emph{loxodromic} (or \emph{hyperbolic}) if $\mathrm{tl}(\phi)>0$.
	\item $\phi$ is \emph{parabolic} if $\mathrm{tl}(\phi)=0$, but orbits $\{ \phi^n(x_0), n \in \mathbb{N} \}$ have infinite diameter, and
	\item $\phi$ is \emph{elliptic} if orbits $\{ \phi^n(x_0), n \in \mathbb{N} \}$ have finite diameter.
\end{enumerate}
If $\phi$ is loxodromic, then there is a bi-infinite quasigeodesic invariant under $\phi$ -- in fact, one can simply take the images $\phi^n(x_0), n \in \mathbb{Z}$. We call any invariant quasigeodesic a \emph{(quasi)axis} for $\phi$. 
Every isometry extends continuously to the Gromov boundary $\partial_\infty X$, and the extension of a loxodromic isometry $\phi$ has exactly two fixed points, denoted $\xi^\pm \phi$, such that every orbit $(\phi^n x)$ converges to $\xi^-\phi$ when $n$ tends to $-\infty$, and to $\xi^+\phi$ when $n$ tends to $+\infty$.

Observe that there is no reason to expect that $\phi$ has an invariant geodesic axis. In fact, if the space $X$ is a graph, this would imply that $\mathrm{tl}(\phi)$ would be an integer, since $\phi$ would translate along this axis by an integral amount.

The following lemma shows that we can, however, find a quasi-axis of definite quality if the translation length is not too small. It is likely known to experts, but we were not able to find a discussion in the literature, so we include a proof.

In our application of the following lemma, the space $X$ will be a graph, where the minimizing point $x$ needed in the first point always exists. The hypothesis on $x$ may be weakened by only requiring that $x$ is a $\delta$-almost minimizing point, and this gives a meaningful statement in the general case. Details are left to the reader.
\begin{lemma}\label{lem:K-for-axis}
Let $X$ be a $\delta$-hyperbolic metric space $X$. 

\begin{enumerate}
\item 
There are $K = K(\delta), L=L(\delta)$ such that the following holds. Let $f$ be a loxodromic element with $\mathrm{tl}(f) \geq L$. Choose $x \in X$ which  minimizes $d(x, f(x))$, and let $[x, f(x)]$ denote any geodesic path from $x$ to $f(x)$. Then
\[
\bigcup_{n \in \mathbb{Z}} f^n ([x, f(x)])
\]
is a $K$-quasigeodesic.
\item
Let $K>0$, and assume $x$ is on a $K$-quasigeodesic which is invariant under $f$.
Then for every positive $p,n$ we have
\[
n \left(d(x, f^p(x)) - 2M\right) \leq d(x, f^{np}(x) )
\]
where $M= M(\delta, K)$ is the Morse constant. In particular,
\[
\frac{1}{p} \left(d(x, f^p(x)\right) - 2M) \leq \mathrm{tl}(f).
\]
\end{enumerate}
\end{lemma}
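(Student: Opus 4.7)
My plan is to apply the local-to-global criterion (Lemma~\ref{lem:local-to-global}) to the concatenation $\bigcup_{n \in \mathbb{Z}} f^n([x, fx])$. Set $c := d(x, fx)$. Since $\mathrm{tl}(f) \leq d(x, fx)$ by subadditivity, the length hypothesis $c \geq L$ is automatic as soon as $L \leq \mathrm{tl}(f)$. By $f$-equivariance, every triple of consecutive vertices has the same Gromov product $G := (f^{-1}x \cdot fx)_x$, so the second hypothesis reduces to a single bound $G \leq B(\delta)$.

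For this bound, I would exploit the standard thin-triangle fact that initial segments of $[x, f^{-1}x]$ and $[x, fx]$ fellow-travel to within $O(\delta)$ up to length $G$. Pick $p \in [x, f^{-1}x]$ and $q \in [x, fx]$, both at distance $G - O(\delta)$ from $x$, with $d(p, q) \leq O(\delta)$. Applying $f$, the image $fp$ lies on $f([x, f^{-1}x]) = [fx, x]$ at distance $G - O(\delta)$ from $fx$, i.e.\ at distance $c - G + O(\delta)$ from $x$ along $[x, fx]$. Since $q$ and $fp$ both lie on the geodesic $[x, fx]$, we get $d(q, fp) = |2G - c| + O(\delta)$, and the minimization property $d(p, fp) \geq c$ yields
\[ c \leq d(p, q) + d(q, fp) \leq |2G - c| + O(\delta). \]
If $2G \leq c$, this collapses to $G \leq O(\delta)$, which is what we want. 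If $2G \geq c$, it gives $d(f^{-1}x, fx) = 2c - 2G \leq O(\delta)$; but also $d(f^{-1}x, fx) = d(x, f^2 x) \geq 2\,\mathrm{tl}(f) \geq 2L$ by subadditivity, so choosing $L = L(\delta)$ beyond this $O(\delta)$ threshold rules out this case. Either way $G$ is bounded by a $\delta$-only constant, and Lemma~\ref{lem:local-to-global} produces the desired $K = K(\delta)$.

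\textbf{Part (2).} This will be a straightforward induction. Write $d = d(x, f^p x)$ and $\tau_n = d(x, f^{np}x)$. The Gromov-product identity reads
\[ \tau_n = \tau_{n-1} + d - 2\,(x \cdot f^{np}x)_{f^{(n-1)p}x}. \]
Because $f^{(n-1)p}x$ lies on the $f$-invariant $K$-quasi-axis between $x$ and $f^{np}x$, the Morse lemma places it within distance $M$ of the geodesic $[x, f^{np}x]$; combined with the inequality $(y \cdot z)_w \leq d(w, [y,z])$ this bounds the Gromov product above by $M$. Induction from $\tau_0 = 0$ then yields $\tau_n \geq n(d - 2M)$, and dividing by $np$ before letting $n \to \infty$ extracts $\mathrm{tl}(f) \geq (d - 2M)/p$. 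The main technical obstacle is the case split in Part~(1) --- ruling out ``folded'' orbits via the lower bound on $d(x, f^2 x)$; Part~(2) is a clean consequence once a quasi-axis is in hand.
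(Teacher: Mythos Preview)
Your argument is correct and follows essentially the same route as the paper: Part~(1) via minimality of $x$ plus fellow-traveling of initial segments feeding into the local-to-global lemma, Part~(2) via the Morse lemma and induction. The only cosmetic difference is that the paper places its test point at fixed distance $4\delta$ from $x$ (so assuming $d(x,fx)>14\delta$ makes your case $2G>c$ impossible from the start), whereas you place it at distance $\approx G$ and dispose of that case using $d(x,f^2x)\geq 2\,\mathrm{tl}(f)$; note also that $fp$ lies on $f[x,f^{-1}x]$, not literally on the chosen $[x,fx]$, but this is absorbed into your $O(\delta)$.
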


Note that by sub-additivity we always have, for every $x\in X$ and $p>0$,
\[
\mathrm{tl}(f) \leq \frac{1}{p} (d(x, f^p(x))
\]
and together with the last part of the lemma, this gives the precise convergence speed towards the translation length, in terms of the quality of a quasigeodesic axis containing $x$.
\begin{proof}
\begin{enumerate}
\item Assuming that $d(x,f(x)) > 14\delta$, we first show that
  \[ (f^{-1}x\cdot f(x))_x < 4\delta. \] Namely, assume that this
  would be false. Consider a geodesic triangle with corners
  $f^{-1}(x),x,f(x)$. By the assumption that
  $(f^{-1}x\cdot f(x))_x \geq 4\delta$, the sides
  $g = [x,f^{-1}(x)], h = [x, f(x)]$ are $2\delta$--fellow-traveling
  for initial subsegments of length $4\delta$. Let $y \in g$ be the
  endpoint of such a segment. Note that it is $2\delta$--close to a
  point $z_1 \in h$ (which thus has $d(z_1, x) \leq 6\delta$).
  \begin{figure}[h!]
    \centering
    \def\svgwidth{0.8\textwidth}
    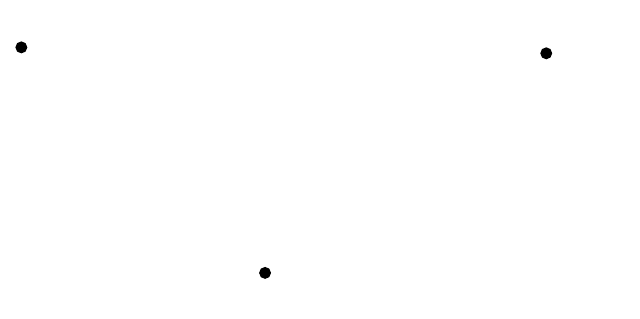
    \caption{The proof of Lemma~\ref{lem:K-for-axis}~(1). The geodesic $f(g)$ is drawn dashed.}
  \end{figure}
    
  The image $f(y)$ lies on the geodesic $f(g)$ which joins $f(x)$ to
  $x$, and is thus $\delta$--close to a point $z_2$ on $h$, and we
  have $d(z_2, f(x)) \leq 5\delta$. By our assumption that
  $d(x,f(x)) > 14\delta$, the point $z_1$ is between $x$ and $z_2$ on
  $h$, and since we have
  $d(z_1, x) \geq 2\delta, d(z_2, f(x)) \geq 3\delta)$ we get
  \[ d(z_1, z_2) \leq d(x, f(x)) - 5\delta, \]
  implying
  \[ d(y, f(y)) \leq d(x, f(x)) - 5 \delta + 3\delta < d(x,f(x)), \]
  which contradicts minimality of $x$. 
  
  Given the estimate on $(f^{-1}x\cdot f(x))_x$, we apply the
  local-to-global Lemma~\ref{lem:local-to-global} to $B=4\delta$, we
  obtain constants $K,L_0$. Setting $L = \max(L_0, 14\delta)$ this
  lemma then implies that
  \[
    \bigcup_{n \in \mathbb{Z}} f^n ([x, f(x)])
  \]
  is a $K$-quasigeodesic.
\item Let $n,p >0$, and let us prove that
  $d(x, f^{np}(x) ) \geq n (d(x, f^p(x)) - 2M)$. By the Morse Lemma,
  there is a point $\pi$ on the geodesics $[x, f^{np}x]$ which is
  distance less than $M$ from $f^p(x)$. We get
\begin{eqnarray*}
  d(x, f^{np}(x)) &=& d(x, \pi) + d(\pi, f^{np}(x))\\
                  &\geq &(d(x, f^{p}(x)) - M) + (d(f^p(x), f^{np}(x)) -M) \\
                  &=& d(x, f^{(n-1)p}(x)) + d(x, f^{p}(x)) - 2M
\end{eqnarray*}
and now the estimate follows by induction on $n$. 

Dividing by $np$ and letting $n$ tends to $+\infty$ yields the estimate on the translation length.
\end{enumerate}
\end{proof}

\begin{corollary}\label{cor:fellow-travel}
For every $\delta, K,T$ there is some $B=B(\delta, K,T)$ with the following property.
If $(\beta_i)$ is a $K$-quasigeodesic which is invariant under the isometry $g$, with $\mathrm{tl}(g) \leq T$, then every $\beta_i$ is distance at most $B$ from some iterate $g^n(\beta_0)$.
\end{corollary}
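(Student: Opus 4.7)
The plan is to first bound $d(\beta_0, g\beta_0)$ using the previous lemma, then observe that the $g$-orbit of $\beta_0$ forms a net along the invariant quasigeodesic with gaps of the same size, and finally invoke the quasigeodesic inequality to cover any intermediate $\beta_i$.

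For the first step, apply the second part of Lemma~\ref{lem:K-for-axis} with $p = 1$ and $x = \beta_0$: this is legitimate since $\beta_0$ lies on the $g$-invariant $K$-quasigeodesic $(\beta_i)$. The lemma yields
\[ d(\beta_0, g\beta_0) - 2M \leq \mathrm{tl}(g) \leq T, \]
where $M = M(\delta, K)$ is the Morse constant for $K$-quasigeodesics. Hence $d(\beta_0, g\beta_0) \leq T + 2M$.

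For the second step, since $g$ preserves the image of $(\beta_i)$ as a set, each iterate $g^n\beta_0$ lies on the quasigeodesic, say $g^n\beta_0 = \beta_{i_n}$. Because $g$ is an isometry, consecutive iterates satisfy $d(\beta_{i_n}, \beta_{i_{n+1}}) = d(\beta_0, g\beta_0) \leq T + 2M$. In the loxodromic case — which is the one relevant to the corollary, since a bi-infinite invariant $K$-quasigeodesic with the iterates marching along it requires $\mathrm{tl}(g) > 0$ — the sequence $(i_n)$ is monotone and exhausts the parameter set: every index $i$ lies between some $i_n$ and $i_{n+1}$.

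For the third step, combine the two sides of the $K$-quasigeodesic condition. From $|i_{n+1} - i_n|/K - K \leq d(\beta_{i_n}, \beta_{i_{n+1}}) \leq T + 2M$ we get $|i_{n+1} - i_n| \leq K(T + 2M) + K^2$, and then for $i$ between $i_n$ and $i_{n+1}$,
\[ d(\beta_i, g^n \beta_0) = d(\beta_i, \beta_{i_n}) \leq K|i - i_n| + K \leq K^2(T + 2M) + K^3 + K. \]
Setting $B = K^2\bigl(T + 2M(\delta, K)\bigr) + K^3 + K$ finishes the proof. The only conceptual subtlety is ensuring the iterates $g^n\beta_0$ reach both ends $\xi^\pm$ of the invariant quasigeodesic (so that every $\beta_i$ is sandwiched between two consecutive iterates); this uses loxodromicity of $g$, which is implicit in the setup of the corollary.
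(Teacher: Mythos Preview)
Your proof is correct and follows essentially the same approach as the paper: apply part~(2) of Lemma~\ref{lem:K-for-axis} with $p=1$ to bound $d(\beta_0,g\beta_0)$, then use the $K$-quasigeodesic inequality to bound the index gap between consecutive iterates, and conclude. The paper's version is terser (it simply writes $g(\beta_0)=\beta_{i_0}$ and $g^n(\beta_0)=\beta_{ni_0}$, implicitly using that $g$ acts on the invariant quasigeodesic as a fixed shift), while you work out explicit constants and spell out the monotonicity/exhaustion of the index sequence; but the argument is the same.
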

\begin{proof}
Applying point (2) of Lemma~\ref{lem:K-for-axis} with $p=1$, we get $d(\beta_0, g(\beta_0)) \leq T + 2M(\delta, K)$. Let $i_0$ be the index such that $\beta_{i_0} = g(\beta_0)$. The $K$-quasi-geodesicity gives a bound on $i_0$ by some function of $K$
and $d(\beta_0, g(\beta_0))$. Then the distance from any $\beta_i$ to the closest $\beta_{n i_0} = g^n(\beta_0)$ is also bounded.
\end{proof}

\subsection{Bestvina-Fujiwara equivalence relation}\label{ssec:Fujiwara}
In this section we (very) briefly summarize results which allow to use actions on hyperbolic spaces to construct unbounded quasimorphisms on groups, or alternatively certify that elements have positive stable commutator length (see the introduction for the definition).

\smallskip
Let $G$ be a group acting on a $\delta$-hyperbolic space, and $f,g \in G$. Assume they act loxodromically, and denote $\xi^\pm(f), \xi^\pm(g)$ their end-points.
Bestvina-Fujiwara (\cite{BF}) define a relation $f \sim g$ if there are copies of the axis of $g$ arbitrarily close to the axis of $f$. More precisely: if for every neighborhoods $V^-, V^+$ of $\xi^-(f), \xi^+(f)$ there exists $h \in G$ such that $h(\xi^-(g)) \in V^-, h(\xi^+(g)) \in V^+$. 


We can now state the criterion: 
\begin{theorem*}[{\cite[Proposition 5]{BF}}]\label{thm:bf}
  Suppose $G$ acts on a $\delta$--hyperbolic space by isometries.
  Assume $f$ acts loxodromically and $f \not \sim f^{-1}$.
Then  $\mathrm{scl}(f) > 0$.
\end{theorem*}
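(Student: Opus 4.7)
The plan is to invoke Bavard duality and build a homogeneous quasimorphism $\Phi \colon G \to \mathbb{R}$ with $\Phi(f) \neq 0$ via a Brooks-type counting construction adapted to the hyperbolic setting by Bestvina--Fujiwara. Recall that by Bavard duality, positivity of $\mathrm{scl}(f)$ is equivalent to the existence of such a $\Phi$, so producing one suffices.

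First, fix a basepoint $x_0$ on a quasi-axis of $f$, and choose a sufficiently long oriented segment $w$ of this quasi-axis (of length much larger than $\delta$ and the quasigeodesic constants). For $x,y \in X$, let $|g|_w$ denote the maximal number of pairwise disjoint subsegments of a geodesic $[x,y]$ that, up to a controlled Hausdorff error, are translates of $w$ by elements of $G$. Following Bestvina--Fujiwara, define
\[ \phi_w(g) = c_w(x_0, g x_0) - c_{w^{-1}}(x_0, g x_0), \]
where $c_w$ is a refinement of $|\cdot|_w$ that counts ``copies'' of $w$ with signs matching the orientation of the geodesic. Standard hyperbolic arguments (thin triangles and the Morse lemma) show that $\phi_w$ is a quasimorphism: concatenating $[x_0, g x_0]$ with $g[x_0, h x_0]$ differs from $[x_0, gh x_0]$ by a uniformly bounded fellow-traveling error, so copies of $w$ can only be created or destroyed near the junction, giving a uniform bound on $|\phi_w(gh) - \phi_w(g) - \phi_w(h)|$. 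Homogenizing, we set $\Phi(g) = \lim_n \phi_w(g^n)/n$.

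It remains to show $\Phi(f) \neq 0$. For this, observe that for a suitable choice of $w$, long orbits $[x_0, f^n x_0]$ fellow-travel the axis of $f$ on most of their length, so they contain at least $n - O(1)$ disjoint translates of $w$, giving $c_w(x_0, f^n x_0) \gtrsim n$. The crucial point, and the real content of $f \not\sim f^{-1}$, is to control the opposite count $c_{w^{-1}}(x_0, f^n x_0)$: a copy of $w^{-1}$ along $[x_0, f^n x_0]$ would produce a group element $h$ with $h(\xi^-(f))$, $h(\xi^+(f))$ arbitrarily close to $\xi^+(f)$, $\xi^-(f)$ respectively, that is, $f \sim f^{-1}$. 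By assumption this cannot occur for $w$ sufficiently long, so $c_{w^{-1}}(x_0, f^n x_0)$ stays uniformly bounded in $n$. Dividing by $n$ and passing to the limit yields $\Phi(f) > 0$.

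The main obstacle is the verification that $\phi_w$ has bounded defect and that the counts behave well under the group action; this is a well-known but delicate hyperbolic geometry argument, and the selection of $w$ (long enough for both the fellow-traveling and the non-equivalence arguments to apply) has to be done with some care. Once these ingredients are in place, the conclusion is immediate from Bavard duality.
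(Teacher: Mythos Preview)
The paper does not give its own proof of this statement: it is quoted verbatim as \cite[Proposition 5]{BF} and used as a black box (in the deduction of Theorem~\ref{thm:scl} from Theorem~\ref{thm:equivalence}). So there is no argument in the paper to compare your sketch against.

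That said, your sketch is a faithful outline of the original Bestvina--Fujiwara construction: build a Brooks-type counting quasimorphism from a long axis segment $w$, homogenize, and use $f\not\sim f^{-1}$ to rule out reversed copies of $w$ along the axis of $f$, so that the homogenization is nonzero on $f$; then conclude via Bavard duality. One point worth tightening: the assertion that ``$c_{w^{-1}}(x_0,f^nx_0)$ stays uniformly bounded in $n$'' does not follow from a single reversed copy giving proximity of the boundary fixed points; rather, one argues that if \emph{for every} length of $w$ a reversed translate appears along the axis, then by taking longer and longer $w$ one forces $h_k(\xi^\pm(f))\to\xi^\mp(f)$, contradicting $f\not\sim f^{-1}$. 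Hence there exists a sufficiently long $w$ for which no reversed copy occurs at all (up to the fixed Hausdorff tolerance), and then $c_{w^{-1}}(x_0,f^nx_0)$ is bounded by a constant depending only on the tolerance. You allude to this with ``for $w$ sufficiently long'', but the quantifier order (choose $w$ after invoking the non-equivalence) is where the argument has content.
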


Let us end this section with an easy criterium for Bestvina-Fujiwara equivalence.
Let $f,g$ be acting loxodromically as above, and assume $\xi^+(f) = \xi^+(g)$.
Then for $n$ large enough $f^{-n}$ fixes $\xi^+(g)$ and sends $\xi^-(g)$ arbitrarily close to $\xi^-(f)$. We deduce:
\begin{lemma}\label{lem:bf}
If $\xi^+(f) = \xi^+(g)$ then $f \sim g$.
\end{lemma}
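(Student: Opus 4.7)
The plan is to verify the defining condition of $\sim$ using the iterates $h = f^{-n}$ as the required conjugating elements, exploiting the standard North--South dynamics of the loxodromic isometry $f$ on $\partial_\infty X$.

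Fix arbitrary neighborhoods $V^-$ of $\xi^-(f)$ and $V^+$ of $\xi^+(f)$; I must produce $h \in G$ with $h(\xi^-(g)) \in V^-$ and $h(\xi^+(g)) \in V^+$. For the $V^+$ condition, the hypothesis $\xi^+(f) = \xi^+(g)$ together with the fact that $f$ fixes $\xi^+(f)$ gives that $f^{-n}(\xi^+(g)) = \xi^+(f)$ for every $n$, which certainly lies in $V^+$. So this condition is free and holds for any choice of $n$.

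For the $V^-$ condition, recall that as a loxodromic isometry $f$ has exactly two boundary fixed points and the orbit $f^{n}(\xi)$ of any $\xi \in \partial_\infty X \setminus \{\xi^+(f)\}$ converges to $\xi^-(f)$ as $n \to -\infty$, by the description of loxodromic dynamics recalled in Section~\ref{sec:tl-estimates}. Now $\xi^-(g) \neq \xi^+(g) = \xi^+(f)$, so applying this to $\xi = \xi^-(g)$ yields $f^{-n}(\xi^-(g)) \to \xi^-(f)$ as $n \to +\infty$. Therefore for all sufficiently large $n$ we have $f^{-n}(\xi^-(g)) \in V^-$, and setting $h = f^{-n}$ for any such $n$ finishes the proof.

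There is no real obstacle here; the only thing to keep in mind is that this uses the North--South dynamics of a loxodromic isometry on the Gromov boundary, which was already recorded in the preliminaries and which applies without any properness assumption on $X$, since $\xi^\pm(f)$ are defined precisely as the limits of forward and backward orbits of points under $f$.
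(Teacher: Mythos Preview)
Your proof is correct and takes essentially the same approach as the paper: both use $h = f^{-n}$, observe that $f^{-n}$ fixes $\xi^+(g) = \xi^+(f)$, and invoke the North--South dynamics of $f$ on the boundary to send $\xi^-(g)$ arbitrarily close to $\xi^-(f)$.
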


As an immediate consequence, we see that Theorem~\ref{thm:equivalence} (the Bestvina-Fujiwara class determines the scaled rotation set) entails Theorem~\ref{thm:boundary-determines-rotation-set} (the fixed point determines the scaled rotation set).

\section{Fine curve graphs}\label{sec:fine-cg}
In this section, we introduce the second main object of this article,
the \emph{fine curve graph} $\cd(S)$ of a closed, connected, oriented
surface $S$.  The vertices of $\cd(S)$ correspond to simple, essential
curves on $S$. If $S$ has genus $g>1$, we join two vertices by an edge
if the corresponding curves are disjoint. If $S$ is the torus, we also
join vertices corresponding to curves which intersect transversely, and at
most once.  Note that there are by now many variants of the fine curve
graph being studied which have slightly different edge relations (but
all of which are quasi-isometric).

The first key property is the following.
\begin{theorem}[{\cite[Theorem 3.10]{Dagger1}}]
  The fine curve graph is Gromov hyperbolic.
\end{theorem}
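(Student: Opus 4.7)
The plan is to apply Bowditch's \emph{guessing geodesics} criterion for Gromov hyperbolicity: it suffices to exhibit, for every pair of vertices $\alpha,\beta \in \cd(S)$, a connected subgraph $\mathcal{U}(\alpha,\beta)$ containing both, such that (i) two such subgraphs with a common endpoint fellow-travel in the sense of a slim-triangle condition, and (ii) if $\alpha,\alpha'$ are adjacent in $\cd(S)$ then $\mathcal{U}(\alpha,\beta)$ and $\mathcal{U}(\alpha',\beta)$ are uniformly Hausdorff-close. A uniform hyperbolicity constant then follows.

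The candidate paths I would build are \emph{unicorn paths} in the spirit of Hensel--Przytycki--Webb, adapted to honest curves rather than isotopy classes. Fix basepoints on $\alpha$ and $\beta$; for each intersection point $p \in \alpha \cap \beta$ the two sub-arcs from the basepoints to $p$ concatenate into a simple arc that, after a small local smoothing of the corner at $p$, produces a simple closed curve $\gamma_p$ meeting $\alpha,\beta$ only along the corresponding half-arcs. Ordering the intersection points along $\beta$ gives a discrete sequence $\gamma_{p_1},\ldots,\gamma_{p_k}$ whose consecutive members are either disjoint or, on the torus, meet in a single transverse point, hence define edges of $\cd(S)$. Setting $\mathcal{U}(\alpha,\beta)$ to be this path, and declaring disjoint or minimally intersecting pairs to be joined by the edge itself, gives the desired family.

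I would then verify the Bowditch axioms. Slimness of unicorn triangles reduces, exactly as in the classical setting, to combinatorics of which half-arcs survive successive surgeries: given a triangle $\alpha,\beta,\gamma$ and a unicorn vertex on the $\alpha\beta$ side, one produces a uniformly nearby vertex on one of the other two sides by tracking how the constituent half-arcs relate to the third curve. The coarse Lipschitz property follows from showing that replacing $\alpha$ by an adjacent $\alpha'$ affects only a bounded initial portion of $\mathcal{U}(\alpha,\beta)$, since outside a small neighborhood of the exchanged curve the intersection data with $\beta$ — and hence the surgered curves $\gamma_{p_i}$ — are essentially unchanged.

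The main obstacle, and where the fine setting genuinely diverges from the isotopy-class case, is ensuring that each smoothed curve $\gamma_p$ is an honest vertex of $\cd(S)$ with the expected intersection behaviour. Without the freedom to isotope, one must perform the corner surgery in an arbitrarily small neighborhood of $p$ and verify by direct geometric inspection that no new intersections with $\alpha$ or $\beta$ are created, that $\gamma_p$ is embedded and essential, and that these properties persist as one iterates through successive $p_i$. Essentialness is handled by a bigon-counting argument on the universal cover, which replaces the usual ``tighten within the isotopy class'' move of the classical proof and constitutes the only non-trivial geometric input beyond the combinatorics of unicorn paths.
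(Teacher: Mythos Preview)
This theorem is not proved in the paper; it is cited from \cite{Dagger1}, and the paper only records that the key ingredient is the family of \emph{bicorn surgery paths}, together with Rasmussen's work on nonseparating curve graphs.

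Your overall strategy --- construct surgery paths between pairs of curves and feed them into Bowditch's guessing-geodesics criterion --- is indeed the approach of \cite{Dagger1}. But the specific construction you describe does not produce closed curves. You fix basepoints on $\alpha$ and on $\beta$ and, for each $p\in\alpha\cap\beta$, concatenate the sub-arcs from the two basepoints to $p$; the result is an \emph{arc} with two distinct endpoints (the basepoints), and no amount of local smoothing at the corner $p$ turns an arc into a closed curve. You seem to have transcribed the Hensel--Przytycki--Webb \emph{unicorn} construction for arc graphs without the modification needed for curves.

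The correct analog, as the paper spells out just after the cited theorem, is the \emph{bicorn}: a simple essential closed curve $a\cup b$ where $a\subset\alpha$ and $b\subset\beta$ are arcs sharing \emph{both} endpoints in $\alpha\cap\beta$. In the fine setting no smoothing is performed; $a\cup b$ is literally the curve. One then builds a bicorn surgery path by successively shrinking the $\alpha$-arc, and the slim-triangle and coarse-Lipschitz properties are verified via Rasmussen's criterion (see the paper's Lemma~\ref{lem:bicorns}). Once you replace your unicorn arcs by bicorns, the rest of your outline --- slimness from tracking which sub-arcs survive, coarse-Lipschitz from locality of the surgery --- matches the actual argument.
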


An important ingredient of the proof, that we will use again below,
are \emph{bicorn surgery paths}. Given simple closed curves
$\alpha,\beta$, a \emph{bicorn} defined by $\alpha$ and $\beta$ is a
simple, essential closed curve made of an arc $a$ included in $\alpha$
and an arc $b$ included in $\beta$. Observe that not every subarc of
$\alpha$ defines a bicorn.  However, for every pair of transverse curves
$\alpha, \beta$ one can find a quasi-path from $\alpha$ to $\beta$ in
$\cd(S)$ made of bicorns defined by $\alpha$ and $\beta$, where the
subarcs taken from $\alpha$ are strictly decreasing. We call such
quasi-paths \emph{bicorn surgery paths}.
We warn the reader that, strictly speaking, these are not paths in the fine curve graph (but they are paths in the augmented graphs, where edges are put between curves intersecting at most $4$ times).

The key point is that bicorn surgery paths are geometrically
efficient. This follows from work of Rasmussen \cite{Rasmussen} on
nonseparating curve graphs; see \cite[Section 4]{Dagger3} for details.
\begin{lemma}[{\cite[Lemma 4.2]{Dagger3}}]\label{lem:bicorns}
  There is a constant $K>0$ with the following property: suppose
  $\alpha, \beta \subset S$ are two curves intersecting
  transversely. Then if $(a_i)$ is a bicorn surgery path between
  $\alpha$ and $\beta$, there is a reparametrization $i(n)$ so that
  $n \mapsto a_{i(n)}$ is a $K$--quasigeodesic.
\end{lemma}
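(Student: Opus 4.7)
The plan is to verify the hypotheses of the local-to-global criterion (Lemma~\ref{lem:local-to-global}) for the bicorn surgery path, after a reparametrization that spaces consecutive bicorns at a prescribed distance $L$ in $\cd(S)$. Consecutive bicorns $a_i$ and $a_{i+1}$ along the surgery path share most of their $\alpha$-arc and differ by a single surgery, so although they may intersect in up to four points (hence only being adjacent in the augmented graph), they lie at bounded $\cd(S)$-distance via a short explicit bicorn perturbation. A reparametrization $i(n)$ can therefore be chosen so that $d_{\cd(S)}(a_{i(n)}, a_{i(n+1)}) \in [L, L+C]$ for a universal $C$, which takes care of the first (separation) hypothesis of Lemma~\ref{lem:local-to-global} for any prescribed $L$.

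The heart of the argument is the bounded Gromov-product estimate $(a_{i(n-1)} \cdot a_{i(n+1)})_{a_{i(n)}} \leq B$, which I would attack via a projection-to-$\alpha$-subarcs argument in the spirit of Rasmussen~\cite{Rasmussen}. Each bicorn $a_i = \sigma_i \cup \tau_i$ carries an $\alpha$-subarc $\sigma_i$, and these subarcs are totally ordered by strict inclusion. Given any third curve $c$ close to $a_i$ in $\cd(S)$, I would show that a suitable bicorn surgery between $c$ and $\alpha$ produces an $\alpha$-subarc that is close in this linear order to $\sigma_i$. Consequently, any geodesic $[a_{i(n-1)}, a_{i(n+1)}]$ must record $\alpha$-subarcs that interpolate monotonically, up to a bounded error, between those of its endpoints; it cannot then stray far from $a_{i(n)}$ without the recorded subarc jumping abruptly across $\sigma_{i(n)}$, contradicting the bounded variation of the projection.

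The main obstacle is precisely this projection step: proving that the ``best'' $\alpha$-subarc extracted from a curve $c$ changes in a controlled way as $c$ is modified along an edge of $\cd(S)$ (disjointness or a single transverse intersection). I would argue by induction along a geodesic in $\cd(S)$ realizing the distance, combining elementary bicorn surgery moves with the fine-curve-graph techniques from \cite{Dagger1}, and reducing to a short case analysis: a disjoint move does not alter the intersection pattern with $\alpha$ at all, while a one-point transverse move can shift the bicorn surgery output by at most one slot in the linear order on $\alpha$-subarcs. Once this bounded-variation property is established, the Gromov-product bound follows, and Lemma~\ref{lem:local-to-global} delivers the desired $K$-quasigeodesic conclusion for $n \mapsto a_{i(n)}$.
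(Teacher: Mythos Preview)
The paper does not prove this lemma: it is quoted verbatim from \cite[Lemma~4.2]{Dagger3}, with the remark that it ``follows from work of Rasmussen~\cite{Rasmussen} on nonseparating curve graphs''. So there is no in-paper proof to compare against; the relevant benchmark is the Rasmussen/unicorn-style argument that \cite{Dagger3} imports.

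That argument is structurally different from yours. It does not pass through the local-to-global lemma at all. Instead one shows a \emph{slim triangles} property for bicorns directly: given any curve $c$ at distance~$1$ from some $a_i$, one exhibits a bicorn of $c$ and $\alpha$ (or $c$ and $\beta$) which is uniformly close to $a_i$. This feeds into a Bowditch/Masur--Schleimer ``guessing geodesics'' criterion (or the Hensel--Przytycki--Webb dismantlability machinery), which simultaneously yields hyperbolicity of the graph and uniform unparametrised quasigeodesicity of bicorn paths. No Gromov-product estimate between reparametrised triples is ever needed.

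Your route via Lemma~\ref{lem:local-to-global} is in principle viable, but it relocates rather than removes the difficulty: the bound on $(a_{i(n-1)}\cdot a_{i(n+1)})_{a_{i(n)}}$ is exactly where the Rasmussen input has to go, and your sketch of that step is not yet a proof. The specific gap is the ``projection to $\alpha$-subarcs''. For a general curve $c$, the set of bicorns of $c$ and $\alpha$ is large and their $\alpha$-arcs are not linearly ordered, so the map ``$c \mapsto$ best $\alpha$-subarc'' is not well defined without further choices, and your assertion that a single $\cd(S)$-edge moves this projection by ``at most one slot'' is precisely the nontrivial content of Rasmussen's lemma, not a short case analysis. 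If you spell that step out you will essentially have reproduced the cited proof inside a different wrapper; as written, the proposal identifies the right ingredient but does not supply it.
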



The following lemma will allow us to consider bicorns for curves that do not intersect transversely. Here we consider vertices of $\cd(S)$ as continuous maps $\mathbb{S}^1 \to S$ up to reparametrization, which allows to endow the set of curves with the quotient topology from the topology of uniform convergence of maps. (Note that this is stronger than the Hausdorff distance between sets; in the lemma the Hausdorff distance would be enough for us.)
\begin{lemma}[Bicorn Perturbation Lemma]\label{lem:non-transverse-bicorns}
Let $\alpha, \beta \in \cd(S)$, and subarcs $a \subset \alpha, b \subset \beta$ such that $a \cup b$ is a bicorn defined by $\alpha$ and $\beta$.
For every $\varepsilon>0$, there exists curves $\alpha', \beta' \in \cd(S)$ which intersect transversely, are respectively $\varepsilon$-close to $\alpha$ and $\beta$, and subarcs $a' \subset \alpha', b' \subset \beta'$ such that $a' \cup b'$ is a bicorn defined by $\alpha'$ and $\beta'$ which is $\varepsilon$-close to $a \cup b$. Furthermore we can assume that $\alpha'=\alpha$.
\end{lemma}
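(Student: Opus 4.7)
The plan is to construct $\beta'$ by perturbing $\beta$ only inside a small annular neighborhood of the bicorn $\gamma = a \cup b$. Since $\gamma$ is a simple essential closed curve in the orientable surface $S$, it admits a closed tubular neighborhood $N \cong \gamma \times [-1,1]$ whose core is $\gamma$. Using that $\alpha$ and $\beta$ are simple closed curves with $a \subset \alpha, b \subset \beta$, one may shrink $N$ so that $\alpha \cap N = \tilde{a}$ is a single arc extending $a$ slightly past $p$ and $q$, and similarly $\beta \cap N = \tilde{b}$ is a single arc extending $b$. In this setup, modifying $\beta$ only inside $N$ will automatically keep it close to $\beta$ in the $C^0$ sense.

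To construct the perturbation, I would push the arc $\tilde{b}$ slightly to one side of $\tilde{a}$. The simplicity of $\gamma$ forces $a$ and $b$ to emanate from each corner $p, q$ into different local positions, so away from the corners the arc $b$ sits on a well-defined side of $\tilde{a}$ inside $N$. Push $\tilde{b}$ further into that side by a small $C^0$-isotopy supported in the interior of $N$, fixing the endpoints of $\tilde{b}$ on $\partial N$. If the push is carried out using the normal coordinate of the tubular neighborhood, the resulting arc $\tilde{b}'$ meets $\tilde{a}$ transversely at exactly two points $p' \in N(p)$ and $q' \in N(q)$ close to the original corners, and is disjoint from $\tilde{a}$ elsewhere in $N$. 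Set $\beta'$ to be $\beta$ outside $N$ and $\tilde{b}'$ inside; this is a simple closed curve $\varepsilon$-close to $\beta$, and $\alpha$ is unchanged. Let $a' \subset \alpha$ be the subarc from $p'$ to $q'$ close to $a$, and $b' \subset \beta'$ the subarc from $p'$ to $q'$ close to $b$. Then $a' \cup b'$ is a simple closed curve $\varepsilon$-close to $\gamma$, and for small enough $\varepsilon$ it is isotopic to $\gamma$, hence essential. This gives the required bicorn defined by $\alpha' = \alpha$ and $\beta'$.

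The main obstacle is the local construction at the corners $p, q$, where $\alpha$ and $\beta$ may meet non-transversely or even share subarcs. The bicorn being a simple closed curve pins down a specific local topological picture: following $\gamma$ through $p$ one turns from the $a$-direction to the $b$-direction, and this singles out the correct side of $\tilde{a}$ on which to push $\tilde{b}$. A careful local analysis (using orientability of $S$ to trivialise the normal bundle of $\tilde{a}$) shows that the pushed arc enters and exits a small disk around each corner through points of transverse intersection with $\tilde{a}$, and this is the step that requires the most work. If one additionally wants $\alpha$ and $\beta'$ to be globally transverse (beyond what is needed for the lemma), a further small $C^0$ perturbation of $\beta'$ supported outside $N$, using standard topological transversality for curves on surfaces, achieves this without disturbing the bicorn $a' \cup b'$.
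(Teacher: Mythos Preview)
The key reduction in your argument fails: you cannot shrink $N$ so that $\alpha\cap N$ is a single arc extending $a$. Nothing in the definition of a bicorn prevents the complementary arc $\alpha\setminus a$ from meeting the interior of $b$, and since $b$ lies on the core of $N$, such intersection points remain in $N$ however thin you take it. Hence $\alpha\cap N$ will in general consist of $\tilde a$ together with many other arcs running across the $b$-part of the core, and symmetrically for $\beta\cap N$. This breaks the rest of your construction: pushing $\tilde b$ does not make $\beta'$ globally transverse to $\alpha$, because the extra non-transverse intersections live \emph{inside} $N$, so the further perturbation you propose, supported outside $N$, cannot repair them. The statement of the lemma does require global transversality of $\alpha'$ and $\beta'$, so this is not optional.

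The local picture at the corners is also more delicate than your sketch allows. The sentence ``away from the corners the arc $b$ sits on a well-defined side of $\tilde a$ inside $N$'' does not parse as written: $b$ lies on the core $\gamma\times\{0\}$, not on a side of anything. What you would actually need is that the continuation of $\alpha$ past each corner enters $N$ on a definite side of $b$, so that pushing $\tilde b$ toward that side creates a single transverse crossing; but for $C^0$ curves this can fail outright, since the continuation may share a subarc with $b$, oscillate between sides, or leave $N$ immediately. The paper isolates exactly this difficulty through the notion of a \emph{stable} intersection point. Its proof first perturbs $\beta$ locally near each corner to make the corners stable (so that every small perturbation of $\beta$ still meets $\alpha$ nearby), at the cost of possibly shortening $a$ and $b$ slightly to restore simplicity of the bicorn. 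Only then does it replace $\beta$ by a nearby curve globally transverse to $\alpha$, using a smooth structure in which $\alpha$ has been made smooth via Schoenflies; stability at the corners guarantees that transverse intersection points persist near the original corners, and the nearby bicorn is read off from those.
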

We postpone the proof of the perturbation lemma to the end of this section. We will need the following consequence.
\begin{corollary}\label{cor:morse-arbitrary-bicorn}
There is a constant $M>0$ such that any bicorn defined by any pair $\alpha, \beta \in \cd(S)$ is within distance $M$ of any geodesic $[\alpha, \beta]$.
\end{corollary}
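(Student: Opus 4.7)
The plan is to handle the case of transverse $\alpha,\beta$ first, via bicorn surgery paths and the Morse lemma, and then reduce the general case to this one using the Bicorn Perturbation Lemma.

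In the transverse case, given a bicorn $c = a\cup b$, I would construct a bicorn surgery path from $\alpha$ to $\beta$ that passes through $c$: start from $\alpha$ (as a degenerate bicorn whose $\alpha$-arc is all of $\alpha$), successively perform surgeries at chosen intersection points of $\alpha\cap \beta$ so that the $\alpha$-subarc strictly decreases and passes through $a$ at some intermediate step, and then continue shrinking it to the empty arc, ending at $\beta$. By Lemma~\ref{lem:bicorns}, a suitable reparametrization of this path is a $K$-quasigeodesic (in the augmented version of $\cd(S)$, which is quasi-isometric to $\cd(S)$), and the Morse lemma gives a uniform $M_0 = M_0(K,\delta)$ such that every point on the path, and in particular $c$, lies within distance $M_0$ of any geodesic $[\alpha,\beta]$.

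For arbitrary $\alpha,\beta$ and a bicorn $c$, I would apply the Bicorn Perturbation Lemma (Lemma~\ref{lem:non-transverse-bicorns}) with $\varepsilon > 0$ small to obtain $\alpha' = \alpha$ and $\beta'$ transverse to $\alpha$, with $\beta'$ that is $\varepsilon$-close to $\beta$, together with a bicorn $c' = a'\cup b'$ between $\alpha$ and $\beta'$ that is $\varepsilon$-close to $c$. For $\varepsilon$ small enough, $\beta'$ and $c'$ lie in fixed open annular tubular neighborhoods of $\beta$ and $c$ respectively; inside such a neighborhood, both the original and its perturbation are closed subsets that avoid a positive-width collar of the boundary, so a parallel curve in this collar is essential and disjoint from both, giving $d_{\cd(S)}(\beta,\beta') \leq 2$ and $d_{\cd(S)}(c,c') \leq 2$. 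The transverse case then gives $d_{\cd(S)}(c',[\alpha,\beta']) \leq M_0$. Combining this with $\delta$-thinness of the geodesic triangle with vertices $\alpha, \beta, \beta'$, which (using $d_{\cd(S)}(\beta,\beta') \leq 2$) places $[\alpha,\beta']$ in the $(\delta+2)$-neighborhood of $[\alpha,\beta]$, one obtains $d_{\cd(S)}(c,[\alpha,\beta]) \leq M_0 + \delta + 4$, and we take $M := M_0 + \delta + 4$.

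The main obstacle I anticipate is the transition from uniform closeness of simple closed curves (as furnished by the Perturbation Lemma) to bounded distance in the fine curve graph: two simple closed curves can be arbitrarily uniformly close yet intersect many times, and hence \emph{a priori} be far apart in $\cd(S)$. The resolution outlined above uses the key observation that within an open annular neighborhood, any essential simple closed curve is contained in a compact sub-annulus, which leaves room for a parallel essential curve disjoint from it. A secondary delicate point is the construction of a bicorn surgery path passing through a prescribed bicorn $c$, which relies on the freedom to choose at each surgery step which intersection point of $\alpha\cap \beta$ to cut.
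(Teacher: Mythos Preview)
Your proposal is correct and follows essentially the same route as the paper: reduce to the transverse case via the Bicorn Perturbation Lemma, use that $C^0$-close curves share a disjoint curve (hence are at distance $\leq 2$), apply Lemma~\ref{lem:bicorns} together with the Morse lemma in the transverse case, and finish with thin-triangle estimates to pass from $[\alpha,\beta']$ back to $[\alpha,\beta]$. Your version is in fact slightly more explicit than the paper's in two respects: you exploit the ``furthermore $\alpha'=\alpha$'' clause of the Perturbation Lemma (yielding $M_0+\delta+4$ instead of the paper's $M+\delta+6$), and you spell out why an arbitrary bicorn lies on \emph{some} bicorn surgery path, a point the paper leaves implicit when invoking Lemma~\ref{lem:bicorns}.
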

\begin{proof}[Proof of the Corollary]
Let $M$ be the Morse constant associated to $K$-quasi-geodesics for the constant $K$ of Lemma~\ref{lem:bicorns}. Let $a \cup b$ be a bicorn defined by curves $\alpha$ and $\beta$, and apply the Bicorn Perturbation Lemma to get curves $\alpha', \beta'$ intersecting transversely, arbitrarily close to $\alpha, \beta$, defining a bicorn $a'\cup b'$ arbitrarily close to $a\cup b$. Curves that are close have a common disjoint curve, and thus are within distance $2$ in the fine curve graph, thus we get 
\[
d(\alpha, \alpha') \leq 2, \ \ d(\beta, \beta') \leq 2, \ \ d(a\cup b, a' \cup b') \leq 2.
\]
By Lemma~\ref{lem:bicorns} and the Morse Lemma, $a' \cup b'$ is within distance $M$ of any geodesic $[\alpha', \beta']$. On the other hand, by hyperbolicity, the geodesic $[\alpha', \beta']$ is included in the $(\delta+4)$-neighborhood of the geodesic $[\alpha, \beta]$. Thus the corollary holds with constant $M + \delta + 6$.
\end{proof}

\bigskip We will also make use of the \emph{fine arc graph} of the
annulus $A =[0,1] \times S^1$, denoted $\mathcal{A}^\dagger(A)$, whose
vertices are \emph{essential proper arcs} in $A$, that is, images of $[0,1]$
under an embedding that maps $(0,1)$ into the interior of $A$ and $0$
and $1$ to distinct boundary components of $A$, and edges correspond
to disjointness.  The distance between two vertices $a \neq b$ in this
graph may be computed as follows (see~\cite{Dagger2, LW}).

If $a, b$ are arcs in $A$, let $\widetilde{a}, \widetilde{b}$ be lifts
of the two arcs to the universal cover of $A$. Define the \emph{width}
$\mathrm{wd}_A(a, b)$ of $a$ and $b$ in $A$ as the number of distinct
lifts of $a$ which intersect one lift of $b$. We then have
\begin{lemma}\label{lem:distance-arc-graph}
  For any arcs $a,b \subset A$ we have 
  \[ d_{\mathcal{A}^\dagger(A)}(a,b) =  \mathrm{wd}_A(a, b) + 1. \]  
\end{lemma}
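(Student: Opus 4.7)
The plan is to prove the two inequalities $d_{\mathcal{A}^\dagger(A)}(a,b) \geq \mathrm{wd}_A(a,b) + 1$ and $d_{\mathcal{A}^\dagger(A)}(a,b) \leq \mathrm{wd}_A(a,b) + 1$ separately, assuming throughout that $a \neq b$. For the lower bound I would take a path $a = v_0, v_1, \ldots, v_d = b$ in $\mathcal{A}^\dagger(A)$, fix the lift $\widetilde{b}$ from the statement, and let $n_i$ denote the number of lifts of $v_i$ in $\widetilde{A}$ meeting $\widetilde{b}$. Then $n_d = 1$ (distinct lifts of a simple proper arc are disjoint) and $n_{d-1} = 0$ (disjointness of $v_{d-1}$ and $b$ in $A$ forces every lift of $v_{d-1}$ to be disjoint from $\widetilde{b}$). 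The crucial step is the estimate $|n_{i-1} - n_i| \leq 1$ whenever $v_{i-1}, v_i$ are disjoint in $A$; chaining it down from $n_{d-1} = 0$ yields $n_0 \leq d-1$, which is exactly $\mathrm{wd}_A(a,b) \leq d-1$. To establish this estimate, I would note that the collection of all lifts of $v_{i-1}$ and $v_i$ in the strip $\widetilde{A}$ is pairwise disjoint (by disjointness of $v_{i-1}, v_i$ in $A$ and simplicity of each arc), hence totally ordered by the above/below relation, and that the lifts meeting $\widetilde{b}$ form a consecutive block in this order. A direct calculation with the heights of lift endpoints on $\{0,1\} \times \mathbb{R}$ shows that $v_{i-1} \cap v_i = \emptyset$ in $A$ forces the "translation number" (difference of boundary heights) of $v_{i-1}$ and $v_i$ to differ by less than $1$, and hence between any two consecutive lifts of one type lies at most one lift of the other. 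In a consecutive block of length $n_{i-1} + n_i$ subject to this constraint, the two counts differ by at most $1$.

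For the upper bound I would induct on $w = \mathrm{wd}_A(a,b)$. The base case $w = 0$ forces $a, b$ to be disjoint in $A$ and hence $d = 1$. For $w \geq 1$, the plan is to construct an arc $c \in \mathcal{A}^\dagger(A)$ disjoint from $b$ with $\mathrm{wd}_A(a,c) = w - 1$; the inductive hypothesis then gives $d(c,b) \leq w$, and the triangle inequality gives $d(a,b) \leq 1 + w$. To build $c$, put $a, b$ in minimal transverse position (so $|a \cap b| = w$) and cut $A$ along $a$ to obtain a disk, in which $b$ becomes a disjoint family of $w+1$ proper arcs. Take an outermost arc of $b$ in this disk (co-bounding with a subarc of the disk boundary a subdisk containing no other $b$-arc) and perform the associated surgery: concatenate a short excursion along $\partial A$, the subarc of $a$ ending at the outermost intersection point $q$ of $a \cap b$, and the remaining subarc of $b$ past $q$; then push the result slightly off both $a$ and $b$ in a coherent direction near $q$ to obtain $c$.

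The main obstacle I anticipate lies in the upper bound: verifying that the pushed-off arc $c$ is a simple, essential, properly embedded arc in $A$, that it is disjoint from $b$, and that it has exactly $w-1$ transverse intersections with $a$. The outermost choice ensures that the subarc of $a$ used meets $b$ only at $q$ and that the subarc of $b$ used retains $w - 1$ intersections with $a$; the corner smoothing at $q$ must be placed in the correct local quadrant of the $a \cup b$ cross to simultaneously avoid $b$ and avoid creating any new crossings with $a$. Checking that a naive "parallel copy" construction is insufficient (since parallel copies of $b$ retain all $w$ crossings with $a$) and that one must genuinely cross $a$ in the pushoff to lose a crossing is the subtle point; writing out this outermost surgery cleanly in the annular topology is the main work.
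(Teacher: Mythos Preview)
Note first that the paper does not prove this lemma itself; it is stated with a citation to \cite{Dagger2, LW}, followed only by the remark on the special case of arcs meeting once. So there is no in-paper argument to compare against, and I comment only on the correctness of your proposal.

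Your lower bound is sound. The alternation of lifts of two disjoint essential arcs under the deck transformation, together with the fact that the connected arc $\widetilde{b}$ meets a \emph{consecutive} block of such lifts (each lift separates the strip), gives exactly the estimate $|n_{i-1}-n_i|\le 1$; chaining down from $n_{d-1}=0$ then yields $\mathrm{wd}_A(a,b)=n_0\le d-1$.

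The upper bound, however, has a genuine gap. In the \emph{fine} arc graph, vertices are specific continuous arcs, not isotopy classes, so the step ``put $a,b$ in minimal transverse position (so $|a\cap b|=w$)'' is not available: you may not move $a$ or $b$. For arbitrary $C^0$ arcs the set $a\cap b$ can be infinite even when $\mathrm{wd}_A(a,b)=w$ is finite, and even for transverse arcs one typically has $|a\cap b|>w$ because of bigons. In either case there is no well-defined outermost subarc of $b$ in the cut-open disk, and a surgery removing a single intersection point need not decrease the width at all.

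The remedy is to reorganise the surgery around the \emph{extremal lift} rather than an extremal intersection point. Straighten the lifts of $a$ to the horizontal lines $y=k$, so that $\widetilde{b}$ meets exactly $y=1,\ldots,y=w$; then look for an essential arc $\widetilde{c}$ in the open strip $0<y<1$ disjoint from both $\widetilde{b}$ and $T^{-w}\widetilde{b}$. These two translates are disjoint (since $b$ is simple and $w\ge 1$) and enter the strip only from $y=1$ and $y=0$ respectively, so such a $\widetilde{c}$ can be found. The projected arc $c$ is then disjoint from $a$ with $\mathrm{wd}_A(c,b)\le w-1$, and your induction proceeds (with the roles of $a$ and $b$ interchanged relative to your setup, which is harmless).
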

In particular, the distance between two arcs that intersect exactly
once is equal to $2$, which is just the observation that two essential
proper arcs that intersect once have a common disjoint arc.

\subsection{Proof of the perturbation lemma}
We start with a short discussion of intersection points for general $C^0$ curves.
 Let $\alpha, \beta$ be two simple closed curves on our surface $S$. Call a point $z \in \alpha \cap \beta$ an \emph{unstable intersection point} if 
there exist a neighborhood $V$ of $z$, and sequences of simple closed curves $(\alpha_n), (\beta_n)$ converging to $\alpha, \beta$ such that for each $n$, $\alpha_n \cap \beta_n \cap V= \emptyset$.
 Here is a more concrete description of unstable intersection points. Assume $\alpha$ and $\beta$ meet at $z$. Consider coordinates $(x,y)$ around the intersection point $z$ such that $\alpha$ locally coincides with the line $y=0$. Then $z$ is an unstable intersection point if and only if there is an open subarc of $\beta$ containing $z$ which is included in $y \geq 0$ or in $y \leq 0$. The following lemma will allow us to replace unstable intersection points with stable ones (where stable means not unstable).
\begin{lemma}
Let $\alpha, \beta$ be two simple closed curves on $S$, and $z\in \alpha \cap \beta$.
Let $\varepsilon>0$. Then there exists a simple closed curve $\beta'$, $\varepsilon$-close to $\beta$, equal to $\beta$ outside the $\varepsilon$-neighborhood of $z$ and such that $z$ is a stable intersection point of $\alpha$ and $\beta'$.
\end{lemma}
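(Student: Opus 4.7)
The plan is to perturb $\beta$ in a tiny neighbourhood of $z$ so that the perturbed curve $\beta'$ genuinely \emph{crosses} $\alpha$ at $z$ in the topological sense. First, I would fix a closed topological disk $D$ centred at $z$, small enough to lie inside the $\varepsilon$-neighbourhood of $z$ and admitting a homeomorphism $\phi : D \to [-1,1]^2$ sending $z$ to $(0,0)$ and $\alpha \cap D$ to the horizontal segment $[-1,1] \times \{0\}$; this is the standard topological straightening of a simple closed curve near a point of it.

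Next, using that $\beta : S^1 \to S$ is a topological embedding, I would shrink $D$ so that the only component of $\beta \cap D$ meeting a concentric subdisk $D' \subset D$ is the single arc $a_z$ of $\beta$ that contains $z$. A short compactness argument justifies this: if no such $D'$ existed we would obtain a sequence of parameters $t_n \not\to \beta^{-1}(z)$ with $\beta(t_n) \to z$, contradicting injectivity of $\beta$. Let $p_-, p_+ \in \partial D'$ be the two endpoints of $a_z \cap D'$.

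The heart of the argument is then the local modification. Working in the coordinates given by $\phi$, I would replace $a_z \cap D'$ by a new simple arc $a'$ in $D'$ from $p_-$ to $p_+$ passing through the origin, chosen to look, near $(0,0)$, like the diagonal $y = x$. Concretely $a'$ can be taken piecewise linear through two auxiliary points $q_- \in \{y < 0\}$ and $q_+ \in \{y > 0\}$, with the middle segment $[q_-, q_+]$ crossing $\alpha$ only at the origin and the two outer segments $[p_-, q_-]$, $[q_+, p_+]$ chosen disjoint from that segment. Defining $\beta'$ to coincide with $\beta$ outside $D'$ and with $a'$ inside $D'$ gives a simple closed curve (nothing else of $\beta$ enters $D'$), equal to $\beta$ outside the $\varepsilon$-neighbourhood of $z$ and $\varepsilon$-close to $\beta$.

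Finally, I would verify stability of the intersection at $z$: in the local coordinates, the parametrization of $a'$ takes values with $y<0$ immediately before the origin and values with $y>0$ immediately after, so for any perturbations $\alpha_n \to \alpha$ and $\beta'_n \to \beta'$, the component of $\beta'_n \cap D'$ close to $a'$ has endpoints-like-pieces on opposite sides of the perturbed straightened $\alpha_n$, and an intermediate-value argument on the $y$-coordinate produces a point of $\alpha_n \cap \beta'_n$ in any prescribed neighbourhood of $z$. The main delicate step is the separation step isolating $a_z$ inside $D'$; once that is in place, the construction of $a'$ and the stability check are elementary planar topology.
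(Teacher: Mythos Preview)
Your argument is essentially correct but takes a different route from the paper. The paper does not cut and replace an arc of $\beta$; instead it realises $\beta'$ as $h_2 h_1(\beta)$ for two compactly supported homeomorphisms: $h_1$ pushes $\beta$ slightly downwards in the local chart so that the image meets both $\{y<0\}$ and $\{y>0\}$, producing some stable intersection point $z'$ near $z$; then $h_2$, supported near $\alpha$ and preserving $\alpha$, slides $z'$ back to $z$. The advantage of the paper's approach is that simplicity of $\beta'$ and $\varepsilon$-closeness come for free, since homeomorphisms with small support send simple curves to nearby simple curves. Your surgery approach is more explicit and arguably more elementary, but forces you to verify simplicity by hand, which is what your isolation step is for.

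One minor imprecision: you write ``let $p_-,p_+ \in \partial D'$ be the two endpoints of $a_z \cap D'$'', but $a_z \cap D'$ need not be connected, since the arc $a_z$ (a component of $\beta \cap D$) could weave in and out of $D'$. This is easily repaired: the same compactness argument you use, applied with $D'$ in place of $D$, shows that after a further shrinking the preimage $\beta^{-1}(D')$ is a single interval. Once that is in place your construction goes through, and stability at $z$ follows directly from the paper's concrete characterisation (every subarc of $\beta'$ through $z$ meets both $\{y<0\}$ and $\{y>0\}$), so your intermediate-value paragraph is not even needed.
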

\begin{proof}
We may obviously assume that the intersection point $z$ is unstable.
We consider local coordinates as above, and assume that $\beta$ is locally contained in the upper half-plane $y \geq 0$. The new curve $\beta'$ is obtained as the image of $\beta$ under the composition $h_2 h_1$ of two homeomorphisms with support arbitrarily close to $x$. Namely, we first apply $h_1$ to push $\beta$ vertically downwards, so that $h_1 \beta$ meets $y<0$ while still having points in $y > 0$. It is easy to see that $h_1 \beta$ has a stable intersection point $z'$ with $\alpha$ in the local chart. Now push $z'$ horizontally by $h_2$ that preserves $\alpha$ and sends $z'$ to $z$, so that $z$ is a stable intersection point of $\alpha$ and $h_2 h_1 \beta$.
\end{proof}
\begin{proof}[Proof of Lemma~\ref{lem:non-transverse-bicorns}]
Let $\alpha, \beta, a,b, \varepsilon$ as in the lemma. Denote $x,y$ the common endpoints of $a$ and $b$. The first step is to get stable intersection points.
By applying the above lemma independently in some small neighborhoods of $x$ and $y$, we may perturb $\beta$ into a curve $\beta'$ which have stable intersection points with $\alpha$ at $x$ and $y$. Let $b'$ be the subarc of $\beta'$ with endpoints $x,y$ which is close to $b$. The closed curve $a \cup b'$ may be not simple, but $a \cap b'$ is included in the union of the neighborhoods of $x$ and $y$ on which the perturbation took place.
Thus we may shorten $a$ and $b'$ a little bit to get a simple closed curve $a' \cup b''$ which is a bicorn defined by $\alpha$ and $\beta'$, and which is still close to $a \cup b$. Note that the closeness ensures that $a' \cup b''$ is homotopic to $a \cup b$, and thus is still an essential closed curve.
The conclusion of this first step is that it suffices to prove the lemma in the case when the intersection points $x$ and $y$ are stable, which we assume now.
We endow $S$ with a smooth structure. Thanks to the Schoenflies theorem, we may assume that $\alpha$ is smooth with respect to this structure.
It is well known that smooth curves are $C^0$-dense in the set of continuous simple closed curves. Furthermore, among smooth curves, transversality to a given one is a generic property. Thus we may  find $\beta'$, arbitrarily $C^0$-close to $\beta$, which is smooth and transverse to $\alpha$. Since $x$ and $y$ were stable intersection points, $\alpha \cap \beta'$ contains points $x',y'$ respectively arbitrarily close to $x$ and $y$. As above, we can now find a bicorn $a' \cup b'$ defined by $\alpha$ and $\beta'$ and arbitrarily close to $a \cup b$. This completes the proof.
\end{proof}

\section{Fine markings}\label{sec:marking}
In this section we restrict to the torus for simplicity. There are
likely no serious issues generalising the notions and results
discussed here to higher genus; however, we do not want to commit to
a precise definition at this point, and we will explore the geometry
of  marking graphs in higher genus in future work.

\begin{definition}
  \begin{enumerate}[i)]
  \item A \emph{(fine) marking (of the torus)} is a set $\{\alpha, \beta\}$ of
    simple closed curves which intersect once, and transversely.
    We say that $\beta$ is a \emph{dual to $\alpha$}.
  \item We say that a marking $\{\alpha', \beta'\}$ \emph{is obtained
      from $\{\alpha,\beta\}$ by an elementary move} if one of the
    curves is unchanged, say $\alpha = \alpha'$, and $\beta'$ intersects $\beta$
     at most once, and transversely.   
  \item The \emph{fine marking graph} $\mathcal{M}^\dagger$ is the
    graph whose vertices are fine markings, and whose edges correspond
    to elementary moves.
  \item If $m$ is a fine marking, we denote by
    $\mathcal{M}^\dagger(m)$ the subgraph spanned by all markings
    isotopic to $m$.
  \end{enumerate}
\end{definition}
\begin{figure}
  \centering
  \def\svgwidth{0.8\textwidth}
\begingroup%
  \makeatletter%
  \providecommand\color[2][]{%
    \errmessage{(Inkscape) Color is used for the text in Inkscape, but the package 'color.sty' is not loaded}%
    \renewcommand\color[2][]{}%
  }%
  \providecommand\transparent[1]{%
    \errmessage{(Inkscape) Transparency is used (non-zero) for the text in Inkscape, but the package 'transparent.sty' is not loaded}%
    \renewcommand\transparent[1]{}%
  }%
  \providecommand\rotatebox[2]{#2}%
  \newcommand*\fsize{\dimexpr\f@size pt\relax}%
  \newcommand*\lineheight[1]{\fontsize{\fsize}{#1\fsize}\selectfont}%
  \ifx\svgwidth\undefined%
    \setlength{\unitlength}{455.14039047bp}%
    \ifx\svgscale\undefined%
      \relax%
    \else%
      \setlength{\unitlength}{\unitlength * \real{\svgscale}}%
    \fi%
  \else%
    \setlength{\unitlength}{\svgwidth}%
  \fi%
  \global\let\svgwidth\undefined%
  \global\let\svgscale\undefined%
  \makeatother%
  \begin{picture}(1,0.30523049)%
    \lineheight{1}%
    \setlength\tabcolsep{0pt}%
    \put(0,0){\includegraphics[width=\unitlength,page=1]{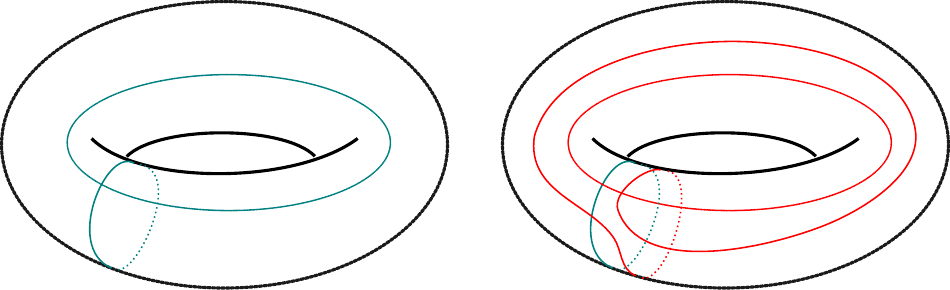}}%
    \put(0.23387972,0.09280723){\makebox(0,0)[lt]{\lineheight{1.25}\smash{\begin{tabular}[t]{l}$\alpha$\end{tabular}}}}%
    \put(0.10221198,0.04949489){\makebox(0,0)[lt]{\lineheight{1.25}\smash{\begin{tabular}[t]{l}$\beta$\end{tabular}}}}%
    \put(0.75588983,0.09164203){\makebox(0,0)[lt]{\lineheight{1.25}\smash{\begin{tabular}[t]{l}$\alpha'$\end{tabular}}}}%
    \put(0.74773345,0.01104325){\makebox(0,0)[lt]{\lineheight{1.25}\smash{\begin{tabular}[t]{l}$\beta'$\end{tabular}}}}%
  \end{picture}%
\endgroup%

  \caption{An elementary move of fine markings $\{\alpha, \beta\}$ to $\{\alpha, \beta'\}$}
\end{figure}

Thus the vertices of the fine marking graph are edges of the fine curve graphs, and the 
edges corresponds to triangles in the fine curve graph. Note that, by classification of surfaces, the group $\mathrm{Homeo}(T^2)$ acts transitively on the fine marking graph, and that the subgraph $\mathcal{M}^\dagger(m)$ is the orbit of $m$ under the subgroup $\mathrm{Homeo}_0(T^2)$. Edges of this subgraph correspond to disjoint moves, that is, there is an edge from $\{\alpha, \beta\}$ to $\{\alpha, \beta'\}$ if and only if $\beta$ and $\beta'$ are disjoint. The geometry of the subgraph $\mathcal{M}^\dagger(m)$ captures the geometry of the homeomorphism group
in the following sense.
\begin{lemma}\label{lem:geometric-model}
  For any $m_0$, the graph $\mathcal{M}^\dagger(m_0)$ is connected and 
  the orbit map
  \[ \mathrm{Homeo}_0(T^2) \to \mathcal{M}^\dagger(m_0), \quad f
    \mapsto f^{-1}(m_0) \] is a quasi-isometry for the (unique)
  right-invariant quasi-isometric structure on $\mathrm{Homeo}_0(S)$
  in the sense of Mann-Rosendal, and the (intrinsic) metric on
  $\mathcal{M}^\dagger(m_0)$.
\end{lemma}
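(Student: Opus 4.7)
The plan is to compare the orbit map with the fragmentation norm $\|\cdot\|_\mathcal{U}$ on $\mathrm{Homeo}_0(T^2)$, which by Mann-Rosendal together with Militon's theorem is quasi-isometric to $\widetilde d$. Fix a finite cover $\mathcal U$ of $T^2$ by open topological disks chosen small enough that for each $U \in \mathcal U$ there exists a marking $m_U \in \mathcal{M}^\dagger(m_0)$ disjoint from $\overline U$. It then suffices to establish, with additive and multiplicative constants, the bi-Lipschitz comparison between $\|f\|_\mathcal{U}$ and $d_{\mathcal{M}^\dagger}(m_0, f^{-1}(m_0))$.

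Upper bound and connectedness. I would first show that any $m_U$ can be reached from $m_0$ by at most two elementary moves: push $\beta_0$ off $U$ via a small isotopy to produce a dual $\beta'$ of $\alpha_0$ with $|\beta' \cap \beta_0| \leq 1$, then similarly push $\alpha_0$ off $U$ keeping $\beta'$ fixed. For any $h \in \mathrm{Homeo}_0(T^2)$ supported in a single chart $U$, the marking $m_U$ is fixed by $h$; the triangle inequality combined with the isometric action of $h$ on $\mathcal{M}^\dagger$ then gives $d_{\mathcal{M}^\dagger}(m_0, h(m_0)) \leq 4$. Telescoping along a decomposition $f = h_1 \cdots h_n$ with $\|f\|_\mathcal{U} = n$ yields $d_{\mathcal{M}^\dagger}(m_0, f(m_0)) \leq 4\|f\|_\mathcal{U}$, which also proves $\mathcal{M}^\dagger(m_0)$ is connected.

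Lower bound via realization of elementary moves. The converse rests on constructing a homeomorphism of bounded norm realizing each elementary move. Consider an edge $\{\alpha,\beta\} \to \{\alpha,\beta'\}$ in $\mathcal{M}^\dagger(m_0)$. Since both markings are ambient isotopic to $m_0$, there exists $g \in \mathrm{Homeo}_0(T^2)$ with $g(\alpha) = \alpha$ and $g(\beta) = \beta'$. The restriction $g|_A$ to $A = T^2 \setminus \alpha$ preserves the orientation of both boundary components (because $g|_\alpha$ is isotopic to the identity on $\alpha$) and therefore acts trivially on $H_1(A)$; combined with $|\beta \cap \beta'|\leq 1$, this forces $\beta$ and $\beta'$ to cobound a bigon or to be disjoint in $A$. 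The move can thus be realized by a homeomorphism $h$ supported in a neighborhood of $\beta \cup \beta'$; choosing the lift of $h$ that is the identity outside the preimage of its support, each connected component of that preimage is preserved, and the displacement is bounded by the (uniformly bounded) diameter of such a component, which depends only on the fixed winding number of $m_0$. Concatenating realizations along a geodesic path in $\mathcal{M}^\dagger(m_0)$ produces $h$ with $h(m_0) = f^{-1}(m_0)$ and $\widetilde d(h, \mathrm{id}) \leq C_1 \cdot d_{\mathcal{M}^\dagger}(m_0, f^{-1}(m_0))$.

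Main obstacle: bounded stabilizer. The element $s = fh$ then stabilizes $m_0$ setwise, and the hardest step is to show that the setwise stabilizer $S \leq \mathrm{Homeo}_0(T^2)$ of $m_0$ is bounded in $\widetilde d$. No element of $\mathrm{Homeo}_0$ can swap $\alpha_0$ and $\beta_0$ (they lie in distinct isotopy classes), so any $s \in S$ preserves each curve setwise and fixes the intersection point $p = \alpha_0 \cap \beta_0$. Pick the lift $\widetilde s$ commuting with $\mathbb Z^2$-translations and sending some lift $\widetilde p$ to $\widetilde p$; then $\widetilde s$ fixes the entire lattice of lifts of $p$, and since the lift of $\alpha_0$ (resp. $\beta_0$) through $\widetilde p$ is uniquely characterized by the integer-spaced sub-lattice it contains, $\widetilde s$ preserves each lift-line setwise. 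The $\mathbb Z^2$-equivariance together with $\widetilde s(\widetilde p) = \widetilde p$ then forces $\widetilde s$ to preserve every complementary fundamental region (the quotient region in $T^2$ is a single disk, so a nontrivial permutation of regions would contradict $\widetilde s(\widetilde p) = \widetilde p$). The displacement of $\widetilde s$ is therefore bounded by the diameter of a complementary region, giving the uniform bound on $\widetilde d(s, \mathrm{id})$ and completing the proof.
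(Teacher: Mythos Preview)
Your upper bound argument is essentially the paper's, and your stabilizer analysis is more detailed than the paper's (which leaves it to the reader) and basically correct.  The real problem is in the lower bound.

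You try to realize each edge $\{\alpha,\beta\}\to\{\alpha,\beta'\}$ of a geodesic in $\mathcal{M}^\dagger(m_0)$ by some $h$ with $h(\alpha)=\alpha$, $h(\beta)=\beta'$ and $\widetilde d(h,\mathrm{id})\leq C$ for a universal $C$, and then concatenate.  This fails.  First, the stated reason --- that the lift of $\mathrm{supp}(h)$ has bounded-diameter components --- is false: any neighborhood of the essential curves $\beta\cup\beta'$ lifts to a union of unbounded strips.  Second, and more seriously, no such universal bound exists.  Take $\beta=\{x=0\}$, $\beta'=\{x=1/4\}$, and let $\alpha$ be the curve whose lift is the graph $y=N\sin(2\pi x)$ (so $\alpha$ is embedded, horizontal class, dual to both $\beta,\beta'$).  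Any lift $\tilde h$ with $\tilde h(\tilde\alpha)=\tilde\alpha+(0,k)$ must send the lattice point $(0,0)\in\tilde\alpha$ (a lift of $\alpha\cap\beta$) to some $(1/4+m,n)\in\tilde\alpha+(0,k)$, forcing $n=N+k$; comparing the displacement at $(0,0)$ (which is $\geq |n|$) with the $y$-displacement at the point $(1/4,N)\in\tilde\alpha$ (whose image lies on $\tilde\alpha+(0,k)$, hence has $y$-coordinate $\leq n$) shows that one of them is at least $N/2$.  So $\widetilde d(h,\mathrm{id})\geq N/2$ for every realizing $h$, and your concatenation cannot give a linear bound.

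The paper avoids this by never realizing a general edge.  Instead it works with a fixed cover $U_1,\dots,U_4$ adapted to the \emph{fixed} base marking $m_0$, shows that any edge \emph{at $m_0$} is realized with fragmentation norm $\leq 4$, and then proceeds inductively: if $\phi_i(m_0)=m_i$, then $\phi_i^{-1}(m_{i+1})$ is adjacent to $m_0$, so one finds $f_{i+1}$ of norm $\leq 4$ with $f_{i+1}(m_0)=\phi_i^{-1}(m_{i+1})$ and sets $\phi_{i+1}=\phi_i f_{i+1}$.  The point is that pulling back by $\phi_i$ straightens out the wiggly $\alpha$ before you try to realize the move; this is exactly what your direct approach misses.
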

\begin{proof}[Proof of Lemma~\ref{lem:geometric-model}]
  Let $m_0 = (\alpha_0, \beta_0)$.  We choose a covering of the
  surface by disks $U_i$ which are: the complement $U_1$ of
  $\alpha_0\cup\beta_0$, a small neighbourhood $U_2$ of
  $\alpha_0\cap\beta_0$, and neighbourhoods $U_3, U_4$ of the arcs
  $\alpha_0\cup\beta_0\setminus U_2$.  By the work of Mann-Rosendal
  (\cite{MR}), the geometry of $\mathrm{Homeo}_0(T^2)$ is
  quasi-isometric to the fragmentation norm induced by the cover
  $\{U_i\}$. We denote this norm by $\|\cdot\|$. We denote the metric
  on $\mathcal{M}^\dagger(m_0)$ for brevity by $d$.  By
  right-invariance of the metric on $\mathrm{Homeo}_0(T^2)$ and of the
  action by inverses of maps on $\mathcal{M}^\dagger(m_0)$, it
  suffices to estimate $d(m_0, \phi^{-1} m_0)$ in terms of $\|\phi\|$,
  which we will do now.

  \smallskip First, observe that if $f$ is supported in some $U_i$, then
  \[ d(m_0, fm_0) \leq 4, \] since for each possible support $U_i$ a
  marking $m_i$ can be obtained from $m_0$ by at most two elementary
  moves, which is disjoint from $U_i$ and hence fixed by $f$.
  If now $\phi \in \mathrm{Homeo}_0(T^2)$ is arbitrary, with $\|\phi\| = n$, we
  can write
  \[ \phi = f_1 \cdots f_n, \] with each $f_i$ supported in some
  $U_{m_i}$. We have by the triangle inequality
  \[ d(m_0, \phi m_0) \leq \sum_{i=1}^n d(f_1\cdots f_{i-1}m_0,
    f_1\cdots f_i m_0). \] Since the action on $\mathcal{M}^\dagger(m_0)$
  is isometric, and by the previous remark, we then have
  \[ \sum_{i=1}^n d(f_1\cdots f_{i-1}m_0, f_1\cdots f_i m_0)
    = \sum_{i=1}^n d(m_0, f_i m_0)
    \leq 4n. \]
  Hence, we have
  \[ d(m_0, \phi^{-1}(m_0)) = d(m_0, \phi(m_0)) \leq 4 \|\phi\|. \] 
  In particular, this shows
  that the graph is connected, since the action of
  $\mathrm{Homeo}_0(T^2)$ on $\mathcal{M}^\dagger(m_0)$ is transitive.
  \begin{figure}
    \centering
    \def\svgwidth{0.8\textwidth}
\begingroup%
  \makeatletter%
  \providecommand\color[2][]{%
    \errmessage{(Inkscape) Color is used for the text in Inkscape, but the package 'color.sty' is not loaded}%
    \renewcommand\color[2][]{}%
  }%
  \providecommand\transparent[1]{%
    \errmessage{(Inkscape) Transparency is used (non-zero) for the text in Inkscape, but the package 'transparent.sty' is not loaded}%
    \renewcommand\transparent[1]{}%
  }%
  \providecommand\rotatebox[2]{#2}%
  \newcommand*\fsize{\dimexpr\f@size pt\relax}%
  \newcommand*\lineheight[1]{\fontsize{\fsize}{#1\fsize}\selectfont}%
  \ifx\svgwidth\undefined%
    \setlength{\unitlength}{412.21390852bp}%
    \ifx\svgscale\undefined%
      \relax%
    \else%
      \setlength{\unitlength}{\unitlength * \real{\svgscale}}%
    \fi%
  \else%
    \setlength{\unitlength}{\svgwidth}%
  \fi%
  \global\let\svgwidth\undefined%
  \global\let\svgscale\undefined%
  \makeatother%
  \begin{picture}(1,0.19626027)%
    \lineheight{1}%
    \setlength\tabcolsep{0pt}%
    \put(0,0){\includegraphics[width=\unitlength,page=1]{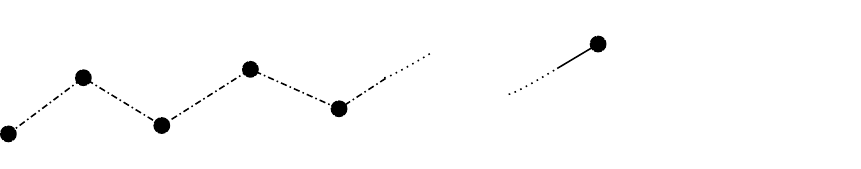}}%
    \put(0.00945527,0.00641427){\makebox(0,0)[lt]{\lineheight{1.25}\smash{\begin{tabular}[t]{l}$m_0$\end{tabular}}}}%
    \put(0.09265526,0.12773877){\makebox(0,0)[lt]{\lineheight{1.25}\smash{\begin{tabular}[t]{l}$f_1m_0$\end{tabular}}}}%
    \put(0.18651127,0.01077224){\makebox(0,0)[lt]{\lineheight{1.25}\smash{\begin{tabular}[t]{l}$f_1f_2m_0$\end{tabular}}}}%
    \put(0.61745466,0.16825791){\makebox(0,0)[lt]{\lineheight{1.25}\smash{\begin{tabular}[t]{l}$f_1\cdots f_n m_0$\end{tabular}}}}%
  \end{picture}%
\endgroup%

    \caption{Moving in the marking graph. The lines between
      consecutive $f_1\cdots f_im_0$ indicate
      distance at most $4$.}
  \end{figure}

  \smallskip Now, on the other hand, suppose that $m_1, m_0$ differ by
  a single elementary move. Then there is an element
  $f \in \mathrm{Homeo}_0(T^2)$ with $f(m_0) = m_1$ and
  $\|f\| \leq 4$; in fact $f$ can be written as a product of
  homeomorphisms supported on each $U_i$ at most once. Namely, assume
  that $\alpha_0$ is unchanged and $\beta_0$ changes to $\beta_1$,
  with $\beta_0 \cap \beta_1 = \emptyset$. By 
  a modification in the neighbourhood of the intersection point followed by another one 
  in the neighourhood of $\beta_0$, we get a curve
  $f_1 f_0(\beta_0)$ which 
  is disjoint from $\beta_0$. By applying a homeomorphism $f_2$ supported in 
  the neighborhood of $\alpha_0$, we get $f_2 f_1 f_0(\beta_0)$ which additionaly 
  coincides with $\beta_1$ near $\alpha_0$.
  Now a modification on the complement
  of $\alpha_0\cup\beta_0$ (i.e. a homeomorphism $f_3$ of
  fragmentation norm $1$) can send $f_2 f_1 f_0(\beta_0)$ to $\beta_1$. The
  existence of $f_2$ and $f_3$ is an application of a Schoenflies-like
  theorem (see for instance \cite{Siebenmann} for an account of the
  classical case, and \cite{Homma} for a very general case.
  \begin{figure}
    \centering
    \def\svgwidth{0.8\textwidth}
    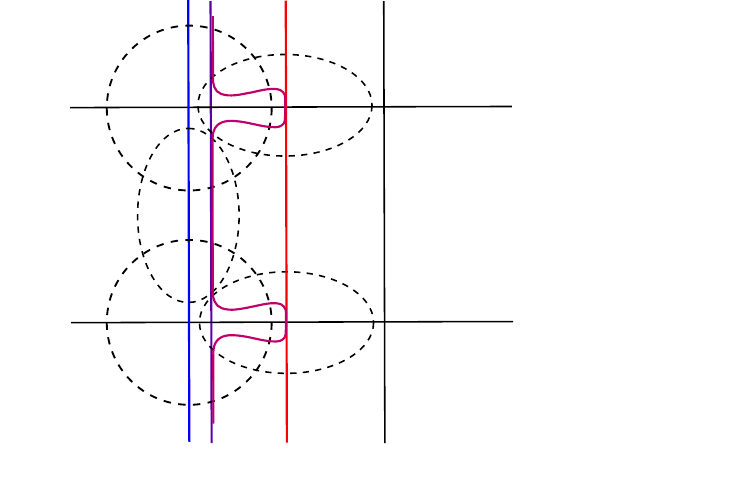
    \caption{Getting one move in the marking graph with a homeomorphism of 
    fragmentation norm 4.}
  \end{figure}

  Let now, $m' = \Psi^{-1}(m_0)$ be a marking obtained by applying an
  arbitrary element $\Psi \in \mathrm{Homeo}_0(T^2)$.
  Denote $n = d(m_0, m') = d(m_0, \Psi (m_0))$, and
  choose a geodesic
  \[ m_0, m_1, \ldots, m_n = \Psi(m_0) \] in the fine marking graph.  By the
  above, there is a homeomorphism $\phi_1$ of fragmentation norm at
  most $4$ with $\phi_1(m_0) = m_1$. Assume inductively that we have
  an element $\phi_i$ of fragmentation norm at most $4i$ so that
  $\phi_i(m_0) = m_i$. Then, $\phi^{-1}_i(m_{i+1})$ is adjacent to
  $m_0$, and so there is an element $f_{i+1}$ of fragmentation norm at
  most $4$ so that $f_{i+1}(m_0) = \phi^{-1}_i(m_{i+1})$. Hence, we have
  \[ \phi_i f_{i+1}(m_0) = m_{i+1}, \] Hence,
  $\phi_{i+1} = \phi_i f_{i+1}$ satisfies the inductive hypothesis for
  $i+1$. We then have that
  \[ \Psi^{-1}\phi_n (m_0) = m_0. \]
  We leave it to the reader to check that a homeomorphism that preserves 
  $m_0$ (but maybe does not fixes it pointwise) has fragmentation norm at most $5$. Therefore 
  $\| \Psi^{-1}\phi_n \| \leq 5$. We thus have
  \[ \|\Psi\| \leq 4n + 5 = 4d(m_0, \Psi^{-1}(m_0)) + 5. \]  
\end{proof}

From now on, we identify the torus $T^2$ with $\mathbb{R}^2/\mathbb{Z}^2$,
and we let $m_0$ be the marking defined by the horizontal and vertical
curve through $0$. For brevity, we write
$\mathcal{M}^\dagger(m_0) =\mathcal{M}^\dagger_0$, since the basepoint marking 
$m_0$ will be fixed throughout.

We then have the following useful consequences, connecting marking
distance to $\mathcal{C}^0$--distance. Remember from the introduction that $\widetilde d$ denotes the distance on $\mathrm{Homeo}_0(T^2)$ obtained by minimizing the usual $C^0$-distance on $\mathrm{Homeo}(\mathbb{R}^2)$ over all pairs of lifts.
\begin{corollary}[$C^0$ bound from markings]
\label{cor:c0-bound}
  For any $D>0$ there is a number $R>0$ with the following property.
  Suppose that $h \in \mathrm{Homeo}_0(T^2)$ satisfies
  \[ d_{\mathcal{M}_0^\dagger}(m_0, hm_0) \leq D. \] Then 
  \[ \widetilde d(\mathrm{id}, {h}) \leq R. \]
\end{corollary}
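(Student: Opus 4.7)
The plan is to invoke the quasi-isometry already established in the proof of Lemma~\ref{lem:geometric-model} in the reverse direction, combined with Militon's theorem identifying $\widetilde d$ with the Mann--Rosendal maximal metric (as cited in the introduction just before Proposition~\ref{prop:metric-WPD}).

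First I would extract from the proof of Lemma~\ref{lem:geometric-model} the explicit inequality
\[ \|\Psi\| \;\leq\; 4\, d_{\mathcal{M}_0^\dagger}(m_0, \Psi^{-1}(m_0)) + 5, \]
where $\|\cdot\|$ denotes the fragmentation norm for the cover $\{U_i\}$ used there. Since the orbit map is $f \mapsto f^{-1}(m_0)$, one has $d_{\mathcal{M}_0^\dagger}(m_0, h m_0) = d_{\mathcal{M}_0^\dagger}(m_0, (h^{-1})^{-1} m_0)$, so I would apply the above with $\Psi = h^{-1}$ to get $\|h^{-1}\| \leq 4D+5$. Right-invariance of the fragmentation (pseudo-)metric yields $\|h\| = \|h^{-1}\| \leq 4D+5$, so $\|h\|$ is bounded in terms of $D$ only.

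Second, I would combine Mann--Rosendal's theorem (the fragmentation norm is quasi-isometric to the maximal right-invariant metric on $\mathrm{Homeo}_0(T^2)$) with Militon's theorem (the universal cover distance $\widetilde d$ is also quasi-isometric to the maximal metric). Composing these quasi-isometries gives universal constants $A,B$ such that $\widetilde d(\mathrm{id}, h) \leq A\|h\| + B$. Plugging in the bound on $\|h\|$ from the first step gives the desired bound with $R = R(D) := A(4D+5) + B$.

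There is no serious obstacle here: the content of the corollary is really a restatement of the chain of quasi-isometries that has already been put in place between the marking graph, the fragmentation norm, and $\widetilde d$. The only mild care required is the bookkeeping around the orbit map being $f \mapsto f^{-1}(m_0)$ rather than $f \mapsto f(m_0)$, which is handled by replacing $h$ by $h^{-1}$ and invoking right-invariance.
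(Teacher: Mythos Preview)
Your argument is correct, but it takes a different route than the paper. You invoke the chain Mann--Rosendal $+$ Militon to pass from a bound on the fragmentation norm $\|h\|$ to a bound on $\widetilde d(\mathrm{id},h)$. The paper instead gives this step directly and elementarily: choose the cover $\{U_i\}$ to consist of disks admitting lifts to $\mathbb{R}^2$ of diameter at most some $r$; then any homeomorphism supported in a single $U_i$ has a lift within $C^0$-distance $r$ of the identity, and composing, any $h$ with $\|h\|\leq n$ has a lift within distance $rn$ of the identity. This yields $\widetilde d(\mathrm{id},h)\leq r\|h\|$ without appealing to Militon. In fact the paper remarks immediately afterward that this corollary \emph{is} the easy direction of Militon's theorem, so your approach, while logically fine once Militon is taken as a black box, inverts the intended dependency and hides the elementary content.

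A minor simplification: the $h$ versus $h^{-1}$ bookkeeping is unnecessary. The proof of Lemma~\ref{lem:geometric-model} already notes $d_{\mathcal{M}_0^\dagger}(m_0,\phi^{-1}m_0)=d_{\mathcal{M}_0^\dagger}(m_0,\phi m_0)$, so you may apply the estimate directly with $\Psi=h$.
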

\begin{proof}
  Cover the torus by finitely many disks $U_i$ which admit finite
  diameter lifts to $\mathbb{R}^2$, and let $r$ be the maximal
  diameter of such lifts. In particular, any homeomorphism supported
  on a disk $U_i$ admits a lift to $\mathbb{R}^2$ which has
  $\mathcal{C}^0$--distance at most $r$ from the identity. Then, if
  $h$ has fragmentation norm at most $n$ (with respect to $U_i$), it
  admits a lift $\widetilde{h}$ of distance at most $rn$ to the
  identity. Now, the corollary follows immediately from
  Lemma~\ref{lem:geometric-model}.
\end{proof}
We note that this corollary proves the easy direction of the
following result due to Emmanuel Militon \cite{Militon}. Here $\mathrm{diam}$ denotes the diameter in the plane.
\begin{theorem}[{\cite[Theorem 2.4]{Militon}}]\label{thm:militon}
  For any choice of covering $\mathcal{U}$ of $T$ there are constants
  $C, C'$ so that for any $f \in \mathrm{Homeo}_0(T)$ we have
  \[ \frac{1}{C}\mathrm{diam}(\widetilde{f}[0,1]^2) - C' \leq \|f\|_\mathcal{U} \leq C\mathrm{diam}(\widetilde{f}[0,1]^2) + C'. \]
\end{theorem}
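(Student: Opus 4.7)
\emph{Overall plan.} The theorem is a two-sided quasi-isometry (with additive error) between the fragmentation norm $\|\cdot\|_\mathcal{U}$ on $\mathrm{Homeo}_0(T)$ and the quantity $\mathrm{diam}(\widetilde{f}[0,1]^2)$, which is comparable up to an additive constant to the universal-cover distance $\widetilde d(\mathrm{id},f)$. The left-hand inequality (lower bound on $\|f\|_\mathcal{U}$) is the easy half, and as the authors indicate, it essentially follows from Corollary~\ref{cor:c0-bound}. The right-hand inequality (upper bound on $\|f\|_\mathcal{U}$) is the deeper content, asserting that small displacement implies a decomposition of $f$ into a controlled number of fragmented pieces.

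\emph{Easy direction.} Possibly after refining $\mathcal{U}$, assume each $U \in \mathcal{U}$ is an embedded open disk with lifts in $\mathbb{R}^2$ of diameter at most some $r$. Any $\phi$ supported in $U$ admits a distinguished lift $\widetilde\phi$ equal to the identity outside a single lifted disk, so $\|\widetilde\phi - \mathrm{id}\|_{C^0} \leq r$. If $\|f\|_\mathcal{U} = n$ with $f = f_1 \cdots f_n$ and each $f_i$ supported in an element of $\mathcal{U}$, the composition of distinguished lifts is a lift of $f$ of $C^0$-displacement at most $nr$. Consequently
\[ \mathrm{diam}(\widetilde{f}[0,1]^2) \leq 2nr + \sqrt{2}, \]
which is exactly the left inequality of the theorem for appropriate constants $C, C'$. (The preliminary refinement only affects these constants, since coarsening a cover of $T$ by open disks can only increase the fragmentation norm by a bounded multiplicative factor plus an additive constant.)

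\emph{Hard direction.} The task is a uniform fragmentation: given a lift $\widetilde{f}$ with $C^0$-displacement at most $D$, produce a decomposition $f = h_1 \cdots h_N$ with each $h_i$ supported in some $U \in \mathcal{U}$ and $N \leq C D + C'$. My plan has three steps. (i) Use quasi-isometry invariance of $\|\cdot\|_\mathcal{U}$ under changes of cover to reduce to a convenient model cover, e.g.\ a cover by four or five disks as in the proof of Lemma~\ref{lem:geometric-model}. (ii) Prove a bounded-fragmentation lemma: any $h$ whose lift is $C^0$-closer to the identity than the Lebesgue number of $\mathcal{U}$ is a product of a bounded number of pieces supported in elements of $\mathcal{U}$. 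This can be done by Schoenflies/Alexander-trick arguments in each chart, in the same spirit as the step-by-step construction in the proof of Lemma~\ref{lem:geometric-model}. (iii) Decompose a general $f$ of displacement $D$ as a composition of $O(D)$ maps of displacement $O(1)$, and apply (ii) to each.

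\emph{Where the main obstacle lies.} Step (iii) is the crux: there is no a priori isotopy from $\mathrm{id}$ to $f$ whose intermediate steps displace points by the right amount. The standard way around this is a Kirby--Edwards torus trick: pull $\widetilde{f}$ back to the covering torus $T/m\mathbb{Z}^2$ with $m \sim D$, on which the displacement rescales to $O(1)$; apply the Edwards--Kirby local contractibility theorem to fragment on the large torus into a bounded number of pieces, and push the decomposition back down to $T$, incurring a multiplicative cost linear in $m$. A more intrinsic alternative, closer to the framework of this paper, would exploit Lemma~\ref{lem:geometric-model}: since $\|f\|_\mathcal{U}$ is quasi-isometric to $d_{\mathcal{M}^\dagger_0}(m_0, fm_0)$, it suffices to build a path of length $O(D)$ in $\mathcal{M}^\dagger_0$ from $m_0$ to $fm_0$, which one would construct by successively altering one side of the marking along a bicorn surgery path between $\alpha_0$ and $f\alpha_0$ (and similarly for $\beta_0, f\beta_0$). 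The displacement bound $D$ should translate linearly into a bound on the number of relevant $\mathbb{Z}^2$-translates the curves cross, hence on the number of elementary marking moves required.
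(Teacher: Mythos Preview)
The paper does not contain a proof of this theorem: it is quoted as an external result of Militon (\cite[Theorem~2.4]{Militon}). The only thing the paper says about its proof is the sentence immediately preceding the statement, namely that Corollary~\ref{cor:c0-bound} establishes the easy (left-hand) inequality. Your treatment of that direction is exactly the argument behind Corollary~\ref{cor:c0-bound}, so on that half you and the paper agree.

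For the hard direction there is nothing in the paper to compare against. Your outline is reasonable as a sketch---the covering/rescaling trick in your step~(iii) is indeed close in spirit to Militon's actual argument---but it remains an outline: you have not carried out the Edwards--Kirby fragmentation on the large torus, nor explained why pushing the decomposition back down costs only linearly in $m$. Your alternative route through Lemma~\ref{lem:geometric-model} is not circular (that lemma is proved from Mann--Rosendal, not from Militon), but ``build a path of length $O(D)$ in $\mathcal{M}^\dagger_0$'' is a restatement of the goal rather than a proof; the bicorn surgery suggestion does not obviously control the number of marking moves, since bicorn paths live in $\cd(T^2)$, not in $\cd_0(T^2)$ or $\mathcal{M}^\dagger_0$, and the length of a bicorn path between $\alpha_0$ and $f\alpha_0$ is governed by fine-curve-graph distance (which is $O(D)$ by \cite[Lemma~4.5]{Dagger2}) rather than by marking-graph distance.
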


  It will be convenient for later use to interpret the
  distance in $\mathcal{M}_0^\dagger$ 
  in a slightly different way. Given two homotopic simple
  closed curves $\delta, \delta'$ we define the \emph{relative width}
  $\mathrm{wd}_\delta(\delta')$ as the number of distinct lifts of
  $\delta$ in $\mathbb{R}^2$ which a lift of $\delta'$ intersects.

  Given two markings $m = \{\alpha, \beta\}, m'=\{\alpha', \beta'\}$ in
  the isotopy class of $m_0$, we then define
  \[ d_w(m, m') = \mathrm{wd}_\alpha(\alpha') + \mathrm{wd}_\beta(\beta'). \]
  \begin{lemma}\label{lem:relative-width-distance}
    The metrics $d_{\mathcal{M}_0^\dagger}$ and $d_w$ on
    $\mathcal{M}_0^\dagger$ are equivalent.
  \end{lemma}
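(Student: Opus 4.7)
The plan is to establish two linear inequalities between $d_{\mathcal{M}_0^\dagger}$ and $d_w$, one in each direction, each resting on the structure of lifts of disjoint isotopic simple closed curves in $\mathbb{R}^2$.

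For the upper bound $d_w \leq C \, d_{\mathcal{M}_0^\dagger}$, I would show that any single elementary move $\{\alpha, \beta\} \to \{\alpha, \beta''\}$ in $\mathcal{M}_0^\dagger$ changes $d_w(\cdot, m')$ by at most a constant. The change equals $|\mathrm{wd}_\beta(\beta') - \mathrm{wd}_{\beta''}(\beta')|$. Since $\beta$ and $\beta''$ are disjoint and isotopic, $\beta \cup \beta''$ cuts the torus into two annuli; these annuli lift to strips that partition $\mathbb{R}^2$, each bounded by one lift of $\beta$ and one lift of $\beta''$. Consequently the lifts of $\beta$ and $\beta''$ strictly interleave along any transverse ordering of this family of pairwise disjoint properly embedded lines. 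A lift $\widetilde{\beta'}$ is itself a connected line, and since each line in the family separates $\mathbb{R}^2$, the set of lifts of $\beta \cup \beta''$ met by $\widetilde{\beta'}$ forms a consecutive subfamily of this alternating order. Hence the counts of $\beta$- and $\beta''$-lifts met by $\widetilde{\beta'}$ differ by at most one, giving the required Lipschitz bound.

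For the lower bound $d_{\mathcal{M}_0^\dagger} \leq C \, d_w$, I would change one curve at a time, first replacing $\beta$ by $\beta'$ in $O(\mathrm{wd}_\beta(\beta'))$ elementary moves and then replacing $\alpha$ by $\alpha'$ in $O(\mathrm{wd}_\alpha(\alpha'))$ elementary moves. The key step is, given $\beta$ with $\mathrm{wd}_\beta(\beta') > 0$, to produce a simple closed curve $\beta''$ disjoint from $\beta$, isotopic to $\beta_0$, with $\mathrm{wd}_{\beta''}(\beta') < \mathrm{wd}_\beta(\beta')$, and such that $\{\alpha, \beta''\}$ remains a fine marking. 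One constructs such a $\beta''$ via bicorn surgery on $\beta$ and $\beta'$: picking a bicorn whose $\beta$-arc is short and sits near a crossing responsible for the width yields an essential simple closed curve isotopic to $\beta$, disjoint from $\beta$, and whose lift meets one fewer lift of $\beta'$. The Bicorn Perturbation Lemma (Lemma~\ref{lem:non-transverse-bicorns}) makes this surgery meaningful in the fine setting even when $\beta$ and $\beta'$ do not meet transversely. Performing the surgery in a small neighborhood disjoint from $\alpha$, which is possible since $\alpha \cap \beta$ is a single point, guarantees that $\alpha$ continues to intersect $\beta''$ in a single transverse point.

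The main obstacle is the careful execution of bicorn surgery in the $C^0$ setting: because curves are only continuous, transversality cannot be assumed a priori, so we must rely on the perturbation machinery from Section~\ref{sec:fine-cg}. A further subtlety is verifying, at each surgery step, that the bicorn is essential and in the correct homotopy class, which is ensured by keeping the subarc of $\beta$ being replaced short compared to one period of its lift. Once these technicalities are handled, each surgery step reduces one of the two width contributions to $d_w$ by one, and the resulting sequence of elementary moves realizes the desired linear bound.
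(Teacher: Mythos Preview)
Your upper bound argument ($d_w \leq C\, d_{\mathcal{M}_0^\dagger}$) via the interleaving of lifts is correct and clean.

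The lower bound has a real gap. Your two-phase plan --- first reach $\{\alpha,\beta'\}$, then $\{\alpha',\beta'\}$ --- fails at the outset because $\{\alpha,\beta'\}$ is generally not a marking: $\beta'$ is only required to be dual to $\alpha'$, and may intersect $\alpha$ arbitrarily many times. The surgery step you describe does not rescue this. A bicorn ``whose $\beta$-arc is short'' is by definition mostly a subarc of $\beta'$; such a curve will typically cross $\alpha$ as often as $\beta'$ does, and will typically cross $\beta$ many times as well, so it is neither dual to $\alpha$ nor disjoint from $\beta$. Bicorn surgery is not a local operation, so ``performing the surgery in a small neighborhood disjoint from $\alpha$'' is not meaningful. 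If instead you take bicorns close to $\beta$ (long $\beta$-arc, short $\beta'$-arc), you can preserve duality with $\alpha$ and disjointness from $\beta$, but then reducing the width forces the $\beta'$-arc to resolve an \emph{extremal} crossing in the lift, and there is no reason that particular arc avoids $\alpha$. Producing, at each step, a curve that simultaneously reduces width and remains a dual is exactly the difficulty addressed later in Section~\ref{sec:c-dagger-alpha} (Propositions~\ref{prop:move-markings} and~\ref{prop:move-markings2}), and it is substantially harder than the present lemma.

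The paper's proof bypasses all of this. Both metrics are $\mathrm{Homeo}_0(T^2)$-invariant, so it suffices to compare them on pairs $(m_0, f m_0)$. By Lemma~\ref{lem:geometric-model} and Militon's Theorem~\ref{thm:militon}, $d_{\mathcal{M}_0^\dagger}(m_0, f m_0)$ is coarsely equal to $\mathrm{diam}(\widetilde f\,[0,1]^2)$; an elementary strip argument then shows the latter is coarsely equal to $\mathrm{wd}_{\alpha_0}(f\alpha_0)+\mathrm{wd}_{\beta_0}(f\beta_0)=d_w(m_0,f m_0)$. So the paper trades your combinatorial surgery for Militon's theorem as a black box.
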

  \begin{proof}
    Note that both metrics are invariant under the action of
    $\mathrm{Homeo}_0(T)$, and thus it suffices to show that
    $d(m_0, m), d_w(m_0, m)$ are comparable for all $m$. To this end
    we consider the orbit map
    \[ \mathrm{Homeo}_0(T) \to \mathcal{M}_0^\dagger(T), \] noting
    that we already know that this is a quasi-isometry for the metric
    $d$. Thus, it suffices to show that it also is a quasi-isometry
    for the metric $d_w$. 
    Let $f\in \mathrm{Homeo}_0(T)$, denote $m_0 = \{\alpha, \beta\}$
    and $f(m_0) = \{\alpha', \beta'\}$.
    From definition of relative width, it is
    clear that
    \[ \mathrm{diam}(\widetilde{f}[0,1]^2) \geq \max\{
      \mathrm{wd}_\alpha(\alpha'), \mathrm{wd}_\beta(\beta') \} \]
    Next, note that $\widetilde{f}[0,1]^2$ is contained in a
    (topological) strip bounded by two adjacent lifts of
    $\widetilde{f}(\alpha)$. By definition of the width, this strip is contained in
    a straight strip of width $\mathrm{wd}_\alpha(\alpha')+2$ bounded by
    lifts of $\alpha$, and analogous for $\beta$. This shows
    \[ \mathrm{diam}(\widetilde{f}[0,1]^2) \leq
      \mathrm{wd}_\alpha(\alpha') + \mathrm{wd}_\beta(\beta') + 4. \] 
      Now, the desired claim follows from Theorem~\ref{thm:militon}.
  \end{proof}

\bigskip Finally, we discuss the difference between the geometry of the subcomplex
$\mathcal{M}^\dagger_0$ and the whole complex $\mathcal{M}^\dagger$.
\begin{lemma}\label{lem:distorted-markings}
  The inclusion $\mathcal{M}^\dagger_0 \to \mathcal{M}^\dagger$ is
  exponentially distorted; that is: there is a function $F$ of exponential growth type, so that
  \[ d_{\mathcal{M}^\dagger_0} (m,m') \leq F(d_{\mathcal{M}^\dagger} (m,m'))\] 
  for all $m, m' \in \mathcal{M}^\dagger_0$, and every function with this property has at least exponential growth type.
\end{lemma}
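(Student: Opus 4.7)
Plan: We prove both directions of the claim separately, using the characterisation of $d_{\mathcal M^\dagger_0}$ via lifts (Theorem~\ref{thm:militon} and Lemma~\ref{lem:geometric-model}) together with the fact that $\mathcal M^\dagger$ carries matrix-type moves that do not preserve the isotopy class.

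\emph{Existence of an exponential $F$.} For $p \in \mathcal M^\dagger$, define a complexity $W(p) = \inf_{\phi}\mathrm{diam}(\widetilde\phi([0,1]^2))$, where $\phi \in \mathrm{Homeo}(T^2)$ ranges over homeomorphisms with $\phi m_0 = p$ and $\widetilde\phi$ over its lifts to $\mathbb R^2$; for $p \in \mathcal M^\dagger_0$, the combination of Theorem~\ref{thm:militon} and Lemma~\ref{lem:geometric-model} gives $W(p) \asymp d_{\mathcal M^\dagger_0}(m_0, p)$. The key inductive statement is $W(p') \leq C W(p) + C$ for every elementary move $p \to p'$ and a uniform $C$: realise the move by a homeomorphism $h$ which is either a uniformly bounded $\widetilde d$-norm element of $\mathrm{Homeo}_0$ (a sliding homeomorphism across the annulus bounded by the two disjoint homotopic curves, whose norm is controlled by the unit size of the torus independently of the complexity of those curves) or a fixed Dehn-twist representative of one of the finitely many elementary mapping classes appearing in an intersection-$1$ move. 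Then $\widetilde h$ lies at bounded sup-distance from a linear map of bounded operator norm on $\mathbb R^2$, and composing with an optimal $\phi$ for $p$ gives $\mathrm{diam}(\widetilde{h\phi}([0,1]^2)) \leq C\,\mathrm{diam}(\widetilde\phi([0,1]^2)) + C$. Iterating along a path of length $n$ yields $W(p_n) = O(C^n)$, hence $d_{\mathcal M^\dagger_0}(m_0, p_n) = O(C^n)$ whenever $p_n \in \mathcal M^\dagger_0$, providing the desired exponential $F$.

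\emph{Exponential is necessary.} Let $A = \bigl(\begin{smallmatrix} 2 & 1 \\ 1 & 1\end{smallmatrix}\bigr) \in SL_2(\mathbb Z)$, with expanding eigenvalue $\lambda = (3+\sqrt 5)/2 > 1$, and fix $B \in \mathrm{Homeo}_0(T^2)$ of bounded fragmentation norm, supported in a small disk, displacing one interior point by a small $\epsilon > 0$ in the contracting eigendirection of $A$. Set $\phi_n = A^{-n} B A^n \in \mathrm{Homeo}_0(T^2)$ and $m_n = \phi_n m_0 \in \mathcal M^\dagger_0$. For the upper bound on $d_{\mathcal M^\dagger}(m_0, m_n)$, follow the path
\[ m_0 \to B m_0 \to BAm_0 \to \cdots \to BA^n m_0 \to A^{-1}BA^n m_0 \to \cdots \to A^{-n}BA^n m_0 = m_n: \]
the first step uses only $O(1)$ elementary moves because $\|B\|_\mathcal U$ is bounded, and each transition $X \to A^{\pm 1} X$ is realised by exactly two elementary moves in $\mathcal M^\dagger$ obtained by changing the two curves successively in whichever order keeps the intermediate intersection equal to $1$ (the relevant determinants, for instance $|\det(v, Av)| = 1$ for $v = (1,0)$ or $(0,1)$ with this $A$, verify validity), hence $d_{\mathcal M^\dagger}(m_0, m_n) = O(n)$. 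For the lower bound on $d_{\mathcal M^\dagger_0}(m_0, m_n)$, examine the natural lift $\widetilde \phi_n = A^{-n}\widetilde B A^n$: it coincides with the identity outside the thin strip $A^{-n}(\mathrm{supp}\, B)$, and inside it displaces by $A^{-n}$ applied to $B$'s displacement vector, of norm $\epsilon \lambda^n$ since $A^{-n}$ expands the chosen direction by $\lambda^n$. Any other lift differs from $\widetilde \phi_n$ by an integer translate $k$, and the triangle inequality forces $\sup_x \|\widetilde\phi_n(x) - x - k\| \geq \max(|k|, \epsilon\lambda^n - |k|) \geq \tfrac{1}{2}\epsilon\lambda^n$, so $\widetilde d(\mathrm{id}, \phi_n) \geq \tfrac 1 2 \epsilon\lambda^n$. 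Theorem~\ref{thm:militon} together with Lemma~\ref{lem:geometric-model} then gives $d_{\mathcal M^\dagger_0}(m_0, m_n) \geq c \lambda^n$, so any valid $F$ must grow at least exponentially.

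The main obstacle is the upper bound: specifically, showing that disjoint moves between arbitrarily wiggly curves can always be realised by uniformly bounded-norm homeomorphisms. This reduces to the fact that two disjoint homotopic simple closed curves on $T^2$ cobound an annulus on which a sliding homeomorphism of bounded $\widetilde d$-norm exists, a fact that persists regardless of the complexity of the curves themselves.
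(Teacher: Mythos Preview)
Your lower-bound argument is essentially the paper's (conjugate a small compactly supported homeomorphism by powers of an Anosov map, so that the $\widetilde d$-displacement grows like $\lambda^n$ while the $\mathcal{M}^\dagger$-distance grows linearly), and it is correct.

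The upper-bound argument has a real gap in the intersection-$1$ case. You assert that the move $p\to p'$ can be realised by some $h$ with $hp=p'$ whose lift $\widetilde h$ lies at bounded sup-distance from a linear map of \emph{bounded} operator norm. But the mapping class of any such $h$ is the Dehn twist $T_\alpha$ about one of the curves $\alpha$ of $p$, and the action of $T_\alpha$ on $H_1$ has operator norm comparable to $|[\alpha]|^2$, which is unbounded as $p$ ranges over $\mathcal{M}^\dagger$. The phrase ``one of the finitely many elementary mapping classes'' is only valid for moves \emph{based at $m_0$}, i.e.\ after pulling back by $\phi^{-1}$; but then the relevant composition is $\phi h_0$ rather than $h\phi$, and the bound $\mathrm{diam}(\widetilde\phi(\widetilde h_0([0,1]^2)))\le C\,W(p)$ requires the additional (true, but unstated) fact that the operator norm of $\phi$'s mapping class is itself controlled by $W(p)$. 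As written, your inductive step does not go through.

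The paper avoids all of this by never passing through a realising homeomorphism: it simply observes that an elementary move replaces the fundamental domain $R$ by one contained in at most three $R$-lattice-translates of $R$, so $\mathrm{diam}(R')\le 3\,\mathrm{diam}(R)$ directly. Iterating gives $\mathrm{diam}(R_n)\le 3^n\mathrm{diam}(R_0)$, and then Theorem~\ref{thm:militon} with Lemma~\ref{lem:geometric-model} finishes. This is both shorter and sidesteps the issue of which side the realising map sits on.
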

\begin{proof}
  Denote by $R_0 = [0,1]^2$ the fundamental domain corresponding to
  our basepoint marking $m_0$. Observe that there is a $k>0$ so that
  if $m$ is an arbitrary marking homotopic to $m_0$, and $R$ the
  corresponding fundamental domain, then we have
  \[ \mathrm{diam}(R) \leq 3^{d_{\mathcal{M}^\dagger}(m,m_0)}. \] This
  follows since each edge in $ \mathcal{M}^\dagger$ corresponds to a
  move that can only change the corresponding fundamental domain to
  intersect three adjacent copies. This implies the ``at most exponential''
  direction by Lemma~\ref{lem:geometric-model} and Theorem~\ref{thm:militon}.

  To see the other direction, let $A$ be an Anosov map of the torus,
  and $\varphi$ a point push around a curve $\gamma$. Then the conjugate
  $A^n\varphi A^{-n}$ is a point push around $A^n\gamma$, and therefore
  \[ \mathrm{diam}(A^n\varphi A^{-n})(R_0) \] grows exponentially in
  $n$. Hence, by Lemma~\ref{lem:geometric-model}, the distance
  $d_{\mathcal{M}^\dagger_0}(m_0, A^n\varphi A^{-n}m_0)$ also grows
  exponentially. On the other hand, by the triangle inequality, the distance $d_{\mathcal{M}^\dagger}(m_0, A^n\varphi A^{-n}m_0)$ grows linearly. Hence, no subexponential bound of $d_{\mathcal{M}^\dagger_0}$ in terms of $d_{\mathcal{M}^\dagger}$ is possible. 
\end{proof}

As an easy consequence we get the following properness, which says that a set of vertices in the subgraph $\mathcal{M}^\dagger_0$ is bounded if and only if it is bounded in the ambient graph $\mathcal{M}^\dagger$ .
This will be crucial later.
\begin{corollary}\label{cor:properness-markings}
  For any $r$ there is an $R$ so that: for all 
  $m,m' \in \mathcal{M}^\dagger_0$
  \[ d_{\mathcal{M}^\dagger}(m, m') \leq r \quad\Rightarrow\quad
    d_{\mathcal{M}^\dagger_0}(m, m') \leq R. \]
\end{corollary}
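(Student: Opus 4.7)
The plan is essentially to observe that this corollary is an immediate unpacking of Lemma~\ref{lem:distorted-markings}. Indeed, that lemma provides a function $F$ (of at most exponential growth type) such that
\[ d_{\mathcal{M}^\dagger_0}(m, m') \leq F\bigl(d_{\mathcal{M}^\dagger}(m, m')\bigr) \]
for every pair $m, m' \in \mathcal{M}^\dagger_0$. Since $F$ is a well-defined (monotone) function, given $r$ it suffices to set $R = F(r)$: then whenever $d_{\mathcal{M}^\dagger}(m, m') \leq r$ we obtain $d_{\mathcal{M}^\dagger_0}(m,m') \leq F(r) = R$, as required.

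If I wanted to give a self-contained argument (without explicitly invoking the distortion lemma), I would follow the reasoning from its first half. Assume $d_{\mathcal{M}^\dagger}(m, m') \leq r$. After translating by an element of $\mathrm{Homeo}_0(T^2)$ one may assume $m = m_0$, and then by right-invariance it suffices to bound $d_{\mathcal{M}^\dagger_0}(m_0, m')$ for any $m'$ reachable in at most $r$ elementary moves inside $\mathcal{M}^\dagger$. A single elementary move can only shift the corresponding fundamental domain across at most three adjacent copies, so after $r$ moves the fundamental domain $R$ associated to $m'$ satisfies $\mathrm{diam}(R) \leq 3^r$. Combining Lemma~\ref{lem:geometric-model} (which identifies $\mathcal{M}^\dagger_0$ quasi-isometrically with $\mathrm{Homeo}_0(T^2)$ via the orbit map) and Militon's Theorem~\ref{thm:militon} (bounding the fragmentation norm by the diameter of $\widetilde{f}[0,1]^2$), one concludes
\[ d_{\mathcal{M}^\dagger_0}(m_0, m') \leq C \cdot 3^r + C', \]
which provides the desired $R = R(r)$.

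I do not anticipate a serious obstacle here, since the corollary is a direct restatement of the upper bound half of the preceding lemma; the only point to check is that the ``at most exponential'' growth bound really does furnish a finite $R$ for each fixed $r$, which is immediate from the fact that $F$ takes finite values.
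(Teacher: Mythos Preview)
Your proposal is correct and matches the paper's approach exactly: the paper presents this corollary as ``an easy consequence'' of Lemma~\ref{lem:distorted-markings} with no separate proof, and you have correctly identified that setting $R=F(r)$ does the job. One tiny quibble: to go from $d_{\mathcal{M}^\dagger}(m,m')\leq r$ to $d_{\mathcal{M}^\dagger_0}(m,m')\leq F(r)$ you implicitly use that $F$ is monotone, which the lemma does not literally assert; but since graph distances are integers you can take $R=\max\{F(0),\ldots,F(r)\}$, and in any case the explicit $F$ from the proof (essentially $C\cdot 3^r+C'$) is monotone.
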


The phenomenon we exhibited here is completely analogous to behavior
seen in mapping class groups. Here, Broaddus--Farb--Putman show that
the kernel of the Birman exact sequence is exponentially distorted
\cite{BFP}.  


\subsection{Outlook}
To finish this section, we describe an outlook over the content of the
next three sections. The eventual goal will be to prove a distance
formula bounding the distance between two markings $m, m'$ from above.

Our formula is inspired by Masur-Minsky's hierarchy formula. We begin
by describing the simplest case of this formula, in the case of the
torus. In this case, the (non fine) curve graph is the classical \emph{Farey
  graph}, whose vertices correspond to alternatively rational numbers,
or isotopy classes of simple closed curves on the torus. Edges
correspond to intersection $1$ up to isotopy, and thus a marking
for the torus corresponds exactly to an edge in the Farey graph.

Suppose we are given a geodesic $\alpha_1, \ldots, \alpha_n$ in the
Farey graph. Then $\{\alpha_1, \alpha_2\}$ and
$\{\alpha_2, \alpha_3\}$ are markings which share a curve $\alpha_2$. The duals
$\alpha_1, \alpha_3$ differ by a power of a Dehn twist about
$\alpha_2$. Moving from $\{\alpha_1, \alpha_2\}$ to
$\{\alpha_2, \alpha_3\}$ can be seen as ``moving across a fan of
triangles'' in the Farey graph. The length of the fan is the power of
the Dehn twist.

Continuing on, moving from $\{\alpha_2, \alpha_3\}$ to
$\{\alpha_3, \alpha_4\}$ corresponds to applying a suitable Dehn twist
power about $\alpha_3$; the power is the length of the fan around
$\alpha_3$.

Hence, one obtains an upper (and, in fact, also a lower bound) for the
distance between $\{\alpha_1, \alpha_2\}$ and
$\{\alpha_{n-1},\alpha_n\}$ as the sum of the lengths of all the fans.

What makes this approach powerful is that the ``length of the fan''
between, say, $\{\alpha_{i-1}, \alpha_i\}$ and
$\{\alpha_i,\alpha_{i+1}\}$, can be computed, up to a bounded error, as a twist of $\alpha_1$
and $\alpha_n$ at $\alpha_i$ \emph{without needing to know the
  intermediate curves $\alpha_k$ in the geodesic}. This is the
celebrated ``bounded geodesic image theorem'' in this context: the
amount of twisting of $\alpha_1$ and $\alpha_{i-1}$ about $\alpha_i$
is uniformly bounded (and similar for $\alpha_n, \alpha_{i+1}$). In
this setting, a reason this works is that all $\alpha_i$ are in
different isotopy classes.

Combining everything one obtains a distance formula in the marking
graph in terms of \emph{curve graph distance} and a sum of
\emph{twists}.

\begin{figure}[h!]
  \centering
  \def\svgwidth{0.8\textwidth}
  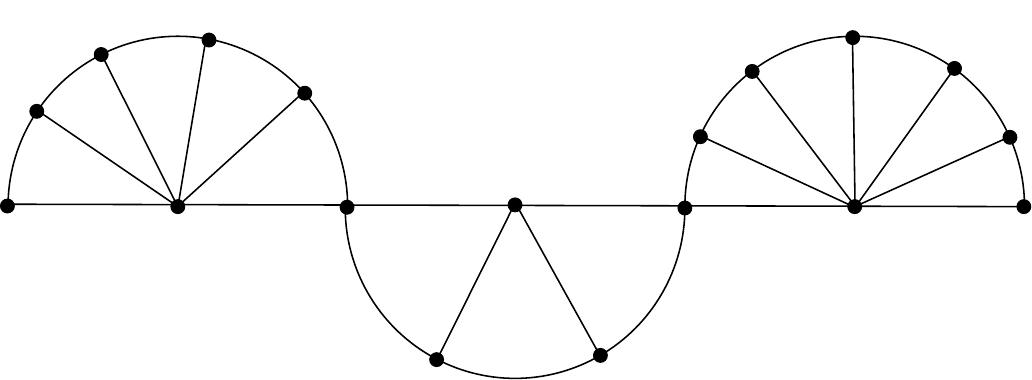
  \caption{From geodesics in the Farey graph to paths in the marking graph}
\end{figure}

We aim to prove a similar upper bound in our fine setting. There are
three obstacles which will be addressed in the next sections. First,
we need to define fine twist numbers, and prove fine bounded geodesic
image theorems. This is done in Section~\ref{sec.twist-numbers}.  A
major technical difficulty arises since geodesics in the fine curve
graph of the torus can contain many curves in the same isotopy classes
-- which means that the bounded geodesic image theorem cannot directly
be used to relate the twists of $\alpha_1$ and $\alpha_{i-1}$ about
$\alpha_i$. To overcome this problem, in Section
\ref{sec:c-dagger-alpha} we discuss paths in the fine curve graph
which stay in the same isotopy class, and how to extract paths in the
marking graphs from them.  Finally, in
Section~\ref{sec:marking-distance-bound}, we put the pieces together
to obtain the distance formula.

\section{Fine twist numbers and bounded geodesic image theorems}\label{sec.twist-numbers}
In the beginning of this section and the next one we discuss a general (orientable) surface $S$ of genus $\geq 1$. We will need only the case of the torus, but the tools developed here are likely useful in the general setting. In higher genus, edges of the curve graph corresponds to disjointness, see~\cite{Dagger1}.

We study the metric properties of the natural map that projects the fine curve graph to the arc graph of an annulus embedded in $S$. The main result, stated here in the torus, is that the projection of the axis of a loxodromic element is bounded (Lemma~\ref{lem:thick-axes}).

\subsection{Fine twist numbers}

\begin{definition}
Let us call \emph{almost embedded annulus} a map $A: [0,1]\times S^1 \to S$ which is an embedding on the interior of the annulus, and on each boundary component separately.
The almost embedded annulus $A$ is called \emph{essential} if the images of the boundary curves are essential. 
\end{definition}
A typical example on the torus is the almost embedded annulus $A_\alpha$ obtained by cutting along an essential simple closed curve $\alpha$. At some point below we will need to deform such annuli, which will lead to more complicated almost embedded annuli. We will often abusively identify $A$ and its image in $S$.

Let $A$ be an almost embedded essential annulus on $S$. The \emph{interior} of $A$ is the image of the interior $(0,1)\times S^1$ under the almost embedding map, the \emph{boundary} $\partial A$ is the image of the boundary of $[0,1]\times S^1$. Likewise, we will call an \emph{essential proper arc} of $A$ the image of an element of the fine arc graph $\mathcal{A}^\dagger([0,1]\times S^1)$. We denote $\mathcal{A}^\dagger(A)$ the graph of essential proper arcs of $A$, with an edge between two essential arcs if they are the images of disjoint essential arcs of $[0,1]\times S^1$: thus $A$ induces a graph isomorphism between $\mathcal{A}^\dagger([0,1]\times S^1)$ and $\mathcal{A}^\dagger(A)$. As usual, we see the graph $\mathcal{A}^\dagger(A)$ as a metric space, and in particular we can consider the Hausdorff distance $d_\mathrm{Hdff}$ between finite subsets of $\mathcal{A}^\dagger(A)$.
We then have a projection
\[ \pi_A: \mathcal{C}^\dagger(S) \to
  \mathbb{P}(\mathcal{A}^\dagger(A)) \] where $\pi_A(\beta)$ is
the set of all essential proper arcs of $A$ included in $\beta$. 
This is a finite set that could be empty; its diameter is at most $2$ (and $2$ may indeed happen when $\pi_A(\beta)$ contains two essential arcs sharing a boundary point that comes from an "inner tangency" of $\beta$ with the boundary of $A$).

\begin{figure}
  \centering
  \def\svgwidth{0.8\textwidth}
\begingroup%
  \makeatletter%
  \providecommand\color[2][]{%
    \errmessage{(Inkscape) Color is used for the text in Inkscape, but the package 'color.sty' is not loaded}%
    \renewcommand\color[2][]{}%
  }%
  \providecommand\transparent[1]{%
    \errmessage{(Inkscape) Transparency is used (non-zero) for the text in Inkscape, but the package 'transparent.sty' is not loaded}%
    \renewcommand\transparent[1]{}%
  }%
  \providecommand\rotatebox[2]{#2}%
  \newcommand*\fsize{\dimexpr\f@size pt\relax}%
  \newcommand*\lineheight[1]{\fontsize{\fsize}{#1\fsize}\selectfont}%
  \ifx\svgwidth\undefined%
    \setlength{\unitlength}{438.68767205bp}%
    \ifx\svgscale\undefined%
      \relax%
    \else%
      \setlength{\unitlength}{\unitlength * \real{\svgscale}}%
    \fi%
  \else%
    \setlength{\unitlength}{\svgwidth}%
  \fi%
  \global\let\svgwidth\undefined%
  \global\let\svgscale\undefined%
  \makeatother%
  \begin{picture}(1,0.37465576)%
    \lineheight{1}%
    \setlength\tabcolsep{0pt}%
    \put(0,0){\includegraphics[width=\unitlength,page=1]{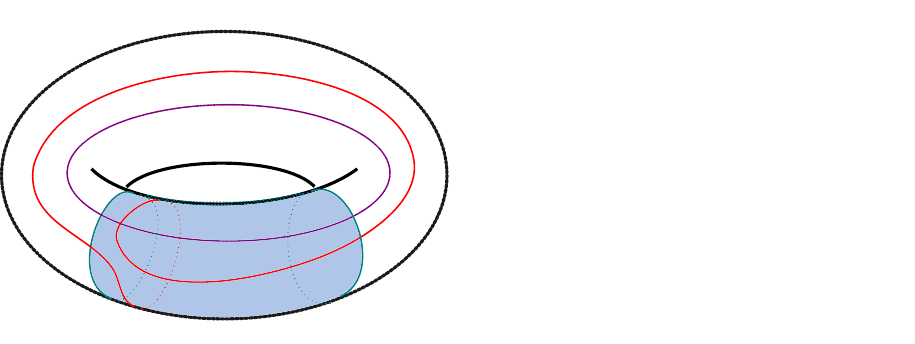}}%
    \put(0.23715273,0.12049317){\color[rgb]{0.50196078,0,0.50196078}\makebox(0,0)[lt]{\lineheight{1.25}\smash{\begin{tabular}[t]{l}$\beta$\end{tabular}}}}%
    \put(0.22869045,0.03687158){\color[rgb]{1,0,0}\makebox(0,0)[lt]{\lineheight{1.25}\smash{\begin{tabular}[t]{l}$\beta'$\end{tabular}}}}%
    \put(0.16624191,0.00000003){\color[rgb]{0.52941176,0.66666667,0.87058824}\makebox(0,0)[lt]{\lineheight{1.25}\smash{\begin{tabular}[t]{l}A\end{tabular}}}}%
    \put(0,0){\includegraphics[width=\unitlength,page=2]{twist_def.pdf}}%
    \put(0.74530621,0.32487496){\color[rgb]{0.50196078,0,0.50196078}\makebox(0,0)[lt]{\lineheight{1.25}\smash{\begin{tabular}[t]{l}$\pi_A(\beta)$\end{tabular}}}}%
    \put(0.71750148,0.04651215){\color[rgb]{1,0,0}\makebox(0,0)[lt]{\lineheight{1.25}\smash{\begin{tabular}[t]{l}$\pi_A(\beta')$\end{tabular}}}}%
  \end{picture}%
\endgroup%

  \caption{Projection into annuli}
\end{figure}

\begin{definition}
If $\beta, \beta'$ both have nonempty projection in the almost embedded annulus $A$, we can define a
\emph{twist number} as the Hausdorff distance between the projections in the arc graph:
\[ \mathrm{tw}_A(\beta, \beta') =  d_{\mathrm{Hdff}}(\pi_A(\beta),\pi_A(\beta')). \]
\end{definition}
Since both sets $\pi_A(\beta),\pi_A(\beta')$ have diameter at most $2$, this quantity is coarsely equal to the distance between any two points of both sets, and to the diameter of the set $\pi_A(\beta) \cup \pi_A(\beta')$ in the arc graph: the difference between any two of these quantities is bounded by $4$. More precisely, we have the following inequalities in any metric space:
denoting $m$ and $M$ respectively the minimum and maximum distance between points of $A$ and $B$,
\[
m \leq d_\mathrm{Hdff}(A,B)
\leq M
\leq \mathrm{diam}(A \cup B) 
\leq m + \mathrm{diam}(A) + \mathrm{diam}(B).
\]
In particular, $\mathrm{tw}_A(\beta, \beta') \leq \mathrm{diam}(\pi_A
(\beta) \cup \pi_A(\beta'))$.

We begin by observing some basic properties of twist numbers.
\begin{lemma}\label{lem:basic-twists-curves}
  Let $A \subset S$ be an almost embedded essential annulus.
  \begin{enumerate}[i)]
  \item The projection $\pi_A(\alpha)$ is empty if and only if
    there is a curve $\gamma$ which is included in the interior of $A$ and disjoint from $\alpha$. In particular, in this case $d_{\cd}(\delta, \alpha) \leq
    2$, where $\delta$ denote one of the boundary curves of $A$,
    $\pi_{A_\alpha}(\delta)$ is also empty, and $\alpha$ is homotopic to $\delta$.
  \item If $\pi_A(\alpha_i) \neq \emptyset$ for $i=1,2$ and
    $d_{\cd}(\alpha_1, \alpha_2) \leq 1$, then
    \[ \mathrm{tw}_A(\alpha_1, \alpha_2) \leq 2. \]
  \item If $(g_0, \dots, g_\ell)$ is an edge path in $\cd(S)$
    so that $\pi_A(g_i) \neq \emptyset$ for all $i$, then
    \[ \mathrm{tw}_A(g_0, g_\ell) \leq 2\ell. \]
  \item If $\pi_A(\alpha_i) \neq \emptyset$ for $i=1,2,3$ then there is a triangle inequality
    \[ \mathrm{tw}_A(\alpha_1, \alpha_3) \leq \mathrm{tw}_A(\alpha_1, \alpha_2)+\mathrm{tw}_A(\alpha_2, \alpha_3). \]
  \item The twist number is a topological invariant, in the sense that for every $h \in \mathrm{Homeo}(S)$ and all curves $\alpha,\alpha'$ we have
  \[
  \mathrm{tw}_A(\alpha, \alpha') =  \mathrm{tw}_{h(A)}(h \alpha, h\alpha')
  \]
  if one (hence both) sides are defined.
  \end{enumerate}
\end{lemma}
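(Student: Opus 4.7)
The plan is to take the five items in an order that reveals the dependencies: I would first dispatch (iv) and (v), which are formal; then I would do the main topological step (i), from which (ii) follows with very little extra work; and finally (iii) comes from (ii) combined with (iv) by an immediate induction.

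For (iv), I just invoke the fact that Hausdorff distance satisfies the triangle inequality on non-empty finite subsets of any metric space; the hypothesis that all three projections are non-empty is exactly what is needed. For (v), I note that any homeomorphism $h$ of $S$ restricts to a homeomorphism $A \to h(A)$ that sends essential proper arcs to essential proper arcs and preserves disjointness, inducing a graph isomorphism $\mathcal{A}^\dagger(A) \to \mathcal{A}^\dagger(h(A))$; equivariance $\pi_{h(A)}(h\alpha) = h\cdot \pi_A(\alpha)$ is immediate, and Hausdorff distances are preserved under graph isomorphisms, giving the claimed equality.

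The heart of the argument is (i). For the easy direction, suppose $\gamma \subset \mathrm{int}(A)$ is a simple closed curve disjoint from $\alpha$; since $\gamma$ is essential in $S$ and lies in the interior of the annulus $A$, it must be isotopic to the core of $A$, and then every essential proper arc of $A$ meets $\gamma$. So no subarc of $\alpha$ can be an essential proper arc, and $\pi_A(\alpha) = \emptyset$. Conversely, suppose $\pi_A(\alpha) = \emptyset$. If $\alpha \subset A$, then $\alpha$ is core-parallel (otherwise it would be null-homotopic in $S$), and any disjoint parallel copy gives $\gamma$. Otherwise $\alpha \cap A$ is a union of proper arcs on $A$, each of which either has both endpoints on the same boundary component or is null-homotopic rel boundary; in particular no arc separates the two boundary components of $A$, and hence the union doesn't either. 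Thus one component of $A \setminus \alpha$ contains both a full collar neighborhood of some point of the core and, after an arbitrarily small push into the interior, a core-parallel curve $\gamma$ disjoint from $\alpha$. The stated consequences are then direct: the path $\delta, \gamma, \alpha$ gives $d_{\cd}(\delta, \alpha) \leq 2$; the core-parallel curve $\gamma$ witnesses that $\alpha$ is homotopic to $\delta$ (via $\gamma$, since on the annulus, disjoint essential curves are isotopic); and the symmetric role of $\alpha$ and $\delta$, together with $\gamma$ lying between them, shows $\pi_{A_\alpha}(\delta) = \emptyset$ by the same reasoning applied to the cut annulus.

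Finally, for (ii), given $d_{\cd}(\alpha_1,\alpha_2) \leq 1$, pick arcs $a_i \in \pi_A(\alpha_i)$; their geometric intersection inside $A$ is at most the intersection of $\alpha_1$ and $\alpha_2$ in $S$, which is at most one. By Lemma~\ref{lem:distance-arc-graph}, $d_{\mathcal{A}^\dagger(A)}(a_1, a_2) \leq 2$. This holds for every such pair, so the Hausdorff distance $\mathrm{tw}_A(\alpha_1,\alpha_2)$ is at most $2$. I expect the main obstacle is giving a clean justification of the topological step in (i) that a union of non-separating arcs in an annulus still leaves a core-parallel curve disjoint from them; the rest is formal bookkeeping.
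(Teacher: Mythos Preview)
Your proof is correct and follows essentially the same approach as the paper: the paper also treats (iv) and (v) as formal, derives (ii) and (iii) from the observation that projections of curves intersecting at most once also intersect at most once (hence lie at distance $\leq 2$ in $\mathcal{A}^\dagger(A)$ by Lemma~\ref{lem:distance-arc-graph}), and handles the nontrivial direction of (i) by producing $\gamma$ as a push-off of one boundary component of $A$ past the bigons that the inessential arcs of $\alpha$ cut off---which is exactly your separation argument phrased constructively. One small slip: your parenthetical ``on the annulus, disjoint essential curves are isotopic'' should read ``on the torus'', since $\alpha$ need not lie in $A$ and the conclusions $\alpha\simeq\delta$ and $\pi_{A_\alpha}(\delta)=\emptyset$ are torus-specific (this is where the notation $A_\alpha$ enters).
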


\begin{proof}
  If $\pi_A(\alpha)$ is empty, then every arc of $A$ included in $\alpha$ joins the  same side of $A$ to itself, and defines a bigon with the boundary of $A$. Thus we may find a curve $\gamma$ in the interior of $A$ by pushing one side of $A$ off 
  $\alpha$ along all bigons. This proves the direct implication in the first sentence of i). The reverse implication is obvious:
  if there is a common disjoint curve, then $\alpha$ cannot have an
  essential arc with respect to $\delta$.  The remaining of i) is also obvious.
  Statements ii) and iii) follow from the observation that the projections
  of curves intersecting at most once also intersect at most
  once, and thus are distance at most two in the fine arc graph, by Lemma~\ref{lem:distance-arc-graph}. Property iv) is simply the triangle inequality for the Hausdorff distance for finite subsets of the fine arc graph of $A$. The topological invariance v) is straightforward.
\end{proof}

Concerning point iii), we emphasize that the projection $\pi_{A}$ is \emph{not} Lipschitz on its domain of definition. Indeed, given a curve $\alpha$, there are curves $\alpha', \alpha''$ which both intersect $\alpha$ once and transversely, and with arbitrarily high twist $\mathrm{tw}_{A_\alpha}(\alpha', \alpha'')$. Then $(\alpha', \alpha, \alpha'')$ is a geodesic in $\cd(S)$ and $\pi_{A_\alpha}(\alpha'), \pi_{A_\alpha}(\alpha'')$ are both non-empty and arbitrarily far away. Nevertheless, it turns out that geodesics with non trivial projections have bounded images.

\subsection{Bounded geodesic image theorems}
We have the following bounded geodesic image theorems, optimizing
either for minimizing the projection bound, or the distance to the
projection annulus. Both of these are inspired by Masur-Minsky's bounded geodesic image theorem, originally proved in \cite{MM2}.
Also compare~\cite{LongTan} for a bounded geodesic image theorem
in higher genus fine curve graphs projecting to non-annular subsurfaces. 
In the following lemmas, $\mathrm{diam}$ denotes the diameter of sets of vertices in ${\mathcal{A}^\dagger}(A)$, and $B_r(\alpha)$ denotes a ball in the fine curve graph.
\begin{lemma}[First bounded geodesic image theorem] \label{lem:bgit1}
  For any $K>0$ there is a number $r>0$ so that the following is
  true. Let $A \subset S$ be an almost embedded essential annulus, and $\alpha$ be any 
  of the boundary curves of $A$.
  Suppose that $(c_i)$ is a $K$--quasigeodesic in
  $\mathcal{C}^\dagger(S)$ which is disjoint from $B_r(\alpha)$. Then
  \[ \mathrm{diam}(\cup_i\pi_A(c_i)) \leq 2. \]
\end{lemma}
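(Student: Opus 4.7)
The plan is to argue by contradiction. Suppose $\mathrm{diam}(\cup_i \pi_A(c_i)) \geq 3$. Then there are indices $i<j$ and essential arcs $a \in \pi_A(c_i)$, $a' \in \pi_A(c_j)$ with $d_{\mathcal{A}^\dagger(A)}(a,a') \geq 3$. By Lemma~\ref{lem:distance-arc-graph} this forces the relative width $\mathrm{wd}_A(a,a')$ to be at least $2$, so in the universal cover $\widetilde{A} \cong [0,1]\times\mathbb{R}$ some lift $\tilde a$ of $a$ is crossed by at least two distinct translates $\tilde a'$ and $T\tilde a'$ of a lift of $a'$, where $T$ generates the deck group.

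From this configuration I produce a bicorn of $c_i$ and $c_j$ sitting inside $A$. Pick intersection points $\tilde p \in \tilde a \cap \tilde a'$ and $\tilde q \in \tilde a \cap T\tilde a'$; the concatenation of the subarc of $\tilde a$ from $\tilde p$ to $\tilde q$ with the subarc of $T\tilde a'$ from $\tilde q$ to $T\tilde p$ projects to a closed loop in $A$ made of a subarc of $a$ and a subarc of $a'$, whose homotopy class in $A$ is the generator of $\pi_1(A)$. An innermost-bigon surgery on the finitely many additional crossings of these two subarcs yields a simple essential closed curve $\gamma$ still composed of a subarc of $a \subset c_i$ and a subarc of $a' \subset c_j$, i.e. a bicorn defined by $c_i,c_j$ in the sense of Section~\ref{sec:fine-cg}. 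Since both subarcs have endpoints at the interior intersection points of $a,a'$, the curve $\gamma$ lies in $\mathrm{int}(A)$, so it is disjoint from the boundary curve $\alpha$ and $d_{\cd}(\gamma,\alpha)\leq 1$.

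Finally, I would combine this with the bicorn geometry to derive the contradiction. By Corollary~\ref{cor:morse-arbitrary-bicorn} there is a universal constant $M$ so that $\gamma$ lies within distance $M$ of any geodesic $[c_i,c_j]$ in $\cd(S)$. Applying the Morse lemma to the $K$-quasigeodesic $(c_k)_{i\leq k\leq j}$ with the same endpoints yields a Morse constant $M_K=M_K(\delta,K)$ and an index $k$ with $d(c_k,\gamma)\leq M+M_K$. The triangle inequality then gives $d(c_k,\alpha)\leq M+M_K+1$. Choosing $r:=M+M_K+2$ (and $r\geq 2$, which ensures every projection is nonempty by Lemma~\ref{lem:basic-twists-curves}(i)) contradicts the hypothesis that every $c_k$ is outside $B_r(\alpha)$, proving the lemma.

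The main obstacle I expect is making the bicorn surgery step rigorous, since the arcs $a,a'$ coming from fine curves need not be in general position (they can share subarcs or meet tangentially), so the innermost-bigon argument is not literally available. I would handle this by invoking the Bicorn Perturbation Lemma (Lemma~\ref{lem:non-transverse-bicorns}): after arbitrarily small $C^0$ perturbations of $c_i$ and $c_j$ to transverse curves $c_i',c_j'$, the corresponding arcs in $A$ become transverse, the surgery produces an essential simple bicorn $\gamma'$ of $c_i',c_j'$, and one then transfers the distance estimate to $c_i,c_j$ using the fact that $\cd(S)$-close curves have a common disjoint curve, changing distances by a uniformly bounded additive constant.
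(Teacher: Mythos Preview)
Your proof is correct and follows essentially the same approach as the paper's: assume the diameter is at least $3$, use the width bound to find two essential arcs $a\subset c_i$, $a'\subset c_j$ that together fill $A$, extract from them a bicorn of $c_i,c_j$ which is close to $\alpha$ in $\cd(S)$, and then combine Corollary~\ref{cor:morse-arbitrary-bicorn} with the Morse lemma to force some $c_k$ into a bounded neighbourhood of $\alpha$. The paper additionally treats the case where the bicorn touches $\partial A$ (yielding distance $\leq 2$ rather than $\leq 1$), while you sidestep this via perturbation; conversely, you are more explicit about the transversality issue in constructing the bicorn, which the paper absorbs into Corollary~\ref{cor:morse-arbitrary-bicorn} (itself proved via the perturbation lemma). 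These are cosmetic differences---the arguments are the same.
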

\begin{proof}[Proof of Lemma~\ref{lem:bgit1}]
  We argue by contraposition.
  Suppose that 
  \[\mathrm{diam}(\cup_i\pi_A(c_i)) \geq 3.\] 
  Since each $\pi_A(c_i)$ has diameter at most $2$, 
  there are $i \neq j$ and arcs $a \subset c_i, b \subset c_j$ so that
  \begin{enumerate}[i)]
  \item $a,b$ are essential proper arcs in $A$,
  \item in the universal cover of $A$, any lift of $b$ meets two lifts of $a$, that is, $a\cup b$ fills the annulus $A$.
  \end{enumerate}
  Hence, there are subarcs $a'\subset a, b'\subset b$ so that
  $a' \cup b'$ is 
  a bicorn defined by $c_i$ and $c_j$
  If $a' \cup b'$ is actually included in the interior of $A$ then it is distance $1$ 
  from $\alpha$. If $a' \cup b'$ meets the 
  boundary of $A$, then we can still find a curve which intersects exactly once, and   
  transversely, both $\alpha$ and $a' \cup b'$, and thus the distance between 
  the two curves is equal to $2$. 

By Corollary~\ref{cor:morse-arbitrary-bicorn}, there a universal constant $M$ such that a geodesic $[c_i c_j]$ enters the $M$-neighborhood of $a' \cup b'$, and thus also the $M+2$-neighborhood of $\alpha$.
  Since  $(c_i)$ is a $K$--quasigeodesic, this implies (by the
  Morse lemma) that there is a constant $r=r(K,M ,\delta)$ depending
  only on the quasigeodesic constants and the hyperbolicity constant
  of the fine curve graph, so that some $c_k \in B_r(\alpha)$.
\end{proof}
The second version below provides a non-explicit bound on the diameter of the projection, for any quasigeodesic \emph{path} under the only condition that every element has a nonempty projection. Note that this hypothesis is satisfied as soon as the path does not enter the ball of radius 2 around the boundary curve of the annulus (by point i) of lemma~\ref{lem:basic-twists-curves}).
\begin{lemma}[Second bounded geodesic image theorem]\label{lem:bgit2}
  For any $K>0$ there is a $B>0$ so that the following is true.  Let
  $A \subset S$ be an almost embedded essential annulus, and suppose that $(c_0, \dots, c_N)$ is
  an edge-path which is a $K$--quasigeodesic in
  $\mathcal{C}^\dagger(T^2)$ so that $\pi_A(c_i)$ is never empty. Then
  \[ \mathrm{diam}(\cup_i\pi_A(c_i)) \leq B. \] 
\end{lemma}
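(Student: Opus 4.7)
The plan is to bootstrap from the first bounded geodesic image theorem (Lemma~\ref{lem:bgit1}) together with the basic twist number estimates collected in Lemma~\ref{lem:basic-twists-curves}. Let $r=r(K)$ be the radius supplied by Lemma~\ref{lem:bgit1} for the quasigeodesic constant $K$, and let $\alpha$ be a boundary curve of $A$. Since each $\pi_A(c_i)$ has diameter at most $2$, it suffices to produce a uniform bound on $\mathrm{tw}_A(c_i,c_j)$ for all pairs $i\leq j$, using then the Hausdorff-to-diameter inequality recorded after the definition of twist numbers.

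Fix such a pair $i\leq j$ and consider the subpath $c_i,\dots,c_j$, which is still a $K$--quasigeodesic with nonempty projections everywhere. Distinguish two cases. If no $c_k$ for $k\in[i,j]$ lies in $B_r(\alpha)$, Lemma~\ref{lem:bgit1} applies directly and gives $\mathrm{tw}_A(c_i,c_j)\leq 2$. Otherwise, let $k_1$ and $k_2$ be the smallest and largest such indices. Since $c_{k_1},c_{k_2}\in B_r(\alpha)$, we have $d_{\cd}(c_{k_1},c_{k_2})\leq 2r$, so the $K$--quasigeodesic inequality forces
\[ k_2-k_1 \leq K(2r+K). \]
As all $c_k$ in between have nonempty projections, Lemma~\ref{lem:basic-twists-curves}~iii) gives $\mathrm{tw}_A(c_{k_1},c_{k_2})\leq 2K(2r+K)$.

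For the initial and final pieces, the subpath $c_i,\dots,c_{k_1-1}$ avoids $B_r(\alpha)$ by minimality of $k_1$ (the case $k_1=i$ being trivial), so Lemma~\ref{lem:bgit1} bounds its projection diameter and hence $\mathrm{tw}_A(c_i,c_{k_1-1})\leq 2$; adjacency of $c_{k_1-1}$ with $c_{k_1}$, both with nonempty projection, adds at most $2$ more by Lemma~\ref{lem:basic-twists-curves}~ii). The triangle inequality iv) then gives $\mathrm{tw}_A(c_i,c_{k_1})\leq 4$, and symmetrically $\mathrm{tw}_A(c_{k_2},c_j)\leq 4$. Applying iv) once more across $c_i,c_{k_1},c_{k_2},c_j$ yields
\[ \mathrm{tw}_A(c_i,c_j) \leq 8 + 2K(2r+K). \]
Taking the maximum over $i,j$ and accounting for the individual projection diameters bounded by $2$, we conclude $\mathrm{diam}(\cup_i\pi_A(c_i))\leq 12+2K(2r+K)=:B$.

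There is no serious obstacle in this argument; the main point is to observe that a $K$--quasigeodesic cannot dwell inside $B_r(\alpha)$ for more than a controlled number of steps, which lets us reduce to Lemma~\ref{lem:bgit1} on the outside and to the trivial length bound iii) on the inside.
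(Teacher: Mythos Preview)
Your proof is correct and follows essentially the same route as the paper's: decompose the path into two outer pieces avoiding $B_r(\alpha)$, handled by Lemma~\ref{lem:bgit1}, and a short middle piece where the quasigeodesic lingers near $\alpha$, handled by the Lipschitz bound of Lemma~\ref{lem:basic-twists-curves}~iii). The only cosmetic difference is that you organize the argument pairwise via the triangle inequality for twists, whereas the paper bounds the diameter of each piece globally and sums; the constants and ideas are the same.
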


We thank Katie Mann for pointing out to us that this second version follows easily from the first one; our initial proof was an adaptation of Richard Webb's argument in the usual curve graph (\cite{Webb}).
\begin{proof}
Let $r$ be given by the previous lemma, and $\alpha$ denotes one of the boundary curve of $\partial A$.
The case when the path $(c_i)$ does not enter the ball $B_r(\alpha)$ follows at once from the previous lemma (with $B=2$), so let us assume there is some $i_0$ such that $c_{i_0} \in B_r(\alpha)$. 
Note that since $(c_i)$ is a K-quasigeodesics, there exists some $L = L(K,r)$ such that whenever $|i-j| \geq L$, the distance between $c_i$ and $c_j$ in the fine curve graph is more than $2r$. Thus the path decomposes as the concatenation of the three sub-paths
$\gamma_1 \cup \gamma_2 \cup \gamma_3$ with $\gamma_1, \gamma_3$ (maybe empty) entirely outside $B_r(\alpha)$, and $\gamma_2$ containing $c_{i_0}$ of length no more than $2L$.
By Lemma~\ref{lem:basic-twists-curves}, iii), the projection of $\gamma_2$ has diameter no more than $4L$, and by the first Bounded Geodesic Image Theorem the projections of $\gamma_1, \gamma_3$ have diameter at most 2. Thus we get the lemma with $B = 4L+4$.
\end{proof}

In Section~\ref{sec:c-dagger-alpha} we will prove a third bounded
geodesic image theorem for a (slightly more technical) class of paths.

\subsection{Twist bounds along axes}
Here, we restrict to the torus again. First, we define projections and twist numbers of markings in the obvious way. 

Namely, if $m=(\alpha,\beta)$ is a fine marking, we define
\[ \pi_A(m) = \pi_A(\alpha) \cup \pi_A(\beta). \] 
The key advantage is that since $\alpha$ and $\beta$ are not homotopic, at least one of them has non-empty projection to any annulus $A$, so this set is never empty. Furthermore it has diameter at most $2$, since $\alpha$ and $\beta$ intersect only once. The twist number between two markings $m,m'$ is then defined as 
\[ \mathrm{tw}_A(m, m') = d_\mathrm{Hdff}(\pi_A(m), \pi_A(m')). \]
Note that since $\pi_A(m)$ and $\pi_A(m')$ both have diameter at most
$2$, for every $\alpha \in m, \alpha' \in m'$ we have
$\mathrm{tw}_A(\alpha, \alpha') = \mathrm{tw}_A (m,m') \pm 4$.  We
then have the following lemma which includes a version of
Lemma~\ref{lem:basic-twists-curves}, iii) for markings, which removes
the requirement of nonempty projection.
\begin{lemma}\label{lem:basic-twists-markings}
  Let $A$ be an almost embedded essential annulus in $T^2$.
  \begin{enumerate}[i)]
  \item The projection
    \[ \pi_A: \mathcal{M}^\dagger \to
      (\mathbb{P}\mathcal{A}^\dagger(A), d_\mathrm{Hdff}) \] is a $2$--Lipschitz map.
  \item 
  If the two markings $m=(\alpha, \beta)$ and $m' = (\alpha, \beta')$ share the curve $\alpha$, then 
    \[ d_\mathcal{M^\dagger}(m,m') \leq \mathrm{tw}_{A_\alpha}(\beta, \beta'). \]
  \item There is a triangle inequality
    \[ \mathrm{tw}_A(m_1, m_3) \leq \mathrm{tw}_A(m_1, m_2)+\mathrm{tw}_A(m_2, m_3), \]
  \item The twist number is a topological invariant, in the sense that for every $h \in \mathrm{Homeo}(T^2)$ and every markings $m,m'$ we have
  \[
  \mathrm{tw}_A(m, m') =  \mathrm{tw}_{h(A)}(h m, h m').
  \]
  \end{enumerate}
\end{lemma}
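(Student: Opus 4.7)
For i), since both $\pi_A(m)$ and $\pi_A(m')$ are nonempty (one of the two curves in a marking must project nontrivially, otherwise both would be homotopic to a boundary of $A$ by Lemma~\ref{lem:basic-twists-curves}~i), contradicting that $\alpha$ and $\beta$ cross once), the Hausdorff distance is defined. The plan is to reduce to the case where $m$ and $m'$ differ by a single elementary move, say $m=\{\alpha,\beta\}$ and $m'=\{\alpha,\beta'\}$ with $\beta, \beta'$ intersecting at most once transversely. The key observation is that on the torus, any two curves dual to $\alpha$ are isotopic: so either $\pi_A(\beta)$ and $\pi_A(\beta')$ are both empty (in which case $\pi_A(m)=\pi_A(m')=\pi_A(\alpha)$) or both are nonempty, in which case Lemma~\ref{lem:basic-twists-curves}~ii) bounds $\mathrm{tw}_A(\beta,\beta')\leq 2$. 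In either case $d_\mathrm{Hdff}(\pi_A(m),\pi_A(m'))\leq 2$, and the claim follows by induction over a geodesic in $\mathcal{M}^\dagger$.

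For ii), the plan is to realise a geodesic in $\mathcal{A}^\dagger(A_\alpha)$ by a sequence of elementary moves in $\mathcal{M}^\dagger$. Since $\beta$ and $\beta'$ each intersect $\alpha$ exactly once, the projections $\pi_{A_\alpha}(\beta)=\{a\}$ and $\pi_{A_\alpha}(\beta')=\{a'\}$ are singletons, and $\mathrm{tw}_{A_\alpha}(\beta,\beta')=d_{\mathcal{A}^\dagger(A_\alpha)}(a,a')$. Choose a geodesic $a=a_0,a_1,\ldots,a_k=a'$ in $\mathcal{A}^\dagger(A_\alpha)$, so consecutive $a_i,a_{i+1}$ are disjoint essential proper arcs. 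Each $a_i$ joins the two boundary components of $A_\alpha$, so regluing along $\alpha$ produces a simple closed curve $\beta_i$ on $T^2$ that crosses $\alpha$ exactly once; hence $(\alpha,\beta_i)$ is a fine marking, with $(\alpha,\beta_0)=m$ and $(\alpha,\beta_k)=m'$. The only place consecutive $\beta_i,\beta_{i+1}$ could intersect is on $\alpha$, and by a small perturbation of the endpoints of the $a_i$ we may arrange all endpoints on each boundary component to be pairwise distinct, so that $\beta_i$ and $\beta_{i+1}$ become disjoint. This yields a path in $\mathcal{M}^\dagger$ of length $k=\mathrm{tw}_{A_\alpha}(\beta,\beta')$ from $m$ to $m'$. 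The main subtlety is verifying that the rebuilt curves are simple and can simultaneously be put in the desired minimal position; this is handled by the perturbation argument and the fact that each $a_i$ is itself a simple arc.

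For iii), the projection of any marking is a nonempty finite subset of $\mathcal{A}^\dagger(A)$, and the Hausdorff distance on nonempty subsets of a metric space always satisfies the triangle inequality, so the claim is immediate once one checks that all three projections are nonempty (which follows as in part i). For iv), a homeomorphism $h$ of $S$ induces a graph isomorphism $\mathcal{A}^\dagger(A)\to\mathcal{A}^\dagger(h(A))$ that sends essential proper arcs to essential proper arcs and preserves disjointness, hence sends $\pi_A(m)$ bijectively to $\pi_{h(A)}(hm)$ and is an isometry between the two arc graphs; topological invariance of the Hausdorff distance then gives the result.
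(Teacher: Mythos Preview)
Your argument for i) relies on the false claim that any two curves dual to $\alpha$ are isotopic on the torus. If $\alpha$ has homology class $(1,0)$, then both the $(0,1)$-curve and the $(1,1)$-curve are dual to $\alpha$, yet they are not isotopic. Consequently your dichotomy misses the case where exactly one of $\pi_A(\beta),\pi_A(\beta')$ is empty, and this case can genuinely occur (take $A$ with boundary isotopic to $\beta$ while $\beta'$ is not isotopic to $\beta$). The paper's proof instead splits on whether $\pi_A(\alpha)$ is empty: if $\pi_A(\alpha)\neq\emptyset$ then $\pi_A(m)$ and $\pi_A(m')$ share this set and, both having diameter at most $2$, lie at Hausdorff distance at most $2$; if $\pi_A(\alpha)=\emptyset$ then $\pi_A(m)=\pi_A(\beta)$ and $\pi_A(m')=\pi_A(\beta')$ are both nonempty (marking projections always are) and Lemma~\ref{lem:basic-twists-curves}~ii) applies directly.

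For ii) your approach matches the paper's, but you gloss over one point: an essential arc $a_i$ of $A_\alpha$ need not close up to a simple closed curve on $T^2$, because its two endpoints (one on each boundary component of the abstract annulus) generally land on \emph{different} points of $\alpha$. The paper handles this by inductively sliding each $a_i$ so that both of its endpoints have the same image on $\alpha$, which does not disturb disjointness with $a_{i-1}$; your ``perturbation of endpoints to be pairwise distinct'' addresses a different, secondary issue. With this correction your argument and the paper's coincide. Parts iii) and iv) are fine and agree with the paper.
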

\begin{proof}
To prove point i), it suffices to check that the twist number between any two adjacent markings $m=\{\alpha, \beta\}, m'=\{\alpha, \beta'\}$ is at most 2. First consider the case when $\pi_A(\alpha) \neq \emptyset$.
Then the sets $\pi_A(m), \pi_A(m')$ have some commun element; since their diameter is at most $2$, the Hausdorff distance between them is bounded by $2$, as wanted.
It remains to consider the case when $\pi_A(\alpha) = \emptyset$. Then $\pi_A(m) = \pi_A(\beta), \pi_A(m') = \pi_A(\beta')$, both sets are non-empty, and since $\beta$ and $\beta'$ are distance $1$ in the fine curve graph, the wanted inequality is exactly Lemma~\ref{lem:basic-twists-curves}, point ii).

Let us prove ii). Denote $A= A_\alpha$ the almost embedded annulus determined by $\alpha$. Here $\pi_A(\beta)$, $\pi_A(\beta')$ are singletons and $r = \mathrm{tw}_A(\beta, \beta')$ is by definition equal to the distance between $\pi_A(\beta)$ and $\pi_A(\beta')$ in the fine arc graph $\mathcal{A}^\dagger(A)$. Let $(b_0, \dots, b_r)$ be a geodesic in $\mathcal{A}^\dagger(A)$ from $\pi_A(\beta)$ to $\pi_A(\beta')$. One can modify $b_i$ inductively so that the almost embedding map $A$ maps both endpoints of $b_i$ to the same point, so that the image $\beta_i$ of $b_i$ under the map $A$ is a simple closed curve. Now the sequence $(\alpha, \beta_0), \dots, (\alpha, \beta_r)$ is a path in the fine marking graph from $m$ to $m'$, showing that $d_\mathcal{M^\dagger}(m,m') \leq r$.

Points iii) and iv) are immediate.
\end{proof}

As a consequence of the Bounded Geodesic Image theorem, we obtain the
following lemma which shows that hyperbolic elements have uniformly
bounded twists along their axes.

First, we observe that given any base marking $m_0$, and any
$f\in \mathrm{Homeo}_0(T^2)$ a homeomorphism acting hyperbolically on
the fine curve graph, the marking $m_0$ can be extended to an
$f$--invariant path $(m_i)$ in the marking graph. Also observe that
for any such invariant path there is a constant $K$, depending on $m_0$ and $f$, so that $(m_i)$ defines a (parametrized) bi-infinite $K$--quasigeodesic in the fine
curve graph.

\begin{lemma}[Twist bounds along axes]
\label{lem:thick-axes}
  Suppose $f\in \mathrm{Homeo}_0(T^2)$ acts
  hyperbolically on the fine curve graph, and let $(m_i)$ be an
  $f$--invariant path in the marking graph.

  Then there is a number $T$ so that for every almost embedded essential annulus $A$
  and every $i,j$ we have
  \[  \mathrm{tw}_A(m_i, m_j) \leq T. \]
\end{lemma}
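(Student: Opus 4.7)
The plan is to combine Lemma~\ref{lem:bgit1} (the first bounded geodesic image theorem) with the $2$-Lipschitz property of $\pi_A$ on the marking graph (Lemma~\ref{lem:basic-twists-markings}(i)). By hypothesis, $(m_i)$ parametrizes a bi-infinite $K$-quasi-geodesic $(c_k)$ in $\cd(T^2)$; since a quasi-geodesic visits any bounded neighborhood of a fixed curve only for a bounded index-range, the indices at which the projections to $A$ can behave badly form a bounded window, outside of which a BGIT estimate takes over. The bound will depend only on $K$, and in particular will be uniform in $A$.

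More concretely, I would first fix an almost embedded essential annulus $A$ with boundary curve $\alpha$, and let $r=r(K)$ be the constant supplied by Lemma~\ref{lem:bgit1}. The two boundary curves of $A$ are disjoint, hence within $\cd$-distance one of each other, so Lemma~\ref{lem:basic-twists-curves}(i) allows me to enlarge $r$ to guarantee that every curve $c$ outside $B_r(\alpha)$ has $\pi_A(c)\neq\emptyset$. By $K$-quasi-geodesicity, the set $J=\{k:c_k\in B_r(\alpha)\}$ then has diameter at most some $L=L(K,r)$.

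Next, for any $i<j$ I would split the subpath $(c_k)_{k=i}^j$ into a left portion preceding $J$, a middle portion lying in $J$, and a right portion following $J$ (the case $J\cap[i,j]=\emptyset$ is even simpler). The left and right portions are $K$-quasi-geodesic subsegments disjoint from $B_r(\alpha)$, so Lemma~\ref{lem:bgit1} bounds the diameter of each of their projections by $2$. On the middle portion, the associated marking subpath $(m_k)$ has length bounded by a constant multiple of $L$, so the $2$-Lipschitz property from Lemma~\ref{lem:basic-twists-markings}(i) bounds the diameter of its projection by $O(L)$. Since the projections of adjacent portions share the projection of a common curve or marking at each seam, triangle inequalities for the Hausdorff distance combine these bounds to give $\mathrm{diam}(\pi_A(m_i)\cup\pi_A(m_j))\leq T$ for some constant $T$ depending only on $K$, from which $\mathrm{tw}_A(m_i,m_j)\leq T$ follows.

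The main subtlety, I expect, will lie in ensuring nonempty projection for curves to which Lemma~\ref{lem:bgit1} is applied: as stated the lemma controls diameter but does not exclude empty projections, so the enlargement of $r$ using Lemma~\ref{lem:basic-twists-curves}(i) together with the proximity of the two boundary curves of $A$ in $\cd(T^2)$ is what makes the three-piece decomposition clean. Everything else is bookkeeping, and crucially the constants $r$ and $L$ depend only on $K$, which is the uniformity over $A$ that the statement requires.
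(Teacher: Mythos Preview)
Your strategy coincides with the paper's: split the invariant path into a bounded middle window and two outer pieces, apply a bounded geodesic image theorem to the outer pieces, the $2$-Lipschitz property of $\pi_A$ on markings (Lemma~\ref{lem:basic-twists-markings}(i)) on the window, and combine by the triangle inequality. The one structural difference is that the paper applies the \emph{second} BGIT (Lemma~\ref{lem:bgit2}) on the outer pieces, whose hypothesis is simply that every $\pi_A(c_i)$ along the piece is nonempty, rather than the first BGIT you invoke.

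There is a slip in your justification. You assert that ``the two boundary curves of $A$ are disjoint'', but this is false for almost embedded annuli: for $A_\alpha$ both boundary curves equal $\alpha$, and in general they may intersect. You used this to conclude that $c\notin B_r(\alpha)$ forces $\pi_A(c)\neq\emptyset$; but Lemma~\ref{lem:basic-twists-curves}(i) only yields proximity to \emph{some} boundary curve of $A$, which need not be your fixed $\alpha$, so your enlargement of $r$ is not justified uniformly in $A$. The paper's choice of Lemma~\ref{lem:bgit2} sidesteps this: one takes the window to be the set of indices with empty projection and applies the second BGIT directly to each outer piece, without ever needing to relate the two boundary curves. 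Your argument is repaired by this substitution (or, if you prefer to keep the first BGIT, by allowing a separate bounded window for each boundary curve and decomposing into up to five pieces).
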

\begin{proof}
  Let $A$ be any almost embedded annulus.
  Let $N$ be so that $m_{i+N} = fm_i$ for all $i$. 
  We can choose some $L= L(K)$ and indices $i_0 < i_1$ with
   the following properties:
  \begin{enumerate}
  \item for every $i$, if either $i \leq i_0$ or $i \geq i_1$, then $\pi_A(c_i) \neq 
  \emptyset$, and
  \item $i_1-i_0 \leq L$.
  \end{enumerate}
  This is due to the fact that $\pi_A(\beta) = \emptyset$ implies that
  the curve graph distance of $\beta$ to a boundary curve of $A$ is at
  most $2$.

  Now, the second Bounded Geodesic Image Theorem (Lemma~\ref{lem:bgit2}) implies that
  \[ \mathrm{diam}(\cup \{ \pi_A(m_i), i \leq i_0 \}) \leq B, \quad
    \mathrm{diam}(\cup \{ \pi_A(m_i), i \geq i_1 \}) \leq B, \] while the fact
  that twists are Lipschitz in the marking graph implies that
  \[ \mathrm{diam}(\cup \{ \pi_A(m_i), i_0 \leq i \leq i_1 \}) \leq 2L+2. \]
  Together this shows that $T = 2B+2L+2 $ satisfies the desired bound, since 
  $\mathrm{tw}_A(m_i, m_j) \leq \mathrm{diam}(\pi_A(m_i) \cup \pi_A(m_j)) \leq \mathrm{diam}(\cup \{ \pi_A(m_k), i \leq k \leq j \})$.
\end{proof}

\subsection{Width in annuli}\label{ssec:width}	
In this section we discuss the notion of \emph{width} of a curve
relative to some annulus $A$. Roughly speaking, the width generalizes
the twist number for arcs and curves that do not project in $A$. This
will be useful in the next section when constructing duals in order
make efficient moves in the fine marking graph.

In this section we also consider any orientable surface $S$ of genus $\geq 1$.
\begin{definition}
Consider an almost embedded essential annulus $A$ in the surface $S$. 
If $a, b$ are any embedded (compact) arcs in $A$, then the \emph{relative width} $\mathrm{wd}_A(a, b)$ is defined as the number of lifts of $a$ met by one lift of $b$ in the universal cover of $A$. 
If $\alpha, \beta$ are any sets in $S$, e.g. simple closed curves, then $\mathrm{wd}_A(\alpha, \beta)$ is defined as the maximum of $\mathrm{wd}_A(a, b)$ where $a, b$ are  arcs included in $A$ and respectively in $\alpha$ and $\beta$.
Note that here we do not assume any kind of transversality between  our curves and the boundary of $A$. Furthermore the definition is compatible with the definition already given in $\mathcal{A}^\dagger (A)$ (see~Lemma~\ref{lem:distance-arc-graph}).
 This is clearly a symmetric quantity, namely $\mathrm{wd}_A(\alpha, \beta) = \mathrm{wd}_A(\beta, \alpha)$ for every $\alpha, \beta$. 
\end{definition}

The notions of the previous section readily extend to arcs. Namely, given an arc $a$ and an almost embedded essential annulus $A$ in the surface $S$, we define the projection $\pi_A(a)$ to be the set of all essential proper arcs of $A$ included in $a$. Now the twist $\mathrm{tw}_A(\alpha, \beta) = d_{\mathrm{Hdff}}(\pi_A(\alpha),\pi_A(\beta))$ makes sense whenever $\alpha, \beta$ are arcs or curves with non-empty projection in $A$.
\begin{lemma}[Width is a twist]\label{lem:width-is-a-twist}
  Suppose $A$ is an almost embedded essential annulus in a surface $S$, $b_0\subset A$ an essential arc, and
  $\alpha$ a simple closed curve.
\begin{enumerate}[i)]
\item if $\pi_A(\alpha) \neq \emptyset$, then
\[ \mathrm{wd}_A(b_0, \alpha)-1 \leq \mathrm{tw}_A(b_0, \alpha) \leq \mathrm{wd}_A(b_0, \alpha)+1.\]
\item if $\pi_A(\alpha) = \emptyset$ but $\alpha$ meets the interior of $A$, then
there is an almost embedded essential annulus $A_0 \subset A$ so that
  $\pi_{A_0}(\alpha) \neq \emptyset$ and
  \[ \mathrm{wd}_{A}(b_0, \alpha) - 1 \leq \mathrm{tw}_{A_0}		
      (b_0, \alpha) \leq \mathrm{wd}_{A}(b_0, \alpha)+1. \]
\end{enumerate}  
\end{lemma}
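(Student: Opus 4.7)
The plan is to reduce both parts to the distance formula in the fine arc graph (Lemma~\ref{lem:distance-arc-graph}, $d_{\mathcal{A}^\dagger(A)}(a,b) = \mathrm{wd}_A(a,b) + 1$), carrying out the analysis in the universal cover $\tilde A = [0,1]\times \mathbb{R}$ of $A$.

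For the upper bound in (i), I observe that $\pi_A(b_0) = \{b_0\}$ and compute
\[
\mathrm{tw}_A(b_0, \alpha) = \max_{e \in \pi_A(\alpha)} d_{\mathcal{A}^\dagger(A)}(b_0, e) = 1 + \max_{e \in \pi_A(\alpha)} \mathrm{wd}_A(b_0, e) \leq \mathrm{wd}_A(b_0, \alpha) + 1,
\]
since each essential $e \subset \alpha \cap A$ is among the arcs over which we maximize in $\mathrm{wd}_A(b_0, \alpha)$. For the lower bound, monotonicity of width under enlargement of subarcs lets us assume $\mathrm{wd}_A(b_0, \alpha)$ is attained by a full arc $a$ of $\alpha \cap A$. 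If $a$ is essential then $a \in \pi_A(\alpha)$ and we are done by the same formula. If $a$ is non-essential, say with both endpoints on $\partial_0 A$, we take a lift $\tilde a \subset \tilde A$ with endpoints $(0,u),(0,v)$. Since $\alpha$ is simple, the $\mathbb{Z}$-translates of $\tilde a$ are pairwise disjoint; the disk $D$ bounded by $\tilde a$ and $\{0\}\times[u,v]$ cannot be entered by any essential lift (such a lift would have to exit through $\tilde a$, contradicting disjointness). This forces every essential lift's $\{0\}$-endpoint to avoid $\bigcup_m [u+m,v+m]$, giving $|v-u| < 1$ (otherwise this complement is empty, contradicting $\pi_A(\alpha)\neq\emptyset$). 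The ``channel'' between $\tilde a$ and $\tilde a + (0,1)$ therefore contains a lift of some $e \in \pi_A(\alpha)$, and comparing vertical extents of these lifts yields $\mathrm{wd}_A(b_0, e) \geq \mathrm{wd}_A(b_0, a) - C$ for a small uniform constant $C$, from which the desired inequality follows after tracking constants.

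For (ii), when $\pi_A(\alpha) = \emptyset$ but $\alpha$ meets the interior of $A$, every full arc of $\alpha \cap A$ is non-essential. Choose such a full arc $a$ realizing $\mathrm{wd}_A(b_0, \alpha)$, say with both endpoints on $\partial_0 A$, and take a lift $\tilde a$ reaching maximum height $H$. Construct $A_0 \subset A$ as the sub-annulus bounded by $\partial_1 A$ and a simple closed curve $\gamma$ parallel to, and close to, $\partial_0 A$, chosen so that in $\tilde A$ the lift of $\gamma$ separates a top portion of $\tilde a$ from both endpoints of $\tilde a$. This portion is essential in $A_0$, so $\pi_{A_0}(\alpha) \neq \emptyset$, and $b_0 \cap A_0$ is itself essential in $A_0$ since $b_0$ connects the two boundaries of $A$. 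Widths in $A_0$ and $A$ differ by at most $1$, because the universal cover of $A_0$ embeds in that of $A$ as the complement of a neighbourhood of one boundary component, so each lift of an arc is trimmed by at most one fundamental domain of $b_0$. Applying (i) to $A_0$ and combining with this comparison yields the claim.

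The main obstacle is the lower bound in (i)---specifically, showing that non-essential arcs of $\alpha \cap A$ cannot have width substantially exceeding that of some essential arc. The argument combines the simplicity of $\alpha$ (ensuring disjointness of lifts in $\tilde A$) with the existence of essential arcs in $\pi_A(\alpha)$ through the channel geometry described above; pinning down the small additive constant, as opposed to a uniform but coarser bound, is the place where care is needed.
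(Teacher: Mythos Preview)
Your upper bound in (i) is correct and matches the paper. The problems are in the lower bound of (i) and in (ii).

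\textbf{Lower bound in (i).} Your argument is incomplete at the crucial step. You assert that ``the channel between $\tilde a$ and $\tilde a+(0,1)$ therefore contains a lift of some $e\in\pi_A(\alpha)$,'' but you give no reason for this; the bound $|v-u|<1$ only says essential lifts can start somewhere on $\{0\}\times\mathbb{R}$, not that one starts in any particular gap. You then invoke an unspecified constant $C$ and ``tracking constants,'' which is not a proof of the stated inequality with constant~$1$. The paper's approach reverses your logic and is much cleaner: fix \emph{any} essential arc $a_0\in\pi_A(\alpha)$, and observe that since $\alpha$ is simple, every other subarc $a\subset\alpha\cap A$ has a lift disjoint from all translates $T^n\tilde a_0$, hence lies in at most two consecutive fundamental domains $\Delta_0\cup T\Delta_0$ cut by the lifts of $a_0$. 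This immediately gives $\mathrm{wd}_A(b_0,a)\le \mathrm{wd}_A(b_0,a_0)+2$, and maximizing over $a$ and over $a_0$ yields the bound. The point is that an essential arc controls all other arcs, not the other way round.

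\textbf{Part (ii).} Your construction of $A_0$ is wrong. You push only $\partial_0 A$ inward to a curve $\gamma$ and claim that the ``top portion'' of $\tilde a$ becomes essential in $A_0$. But that portion has both its endpoints on $\tilde\gamma$ (the new boundary component replacing $\partial_0 A$), so it is still non-essential in $A_0$. To make the width-realizing component $\bar\alpha$ essential, the paper pushes \emph{both} boundary components of $A$ \emph{along $b_0$} until each touches $\bar\alpha$ at a single point of $b_0$; this guarantees $\bar\alpha$ runs from one new boundary to the other, while $b_0\cap A_0$ remains a single essential arc and the widths are unchanged. Part (i) then applies to $A_0$.
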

\begin{proof} 
  \begin{figure}
    \centering
    \def\svgwidth{0.9\textwidth}
\begingroup%
  \makeatletter%
  \providecommand\color[2][]{%
    \errmessage{(Inkscape) Color is used for the text in Inkscape, but the package 'color.sty' is not loaded}%
    \renewcommand\color[2][]{}%
  }%
  \providecommand\transparent[1]{%
    \errmessage{(Inkscape) Transparency is used (non-zero) for the text in Inkscape, but the package 'transparent.sty' is not loaded}%
    \renewcommand\transparent[1]{}%
  }%
  \providecommand\rotatebox[2]{#2}%
  \newcommand*\fsize{\dimexpr\f@size pt\relax}%
  \newcommand*\lineheight[1]{\fontsize{\fsize}{#1\fsize}\selectfont}%
  \ifx\svgwidth\undefined%
    \setlength{\unitlength}{414.71758024bp}%
    \ifx\svgscale\undefined%
      \relax%
    \else%
      \setlength{\unitlength}{\unitlength * \real{\svgscale}}%
    \fi%
  \else%
    \setlength{\unitlength}{\svgwidth}%
  \fi%
  \global\let\svgwidth\undefined%
  \global\let\svgscale\undefined%
  \makeatother%
  \begin{picture}(1,0.24375209)%
    \lineheight{1}%
    \setlength\tabcolsep{0pt}%
    \put(0,0){\includegraphics[width=\unitlength,page=1]{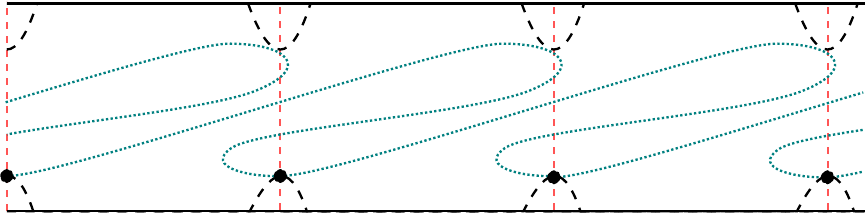}}%
    \put(0.33174551,0.06039738){\color[rgb]{1,0,0}\makebox(0,0)[lt]{\lineheight{1.25}\smash{\begin{tabular}[t]{l}$b_0$\end{tabular}}}}%
    \put(0.40135992,0.16718314){\color[rgb]{0,0.50196078,0.50196078}\makebox(0,0)[lt]{\lineheight{1.25}\smash{\begin{tabular}[t]{l}$\alpha$\end{tabular}}}}%
  \end{picture}%
\endgroup%

    \caption{Width is a twist}
  \end{figure}
Let us prove i). Since $b_0$ is an essential arc, 
$\pi_A(b_0)=\{b_0\}$. Given the definition of the twist and the formula relating the distance in $\mathcal{A}^\dagger(A)$ with the width (lemma~\ref{lem:distance-arc-graph}), we have 
\[
\mathrm{tw}_A(b_0, \alpha) = \max_a d_{\mathcal{A}^\dagger(A)}(b_0,a) = \max_a \mathrm{wd}_A(b_0, a) + 1
\]
 where $a$ runs over the set $\pi_A(\alpha)$ made of all essential proper arcs of $A$ included in $\alpha$. 
On the other hand, $\mathrm{wd}_A(b_0, \alpha)+1$ is defined by the same rightmost formula, except that now $a$ runs over all arcs $a \subseteq \alpha$ of $A$, not necessarily essential. Thus the upper bound in i) is clear. For the lower bound, consider an essential arc $a_0$ of $A$ included in $\alpha$, and denote $\Delta_0$ the closed fundamental domain cut by $a_0$ in the universal cover of the annulus. If
$a$ is any other arc included in $\alpha$, any lift of $a$ is included in the union of two consecutive copies of $\Delta_0$ (the worst case happens when $\alpha$ has a "tangency" at both ends of $a_0$, $a$ contains $a_0$ and continues at both ends of $a_0$ on opposite sides of $a_0$). Thus it meets at most two more lifts of $b_0$ than $a_0$ does. Thus $\mathrm{wd}_A(b_0, a) \leq \mathrm{wd}_A(b_0, a_0)+2$.
By choosing $a_0$ such that $\mathrm{tw}_A(b_0, \alpha) = \mathrm{wd}_A(b_0, a_0) + 1$, this gives the lower bound.

Let us prove ii). If every connected component of $\alpha \cap A$ meets $b_0$ at most once, then any essential embedded annulus $A_0 \subset A$ in which $\alpha$ has non-empty projection satisfies the conclusion of the lemma.
 

From now on we assume that some component of $\alpha \cap A$ meets $b_0$ at least twice. We will obtain $A_0$ by pushing $A$ along $b_0$, taking care that the width does not decrease too much.
By definition of width, there is a connected component $\bar \alpha$ of $\alpha \cap A$ such that 
$\mathrm{wd}_{A}(b_0, \alpha)= \mathrm{wd}_{A}(b_0, \bar \alpha)$, and we can choose $\bar \alpha$ that meets $b_0$ at least twice.
We build $A_0 \subseteq A$ by pushing both boundary components of $A$ along $b_0$ until they both touch $\bar \alpha$, each at a single point on $b_0$. By construction we have $\bar \alpha \subset A_0$, and $b_0' = b_0 \cap A_0$ is connected.
The universal cover $\widetilde{A_0}$ of $A_0$ naturally sits inside the universal cover $\widetilde A$ of $A$. Unrolling the definitions we see that $\mathrm{wd}_{A_0}(b_0, \alpha) = \mathrm{wd}_{A_0}(b_0, \bar \alpha) = \mathrm{wd}_{A}(b_0, \bar \alpha) = \mathrm{wd}_{A}(b_0, \alpha)$. Now the estimate follows from point i) applied to $A_0$ and $b_0'$.
\end{proof}

\section{The isotopy-constrained fine curve graph}
\label{sec:c-dagger-alpha}
This section is concerned with a subgraph of the fine curve graph,
which will play a central role in our construction of paths in the
marking graph. We fix an essential simple closed curve $\alpha_0$ in
the torus throughout. Let
\[ \cd_{[\alpha_0]}(T^2) = \cd_0(T^2) \subset \cd(T^2) \] be the full subgraph of the fine
curve graph spanned by those curves in the isotopy class of
$\alpha_0$. We denote the induced path-distance on this graph by
$d_{[\alpha]}$, or $d_0$ if the base curve $\alpha_0$ is understood.

Note that in general $d_0$ is much bigger than $d$: if $\beta$ is a
curve not in the isotopy class of $\alpha_0$, then the projection of
any curve isotopic to $\alpha_0$ to the complementary annulus $A_\beta$ of $\beta$
is nonempty, and therefore
\[ \cd_0(T^2) \to \ZZ, \quad \alpha \mapsto
  \mathrm{tw}_{A_\beta}(\alpha_0, \alpha) \] is a Lipschitz
function. Hence, there is a curve $\alpha$ at distance two of $\alpha_0$ in
$\cd(T^2)$ whose distance to $\alpha_0$ in $\cd_0(T^2)$ can be
arbitrarily large. For a similar reason $\cd_0(T^2)$ is not hyperbolic (one can find a quasi-flat by considering disjoint annuli).

\smallskip The graph $\cd_0(T^2)$ is maybe more similar to the fine
arc graph of an annulus than the full fine curve graph of the torus.
Namely, let us choose coordinates so that
$T^2 \simeq \mathbb{R}^2/\mathbb{Z}^2$ and
$\alpha_0 = \{0\} \times \mathbb{R}/\mathbb{Z}$. Consider the infinite
cyclic cover $A \simeq \mathbb{R} \times \mathbb{S}^1 \to T^2$, where
curves isotopic to $\alpha_0$ lift to simple closed curves. We denote
the deck transformation of this cover by $T$ (which, in our
coordinates is simply the shift by $1$ in the $\mathbb{R}$-direction).
\begin{lemma}[{see~\cite{LW}}, Section 4.3]\label{lem:cd0-distance}
  Let $\alpha, \alpha' \in \cd_0(T^2)$ be two distinct curves, and let
  $\widehat{\alpha}, \widehat{\alpha}'$ be lifts to $A$. Then
  \[ d_0(\alpha, \alpha') = \#\{n \in \ZZ, \widehat{\alpha} \cap
    T^n\widehat{\alpha}' \neq \emptyset\} + 1, \] that is: the distance
  in the graph $\cd_0(T^2)$ is equal to the number of lifts to $A$ of
  $\alpha$ met by one lift to $A$ of $\alpha'$, plus one.
\end{lemma}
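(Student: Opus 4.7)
The plan is to prove both inequalities $d_0(\alpha,\alpha') \leq N+1$ and $d_0(\alpha,\alpha') \geq N+1$ separately, where $N$ denotes the count on the right-hand side. The strategy parallels the proof of the arc graph distance formula in an annulus (Lemma~\ref{lem:distance-arc-graph}).

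For the upper bound, I would proceed by induction on $N$. The base case $N=0$ is immediate: since $\pi^{-1}(\alpha') = \bigcup_n T^n\widehat{\alpha}'$ and $\pi$ restricted to $\widehat{\alpha}$ is a homeomorphism onto $\alpha$, one has $N=0$ iff $\alpha \cap \alpha' = \emptyset$, hence $d_0(\alpha,\alpha') = 1$. For $N \geq 1$, the inductive step consists in producing a curve $\alpha'' \in \cd_0(T^2)$ disjoint from $\alpha$ whose lift meets only $N-1$ translates of $\widehat{\alpha}'$. This is done via a surgery: select an outermost translate $T^{n_0}\widehat{\alpha}'$ met by $\widehat{\alpha}$ (extremal in the natural nesting order on disjoint essential simple closed curves in $A$), form a bicorn from a subarc of $\widehat{\alpha}$ and a subarc of $T^{n_0}\widehat{\alpha}'$, and push the resulting essential simple closed curve slightly off $\widehat{\alpha}$ to obtain $\widehat{\alpha''}$. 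Outermostness guarantees no new intersections with further translates of $\widehat{\alpha}'$ are introduced.

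For the lower bound, consider any path $\alpha = \beta_0, \beta_1, \ldots, \beta_\ell = \alpha'$ in $\cd_0(T^2)$, and assume without loss of generality that $\beta_i \neq \alpha$ for $i \geq 1$ (else shortcut the path). For each $i$, fix a lift $\widehat{\beta_i}$ and set
\[ N_i = \#\{n \in \ZZ : \widehat{\alpha} \cap T^n \widehat{\beta_i} \neq \emptyset\}, \]
independent of the choice of lift. Then $N_\ell = N$ and $N_1 = 0$, since $\beta_1$ is disjoint from $\alpha$. The heart of the argument is the estimate
\[ N_{i+1} \leq N_i + 1 \qquad (1 \leq i \leq \ell-1), \]
which upon iteration gives $N = N_\ell \leq N_1 + (\ell-1) = \ell - 1$, whence $\ell \geq N+1$. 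To prove the estimate, use that since $\beta_i, \beta_{i+1}$ are disjoint on $T^2$, their preimages are disjoint in $A$; and by $T$-equivariance each complementary strip of $\pi^{-1}(\beta_i)$ contains exactly one translate of $\widehat{\beta_{i+1}}$. The connected simple closed curve $\widehat{\alpha}$, being distinct from every $T^n\widehat{\beta_i}$, visits a \emph{contiguous} interval of these strips, and a short analysis shows the number of visited strips is at most $N_i + 1$ (the $N_i$ met boundaries separate $N_i+1$ consecutive visited strips). Since any translate of $\widehat{\beta_{i+1}}$ met by $\widehat{\alpha}$ lies in a visited strip and there is at most one translate per strip, one concludes $N_{i+1} \leq N_i + 1$.

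The main technical difficulty lies in the non-transverse $C^0$ phenomena that can occur (tangencies between $\widehat{\alpha}$ and the boundary curves $T^n\widehat{\beta_i}$), which I would address either by a small perturbation argument or by a careful direct case analysis preserving the relevant intersection counts.
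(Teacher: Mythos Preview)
The paper does not give its own proof of this lemma; it is stated with a citation to \cite{LW}, Section~4.3, and no proof environment follows. So there is no paper proof to compare against, only the cited reference. Your overall strategy---proving both inequalities separately, with the lower bound via a Lipschitz-type estimate along paths and the upper bound by induction on $N$---is the standard one and parallels the annular arc graph argument as you say.

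Your lower bound is correct. The key step $N_{i+1} \leq N_i + 1$ is exactly right: since the translates $T^n\widehat{\beta_i}$ are pairwise disjoint and each complementary strip contains a unique translate of $\widehat{\beta_{i+1}}$, the connected curve $\widehat{\alpha}$ is confined to $N_i+1$ consecutive strips and hence can meet at most that many translates of $\widehat{\beta_{i+1}}$. The $C^0$ issues you flag are genuine but indeed manageable by perturbation.

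There is, however, a real gap in your upper bound. After forming the bicorn $a \cup b$ with $a \subset \widehat{\alpha}$ and $b \subset T^{n_0}\widehat{\alpha}'$ and pushing off $\widehat{\alpha}$, you obtain a simple closed curve $\widehat{\alpha}''$ in $A$, but you have not verified two things: (i) that $\widehat{\alpha}''$ is disjoint from its own translates $T^n\widehat{\alpha}''$, so that it actually projects to a simple closed curve on $T^2$; and (ii) that $\widehat{\alpha}''$ is disjoint from \emph{all} translates $T^n\widehat{\alpha}$, not just $\widehat{\alpha}$ itself, so that $\alpha''$ is adjacent to $\alpha$ in $\cd_0(T^2)$. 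The problematic cross-term is $b \cap T(a)$: since $T^{n_0}\widehat{\alpha}'$ typically meets $T\widehat{\alpha}$ when $N \geq 2$, the arc $b$ may well intersect $T(a)$, and your outermostness hypothesis does not prevent this. A clean way to repair the argument is to replace the ad hoc bicorn by the wedge construction that appears later in the paper (Section~7.4): setting $\widehat{\alpha}'' = \widehat{\alpha} \wedge T^{-(N-1)}\widehat{\alpha}'$, Lemma~\ref{lem:wedge2} guarantees $\widehat{\alpha}'' \cap T\widehat{\alpha}'' = \emptyset$, and the algebra-of-curves lemma gives $\widehat{\alpha} \leq \widehat{\alpha}'' < T\widehat{\alpha}$; a small push then yields the desired disjoint curve meeting at most $N-1$ translates of $\widehat{\alpha}'$.
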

We remark that $d_0$ is also the minimal degree of a finite cover of
the torus in which the curves have disjoint lifts.

\subsection{Statement of the main proposition}
The rest of this section is concerned with stating and proving
Proposition~\ref{prop:move-markings-all}, which is a key tool we need
in the next section in order to estimate fine marking graph
distance. The proposition will construct markings which in some sense
``follow'' an efficient path in $\cd_0$. At first reading, it might be
useful to skip the proof to see how this proposition is used later.

The statement of the central proposition relies on the following
notion.
\begin{definition}
  Let $\sigma, \tau \in \cd_{0}(T^2)$, and $n>0$. We say that $\tau$
  is \emph{almost distance $n$ from $\sigma$} if it belongs to the
  $C^0$-closure of the ball of radius $n$ centered at $\sigma$ in
  $\cd_0(T^2)$.  A \emph{quasi-path} from $\alpha$ to $\alpha'$ is a
  sequence $(\sigma_0, \dots, \sigma_n)$ in $\cd_{0}(T^2)$, 
  such that consecutive curves $\sigma_i, \sigma_{i+1}$ are almost distance $1$, for each $i = 0, \dots, n-1$. 
\end{definition}

Remember that we say that $\beta$ is a \emph{dual} to $\alpha$ if the
curves intersect transversely in one point (i.e. $\{\alpha, \beta\}$
is a fine marking).  The following proposition provides an efficient
sequence of fine markings connecting two given markings
$\{\alpha, \beta\}$ and $\{\alpha,', \beta'\}$ where $\alpha$ and
$\alpha'$ are in the same isotopy class. All distances between two
consecutive markings are bounded, up to constants, by twists in some
annuli between the duals $\beta$ and $\beta'$.

\begin{proposition}[Efficient isotopic markings]\label{prop:move-markings-all} There exists constants $B,C$ such that the following holds.
Let $\alpha, \alpha' \in \cd_0(T^2)$, and denote $n=d_0(\alpha, \alpha')$ the distance in the graph. Let $\beta, \beta'$ be duals respectively to $\alpha, \alpha'$.
There exist 
\begin{itemize}
\item a quasi-path $\sigma_{0} = \alpha, \dots, \sigma_{n} = \alpha'$ in  $\cd_0(T^2)$,
\item curves $\tau_0^-= \beta$, $\tau_0^+ , \tau_1^-, \tau_1^+, \dots, \tau_{n-1}^-, \tau_{n-1}^+, \tau_n^-$, $\tau_n^+= \beta'$,
\item and annuli $A_i \subseteq A_{\sigma_i}$
\end{itemize}
 such that: 
\begin{enumerate}
\item each $\tau_i^\pm$ is dual to $\sigma_i$, we denote $m_i^- = \{\sigma_i, \tau_i^- \}$ and $m_i^+ = \{\sigma_i, \tau_i^+ \}$,
\item $d(m_i^+, m_{i+1}^-) \leq C$ for all $i = 0, \dots, n-1$,
\item $d(m_0^-, m_0^+) \leq \mathrm{tw}_{A_0}(\beta, \beta') {+B+1}$,
\item $d(m_1^-, m_1^+) \leq \mathrm{tw}_{A_1}(\beta, \beta') + \mathrm{tw}_{A_0}(\beta, \beta') + {4 + 2B}$,
\item $d(m_i^-, m_i^+) \leq \mathrm{tw}_{A_i}(\beta, \beta') + 2B+6$, for all $i=2, \dots, n-2$,
\item $d(m_{n-1}^-, m_{n-1}^+) \leq \mathrm{tw}_{A_{n-1}}(\beta, \beta') + \mathrm{tw}_{A_n}(\beta, \beta') + {4+2B}$,
\item $d(m_n^-, m_n^+) \leq \mathrm{tw}_{A_n}(\beta, \beta')+ {B+1}$,
\end{enumerate}
where all the distances are in the fine marking graph. Furthermore, (8) for each $i = 0, \dots, n$, the curve $\sigma_i$ is almost distance $i$ from $\alpha$, and almost distance $n-i$ from $\alpha'$.
\end{proposition}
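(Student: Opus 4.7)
The plan is to build the intermediate curves $\sigma_i$ in the infinite cyclic cover $A = \mathbb{R} \times S^1 \to T^2$ of Lemma~\ref{lem:cd0-distance}. Lift $\alpha, \alpha'$ to simple closed curves $\widehat\alpha, \widehat\alpha'$ so that exactly $n-1$ of the translates $T^k\widehat\alpha$ meet $\widehat\alpha'$, say for $k = 0, \ldots, n-2$. These intersection points subdivide $\widehat\alpha'$ into $n-1$ arcs. I would build $\widehat{\sigma_i}$ as a bicorn-type concatenation: an initial segment of $\widehat\alpha'$ up to its $i$-th intersection point, continued along a suitable translate $T^{k_i}\widehat\alpha$. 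Pushed down to $T^2$, these give curves $\sigma_0 = \alpha, \ldots, \sigma_n = \alpha'$ in $\cd_0$, and adjacent curves differ only by a bicorn surgery at a single intersection point. The Bicorn Perturbation Lemma~\ref{lem:non-transverse-bicorns} ensures this gives a quasi-path in the sense of the definition, and the almost-distance conditions (8) are immediate from the construction.

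Next, I would construct the duals $\tau_i^\pm$ by surgery with $\beta$ and $\beta'$. Since $\beta$ is dual to $\sigma_0 = \alpha$, it crosses $\widehat\alpha$ transversely once in the cover; following along $\widehat{\sigma_i}$ (which contains a large arc of $T^{k_i}\widehat\alpha$), one can surger $\beta$ against $\sigma_i$ to produce a curve $\tau_i^-$ intersecting $\sigma_i$ exactly once and transversely. Symmetrically, $\tau_i^+$ is built from $\beta'$ using the $\widehat\alpha'$-part of $\widehat{\sigma_i}$. The boundary cases $\tau_0^- = \beta$ and $\tau_n^+ = \beta'$ are tautological, and the transition bound (2) follows because $\tau_i^+$ and $\tau_{i+1}^-$ differ only inside a bounded neighbourhood of the single surgery disk separating $\widehat{\sigma_i}$ from $\widehat{\sigma_{i+1}}$, so a uniformly bounded number of elementary moves of the marking graph suffices.

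The core of the argument is bounding the internal distances $d(m_i^-, m_i^+)$, for which I would invoke Lemma~\ref{lem:basic-twists-markings}(ii): since both markings share $\sigma_i$, the distance is bounded by $\mathrm{tw}_{A_{\sigma_i}}(\tau_i^-, \tau_i^+)$. Choose $A_i \subseteq A_{\sigma_i}$ by pushing the boundary circles of $A_{\sigma_i}$ off $\sigma_i$ along the common arcs used in the surgery, thereby excising the neighbourhoods where $\tau_i^\pm$ differ from $\beta, \beta'$. Lemma~\ref{lem:width-is-a-twist} then transfers the width picture visible in the cover to a twist bound, giving that $\mathrm{tw}_{A_{\sigma_i}}(\tau_i^-, \tau_i^+)$ coincides up to an absolute additive error with $\mathrm{tw}_{A_i}(\beta, \beta')$. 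This yields the single-twist bounds (3), (5), (7) for interior and endpoint indices. For the near-endpoint indices $i = 1$ and $i = n-1$, the sub-annulus $A_i$ cannot be made to avoid the surgery region near $\sigma_0$, respectively $\sigma_n$, and one must additionally absorb the contribution of the twist in $A_0$, respectively $A_n$, producing the double-twist terms (4) and (6).

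The main obstacle will be the bookkeeping for the dual construction: ensuring each $\tau_i^\pm$ has exactly one transverse intersection with $\sigma_i$, that the surgery pieces glue compatibly and land in the correct isotopy class of dual to $\sigma_i$, and that the projections of $\beta, \beta'$ to the intermediate annuli $A_{\sigma_i}$ --- which are defined with respect to curves other than the endpoints --- relate coherently to their projections into $A_\alpha, A_{\alpha'}$. The width--twist dictionary of Section~\ref{ssec:width}, and in particular Lemma~\ref{lem:width-is-a-twist}, is the central technical tool for converting the combinatorial picture in the annulus cover into the twist bounds stated in the proposition.
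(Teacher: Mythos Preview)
Your construction of the quasi-path $\sigma_i$ via bicorn/wedge surgery in the cyclic cover is in the right spirit and close to what the paper does (it uses the wedge $\widetilde\sigma_i = \widetilde\alpha \wedge T^i\widetilde\alpha'$ and proves precise ``touching from one side / both sides'' properties). The real problem is in your treatment of the duals and the twist bounds.

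You propose to build $\tau_i^+$ by surgering $\beta'$ along the $\alpha'$-part of $\sigma_i$, and then to choose $A_i \subset A_{\sigma_i}$ by excising the surgery region, claiming that inside $A_i$ the curves $\tau_i^\pm$ agree with $\beta,\beta'$ so that $\mathrm{tw}_{A_{\sigma_i}}(\tau_i^-,\tau_i^+)$ coincides with $\mathrm{tw}_{A_i}(\beta,\beta')$ up to a bounded error. This step does not work. For $i$ far from $n$, the curve $\sigma_i$ contains a long arc of $\alpha$, and $\beta'$ (which is dual to $\alpha'$, not to $\alpha$) may cross that arc arbitrarily many times. The surgery needed to turn $\beta'$ into a dual of $\sigma_i$ is therefore not confined to any bounded neighbourhood; it replaces most of $\beta'$. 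So there is no sub-annulus $A_i$ that simultaneously (a) avoids the surgery regions for both $\tau_i^-$ and $\tau_i^+$ and (b) still sees essentially all of the winding of $\tau_i^-$ against $\tau_i^+$ in $A_{\sigma_i}$. Without that, Lemma~\ref{lem:width-is-a-twist} gives you nothing about $\mathrm{tw}_{A_{\sigma_i}}(\tau_i^-,\tau_i^+)$ versus $\mathrm{tw}_{A_i}(\beta,\beta')$.

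The paper circumvents this by constructing the duals \emph{locally}: $\tau_i^\pm$ are built only from the adjacent curves $\sigma_{i-1},\sigma_i,\sigma_{i+1}$ (Propositions~\ref{prop:move-markings} and~\ref{prop:move-markings2}), yielding direct bounds such as $\mathrm{tw}_{\sigma_i}(\tau_i^+,\sigma_{i+1})\le 2$. The passage from twists between neighbouring $\sigma$'s to twists between the endpoint duals $\beta,\beta'$ is then done in a single stroke by the quasi-path bounded geodesic image theorem (Lemma~\ref{lem:bgit3}); this is exactly where the constant $B$ in the statement comes from. Your proposal never invokes any BGIT, and without it there is no mechanism that bounds the contribution of $\beta$ or $\beta'$ at an intermediate annulus $A_{\sigma_i}$ by anything absolute. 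That is the missing idea.
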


  \begin{figure}
    \centering
    \def\svgwidth{\textwidth}
    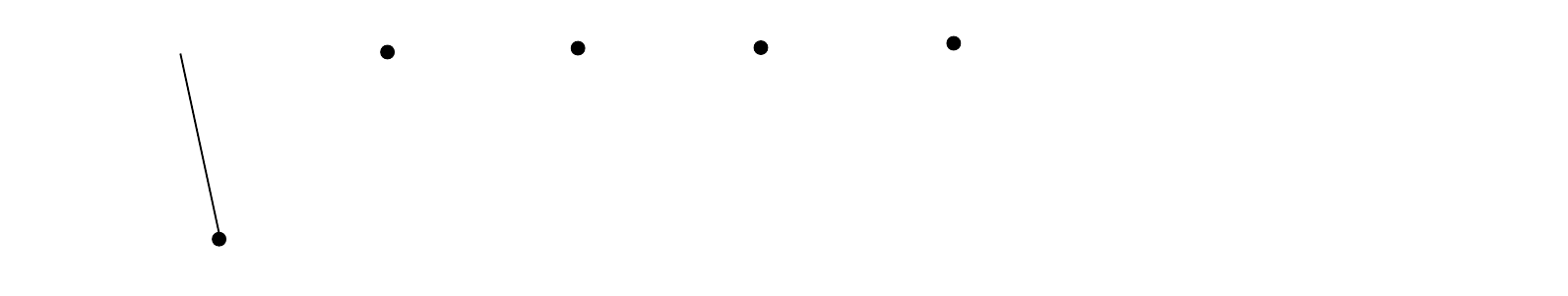
    \caption{Efficient sequence of fine markings connecting two curves in the same isotopy class. Numbers refer to upper bounds in Proposition~\ref{prop:move-markings-all}.}
  \end{figure}

\subsection{Quasi-paths}

Let us first give a more concrete, geometrical characterization of
being at almost distance at most $1$, which will show in particular that this
is a symmetric notion.  From now on we fix an orientation on our base
curve $\alpha_0$, and orient each curve isotopic to $\alpha_0$
accordingly, so that the oriented curves are isotopic. Consider the infinite cyclic cover $A = \mathbb{R} \times \mathbb{S}^1$ as above where $\alpha_0$ lifts to a closed curve, and let
$T: A \to A$ denote the covering transformation
$(x,y ) \mapsto (x+1, y)$.  We may choose the orientation so that for
each lift $\widetilde \alpha$ of $\alpha \in \cd_0(T^2)$, the lift
$T\widetilde \alpha$ is on the right-hand side of $\widetilde \alpha$.
Given two essential simple closed curves $a,b$ in $A$, we write
$a < b$ if $b$ is included in the open domain bounded by $a$ on its
right-hand side, and $a \leq b$ if $b$ is included in the closure of
this domain.

\begin{lemma}\label{lem:sandwich}
  Let $\alpha, \alpha' \in \cd_0(T^2)$. Fix a lift $\widetilde \alpha$
  of $\alpha$ in $A$.
  \begin{enumerate}[i)]
  \item $\alpha'$ and $\alpha$ are disjoint exactly if $\alpha'$ has a
    lift $\widetilde \alpha'$ such that
    $\widetilde \alpha < \widetilde \alpha' < T \widetilde \alpha$.
  \item The curve $\alpha'$ is almost distance $1$ from $\alpha$ if
    and only if $\alpha'$ has a lift $\widetilde \alpha'$ such that
    $\widetilde \alpha \leq \widetilde \alpha' \leq T \widetilde
    \alpha$.
  \end{enumerate}
\end{lemma}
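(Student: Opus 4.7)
The plan is to exploit the structural fact that, since $\alpha$ is a simple closed curve isotopic to $\alpha_0$, its full preimage $\pi^{-1}(\alpha) = \bigsqcup_{n\in\mathbb{Z}} T^n\widetilde{\alpha}$ in the cover $A$ is a disjoint union of simple closed curves, each isotopic to the core of $A$. Consequently, the closure $\mathcal{S}$ of the region $\{\widetilde{\alpha} < \cdot < T\widetilde{\alpha}\}$ is a compact topological annulus having $\widetilde{\alpha}$ and $T\widetilde{\alpha}$ as its two boundary circles, and the remaining translates $T^n\widetilde{\alpha}$ with $n \notin \{0,1\}$ lie entirely outside $\mathcal{S}$.

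Part i) will follow directly from this. If $\alpha\cap\alpha'=\emptyset$, then any lift $\widetilde{\alpha}'$ is connected and disjoint from $\pi^{-1}(\alpha)$, hence trapped in one of the open strips $\{T^n\widetilde{\alpha} < \cdot < T^{n+1}\widetilde{\alpha}\}$. Applying a suitable power of $T$ then produces a lift with $\widetilde{\alpha}<\widetilde{\alpha}'<T\widetilde{\alpha}$. Conversely, such a lift is disjoint from every $T^n\widetilde{\alpha}$, so its projection $\alpha'$ is disjoint from $\alpha$.

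For the $(\Leftarrow)$ direction of ii), given $\widetilde{\alpha} \leq \widetilde{\alpha}' \leq T\widetilde{\alpha}$ (the case of equality being trivial), I would push $\widetilde{\alpha}'$ slightly into the interior of $\mathcal{S}$ via a sequence of ambient homeomorphisms isotopic to the identity. Identifying $\mathcal{S}$ with $[0,1]\times S^1$, the maps $\phi_n(x,y) = \bigl(\tfrac{1}{2n}+(1-\tfrac{1}{n})x,\, y\bigr)$ contract $\mathcal{S}$ into its interior while converging uniformly to the identity. Then $\phi_n(\widetilde{\alpha}')$ is a simple closed curve in the open strip, so its projection to $T^2$ is a simple closed curve disjoint from $\alpha$ (using that $T^k\widetilde{\alpha}$ lies outside $\mathcal{S}$ for $k\neq 0,1$), and the projections converge to $\alpha'$ in $C^0$, verifying the definition of almost distance $1$.

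For the $(\Rightarrow)$ direction of ii), I will use a limit-of-lifts argument. Starting from a sequence $\alpha_n\to\alpha'$ in $C^0$ with each $\alpha_n$ disjoint from $\alpha$ (discarding the terms where $\alpha_n=\alpha$, which forces $\alpha'=\alpha$), part i) yields lifts $\widetilde{\alpha}_n\subset\mathcal{S}$. Fix basepoints $p_n\in\widetilde{\alpha}_n$; compactness of $\mathcal{S}$ lets me pass to a subsequence with $p_n\to p\in\mathcal{S}$. Parametrize all curves compatibly as maps $S^1\to T^2$ with $\alpha_n\to\alpha'$ uniformly, and lift $\alpha_n$ to a loop $\widetilde{\alpha}_n\colon S^1 \to A$ based at $p_n$ — such a loop-lift exists precisely because $\alpha_n$ is isotopic to $\alpha_0$, whose homotopy class generates the subgroup defining the cover $A$. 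Standard continuity of path-lifting with converging basepoints then gives uniform convergence of $\widetilde{\alpha}_n$ to the lift $\widetilde{\alpha}'$ of $\alpha'$ through $p$; since each $\widetilde{\alpha}_n$ lies in the closed set $\mathcal{S}$, so does $\widetilde{\alpha}'$. The only mildly delicate step is this last lifting argument, which requires choosing compatible parametrizations, but it is entirely routine.
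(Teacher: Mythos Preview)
Your proof is correct and follows essentially the same approach as the paper's own proof, which is extremely terse: the paper declares part~i) ``obvious'', says that the forward direction of~ii) follows ``immediately'' from~i), and for the backward direction simply asserts that ``there are arbitrarily small deformations'' pushing $\widetilde{\alpha}'$ into the open strip. Your argument supplies exactly the details the paper omits --- the explicit contraction $\phi_n$ for the backward direction, and the limit-of-lifts argument (via compactness of $\mathcal{S}$ and continuity of path-lifting) for the forward direction --- so there is no genuine methodological difference, only a difference in level of detail.
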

\begin{proof}
  Part i) is obvious. It also immediately implies that being almost
  distance $1$ implies the criterion~ii) in the lemma. Conversely, if
  $\widetilde \alpha'$ satisfies condition~ii), there are arbitrarily
  small deformations so that
  $\widetilde \alpha < \widetilde \alpha' < T \widetilde \alpha$, and
  thus $\alpha'$ has almost distance $1$.
\end{proof}
Assuming the situation of the lemma, the curves are distance one if
and only if
$\widetilde \alpha < \widetilde \alpha' < T \widetilde \alpha$. In the
opposite case, we will say that \emph{$\alpha'$ touches $\alpha$ only
  from the right} if $\widetilde \alpha' < T \widetilde \alpha$,
\emph{$\alpha'$ touches $\alpha$ only from the left} if
$\widetilde \alpha < \widetilde \alpha'$, and \emph{$\alpha'$ touches
  $\alpha$ only from both sides} if none of the two inequalities
hold. Rephrasing the lemma, a curve is almost distance $1$ from
$\alpha$ if and only if it either touches $\alpha$ from the right, or
from the left, or from both sides, or is disjoint from it.

Note that $\alpha'$ touches $\alpha$ from one side if and only if
$\alpha$ touches $\alpha'$ from the other side; and "touching from both
sides" is a symmetric relation.  Also note that if $\alpha'$ touches
$\alpha$ from both sides then the projection $\pi_\alpha(\alpha')$ in the annulus $A_{\alpha}$ is
not empty. 

\medskip We also need a version of the bounded geodesic image theorem
for quasi-paths, which uses the following lemma.
\begin{lemma}
  Suppose $A$ is an almost embedded essential annulus in $T^2$, and $(\alpha_i),i=0,\ldots,n$ a
  quasi-path. If $\pi_A(\alpha_i) \neq \emptyset$ for all
  $i$, then
  \[ \mathrm{diam}(\cup_i\pi_A(\alpha_i)) \leq 2n. \]
\end{lemma}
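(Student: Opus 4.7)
The plan is to generalize the proof of Lemma~\ref{lem:basic-twists-curves}~iii) from edge-paths to quasi-paths via a $C^0$-approximation argument. I will first reduce the claim to a single-step estimate on consecutive terms of the quasi-path, and then prove that single-step estimate by passing to the limit from the genuine-edge case.

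The reduction is the following single-step claim: if $\alpha, \alpha' \in \cd_0(T^2)$ are at almost distance at most one in $\cd(T^2)$ and both have non-empty projection in $A$, then for every $a \in \pi_A(\alpha)$ and $a' \in \pi_A(\alpha')$ one has $d_{\mathcal{A}^\dagger(A)}(a, a') \leq 2$. Chaining this estimate along the quasi-path via auxiliary arcs $b_k \in \pi_A(\alpha_k)$ and the triangle inequality in $\mathcal{A}^\dagger(A)$ then yields, for any $b \in \pi_A(\alpha_i)$ and $b' \in \pi_A(\alpha_j)$, a bound $d_{\mathcal{A}^\dagger(A)}(b, b') \leq 2|j - i| \leq 2n$, giving the stated diameter bound.

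To prove the single-step claim, I approximate: by definition of almost distance one, pick a sequence $\gamma_k \to \alpha'$ in $C^0$ with $d_{\cd(T^2)}(\alpha, \gamma_k) \leq 1$ for every $k$. Since $\alpha$ and $\alpha'$ are both isotopic to $\alpha_0$, so are all the $\gamma_k$, and two isotopic simple closed curves on the torus have zero algebraic intersection, which precludes a single transverse intersection; hence the condition $d_{\cd} \leq 1$ forces $\gamma_k \cap \alpha = \emptyset$. Any essential arc $a_k \in \pi_A(\gamma_k)$ is therefore disjoint from $a$, which by Lemma~\ref{lem:distance-arc-graph} gives $d_{\mathcal{A}^\dagger(A)}(a, a_k) \leq 1$. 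The triangle inequality then reduces matters to producing, for $k$ sufficiently large, such an $a_k$ at bounded $\mathcal{A}^\dagger(A)$-distance from the fixed arc $a'$.

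The main obstacle is exactly this semicontinuity of the projection: I must show that for $k$ large, $\pi_A(\gamma_k)$ contains an essential proper arc whose lifts to the universal cover of $A$ are $C^0$-close to lifts of $a'$, so that such a lift meets at most one lift of $a'$. Since the endpoints of $a'$ lie on distinct boundary components of $A$, for $\gamma_k$ sufficiently $C^0$-close to $\alpha'$ the intersection $\gamma_k \cap A$ contains a subarc near $a'$ whose endpoints remain on those two distinct boundary components; after slight trimming this yields the desired $a_k$. The delicate case is when $\alpha'$ is tangent rather than transverse to $\partial A$ at an endpoint of $a'$, which I propose to handle by first putting $\alpha'$ in transverse position near that endpoint in the spirit of Lemma~\ref{lem:non-transverse-bicorns}; this changes $a'$ by a uniformly bounded amount in $\mathcal{A}^\dagger(A)$ that can be absorbed into the constant.
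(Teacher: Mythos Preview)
Your reduction to a single-step estimate and the chaining argument are correct and match the paper's strategy. The difference lies entirely in how you prove the single-step bound, and here your approach is both more complicated and does not quite reach the stated constant.

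The paper's single-step argument is one line: by the sandwich characterization (Lemma~\ref{lem:sandwich}), consecutive curves $\alpha_i,\alpha_{i+1}$ in a quasi-path do not intersect \emph{transversely} (one lift of $\alpha_{i+1}$ is trapped between two adjacent lifts of $\alpha_i$). Hence any arcs $a\in\pi_A(\alpha_i)$ and $a'\in\pi_A(\alpha_{i+1})$ also fail to intersect transversely, so in the universal cover of $A$ a lift of $a'$ cannot cross a lift of $a$ and therefore meets at most one such lift; by Lemma~\ref{lem:distance-arc-graph} this gives $d_{\mathcal{A}^\dagger(A)}(a,a')\le 2$ directly. No approximation, no semicontinuity.

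Your route via $C^0$-approximation runs into two concrete problems. First, even in the transverse case your own bookkeeping gives $d(a,a_k)\le 1$ (disjointness) and $d(a_k,a')\le 2$ (width $\le 1$ for $C^0$-close arcs), hence only $d(a,a')\le 3$; chaining yields $3n$, not the claimed $2n$. Your remark that extra error ``can be absorbed into the constant'' does not apply here, since the lemma asserts the specific bound $2n$. Second, the semicontinuity step---producing $a_k\in\pi_A(\gamma_k)$ close to $a'$---is genuinely delicate when $\alpha'$ meets $\partial A$ tangentially: the approximants $\gamma_k$ may have \emph{empty} projection to $A$, and Lemma~\ref{lem:non-transverse-bicorns} concerns bicorns of two curves, not arcs relative to an annulus boundary, so it does not directly apply.

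There is a quick salvage of your idea that lands on the paper's argument: once you observe that $\gamma_k\cap\alpha=\emptyset$ and $\gamma_k\to\alpha'$ in $C^0$, it follows immediately that $\alpha$ and $\alpha'$ have no transverse intersection (a transverse crossing is $C^0$-stable). From there you can bound $d(a,a')$ directly by $2$ as above, bypassing the approximants entirely. This is essentially what Lemma~\ref{lem:sandwich} encodes.
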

\begin{proof}
  Observe that by Lemma~\ref{lem:sandwich}, consecutive
  $\alpha_i, \alpha_{i+1}$ do not intersect transversely, and thus the
  same is true for the projections
  $\pi_A(\alpha_i), \pi_A(\alpha_{i+1})$. This implies that
  \[ \mathrm{tw}_A(\alpha_i, \alpha_{i+1}) \leq 2 \] for all $n$, and
  thus the lemma by induction.
\end{proof}
Given the previous lemma, the proof of the second Bounded Geodesic
Image Theorem applies verbatim to give the following.
\begin{lemma}[Quasi-path bounded geodesic image theorem]\label{lem:bgit3}
  For any $K>0$ there is a $B>0$ so that the following is true.  Let
  $A \subset S$ be an almost embedded essential annulus, and suppose
  that $(c_0, \dots, c_N)$ is a quasi-path in $\cd_{0}(T^2)$ which is also a
  $K$--quasigeodesic in $\mathcal{C}^\dagger(T^2)$, so that
  $\pi_A(c_i)$ is never empty. Then
  \[ \mathrm{diam}(\cup_i\pi_A(c_i)) \leq B. \]
\end{lemma}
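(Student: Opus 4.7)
The plan is to adapt the argument of the second bounded geodesic image theorem (Lemma~\ref{lem:bgit2}) essentially verbatim, with the only substantive change being that the Lipschitz-type bound on the projection of a short path (Lemma~\ref{lem:basic-twists-markings}, i.e.\ Lemma~\ref{lem:basic-twists-curves} iii in the curve setting) is replaced by the quasi-path version established in the unlabeled lemma immediately preceding Lemma~\ref{lem:bgit3}. So the structure is: first reduce to the case where the quasi-path dips close to a boundary curve of $A$, cut out a short central segment there, and handle the two long flanking pieces with Lemma~\ref{lem:bgit1}.

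Concretely, I would start by letting $\alpha$ denote one of the boundary curves of $A$, and letting $r = r(K,\delta)$ be the constant supplied by the first bounded geodesic image theorem (Lemma~\ref{lem:bgit1}). If the entire quasi-path $(c_0,\dots,c_N)$ stays outside $B_r(\alpha)$, then Lemma~\ref{lem:bgit1} immediately yields $\mathrm{diam}(\cup_i \pi_A(c_i)) \leq 2$, and we are done (with $B = 2$ in this case). Otherwise there exists some index $i_0$ with $c_{i_0} \in B_r(\alpha)$. Using that $(c_i)$ is a $K$-quasigeodesic in $\cd(T^2)$, there is a constant $L = L(K,r)$ so that whenever $|i-j|\geq L$ we have $d_{\cd(T^2)}(c_i,c_j) > 2r$; in particular the set of indices $i$ with $c_i \in B_r(\alpha)$ is an interval of diameter at most $2L$.

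Now decompose the path as a concatenation $\gamma_1 \ast \gamma_2 \ast \gamma_3$ where $\gamma_2$ is the subpath of indices within $2L$ of $i_0$ and $\gamma_1, \gamma_3$ are the (possibly empty) remaining initial and terminal subpaths, both of which stay outside $B_r(\alpha)$. Each of $\gamma_1$ and $\gamma_3$ is itself a $K$-quasigeodesic in $\cd(T^2)$ disjoint from $B_r(\alpha)$, so Lemma~\ref{lem:bgit1} gives $\mathrm{diam}(\cup \pi_A(\gamma_j)) \leq 2$ for $j=1,3$. For the central segment $\gamma_2$, every $c_i$ appearing in it still satisfies $\pi_A(c_i) \neq \emptyset$, and $\gamma_2$ remains a quasi-path in $\cd_0(T^2)$ (being a subpath of one), so the preceding lemma applies and yields $\mathrm{diam}(\cup \pi_A(\gamma_2)) \leq 2 \cdot 2L = 4L$. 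Combining the three bounds via the triangle inequality in the fine arc graph gives $\mathrm{diam}(\cup_i \pi_A(c_i)) \leq 4L + 4$, so $B := 4L+4$ works.

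There is no real obstacle in this proof; the content was already packaged into the preceding lemma, whose role is precisely to replace the step where the second BGIT used that edges in $\cd(T^2)$ change twist by at most $2$ (which would fail for us, since "almost distance $1$" curves in $\cd_0$ need not be disjoint and can a priori twist arbitrarily in annuli disjoint from them). The only point to verify carefully is that the quasi-path structure is preserved under restriction to a subinterval, which is immediate from the definition.
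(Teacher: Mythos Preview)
Your proof is correct and is exactly the approach the paper takes: the paper simply states that, given the preceding lemma bounding the projection of a quasi-path with nonempty projections, the proof of Lemma~\ref{lem:bgit2} applies verbatim, and you have written out precisely that argument. The only quibbles are cosmetic (the set of indices landing in $B_r(\alpha)$ need not literally be an interval, only contained in one of bounded length, and your constants are slightly loose), but these do not affect correctness.
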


\subsection{Proof of Proposition~\ref{prop:move-markings-all}, Strategy}

We will split our main proposition above into three statements. The
first is concerned with constructing the quasi-path which will form
the $\sigma_i$, and the other two provide the nice duals.

\begin{proposition}\label{prop:quasi-path}
Let $\alpha, \alpha' \in \cd_0(T^2)$, and denote $n=d_0(\alpha, \alpha')$ the distance in the subgraph.
Then there exists a quasi-path $\sigma_{0} = \alpha, \dots, \sigma_{n} = \alpha'$ in  $\cd_0(T^2)$ with the following properties.
\begin{enumerate}
\item $\sigma_1$ touches $\sigma_0$ from the left, $\sigma_{n-1}$ touches $\sigma_n$ from the left,
\item for each $i=1, \dots, n-1$, the curves $\sigma_{i+1}$ touches $\sigma_i$ from both sides,
\item for each $i = 0, \dots, n$, the curve $\sigma_i$ is almost distance $i$ from $\alpha$, and almost distance $n-i$ from $\alpha'$.
\end{enumerate}
In particular, note that $\pi_{A_{\sigma_i}}(\sigma_j) \neq \emptyset$ if $|i-j| \geq 2$.
Furthermore, all these curves are included in $\alpha \cup \alpha'$.
\end{proposition}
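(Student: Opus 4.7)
The strategy is to lift the problem to the infinite cyclic cover $A = \mathbb{R}\times\mathbb{S}^1$ and build the $\sigma_i$ as bicorn-type curves formed by arcs of $\alpha$ and $\alpha'$. By Lemma~\ref{lem:cd0-distance}, after fixing lifts $\widetilde{\alpha}$ of $\alpha$ and $\widetilde{\alpha}'$ of $\alpha'$ I may arrange that $\widetilde{\alpha}'$ meets exactly the consecutive translates $T^a\widetilde{\alpha}, T^{a+1}\widetilde{\alpha}, \ldots, T^{a+n-2}\widetilde{\alpha}$ of $\widetilde{\alpha}$, for some integer $a$. Since the conclusion is stated in terms of \emph{almost} distance, I may use the Bicorn Perturbation Lemma~\ref{lem:non-transverse-bicorns} to reduce to the case where $\widetilde{\alpha}$ and $\widetilde{\alpha}'$ have transverse, essential intersections, performing the surgery in the transverse setting and then taking a $C^0$ limit.

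For each $i = 1, \ldots, n-1$ I construct $\widetilde{\sigma}_i$ as an essential simple closed curve in $A$ built from an arc of $\widetilde{\alpha}'$ together with a closing arc of $T^{a+i-1}\widetilde{\alpha}$. The $\widetilde{\alpha}'$-arc is chosen to record precisely the essential crossings of $\widetilde{\alpha}'$ with the first $i-1$ translates $T^a\widetilde{\alpha}, \ldots, T^{a+i-2}\widetilde{\alpha}$ and to ``turn back'' at $T^{a+i-1}\widetilde{\alpha}$ without crossing it; equivalently, $\widetilde{\sigma}_i$ lies weakly to the left of $T^{a+i-1}\widetilde{\alpha}$ and touches it. By Lemma~\ref{lem:cd0-distance}, $\widetilde{\sigma}_i$ then meets exactly $i-1$ lifts of $\alpha$, so $\sigma_i$ is almost distance $i$ from $\alpha$; a symmetric bookkeeping applied to the complementary piece of $\widetilde{\alpha}'$ (which carries the $n-i-1$ remaining essential crossings) gives almost distance $n-i$ from $\alpha'$, yielding condition~(3). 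Since by construction $\widetilde{\sigma}_i$ is contained in $\bigcup_j T^j\widetilde{\alpha} \cup \bigcup_k T^k\widetilde{\alpha}'$, the support condition $\sigma_i \subset \alpha \cup \alpha'$ is immediate.

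For the touching conditions~(1) and~(2), the key observation is that $\widetilde{\sigma}_i$ touches $T^{a+i-1}\widetilde{\alpha}$ from the right while $\widetilde{\sigma}_{i+1}$ touches the same lift from the left; after choosing lifts consistently (possibly with different lifts of $\alpha'$ at the two ends of the sequence, as permitted by condition~(1)), this places them so that $\widetilde{\sigma}_i \leq \widetilde{\sigma}_{i+1} \leq T\widetilde{\sigma}_i$ with touching on both sides, yielding condition~(2) via Lemma~\ref{lem:sandwich}. The boundary cases of~(1) follow from the fact that $\widetilde{\sigma}_1$ is strictly separated from $\widetilde{\sigma}_0 = \widetilde{\alpha}$ (the leftward closing at step $1$ is by construction performed past $T^a\widetilde{\alpha}$, which lies strictly to the right of $\widetilde{\alpha}$), and symmetrically at the other end. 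The main technical obstacle is to select, among the two possible closing arcs of $T^{a+i-1}\widetilde{\alpha}$, the unique one yielding a simple, essential, unit-winding closed curve in $A$ while keeping the count of intersections with translates of $\widetilde{\alpha}'$ correct; this is resolved by a winding analysis in $A$ combined with the slack afforded by passage to the $C^0$-closure in the definition of almost distance, which allows small perturbations to absorb extraneous tangential intersections.
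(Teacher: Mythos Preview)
Your outline is on the right track---working in the cyclic cover $A$ and building the $\sigma_i$ from pieces of $\alpha$ and $\alpha'$---but the argument as written has genuine gaps and does not quite match the paper's construction.

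\textbf{The bicorn description is underspecified.} You propose $\widetilde{\sigma}_i$ as ``an arc of $\widetilde{\alpha}'$ together with a closing arc of $T^{a+i-1}\widetilde{\alpha}$''. But $\widetilde{\alpha}'$ may cross $T^{a+i-1}\widetilde{\alpha}$ in many points, and the portion of $\widetilde{\alpha}'$ lying weakly to the left of $T^{a+i-1}\widetilde{\alpha}$ need not be a single arc: it can be a union of arcs, so no single closing arc will do. Your ``main technical obstacle'' paragraph acknowledges this but does not resolve it. The paper avoids this entirely by defining $\widetilde{\sigma}_i = \widetilde{\alpha} \wedge T^i\widetilde{\alpha}'$, where $\wedge$ denotes the boundary of the unbounded right-hand component of the complement of the union. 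Kerekjarto's theorem guarantees this is a simple closed curve with no transversality hypothesis, and it is automatically contained in $\alpha\cup\alpha'$. All the order relations $\widetilde{\sigma}_i \leq \widetilde{\sigma}_{i+1} \leq T\widetilde{\sigma}_i$ then follow from a short list of algebraic properties of $\wedge$ (monotonicity, commutation with $T$, etc.).

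\textbf{The perturbation-and-limit step does not deliver the conclusion.} The proposition requires $\sigma_0=\alpha$, $\sigma_n=\alpha'$ exactly, and all $\sigma_i\subset \alpha\cup\alpha'$. If you perturb $\alpha,\alpha'$ to transverse curves, do surgery there, and take a $C^0$ limit, there is no reason the limit curves land back inside the original $\alpha\cup\alpha'$, nor that the touching-from-both-sides property survives the limit. The paper's wedge construction works directly on the original curves and needs no perturbation.

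\textbf{The touching-from-both-sides verification is the hard part.} You assert that $\widetilde{\sigma}_{i+1}$ touches $\widetilde{\sigma}_i$ from both sides essentially by inspection, but this is where the paper does real work: it proves, via a witness-ray argument, that $\widetilde{\sigma}_{i+1}\setminus T^{i+1}\widetilde{\alpha}'\subset\widetilde{\sigma}_i$ and that $T$-translates of components of $\widetilde{\sigma}_i\setminus\widetilde{\alpha}$ sit inside $\widetilde{\sigma}_{i+1}$. Your sentence ``after choosing lifts consistently\ldots this places them so that $\widetilde{\sigma}_i\leq\widetilde{\sigma}_{i+1}\leq T\widetilde{\sigma}_i$ with touching on both sides'' is exactly the statement to be proved, not an argument for it.

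In short: replace the ad hoc bicorn-plus-limit scheme by the wedge operation $\widetilde{\sigma}_i=\widetilde{\alpha}\wedge T^i\widetilde{\alpha}'$, and prove the touching conditions using monotonicity of $\wedge$ together with an explicit accessibility (witness-ray) argument.
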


\begin{proposition}\label{prop:move-markings}
Let $\sigma, \sigma'\in \cd(T^2)$ be isotopic curves that touch each other from one side. Let $\tau$ be a dual to $\sigma$.
Then there exists a curve $\tau'$ which is a common dual to $\sigma$ and $\sigma'$, and an almost embedded annulus $A \subset A_\sigma$, in which $\sigma'$ has non-empty projection, and  such that
$\mathrm{tw}_{\sigma}(\tau, \tau') \leq  \mathrm{tw}_{A}(\tau, \sigma')$.
Furthermore we can choose $\tau'$ isotopic to $\tau$.
\end{proposition}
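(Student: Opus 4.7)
My plan is to construct $\tau'$ by performing bigon surgery on $\tau$ to remove its excess intersections with $\sigma'$, with $A$ chosen as a suitable subannulus of $A_\sigma$ where the relevant twist is captured. Fix an orientation so that $\sigma'$ touches $\sigma$ from the right. Then $\sigma'$ sits inside the closed annulus $A_\sigma$, isotopic to the core, with tangencies on only one boundary copy of $\sigma$ (and, in particular, $\sigma'$ does not cross $\sigma$). Using a construction analogous to that of Lemma~\ref{lem:width-is-a-twist}~(ii), I thin $A_\sigma$ down to an almost embedded essential subannulus $A \subseteq A_\sigma$ whose boundaries both cross $\sigma'$ essentially, so that $\pi_A(\sigma')$ is non-empty. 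Since $\tau \cap A_\sigma$ is an essential arc of $A_\sigma$ crossing each of its boundary components, it also crosses each boundary of $A$, so $\pi_A(\tau)$ is non-empty and $\mathrm{tw}_A(\tau, \sigma')$ is defined.

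\textbf{Construction of $\tau'$.} After a small perturbation of $\tau$, the intersection $\tau \cap \sigma'$ is transverse and consists of $1+2k$ points, since $\sigma'$ is isotopic to $\sigma$ and $\tau$ has algebraic intersection $\pm 1$ with $\sigma$. I pair off the $2k$ extra intersections into $k$ innermost bigons, each bounded by an arc of $\tau$ and an arc of $\sigma'$ and bounding a disk in $T^2$. Performing the $k$ bigon surgeries produces a simple closed curve $\tau'$ meeting $\sigma'$ exactly once. Since all the bigons lie inside $A_\sigma$, these surgeries take place away from $\sigma$, so $\tau'$ still meets $\sigma$ transversely in exactly one point. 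Bigon surgery across disk bigons is realized by an ambient isotopy, so $\tau'$ is isotopic to $\tau$ in $T^2$, giving a common dual to $\sigma$ and $\sigma'$ that is isotopic to $\tau$, as required.

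\textbf{Twist bound.} By Lemma~\ref{lem:distance-arc-graph}, $\mathrm{tw}_\sigma(\tau, \tau') = \mathrm{wd}_{A_\sigma}(\tau, \tau')+1$. I lift the arcs of $\tau$ and $\tau'$ in $A_\sigma$ to the universal cover. Each bigon surgery modifies the $\tau$-lift by replacing one subarc with another that parallels a lift of $\sigma'$ across one lift of the bigon, so that lifts of $\tau'$ differ from lifts of $\tau$ precisely by swapping out pieces for pieces of lifts of $\sigma'$. Comparing lifts shows that the relative width of $\tau$ and $\tau'$ in $A_\sigma$ is controlled by the Hausdorff distance in $\mathcal{A}^\dagger(A)$ between $\pi_A(\tau)$ and $\pi_A(\sigma')$, which is exactly $\mathrm{tw}_A(\tau, \sigma')$.

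\textbf{Main obstacle.} The technical heart is the lift-counting step relating the bigon surgeries to the twist in $A$. Each individual surgery changes the width in $A_\sigma$ by at most a bounded amount, but showing that the total is genuinely bounded by $\mathrm{tw}_A(\tau, \sigma')$ requires a careful analysis of how the pieces of $\sigma'$ used for surgery correspond to elements of $\pi_A(\sigma')$. Choosing $A$ so that the twist there precisely captures the winding difference between $\tau$ and $\sigma'$---rather than over- or undercounting---is what makes the subannulus construction in the first step non-trivial.
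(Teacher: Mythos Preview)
Your proposal has a genuine gap: you explicitly flag the twist bound as an unresolved ``Main obstacle'', and indeed the sketch you give does not establish it. Bigon surgery between $\tau$ and $\sigma'$ replaces many subarcs of $\tau$ by arcs of $\sigma'$, and each such replacement can contribute to $\mathrm{wd}_{A_\sigma}(\tau,\tau')$. You would need to show that these contributions do not accumulate beyond $\mathrm{tw}_A(\tau,\sigma')$, but there is no mechanism in your argument linking the total surgery cost to a twist in a single subannulus $A$ chosen \emph{in advance}. There is also a secondary issue: since $\sigma'$ genuinely touches $\sigma$, the arcs of $\sigma'$ you splice in may contain points of $\sigma'\cap\sigma$, so your $\tau'$ can acquire tangencies with $\sigma$ and fail to be a dual in the strict sense (one transverse intersection).

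The paper avoids both difficulties by taking a completely different route. Rather than modifying $\tau$, it builds $\tau'$ from scratch as a concatenation $c\cup c'$ inside $A_\sigma$: the arc $c$ runs from the boundary side \emph{not} touched by $\sigma'$ to a point $x\in\sigma'$ and is chosen \emph{disjoint from $\tau$}; the arc $c'$ lies inside a single bigon $D$ bounded by a subarc $t\subset\sigma'$ and a subarc $s\subset\sigma$, joining $x$ to a point of $s$. Because $c\cap\tau=\emptyset$, one has $\mathrm{tw}_\sigma(\tau,\tau')=\mathrm{wd}_\sigma(\tau,c')$, and because any lift of $\tau$ meeting $\widetilde{c}'\subset\widetilde{D}$ must exit through $\widetilde{t}\subset\widetilde{\sigma}'$, this is at most $\mathrm{wd}_\sigma(\tau,\sigma')$. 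Only \emph{then} is the annulus $A$ produced, via Lemma~\ref{lem:width-is-a-twist}, to convert this width into a twist. The key idea you are missing is that building $\tau'$ with a large piece disjoint from $\tau$ makes the twist bound a one-line lift argument; the surgery approach forfeits this and leaves you with the accumulation problem you identified.
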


\begin{proposition}\label{prop:move-markings2}There exists a constant $C$ such that the following holds.
Let $\sigma, \sigma' \in \cd(T^2)$ be isotopic curves that touch each other from both sides. Then:
\begin{enumerate}
\item There exist curves $\tau^+, \tau'^-$ respectively dual to $\sigma, \sigma'$ such that 
\[
\mathrm{tw}_\sigma(\tau^+, \sigma') \leq 2, \ \ \mathrm{tw}_{\sigma'}(\sigma, \tau'^-) \leq 2.
\]
\item Denoting $m= \{\sigma, \tau^+\}$, $m'= \{\sigma', \sigma'^-\}$, we have
$d_{\mathcal{M}_0^\dagger}(m, m') \leq C$.
\end{enumerate} 
\end{proposition}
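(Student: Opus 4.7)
The approach is to construct $\tau^+$ and $\tau'^-$ as local modifications of essential arcs of the opposite curve in the corresponding annulus, and then bound the marking distance via the relative-width metric from Lemma~\ref{lem:relative-width-distance}.

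\emph{Constructing the duals.} The both-sides condition says that the lift $\widetilde \sigma'$ in the cyclic cover lies in the closed strip bounded by $\widetilde \sigma$ and $T\widetilde \sigma$ and meets both boundary curves. Consequently, $\widetilde \sigma'$ contains an essential arc from $\widetilde \sigma$ to $T\widetilde \sigma$, obtained by following $\widetilde \sigma'$ between a touch point on $\widetilde \sigma$ and the next touch point on $T\widetilde \sigma$. Let $a$ be the projection to $T^2$ of such an arc; it is an essential subarc of $\sigma'$ in $A_\sigma$ with both endpoints on $\sigma$. Build $\tau^+$ by perturbing $a$ slightly so that its endpoints lie off $\sigma$ on the same side, and closing it up with a short arc near $\sigma$ that crosses $\sigma$ transversely exactly once. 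Tracking the lift in $\mathbb{R}^2$ (the $y$-displacement along the closing arc exactly cancels that of $a$) shows that $\tau^+$ is a simple closed curve dual to $\sigma$ in the isotopy class $[\beta_0]$. Its essential arc in $A_\sigma$ is a disjoint perturbation of $a$, hence at distance at most one in the fine arc graph from $a \in \pi_{A_\sigma}(\sigma')$, which gives $\mathrm{tw}_\sigma(\tau^+, \sigma') \leq 2$. The curve $\tau'^-$ is constructed symmetrically from an essential arc of $\sigma$ in $A_{\sigma'}$.

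\emph{Bounding the marking distance.} By Lemma~\ref{lem:relative-width-distance} it suffices to bound $d_w(m,m') = \mathrm{wd}_\sigma(\sigma') + \mathrm{wd}_{\tau^+}(\tau'^-)$. The first term equals $2$: in the cyclic cover, the single lift $\widetilde \sigma'$ meets exactly the two bounding lifts of $\sigma$ sitting in the closed strip. For the second term, both $\tau^+$ and $\tau'^-$ are simple closed curves in $[\beta_0]$, so their $\mathbb{R}^2$-lifts are $T_1$-invariant horizontal-like curves. Simplicity forces the $y$-oscillation of each such lift to be strictly less than $1$ (otherwise the $T_2$-translates of a lift would meet). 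Two intervals of length $<1$ can have overlapping integer translates of one another for only a bounded number of shifts, so only a universal number of vertical translates of a lift of $\tau^+$ can overlap in the $y$-direction with a given lift of $\tau'^-$; hence $\mathrm{wd}_{\tau^+}(\tau'^-)$ is bounded by a universal constant. Therefore $d_w(m,m')$ is bounded, and the bi-Lipschitz equivalence supplied by Lemma~\ref{lem:relative-width-distance} yields $d_{\mathcal{M}_0^\dagger}(m, m') \leq C$ for a universal $C$.

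\emph{Main obstacle.} The delicate point is the local surgery defining $\tau^+$: we want $\tau^+$ to essentially contain $a$, while meeting $\sigma$ at only one transverse crossing and realizing the class $[\beta_0]$. Because the endpoints of $a$ lie on $\sigma$, a careful perturbation is needed near the tangency points of $\sigma$ and $\sigma'$ (whose existence is forced by the both-sides condition), and one must verify that no net vertical winding is introduced along the closing arc. The combinatorial picture in the cyclic cover determines the correct side on which to push off, making the modification standard once set up.
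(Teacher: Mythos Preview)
Your construction of the duals and the twist estimate in part~(1) are fine, and close in spirit to the paper. The gap is in part~(2), in the sentence ``Simplicity forces the $y$-oscillation of each such lift to be strictly less than $1$.'' This is false: the graph of $y=N\sin(2\pi x)$ projects to a simple closed curve in $[\beta_0]$ whose lift has $y$-range $[-N,N]$; its $T_2$-translates are the graphs $y=N\sin(2\pi x)+n$, which are pairwise disjoint for every $N$. Disjointness of integer vertical translates is much weaker than having $y$-range $<1$.

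Because you build $\tau^+$ and $\tau'^-$ \emph{independently} (one from an arc of $\sigma'$, the other from an arc of $\sigma$, with unrelated closing arcs), nothing in your construction controls $\mathrm{wd}_{\tau^+}(\tau'^-)$. The arc $\hat a\subset\hat{\sigma'}$ used for $\tau^+$ can have arbitrarily large $y$-extent in $\mathbb R^2$ (it is a proper subarc of a $T_2$-invariant embedded curve, and such subarcs are not height-bounded), and likewise for the closing arcs once you force the correct homology class. So your interval-overlap argument does not apply.

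The paper avoids this by constructing a \emph{single} curve $\gamma$ that is an approximate dual to both $\sigma$ and $\sigma'$ simultaneously (take any dual $\tau$ to $\sigma$ with no triple points and replace the segment between its first and last intersection with $\sigma'$ by a subarc of $\sigma'$). Then $\tau^+$ and $\tau'^-$ are taken as small $C^0$-perturbations of the \emph{same} $\gamma$, which immediately gives $\mathrm{wd}_{\tau^+}(\tau'^-)\le 1$; together with $\mathrm{wd}_\sigma(\sigma')\le 3$ this bounds $d_w(m,m')$ and Lemma~\ref{lem:relative-width-distance} finishes. The twist bound also falls out from $\gamma$ via $\mathrm{tw}_\sigma(\tau^+,\sigma')\le\mathrm{tw}_\sigma(\tau^+,\gamma)+\mathrm{tw}_\sigma(\gamma,\sigma')\le 2$. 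The missing idea in your argument is precisely this coupling of the two duals through a common curve.
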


The proofs are contained in the next subsections. We now deduce our main proposition.

\begin{proof}[Proof of Proposition~\ref{prop:move-markings-all}]
Let $\alpha, \alpha' \in \cd_0(T^2)$, 
$n=d(\alpha, \alpha')$, $\beta$ a dual to $\alpha'$. Proposition~\ref{prop:quasi-path} provides a quasi-path $(\sigma_0, \dots, \sigma_n)$ from $\alpha$ to $\alpha'$, with distance to the end-points satisfying the wanted property (8).

Let us construct the duals. We set $\tau_0^- = \beta$, and apply Proposition~\ref{prop:move-markings} with $\sigma=\sigma_0, \sigma'=\sigma_1$ which touch each other from one side, and dual $\tau_0^-$. We get an annulus $A_0 \subseteq A_{\sigma_0}$ and a common dual $\tau'$ for $\sigma_0$ and $\sigma_1$, and set $\tau_0^+ = \tau_1^- = \tau'$. 
Now we start estimating distances in the fine marking graph, using silently the basic properties of twists given by Lemma~\ref{lem:basic-twists-markings}.
Since $\sigma_0$ and $\sigma_1$ touch each other from one side and $\tau'$ is a common dual, we can find a curve $\sigma'$ which is disjoint from both $\sigma_0, \sigma_1$ and still dual to $\tau'$. This entails $d(m_0^+, m_1^-) \leq 2$, which is point (2) for $i=0$.
According to Proposition~\ref{prop:move-markings} we have \[ d(m_0^-, m_0^+) \leq \mathrm{tw}_{\sigma_0}(\tau_0^-, \tau_0^+) \leq \mathrm{tw}_{A_0}(\tau_0^-, \sigma_1) = \mathrm{tw}_{A_0}(\beta, \sigma_1). \]
 Now, $\sigma_2, \ldots,
  \sigma_n$ is a quasi-path which is also a
  $2$-quasigeodesic in
  $\cd(T^2)$, and by Proposition~\ref{prop:quasi-path} the projections
  $\pi_{A_0}(\sigma_i), i\geq 2$ are non empty.
  Thus the Quasi-path bounded geodesic image theorem~\ref{lem:bgit3}, together with the 
  triangular inequality for twists, implies that
  \[ \mathrm{tw}_{A_0}(\beta, \sigma_1) \leq
    \mathrm{tw}_{A_0}(\beta, \sigma_n) + B. \] Since $\sigma_n,
  \beta'$ intersect once, and both project to
  $A_0$, the definition of twists gives $\mathrm{tw}_{A_0}(\sigma_{n}, \beta') \leq 1$ and thus
  \[ d(m_0^-, m_0^+) \leq \mathrm{tw}_{A_0}(\beta, \beta') + B+1, \]
which is point (3).

Next we apply Proposition~\ref{prop:move-markings2} with $\sigma, \sigma' = \sigma_i, \sigma_{i+1}$ for each $i=1, \dots, n-2$. We get curves $\tau_i^+, \tau_{i+1}^-$ which are duals respectively to $\sigma_i, \sigma_{i+1}$.
By the proposition we have $d(m_i^+, m_{i+1}^-) \leq C$, which gives point (2) for all $i$ except $n-1$.

Let us check (4), setting $A_1 = A_{\sigma_1}$. By the proposition we have $\mathrm{tw}_{\sigma_1}(\tau_1^+, \sigma_2) \leq 2$,
and thus
\[
\begin{array}{rcl}
d(m_1^-, m_1^+) &\leq & \mathrm{tw}_{\sigma_1}(\tau_1^-, \tau_1^+) \\
&\leq & \mathrm{tw}_{\sigma_1}( \tau_1^-, \beta) + \mathrm{tw}_{\sigma_1}(\beta, \sigma_2) + 2.
\end{array}
\]
The first term $\mathrm{tw}_{\sigma_1}( \tau_1^-, \beta) = \mathrm{tw}_{\sigma_1}( \tau_0^+, \tau_0^-)$ is bounded by the "universal twist" between the isotopic curves $\tau_0^+, \tau_0^-$, namely the number of lifts of $\tau_0^+$ met by one lift of $\tau_0^-$ in the universal cover of the torus. Since both curves are dual to $\sigma_0$, this coincides with $\mathrm{tw}_{\sigma_0}( \tau_0^+, \tau_0^-)$. We have proved above that this is bounded by $\mathrm{tw}_{A_0}(\beta, \sigma_1)$. Finally we get
\[  
d(m_1^-, m_1^+) \leq  \mathrm{tw}_{A_0}(\beta, \sigma_1) + \mathrm{tw}_{\sigma_1}(\beta, \sigma_2) + 2.
\]
As before, we note that the projections of $\sigma_i$ to $A_{\sigma_0}, A_{\sigma_1}$ are nonempty for all $i > 2$ by Proposition~\ref{prop:quasi-path}, and thus the Quasi-path bounded geodesic image theorem~\ref{lem:bgit3} again implies that we can replace $\sigma_1, \sigma_2$ in the above formula by $\beta'$ for an additional cost of $B+1$ each, yielding
\[  
d(m_1^-, m_1^+) \leq  \mathrm{tw}_{A_0}(\beta, \beta') + \mathrm{tw}_{\sigma_1}(\beta, \beta') + 4 + 2B,
\]
which is (4).
  
Let $i \in \{2, \dots, n-2\}$, and let $A_i = A_{\sigma_i}$. 
To prove (5), we first note that the two markings $m_i^-, m_i^+$ have the curve $\sigma_i$ in common, thus their distance is bounded by $\mathrm{tw}_{A_i}(\tau_i^-, \tau_i^+)$. Now the triangular
inequality for twists and the bounds in point (1) of
Proposition~\ref{prop:move-markings2} gives
\[ d(m_i^-, m_i^+) \leq \mathrm{tw}_{A_i}(\sigma_{i-1}, \sigma_{i+1})
  + 4.\]
   The same argument with the
bounded geodesic image theorem allows to replace the twist term by
$\mathrm{tw}_{A_i}(\beta, \beta')$ at the cost of $2B+2$, showing (5).
We apply again Proposition~\ref{prop:move-markings} symmetrically with
$\sigma = \alpha', \sigma' = \sigma_{n-1}, \tau' = \beta'$, and get a
curve $\tau_{n-1}^+ = \tau_n^-$ which is a common dual to
$\sigma_{n-1}$ and $\sigma_n$.
Finally we get property (6) exactly as (4), and (7) exactly as (3). 
\end{proof}

\subsection{Operations on curves in the infinite annulus}
In order to construct our quasi-path, we will mainly work, as before, in the infinite annulus $A \simeq \mathbb{R} \times \mathbb{S}^1$, remembering the covering transformation $T(x,y) = (x+1, y)$. This annulus has a natural compactification by adding two points denoted $\pm \infty$. Remark that $A \cup\{+\infty\}$ is homeomorphic to the plane, and a sequence tends to $+\infty$ if and only if its first coordinate tends to $+\infty$. Given a set $K$ whose first projection is bounded from above, we denote $R(K)$ the unique connected component of $A \setminus K$ whose closure contains $+\infty$.

Denote $\cd(A)$ the set of essential simple closed curve in $A$, which corresponds to curves in the plane that surrounds $+\infty$.
Given two curves $\alpha, \beta \in \cd(A)$, we denote $\alpha \wedge \beta$ the boundary of $R(\alpha \cup \beta)$. A theorem of Kerekjarto (see \cite{kerekjarto}) asserts that $\alpha \wedge \beta$ is an element of $\cd(A)$, the remarkable fact being that this holds without any regularity nor transversality assumption. Thus we have an operation $\wedge : \cd(A) \times \cd(A) \to \cd(A)$. Note the following caracterization, which is an immediate  consequence of the Schoenflies theorem: a point $x$ belongs to $\alpha \wedge \beta$ if and only if (1) it belongs to $\alpha \cup \beta$, and (2)  it is accessible from $+\infty$ in $R(\alpha) \cap R(\beta)$, that is, there is an embedding 
 with $\rho(0)= x$, $\rho(+\infty) = +\infty$,
 and $\rho((0,1)) \subset R(\alpha) \cap R(\beta)$. We call $\rho$ a witness ray for $x$ in $\alpha \wedge \beta$. 

Let us recall the following relations on $\cd(A)$:

\begin{itemize}
\item $\alpha < \beta$ if $\beta \subset R(\alpha)$, 
\item $\alpha \leq \beta$ if $\beta \subset \mathrm{Clos}(R(\alpha))$. 
\end{itemize}

The following lemma collects some classical properties of the wedge operation. The proof is left to the reader.
\begin{lemma}[Algebra of curves]
For every $\alpha, \alpha', \beta, \gamma$ the following holds:
\begin{enumerate}
\item $\alpha \wedge \beta = \beta \wedge \alpha$.
\item The relations $<, \leq$ are transitive, and $\alpha < \beta \leq \gamma \Rightarrow \alpha < \gamma$.
\item $\alpha \leq \alpha' \Rightarrow \alpha \wedge \beta \leq \alpha' \wedge \beta$.
\item $\alpha < \gamma, \beta < \gamma \Rightarrow \alpha \wedge \beta < \gamma$.
\item $\alpha  \leq \alpha \wedge \beta$, and $\alpha < \alpha\wedge \beta \iff \alpha < \beta$, and in this case $\alpha \wedge \beta = \beta$.
\item $\alpha \cap T \alpha = \emptyset \Rightarrow \alpha < T \alpha$.
\item $T ( \alpha \wedge \beta) = (T \alpha) \wedge (T \beta)$.
\item $\alpha < \beta \iff T \alpha < T \beta$, and likewise for $\leq$.
\end{enumerate}
\end{lemma}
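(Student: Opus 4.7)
The plan is to treat the eight items using two recurring tools: the accessibility characterization of $\alpha\wedge\beta$ stated just before the lemma (a point $x$ lies on $\alpha\wedge\beta$ iff $x\in\alpha\cup\beta$ and admits a witness ray from $+\infty$ inside $R(\alpha)\cap R(\beta)$), together with the Jordan--Schoenflies nesting principle that any two disjoint essential simple closed curves in $\cd(A)$ are ordered, with $\alpha<\beta\iff R(\beta)\subsetneq R(\alpha)$ and $\alpha\leq\beta\iff R(\beta)\subseteq R(\alpha)$. With these in hand, almost every property reduces to routine manipulation of the regions $R(\cdot)$.

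The symmetries and order properties go first. Property (1) is immediate from the symmetry of $\cup$. For (7), $T$ is a homeomorphism of $A$ fixing the two ends, so $T(R(K))=R(T(K))$ for every $K$ whose first projection is bounded above, and $T$ therefore commutes with $\partial$; property (8) then follows from (7) and the definition of the order. For (2), the transitivities of $<$ and $\leq$ are just the corresponding transitivities of strict/non-strict inclusion of the regions; in the mixed case $\alpha<\beta\leq\gamma$ one uses $\gamma\subset\overline{R(\beta)}\subset\overline{R(\alpha)}$ together with the fact that $\overline{R(\beta)}$ is separated from $\alpha$ by $\alpha<\beta$, giving $\gamma\subset R(\alpha)$. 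For (6), disjointness of $\alpha$ and $T\alpha$ forces by nesting either $\alpha<T\alpha$ or $T\alpha<\alpha$; the second is excluded by the paper's convention that $T$ sends every lift $\widetilde\alpha$ to a lift lying on its right-hand side, so $T\alpha\subset R(\alpha)$.

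The remaining items (3), (4), (5) use accessibility more directly. For (3), the hypothesis $\alpha\leq\alpha'$ gives $R(\alpha')\subseteq R(\alpha)$, hence $R(\alpha'\cup\beta)\subseteq R(\alpha\cup\beta)$ (any path from $+\infty$ avoiding $\alpha'\cup\beta$ stays in $R(\alpha')\subseteq R(\alpha)$ and so also avoids $\alpha$), and therefore $\alpha'\wedge\beta=\partial R(\alpha'\cup\beta)\subset\overline{R(\alpha\cup\beta)}$, which is exactly $\alpha\wedge\beta\leq\alpha'\wedge\beta$. For (4), if $\alpha,\beta<\gamma$ then $\gamma\subset R(\alpha)\cap R(\beta)$; the open region $R(\gamma)$ is disjoint from $\alpha\cup\beta$ and contains $+\infty$, so $R(\gamma)\subset R(\alpha\cup\beta)$, and therefore $\gamma\subset\overline{R(\gamma)}\cap(A\setminus(\alpha\cup\beta))\subset R(\alpha\cup\beta)=R(\alpha\wedge\beta)$, i.e.\ $\alpha\wedge\beta<\gamma$. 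For (5), the inclusion $\alpha\leq\alpha\wedge\beta$ follows from the accessibility description: points of $\alpha\wedge\beta$ on $\alpha$ are already in $\overline{R(\alpha)}$, and points on $\beta$ have witness rays lying in $R(\alpha)$. The strict clause $\alpha<\alpha\wedge\beta$ means $\alpha\wedge\beta\subset R(\alpha)$ is disjoint from $\alpha$, so $\alpha\wedge\beta\subseteq\beta$; since by Kerekjarto's theorem $\alpha\wedge\beta$ is itself an essential simple closed curve, it must equal $\beta$, which forces $R(\alpha\cup\beta)=R(\beta)$, so $\alpha$ lies outside $\overline{R(\beta)}$, and nesting (together with $\alpha\neq\beta$) gives $\alpha<\beta$. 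Conversely, $\alpha<\beta$ gives $R(\beta)\subset R(\alpha)$ disjoint from $\alpha\cup\beta$, forcing $R(\alpha\cup\beta)=R(\beta)$ and hence $\alpha\wedge\beta=\beta\subset R(\alpha)$. The only real obstacle is careful bookkeeping between strict and non-strict inequalities in (2) and (5), particularly when curves are allowed to touch without crossing; no step is genuinely difficult, which is why the author left the proof to the reader.
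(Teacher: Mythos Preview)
The paper explicitly leaves this proof to the reader, so there is no argument of the authors' to compare against; your approach via the region inclusions $R(\cdot)$ and the accessibility description of $\alpha\wedge\beta$ is exactly the intended route and your treatment of the eight items is correct.

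One minor caveat: the biconditional $\alpha<\beta\iff R(\beta)\subsetneq R(\alpha)$ that you state as a tool fails in the reverse direction (take $\beta$ touching $\alpha$ at a point but otherwise lying in $R(\alpha)$). Fortunately you never actually invoke that direction: in (2) you only use the forward implication, and in (5) your conclusion $\alpha<\beta$ already follows immediately from $\alpha<\alpha\wedge\beta=\beta$ once you have shown $\alpha\wedge\beta=\beta$, so the detour through ``$\alpha$ lies outside $\overline{R(\beta)}$'' and nesting is unnecessary (though not wrong). The non-strict version $\alpha\leq\beta\iff R(\beta)\subseteq R(\alpha)$ is genuinely an equivalence and is what drives your proof of (3).
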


\begin{lemma}\label{lem:wedge2}
If $\alpha \cap T \alpha = \emptyset$ and $\beta \cap T \beta = \emptyset$ then 
$(\alpha \wedge \beta) \cap T ( \alpha \wedge \beta) = \emptyset$.
\end{lemma}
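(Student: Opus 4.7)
The plan is to derive this cleanly from the algebraic identities of the preceding lemma by proving the strict inequality $\alpha\wedge\beta < T(\alpha\wedge\beta)$, which implies the desired disjointness: by definition $\sigma<\sigma'$ means $\sigma'\subset R(\sigma)$ and the latter is open and disjoint from $\sigma$.

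The key step will be to invoke property (4) of the previous lemma, which says that $\alpha<\gamma$ and $\beta<\gamma$ imply $\alpha\wedge\beta<\gamma$, taken with the choice $\gamma := T\alpha\wedge T\beta$. The remaining work is to verify $\alpha<T\alpha\wedge T\beta$ and $\beta<T\alpha\wedge T\beta$. For the first, property (6) turns the hypothesis $\alpha\cap T\alpha=\emptyset$ into $\alpha < T\alpha$; property (5) gives $T\alpha\leq T\alpha\wedge T\beta$; then the ``mixed'' transitivity clause in (2) combines these into $\alpha<T\alpha\wedge T\beta$. The inequality $\beta<T\alpha\wedge T\beta$ follows symmetrically, using (1) to rewrite $T\alpha\wedge T\beta=T\beta\wedge T\alpha$ before applying (5) to extract $T\beta\leq T\alpha\wedge T\beta$.

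Once (4) delivers $\alpha\wedge\beta<T\alpha\wedge T\beta$, property (7) rewrites the right-hand side as $T(\alpha\wedge\beta)$, concluding the argument. I do not anticipate any serious obstacle: the entire proof is a short algebraic chain, and the only subtlety to flag is remembering to invoke the symmetry (1) when deriving the bound for $T\beta$, since (5) is stated only in the form $\alpha\leq\alpha\wedge\beta$.
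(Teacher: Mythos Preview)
Your proof is correct and follows essentially the same approach as the paper: both use (6), (5), and (2) to obtain $\alpha < T\alpha \wedge T\beta$ and $\beta < T\alpha \wedge T\beta$, then apply (4) and (7) to conclude. Your write-up is simply more explicit about invoking (1) and (7), which the paper leaves implicit.
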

\begin{proof}
By transitivity (2) and (5) of the lemma we have $\alpha < T \alpha \leq (T \alpha) \wedge (T \beta)$, and likewise $\beta < (T \alpha) \wedge (T \beta)$, thus by (4) we get
$(\alpha \wedge \beta) \cap T ( \alpha \wedge \beta) = \emptyset$.
\end{proof}

\subsection{Construction of the quasi-path}
Coming back to the torus, let $\alpha, \alpha' \in \cd_{0}(T^2)$, and denote $n= d_0(\alpha, \alpha')$ as above. We want to prove Proposition~\ref{prop:quasi-path}, and we may assume $n \geq 2$, otherwise there is nothing to prove.
Choose lifts $\widetilde \alpha, \widetilde \alpha'$ such that the lifts met by $\widetilde \alpha'$ are $T^{-(n-1)}\alpha, \dots, T^{-1} \alpha$.
Consider
\[
\tilde \sigma_i = \tilde \alpha \wedge T^i\tilde \alpha', \ \  i = 0, \dots, n.
\]
\begin{lemma}[Properties of the quasi-path]
\label{lem:quasi-path}
The following holds:
\begin{enumerate}
\item[(i)] $\widetilde \sigma_0=\widetilde \alpha$, $\widetilde \sigma_n = T^n\widetilde \alpha'$, and each $\widetilde \sigma_i$ is an essential simple closed curve isotopic to $\widetilde \alpha$ and disjoint from its image under $T$.
\item[(ii)] $\widetilde \sigma_0 \leq \widetilde \sigma_1 < T \widetilde \sigma_0$, and $\widetilde \sigma_0$ meets $\widetilde \sigma_1$; 
\item[(iii)] $\widetilde \sigma_{i} \leq \widetilde \sigma_{i+1} \leq T\widetilde \sigma_i$, and $\widetilde \sigma_{i+1}$ meets  $\widetilde \sigma_i$ and $T\widetilde \sigma_i$, for each $1 \leq i \leq n-2$;
\item[(iv)] $\widetilde \sigma_{n-1} < \widetilde \sigma_n \leq T \widetilde \sigma_{n-1}$, and $\widetilde \sigma_n$ meets $T\widetilde \sigma_{n-1}$.
\item[(v)] $\widetilde \alpha \leq \widetilde \sigma_i < T^i \widetilde \alpha$, and $T^i \widetilde \alpha' \leq \widetilde \sigma_i < T^n \widetilde \alpha'$, for $1 \leq i \leq n-1$.
\end{enumerate}
\end{lemma}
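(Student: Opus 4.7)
The proof proceeds by first translating the combinatorial hypothesis on $d_0(\alpha,\alpha')$ into a geometric picture, then applying the algebra of curves for the identifications and orderings, and finally analyzing the wedges to obtain the meeting properties. Translating the hypothesis by $T^i$ shows that $T^i\widetilde\alpha'$ meets exactly the lifts $T^{i-n+1}\widetilde\alpha,\ldots,T^{i-1}\widetilde\alpha$ of $\alpha$; in particular $T^i\widetilde\alpha'$ meets $\widetilde\alpha$ precisely when $1\le i\le n-1$, and for $i=0,n$ it lies to the left, resp.~to the right of $\widetilde\alpha$, i.e.~$\widetilde\alpha'<\widetilde\alpha$ and $\widetilde\alpha<T^n\widetilde\alpha'$.

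Parts (i) and (v) are then immediate from the algebra. The endpoint identifications $\widetilde\sigma_0=\widetilde\alpha$ and $\widetilde\sigma_n=T^n\widetilde\alpha'$ follow from property (5); each $\widetilde\sigma_i$ is an essential simple closed curve (hence isotopic to $\widetilde\alpha$) by Kerekjarto's theorem; and Lemma~\ref{lem:wedge2} applied to $\widetilde\alpha$ and $T^i\widetilde\alpha'$ yields $\widetilde\sigma_i\cap T\widetilde\sigma_i=\emptyset$. The weak inequalities in (v) come from (5), and the strict ones from (4): for example $\widetilde\sigma_i<T^i\widetilde\alpha$ follows by feeding $\widetilde\alpha<T^i\widetilde\alpha$ and $T^i\widetilde\alpha'<T^i\widetilde\alpha$ (a translate of $\widetilde\alpha'<\widetilde\alpha$) into (4). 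The orderings in (ii)--(iv) are direct consequences of monotonicity (3): applied with $T^i\widetilde\alpha'<T^{i+1}\widetilde\alpha'$ it gives $\widetilde\sigma_i\le\widetilde\sigma_{i+1}$, and applied with $\widetilde\alpha\le T\widetilde\alpha$ together with the equivariance (7) it gives $\widetilde\sigma_{i+1}\le T\widetilde\sigma_i$. The strict inequalities in (ii) and (iv) are special cases of (v).

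The core of the argument is the meeting properties. The central observation is an accessibility monotonicity: since $R(T^{i+1}\widetilde\alpha')\subseteq R(T^i\widetilde\alpha')$, any witness ray certifying that a point $x\in\widetilde\alpha$ lies in $\widetilde\sigma_{i+1}$ also certifies $x\in\widetilde\sigma_i$, so $\widetilde\alpha\cap\widetilde\sigma_{i+1}\subseteq\widetilde\alpha\cap\widetilde\sigma_i$. Moreover $\widetilde\alpha\cap\widetilde\sigma_{i+1}$ is nonempty whenever $1\le i+1\le n-1$: if $\widetilde\sigma_{i+1}$ contains any arc of $\widetilde\alpha$ this is obvious, and otherwise $\widetilde\sigma_{i+1}$ must coincide with $T^{i+1}\widetilde\alpha'$ as a simple closed curve, in which case the actual intersection points of $\widetilde\alpha$ with $T^{i+1}\widetilde\alpha'$ provide shared points of $\widetilde\alpha$ and $\widetilde\sigma_{i+1}$. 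Applying this for $i\in\{0,\ldots,n-2\}$ yields the meetings in (ii) and (iii). The symmetric argument with $R(T\widetilde\alpha)\subseteq R(\widetilde\alpha)$ gives $T^{i+1}\widetilde\alpha'\cap T\widetilde\sigma_i\subseteq\widetilde\sigma_{i+1}$, and since $T\widetilde\alpha$ meets $T^{i+1}\widetilde\alpha'$ for $1\le i\le n-1$, this set is nonempty; this handles the second meeting in (iii) and (taking $i=n-1$, with $\widetilde\sigma_n=T^n\widetilde\alpha'$) also the meeting in (iv).

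The main technical subtlety is in this last step: one must verify both the monotonicity-of-accessibility (which rests on the set-theoretic inclusion of the relevant right-regions, and so transports witness rays in the correct direction) and the non-emptiness statement, which may require examining degenerate cases where intersections are tangential rather than transverse. In those degenerate cases the wedge can collapse to one constituent curve; but then the shared crossing points of $\widetilde\alpha$ and $T^{i+1}\widetilde\alpha'$ automatically supply the required meeting, so the conclusion is robust.
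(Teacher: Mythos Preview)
Your proof is correct and follows essentially the same approach as the paper: the algebraic properties (3)--(5) of the wedge handle the orderings, and the meeting properties are obtained via witness-ray (accessibility) monotonicity under the inclusions $R(T^{i+1}\widetilde\alpha')\subseteq R(T^i\widetilde\alpha')$ and $R(T\widetilde\alpha)\subseteq R(\widetilde\alpha)$. Your inclusion $\widetilde\alpha\cap\widetilde\sigma_{i+1}\subseteq\widetilde\sigma_i$ is exactly what the paper phrases as $\widetilde\sigma_{i+1}\setminus T^{i+1}\widetilde\alpha'\subset\widetilde\sigma_i$, and your symmetric inclusion $T^{i+1}\widetilde\alpha'\cap T\widetilde\sigma_i\subseteq\widetilde\sigma_{i+1}$ is the content of the paper's Claim about components $b\subset\widetilde\sigma_i\setminus\widetilde\alpha$ satisfying $Tb\subset\widetilde\sigma_{i+1}$. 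If anything, your treatment of the degenerate case $\widetilde\sigma_{i+1}=T^{i+1}\widetilde\alpha'$ is more explicit than the paper's, which tacitly assumes $\widetilde\sigma_{i+1}\setminus T^{i+1}\widetilde\alpha'$ is nonempty.
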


More geometrically, one can think of the $\sigma_i$'s the following way. Let $0 < i < n$.
Call $(a,b)$ an $i$-bigon if $a \cup b$ is a simple closed curve with
$a$ a sub-arc of $\widetilde \alpha$, and $b$ a sub-arc of $T^i\widetilde \alpha'$ on the right of $\widetilde \alpha$. Call an $i$-bigon maximal if it is not included in the disk bounded by any other $i$-bigon. Note that for each $i$ there are maximal $i$-bigons meeting $T^{i-1}\widetilde \alpha$.
Then $\sigma_i$ is obtained from $\alpha$ by changing $a$ into $b$ for every maximal $i$-bigon $(a, b)$. Before proving the lemma, let us first check that it entails the wanted properties for the quasi-path.

\begin{proof}[Proof of Proposition~\ref{prop:quasi-path}]
Let $\alpha, \alpha' \in \cd_0(T^2)$, denote $n=d_0(\alpha, \alpha')$. Consider the curves $\widetilde \sigma_i$, and let $\sigma_i$ denote their projection in the torus. By (i), these are simple closed curves in the right isotopy class. Each curve touches the next one from one or two sides, by points (ii), (iii), (iv) of the lemma, as required by points (1) and (2) of the proposition.
 For point (3), note that due to property (v) of the lemma, pushing $\sigma_i$ slightly to the right yields a curve $\alpha'_i$ arbitrary close to $\sigma_i$ with a lift $\widetilde \alpha'_i$ such that 
\[
\widetilde \alpha < \widetilde \alpha'_i < T^i \widetilde \alpha \text{ and } T^i \widetilde \alpha' < \widetilde \alpha'_i < T^n \widetilde \alpha'
\]
and thus $d_0(\alpha, \alpha'_i) \leq i$ and $d_0(\alpha'_i, \alpha') \leq n-i$.
\end{proof}

\bigskip

\begin{proof}[Proof of Lemma~\ref{lem:quasi-path}]
We will refer to the algebraic properties of curves stated in the Lemma~\ref{lem:wedge2} about the wedge operation by their number.
First note that $\widetilde \alpha'$ and $T^n\widetilde \alpha'$ are disjoint from $\widetilde \alpha$, and thus
$\widetilde \sigma_0=\alpha$ and $\widetilde \sigma_n = T^n(\widetilde \alpha')$.
Each curve $\widetilde \sigma_i$ is disjoint from its image under $T$, thanks to Lemma~\ref{lem:wedge2}.

Let us prove (i). We have $\widetilde \alpha \leq \widetilde \sigma_1$, and the two curves meet, by (5).
Since $\widetilde \alpha' < \widetilde \alpha$ we have $T\widetilde \alpha' < T\widetilde \alpha$, and since $\widetilde \alpha < T \widetilde \alpha$ we get $\widetilde \sigma_1 < T \widetilde \alpha$ by (4). Point (4) of the lemma is similar.

Let us prove (ii). Since $T^i \widetilde \alpha' < T^{i+1} \widetilde \alpha'$ we get 
$\widetilde \sigma_i \leq \widetilde \sigma_{i+1}$ by (3). Likewise, from $\widetilde \alpha < T \widetilde \alpha$ we get $\widetilde \sigma_{i+1} \leq T \widetilde \sigma_{i}$. It remains to prove that the curve $\widetilde \sigma_{i+1}$ meet $\widetilde \sigma_i$ and $T \widetilde \sigma_i$.
For this we need a claim.
\begin{claim}
Let $0 < i < n$, and consider a connected component $b$ of $\widetilde{\alpha}_i \setminus \widetilde \alpha$. Then $Tb$ is included in some connected component $B'$ of 
$\widetilde{\alpha}_{i+1} \setminus \widetilde \alpha$.
\end{claim}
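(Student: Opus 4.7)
The plan is to establish two separate facts about $Tb$: first, that $Tb \subset \widetilde{\sigma}_{i+1}$, and second, that $Tb$ is disjoint from $\widetilde{\alpha}$. Since $Tb$ is connected (as the image of a connected set under a homeomorphism), the two facts combined imply that $Tb$ lies in a single connected component $B'$ of $\widetilde{\sigma}_{i+1} \setminus \widetilde{\alpha}$.

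First I would observe that $b \subset T^i\widetilde{\alpha}'$: indeed, $\widetilde{\sigma}_i \subset \widetilde{\alpha} \cup T^i\widetilde{\alpha}'$ by construction of the wedge, and $b$ is defined to avoid $\widetilde{\alpha}$. Hence $Tb \subset T^{i+1}\widetilde{\alpha}'$. To show $Tb \subset \widetilde{\sigma}_{i+1} = \widetilde{\alpha} \wedge T^{i+1}\widetilde{\alpha}'$, I would use the accessibility characterization recalled above. Any $x \in b$ admits a witness ray $\rho$ joining $x$ to $+\infty$ with $\rho((0,+\infty)) \subset R(\widetilde{\alpha}) \cap R(T^i\widetilde{\alpha}')$. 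The deck transformation $T$ fixes $+\infty$, so $T\rho$ joins $Tx$ to $+\infty$ inside $R(T\widetilde{\alpha}) \cap R(T^{i+1}\widetilde{\alpha}')$. Since $\widetilde{\alpha} < T\widetilde{\alpha}$ gives $R(T\widetilde{\alpha}) \subset R(\widetilde{\alpha})$, the ray $T\rho$ in fact witnesses that $Tx$ is accessible from $+\infty$ in $R(\widetilde{\alpha}) \cap R(T^{i+1}\widetilde{\alpha}')$. Together with $Tx \in T^{i+1}\widetilde{\alpha}'$, this gives $Tx \in \widetilde{\sigma}_{i+1}$.

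For the disjointness $Tb \cap \widetilde{\alpha} = \emptyset$, the key point is that $b$ lies in the \emph{open} right region $R(\widetilde{\alpha})$, and not merely in its closure. Indeed, $\widetilde{\sigma}_i \subset \overline{R(\widetilde{\alpha})}$ by construction of the wedge, and since $b$ avoids $\widetilde{\alpha}$, we get $b \subset \overline{R(\widetilde{\alpha})} \setminus \widetilde{\alpha} = R(\widetilde{\alpha})$. On the other hand, point~(i) of the lemma gives $T^{-1}\widetilde{\alpha} \cap \widetilde{\alpha} = \emptyset$, and combined with $T^{-1}\widetilde{\alpha} < \widetilde{\alpha}$ this forces $T^{-1}\widetilde{\alpha}$ to lie strictly on the left of $\widetilde{\alpha}$, hence disjoint from $R(\widetilde{\alpha})$. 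Therefore $b \cap T^{-1}\widetilde{\alpha} = \emptyset$, which upon applying $T$ yields $Tb \cap \widetilde{\alpha} = \emptyset$.

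The only subtle point in this argument is this ``strict side'' observation: ensuring $b \subset R(\widetilde{\alpha})$ rather than just $\overline{R(\widetilde{\alpha})}$ is exactly what rules out $Tb$ touching $\widetilde{\alpha}$. Otherwise the proof is a straightforward manipulation of the wedge operation and the access-ray definition, with no serious technical obstacle.
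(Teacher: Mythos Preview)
Your argument is correct and follows the same approach as the paper: both use the witness-ray characterization of the wedge to show $Tb \subset \widetilde{\sigma}_{i+1}$, and the paper's proof is essentially identical to your first paragraph. You are more careful than the paper on the disjointness step $Tb \cap \widetilde{\alpha} = \emptyset$, which the paper simply asserts (``since it does not belong to $\widetilde{\alpha}$''); your explicit justification via $b \subset R(\widetilde{\alpha})$ and $T^{-1}\widetilde{\alpha} \cap R(\widetilde{\alpha}) = \emptyset$ is a welcome clarification.
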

The claim entails immediately that $\widetilde \sigma_{i+1}$ meets $T \widetilde \sigma_i$, as asserted in point (iii) of the lemma.
\begin{proof}
Let $x \in b$ with $b$ as in the claim.
By the characterization of $\tilde \sigma_i = \tilde \alpha \wedge T^i\tilde \alpha'$, 
there is a witness ray $\rho$ for $x$. The curve $T\rho$ is clearly on the right-hand side of $\widetilde \alpha$ and $T^{i+1}\widetilde \alpha'$, thus it is a witness ray for $Tx$.
Thus $Tx $ belongs to $\widetilde \sigma_{i+1}$, and since it does not belong to $\widetilde \alpha$, it belongs to  $\widetilde{\alpha}_{i+1} \setminus \widetilde \alpha$. This proves that $Tb$ is included in $\widetilde{\alpha}_{i+1} \setminus \widetilde \alpha$, and thus in some connected component $b'$ of this set.
\end{proof}
The fact (iv) that $\widetilde \sigma_{i+1}$ meets  $\widetilde \sigma_i$ has a similar proof, here is a sketch. Consider some point $x$ in $\widetilde \sigma_{i+1} \setminus T^{i+1} \widetilde \alpha'$. A witness ray $\rho$ for $x$ in $\widetilde \sigma_{i+1} = \tilde \alpha \wedge T^{i+1}\tilde \alpha'$ is on the right-hand side of both $\widetilde \alpha$ and $T^{i+1} \widetilde \alpha'$, and thus also on the righ-hand side of $T^{i} \widetilde \alpha'$, thus it is also a witness ray for $x$ in $\widetilde \sigma_i$. This proves that $\widetilde \sigma_{i+1} \setminus T^{i+1} \widetilde \alpha' \subset \widetilde \sigma_i$, and thus both curves meet.

Let us prove (v). Let $1 \leq i \leq n-1$. First $\widetilde \alpha \leq \widetilde \alpha \wedge T^i \widetilde \alpha' = \widetilde \sigma_i$ by (5).
For the second inequality, we have $\widetilde \alpha' < \widetilde \alpha$, so
$T^i \widetilde \alpha' < T^i \widetilde \alpha$, and obviously $\widetilde \alpha < T^i \widetilde \alpha$, so $\widetilde \sigma_i < T^i \widetilde \alpha$ by (4).
Likewise, we have $T^i \widetilde \alpha' < \widetilde \sigma_i$ by (5), and from $T^{-n} \widetilde \alpha < \widetilde \alpha'$ we get $\widetilde \sigma_i < T^n \widetilde \alpha'$.
\end{proof}

\subsection{Constructing duals}

\begin{proof}[Proof of Proposition~\ref{prop:move-markings}]
  We work in the annulus $A_\sigma$ cut by $\sigma$.  Let
  $c$ be an arc inside $A$ that starts on the side which is not
  touched by $\sigma'$, is disjoint from the dual $\tau$, and ends at
  a point $x$ of $\sigma'$; we may choose $c$ such that it is disjoint
  from $\sigma'$ except at its endpoint (see Figure~\ref{fig:construct-duals}).
  \begin{figure}[h!]
    \centering
    \def\svgwidth{0.8\textwidth}
    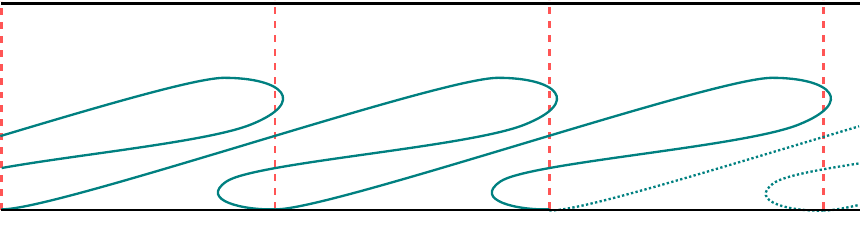
    \caption{Constructing a dual in Proposition~\ref{prop:move-markings}}
    \label{fig:construct-duals}
  \end{figure}
  Let $t \subset \sigma'$ be the connected component of
  $\sigma' \setminus \sigma$ containing $x$. Let $s \subset \sigma$ be
  the arc such that $s \cup t$ bounds an open disk $D$ in the annulus
  $A_\sigma$. Note that since $\sigma'$ touches $\sigma$ on one side, it
  is disjoint from $D$. Let $c'$ be any arc that starts at $x$ and
  ends at a point $y$ on $s$, and is included in $D$ but at its
  endpoints. Now we may modify $c$ close to $\sigma$ so that its
  starting point, seen in the torus, coincides with $y$; note that
  this can be done with $c$ still not meeting $\tau$.  Now the curve
  $c \cup c'$ defines a simple closed curve $\tau'$ in the torus,
  which meets $\sigma$ only at $y$ and $\sigma'$ only at $x$, and this is
  a common dual. (This is not our final curve $\tau'$ since it is not
  isotopic to $\tau$; we will remedy for this at the very end.)

Let us check the inequality about twists. Remember that $c$ is disjoint from $\tau$. The arc $c'$ is included in a bigon $D$ of $\sigma'$ and $\sigma$ bounded by $s \cup t$. In the universal cover of the annulus $A_\sigma$, consider lifts $\widetilde s$, $\widetilde t$ of $s$ and $t$ whose union bounds a lift $\widetilde D$ of $D$, and the lift $\widetilde c'$ of $c'$ included in $\widetilde D$. Each lift of $\tau$ that meets $\widetilde c'$ must also meet $\widetilde t$, since it is a connected set that meets both $\widetilde D$ and its complement. Thus we have
\[
\mathrm{tw}_\sigma(\tau, \tau') = \mathrm{wd}_\sigma(\tau, c') \leq \mathrm{wd}_\sigma(\tau, \sigma').
\]
Now we apply the "width is a twist" Lemma~\ref{lem:width-is-a-twist} to find an annulus $A$ included in $A_\sigma$ such that $\mathrm{wd}_{\sigma}(\tau, \sigma') = \mathrm{tw}_{A}(\tau, \sigma')$, and thus satisfies the wanted inequality on twists. 

It remains to modify $\tau'$ to get a curve in the same isotopy class as $\tau$.
Since $\tau$ is disjoint from a small enough tubular neighborhood of $\sigma$ on one side of $\sigma$, we can modify it anyhow in this neighborhood while keeping a common dual to $\sigma, \tau$. We do such a modification in order to change $\tau'$ into a curve isotopic to $\tau$. This can be done, in the universal cover of $A_\sigma$, by modifying only the $c$ part of $\tau'$, without crossing any lift of $\tau$ which is not already crossed by $c'$. Thus the twist number $\mathrm{tw}_\sigma(\tau, \tau')$ does not increase under the modification. This completes the proof.
\end{proof}

\begin{proof}[Proof of Proposition~\ref{prop:move-markings2}]
  Let $\sigma, \sigma'$ be as in
  Proposition~\ref{prop:move-markings2}. In order to evaluate the marking distance, we will use the distance $d_w$, which was defined by
  \[ d_w(\{\alpha, \beta\}, \{\alpha', \beta'\}) = \mathrm{wd}_\alpha(\alpha') + \mathrm{wd}_\beta(\beta') \]
(see Lemma~\ref{lem:relative-width-distance}).
 Note that 
  $\mathrm{wd}_\sigma(\sigma') \leq 3$ since they touch from both
  sides. 
  
  We claim that there is a curve $\gamma$ which is an ``approximate
  dual'' to both $\sigma, \sigma'$. By this we mean that there are
  duals $\tau^+, \tau^-$ to $\sigma, \sigma'$ which are
  arbitrarily $C^0$-close to $\gamma$. To find such a curve, consider
  a dual $\tau$ to $\sigma$ which intersects $\sigma'$
  with no triple points. Then replace the segment after the first
  intersection with $\sigma'$ until the last intersection with
  $\sigma'$ with a subarc of $\sigma'$. 
  \begin{figure}[h!]
    \centering
    \def\svgwidth{0.8\textwidth}
\begingroup%
  \makeatletter%
  \providecommand\color[2][]{%
    \errmessage{(Inkscape) Color is used for the text in Inkscape, but the package 'color.sty' is not loaded}%
    \renewcommand\color[2][]{}%
  }%
  \providecommand\transparent[1]{%
    \errmessage{(Inkscape) Transparency is used (non-zero) for the text in Inkscape, but the package 'transparent.sty' is not loaded}%
    \renewcommand\transparent[1]{}%
  }%
  \providecommand\rotatebox[2]{#2}%
  \newcommand*\fsize{\dimexpr\f@size pt\relax}%
  \newcommand*\lineheight[1]{\fontsize{\fsize}{#1\fsize}\selectfont}%
  \ifx\svgwidth\undefined%
    \setlength{\unitlength}{362.74759848bp}%
    \ifx\svgscale\undefined%
      \relax%
    \else%
      \setlength{\unitlength}{\unitlength * \real{\svgscale}}%
    \fi%
  \else%
    \setlength{\unitlength}{\svgwidth}%
  \fi%
  \global\let\svgwidth\undefined%
  \global\let\svgscale\undefined%
  \makeatother%
  \begin{picture}(1,0.23156597)%
    \lineheight{1}%
    \setlength\tabcolsep{0pt}%
    \put(0,0){\includegraphics[width=\unitlength,page=1]{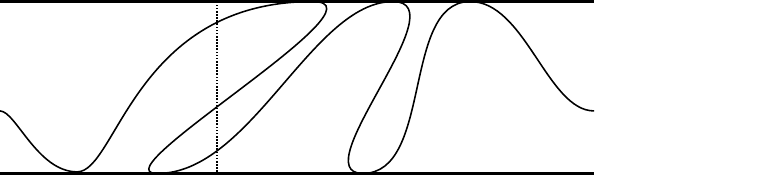}}%
    \put(0.79669724,0.00854083){\makebox(0,0)[lt]{\lineheight{1.25}\smash{\begin{tabular}[t]{l}$\sigma$\end{tabular}}}}%
    \put(0.79669724,0.08201296){\makebox(0,0)[lt]{\lineheight{1.25}\smash{\begin{tabular}[t]{l}$\sigma'$\end{tabular}}}}%
    \put(0.28961895,0.14128284){\makebox(0,0)[lt]{\lineheight{1.25}\smash{\begin{tabular}[t]{l}$\tau$\end{tabular}}}}%
    \put(0,0){\includegraphics[width=\unitlength,page=2]{prop_7_9.pdf}}%
    \put(0.19498466,0.04630562){\color[rgb]{1,0,0}\makebox(0,0)[lt]{\lineheight{1.25}\smash{\begin{tabular}[t]{l}$\gamma$\end{tabular}}}}%
  \end{picture}%
\endgroup%

    \caption{Constructing an approximate dual in
      Proposition~\ref{prop:move-markings2}}
    \label{fig:construct-approx-duals}
  \end{figure}
  Now, let $m = \{\sigma, \tau^+\}, m'=\{\sigma', \tau^-\}$ be the two
  markings for $\tau^+, \tau^-$ close enough to $\gamma$. We then have
  that the relative width of $\tau^-, \tau^+$ is at most $1$, and thus
  $d_w(m, m') \leq 3+1 = 4$. By
  Lemma~\ref{lem:relative-width-distance} this implies that a constant
  $C$ as in (2) exists.

  For the twist bound (1) we note
  \[ \mathrm{tw}_\sigma(\tau^+, \sigma') \leq
    \mathrm{tw}_\sigma(\tau^+, \gamma) + \mathrm{tw}_\sigma(\gamma,
    \sigma') \leq 2, \]
  and similar for the other case.
\end{proof}

\section{The marking graph estimate}\label{sec:marking-distance-bound}

The aim of this section is to prove an upper bound for the distance
$d(m,m')$ in the marking graph
inspired by foundational Masur-Minsky distance formula \cite{MM2} in
the ``usual'' marking graph.

This bound is in terms of (1) the distance between $m$ and $m'$ in the
fine curve graph, and (2) the sum over the twist numbers between $m$
and $m'$ in suitable annuli. 

\smallskip Before we phrase the result, note that given two markings
$m=\{\alpha, \beta\}, m'=\{\alpha', \beta'\}$, we have
$d_{\cd}(\alpha, \beta) = d_{\cd}(\alpha', \beta')=1$ , and thus we
may define $d_{\cd}(m,m') = d_{\cd}(\alpha,\alpha')$, up to an error
of $2$.

\begin{lemma}[Fine marking distance estimate] \label{lem:good-marking-bound}
  There is a constant $K>0$ so that the following holds:
  Suppose that $m, m' \in \mathcal{M}^\dagger$ are two markings. Then
  there is a number $N \leq d_{\mathcal{C}^\dagger}(m, m')$ and curves
  $\delta_i, i=0,\ldots, N$ so that 
  \[ \frac{1}{K} d_{\mathcal{M}^\dagger}(m, m') - K \leq d_{\cd}(m,m')
    + \sum_{i=0}^N \mathrm{tw}_{\delta_i}(m, m'). \]
\end{lemma}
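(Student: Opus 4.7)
The plan is to build an explicit path from $m=\{\alpha,\beta\}$ to $m'=\{\alpha',\beta'\}$ in $\mathcal{M}^\dagger$ by shadowing a fine-curve-graph geodesic and applying Proposition~\ref{prop:move-markings-all} on each isotopy-constant stretch, then converting the resulting twists into twists between the original markings via a bounded-geodesic-image argument.

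\textbf{Setup.} Fix a geodesic $\alpha_0=\alpha,\alpha_1,\dots,\alpha_L=\alpha'$ in $\cd(T^2)$, so $L = d_\cd(\alpha,\alpha') = d_\cd(m,m') + O(1)$. Decompose $\{0,\dots,L\}$ into maximal intervals $I_j=[a_j,b_j]$, $j=1,\dots,k$, of constant isotopy class; consecutive curves within each interval are disjoint (hence adjacent in $\cd_0$), and at each transition $\{\alpha_{b_j},\alpha_{a_{j+1}}\}$ is a fine marking. On each $I_j$, invoke Proposition~\ref{prop:move-markings-all} with isotopic endpoints $\alpha_{a_j},\alpha_{b_j}$ and duals $\beta^{(j)}_-=\alpha_{a_j-1}$ and $\beta^{(j)}_+=\alpha_{b_j+1}$ (using the conventions $\beta^{(1)}_-=\beta$ and $\beta^{(k)}_+=\beta'$). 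This produces a quasi-path $\sigma^{(j)}_0,\dots,\sigma^{(j)}_{n_j}$ of length $n_j \leq b_j - a_j$ in $\cd_0$, annuli $A^{(j)}_i$, and a sequence of markings in $\mathcal{M}^\dagger$ joining $\{\alpha_{a_j},\beta^{(j)}_-\}$ to $\{\alpha_{b_j},\beta^{(j)}_+\}$. Since the terminal marking of $I_j$ equals the initial marking of $I_{j+1}$ (as unordered pairs), the concatenation is a genuine path in $\mathcal{M}^\dagger$.

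\textbf{Distance bound and twist conversion.} Summing the estimates of Proposition~\ref{prop:move-markings-all} over all intervals, each marking-distance bound involves an $O(1)$ baseline cost per step of the quasi-path plus the relevant twist, yielding
\[
d_{\mathcal{M}^\dagger}(m,m') \leq C_1 L + C_2 + 2\sum_{j,i} \mathrm{tw}_{A^{(j)}_i}\bigl(\beta^{(j)}_-,\beta^{(j)}_+\bigr).
\]
By the triangle inequality, each twist splits as
\[
\mathrm{tw}_{A^{(j)}_i}\bigl(\beta^{(j)}_-,\beta\bigr) + \mathrm{tw}_{A^{(j)}_i}(\beta,\beta') + \mathrm{tw}_{A^{(j)}_i}\bigl(\beta',\beta^{(j)}_+\bigr).
\]
For the first (resp. last) outer term, concatenate $\beta$ (resp. $\beta'$) with the initial portion $\alpha_0,\dots,\alpha_{a_j-1}$ (resp. $\alpha_{b_j+1},\dots,\alpha_L$) of the geodesic to form a uniform $\cd$-quasigeodesic. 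All its interior vertices lie outside $I_j$, so they are not isotopic to $\sigma^{(j)}_i$, and by Lemma~\ref{lem:basic-twists-curves}(i) they project nontrivially to $A^{(j)}_i$. The second bounded geodesic image theorem (Lemma~\ref{lem:bgit2}) then bounds each outer term by a universal constant.

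\textbf{Counting and conclusion.} The total annulus count is $\sum_j(n_j+1) \leq \sum_j(b_j-a_j+1) = L+1$, so the annuli can be enumerated $\delta_0,\dots,\delta_N$ with $N \leq L+1 \leq d_\cd(m,m') + O(1)$. Since $\pi_{\delta_i}(m)$ and $\pi_{\delta_i}(m')$ have diameter at most $2$, we have $|\mathrm{tw}_{\delta_i}(\beta,\beta') - \mathrm{tw}_{\delta_i}(m,m')| \leq O(1)$, and after absorbing all additive and multiplicative constants into a single $K$ we obtain the claimed inequality.

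\textbf{Main obstacle.} The critical step is the twist conversion: one must guarantee that every interior vertex of the extended quasigeodesic projects nontrivially to $A^{(j)}_i$. The argument uses that those vertices lie outside $I_j$ and hence belong to a different isotopy class than the boundary $\sigma^{(j)}_i$. The endpoints $\beta$, $\beta'$ themselves require a small case distinction: if $\beta$ happens to be isotopic to some $\sigma^{(j)}_i$, one replaces $\beta$ by the full marking $m$ and invokes Lemma~\ref{lem:basic-twists-markings}(i), exploiting that $\pi_{A^{(j)}_i}(m) = \pi_{A^{(j)}_i}(\alpha) \cup \pi_{A^{(j)}_i}(\beta)$ is always nonempty since $\alpha$ and $\beta$ lie in different isotopy classes.
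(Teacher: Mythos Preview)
Your overall architecture matches the paper's closely: decompose a $\cd$-geodesic into isotopy-constant intervals, apply Proposition~\ref{prop:move-markings-all} on each, and use a bounded geodesic image theorem to replace the local twists by twists between $m$ and $m'$.  However, the twist-conversion step contains a genuine gap.

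The assertion ``all interior vertices lie outside $I_j$, so they are not isotopic to $\sigma^{(j)}_i$'' is false.  Maximality of $I_j$ only prevents the \emph{adjacent} intervals $I_{j-1},I_{j+1}$ from sharing its isotopy class; non-adjacent intervals may well repeat it.  Concretely, a $\cd(T^2)$-geodesic can alternate isotopy classes, e.g.\ horizontal--horizontal--vertical--horizontal--horizontal, so that $I_1$ and $I_3$ are both horizontal.  Then a vertex $\alpha_r\in I_3$ is isotopic to the boundary of $A^{(1)}_i$, and Lemma~\ref{lem:basic-twists-curves}(i) gives no information.  In fact $\pi_{A^{(1)}_i}(\alpha_r)$ can be empty: if $\alpha_r$ is disjoint from a horizontal curve inside $A_{\sigma^{(1)}_i}$, its projection vanishes, and your invocation of Lemma~\ref{lem:bgit2} fails.

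The paper resolves this by working not with an arbitrary geodesic but with a \emph{steady} geodesic (one maximising the number of disjoint edges) and proving a tightness lemma (Lemma~\ref{lem:tightness}).  That lemma combines a distance estimate---which shows that an empty projection can only occur for $r\in\{i-1,i-2\}$---with steadiness, which rules out the remaining case $r=i-2$, $k=0$: if $\pi_{A_{\alpha_i}}(\alpha_{i-2})=\emptyset$ there is a curve disjoint from both $\alpha_{i-2}$ and $\alpha_i$, and replacing $\alpha_{i-1}$ by it would increase the number of disjoint edges, contradicting steadiness.  Your proof can be repaired by adopting this device; neither the pure isotopy-class argument nor the endpoint case-distinction you outline suffices.
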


The proof of the marking distance bound uses the following class of
geodesics, which seem to be an analog of Masur-Minsky's \emph{tight
  geodesics} in our setting.
\begin{definition}
  A geodesic $(\alpha_i), i=0, \ldots, n$ in $\cd(T)$ is called
  \emph{steady} if the number
  of indices $i$ so that $\alpha_i \cap \alpha_{i+1} = \emptyset$ is
  maximal among all geodesics joining $\alpha_1$ to $\alpha_n$.

  For such a geodesic we define the \emph{disjointness intervals} as
  the maximal intervals $I_k = [i,j]\cap\ZZ$ so that
  $\alpha_r, \alpha_{r+1}$ are disjoint for all $r, r+1 \in I_k$. 
\end{definition}
Note that every curve on a steady geodesic lies in a disjointness
interval, which might have length $0$.

The reason for the usefulness of these geodesics is the next lemma,
which shows that these geodesics behave like tight geodesics in the
usual curve graph: they "don't loose projection". Remember that a curve $_alpha$ is almost distance $k$ from a curve $\beta$ if it may be $C^0$-approximated by curves at distance at most $k$ from $\beta$. Note that if $\alpha$ is almost distance $0$ from $\beta$ then $\alpha=\beta$.
\begin{lemma}\label{lem:tightness}
  Suppose that $(\alpha_i), i=0,\ldots,n$ is steady, $\beta_0$ is a
  dual to $\alpha_0$, $\beta_n$ is a dual to $\alpha_n$ and
  $I_k = [i,j]\cap\ZZ$ is a disjointness interval for $i>1, j<n$.

  Suppose that $A$ is an annulus one of whose boundary curves $\delta$
  has almost distance $k$ from $\alpha_i$ and almost distance
  $(j-i)-k$ from $\alpha_j$.

  Then $\pi_A(\alpha_r) \neq \emptyset$ for all $r$ with $r<i$ or $r>j$.
\end{lemma}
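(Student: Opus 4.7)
The argument is by contradiction. Suppose $\pi_A(\alpha_r)=\emptyset$ for some $r<i$; the case $r>j$ is symmetric. By Lemma~\ref{lem:basic-twists-curves}~(i), $\alpha_r$ is isotopic to $\delta$ and there is a curve $\gamma\subset\mathrm{int}(A)$ disjoint from $\alpha_r$. After a small perturbation I may assume $\gamma$ lies at positive $C^0$-distance from $\delta$.

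The central distance bound comes from ``shortcutting through $\gamma$''. Take a sequence $\delta_s\to\delta$ in $C^0$ realising the almost-distance hypothesis $d_{\cd}(\delta_s,\alpha_i)\le k$. For $s$ large, $\delta_s$ is disjoint from $\gamma$, so the two-edge path $\alpha_r,\gamma,\delta_s$ yields $d_{\cd}(\alpha_r,\delta_s)\le 2$ and hence $d_{\cd}(\alpha_r,\alpha_i)\le k+2$. The symmetric argument using the almost-distance to $\alpha_j$ gives $d_{\cd}(\alpha_r,\alpha_j)\le (j-i-k)+2$. Combining these with the geodesic identities $d_{\cd}(\alpha_r,\alpha_i)=i-r$ and $d_{\cd}(\alpha_r,\alpha_j)=j-r$, the second inequality rearranges to $r\ge i+k-2$; together with $r<i$ this forces $k\le 1$, which already yields the desired contradiction whenever $k\ge 2$.

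The remaining low-$k$ cases are dispatched using steadiness. When $k=0$ we have $\delta=\alpha_i$ and $\alpha_r$ isotopic to $\alpha_i$. The case $r=i-1$ is impossible: by maximality of $I_k$ the curves $\alpha_{i-1},\alpha_i$ intersect once transversely and so cannot be isotopic. For $r=i-2$, the subpath $\alpha_{i-2},\gamma,\alpha_i$ has two disjoint edges, strictly improving on the original subpath $\alpha_{i-2},\alpha_{i-1},\alpha_i$ (whose edge $\alpha_{i-1}\alpha_i$ is non-disjoint by maximality), contradicting steadiness. For $r\le i-3$ the bound $d_{\cd}(\alpha_r,\alpha_i)\le 2<i-r$ directly violates the geodesic property.

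The main obstacle is the case $k=1$, which forces $r=i-1$. Here $\alpha_{i-1}$ is isotopic to $\delta$, which lies in a Farey-neighbour isotopy class of $[\alpha_i]$, and none of the simple bounds above suffices. I expect to handle this case by constructing an explicit steady-violating modification: combine $\gamma$, an approximant $\delta'$ of $\delta$ with $d_{\cd}(\delta',\alpha_i)=1$, and a carefully chosen chain across the disjointness interval to produce a geodesic from $\alpha_{i-1}$ to $\alpha_{j+1}$ of the same length as the original subpath but with strictly more disjoint consecutive pairs. This last step, which exploits the detailed combinatorics of disjointness intervals in steady geodesics, is the delicate part of the argument.
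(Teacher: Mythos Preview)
Your argument is essentially the paper's proof, and it is complete once you correct a misunderstanding about the case $k=1$.

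The key point you missed is that ``almost distance'' is defined in $\cd_0(T^2)$, the isotopy-constrained subgraph. Thus $\delta$ is \emph{always} isotopic to $\alpha_i$ (and to every curve in the disjointness interval $[i,j]$), regardless of $k$. So when $\pi_A(\alpha_r)=\emptyset$ forces $\alpha_r$ to be isotopic to $\delta$, it forces $\alpha_r$ to be isotopic to $\alpha_i$. Your sentence ``$\delta$ lies in a Farey-neighbour isotopy class of $[\alpha_i]$'' is therefore incorrect: $\delta$ lies in the \emph{same} isotopy class as $\alpha_i$.

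Once you see this, the case $k=1$, $r=i-1$ is dispatched by exactly the same argument you already gave for $k=0$, $r=i-1$: by maximality of the disjointness interval, $\alpha_{i-1}$ and $\alpha_i$ intersect once transversely and hence are not isotopic, contradicting $\alpha_{i-1}$ isotopic to $\delta$ isotopic to $\alpha_i$. The paper phrases this uniformly: ``$\alpha_{i-1}$ is in a different isotopy class than any curve inside the disjointness interval, and therefore $\pi_A(\alpha_{i-1})\neq\emptyset$.'' No delicate combinatorial construction is needed.

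With this correction, your bound $r\ge i+k-2$ combined with $r<i$ leaves only $r=i-1$ (any $k\le 1$) and $r=i-2$ (forcing $k=0$, so $A=A_{\alpha_i}$), and both are handled exactly as in the paper.
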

\begin{proof}
  We prove that $\pi_A(\alpha_r) \neq \emptyset$ for $r < i$; the
  other case is similar. Thus, suppose that
  $\pi_A(\alpha_r) = \emptyset$, and thus
  $d_0(\alpha_r, \delta) \leq 2$. Since $\delta$ has almost distance
  $(j-i)-k$ from $\alpha_j$, and upper distance bounds are an open
  condition, we thus find that
  \[ j-r = d(\alpha_r, \alpha_j) \leq d(\alpha_r, \delta') +
    d(\delta', \alpha_j) \leq 2 + (j-i)-k \leq 2+j-i, \] where
  $\delta'$ is a curve close enough to $\delta$.
  Hence, we obtain
  \[ r \geq i-2+k. \]
  Since we assume $r < i$, this implies $r = i-1$ or $r = i-2$ (and, in this latter case, $k=0$).

  Note that by the definition of disjointess
  intervals, $\alpha_{i-1}$ is in a different isotopy class than any
  curve inside the disjointess interval, and therefore 
  $\pi_A(\alpha_{i-1}) \neq \emptyset$.

  Thus, the only remaining case is $r = i-2, k=0$, and thus
  $\pi_A(\alpha_{i-2}) = \emptyset$ where a boundary of $A$ has almost
  distance $0$ from $\alpha_i$, which means that $A = A_{\alpha_i}$.  However, in this case, there is a
  curve $\gamma$ which is disjoint from $\alpha_i, \alpha_{i-2}$.
  Thus, replacing $\alpha_{i-1}$ by $\gamma$ yields a geodesic between
  $\alpha_1, \alpha_n$ with more edges corresponding to disjointness
  -- contradicting the definition of steadyness.
\end{proof}
Combining this lemma with the main result of the previous section, we
obtain the following crucial estimate.
\begin{corollary}\label{cor:move-over-disjointness-interval}
  There is a constant $K>0$ so that the following holds.
  Suppose that $\alpha_k, k=0,\ldots,N$ is a steady geodesic, and
  $\alpha_{-1}$ is dual to $\alpha_0$, and $\alpha_{N+1}$ is dual to
  $\alpha_N$.

  Suppose that $I = [i,j]$ is a disjointness interval of the steady
  geodesic. Then
  $m_I^- = (\alpha_i, \alpha_{i-1}), m_I^+=(\alpha_j, \alpha_{j+1})$
  are markings and there are annuli $A_k, k=0, \ldots, j-i$ so that 
  \[ d(m_I^-, m_I^+) \leq K(j-i) + 2 \sum_{k=0}^{j-i} \mathrm{tw}_{A_k}(\alpha_{-1}, \alpha_{N+1}) \]
\end{corollary}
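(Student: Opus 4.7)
The plan is to apply Proposition~\ref{prop:move-markings-all} to the curves $\alpha = \alpha_i$, $\alpha' = \alpha_j$ (which lie in the same isotopy class, since every consecutive pair in $I$ is disjoint and hence homotopic) together with the duals $\beta = \alpha_{i-1}$, $\beta' = \alpha_{j+1}$ (these are genuine duals since $\alpha_{i-1}, \alpha_i$ are not disjoint, otherwise $i-1 \in I$). Setting $n_0 = d_0(\alpha_i, \alpha_j) \leq j-i$, the proposition produces markings $m_k^{\pm}$, $k=0,\dots,n_0$, with $m_0^- = m_I^-$ and $m_{n_0}^+ = m_I^+$, an intermediate quasi-path $(\sigma_0,\dots,\sigma_{n_0})$, and annuli $A_k \subseteq A_{\sigma_k}$. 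Chaining the bounds (2)--(7) of the proposition along the sequence $m_0^-, m_0^+, m_1^-, \ldots, m_{n_0}^+$ and noticing that each annulus $A_k$ appears at most twice in these bounds (the extreme ones $A_0, A_{n_0}$ twice, the interior ones once), one gets
\[
d_{\mathcal{M}^\dagger}(m_I^-, m_I^+) \leq K_1\, n_0 + K_2 + 2\sum_{k=0}^{n_0}\mathrm{tw}_{A_k}(\alpha_{i-1}, \alpha_{j+1}).
\]

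The next step is to replace each $\mathrm{tw}_{A_k}(\alpha_{i-1}, \alpha_{j+1})$ by $\mathrm{tw}_{A_k}(\alpha_{-1}, \alpha_{N+1})$ up to a bounded error. By property~(8) of Proposition~\ref{prop:move-markings-all}, one boundary of $A_k$ (isotopic to $\sigma_k$) is almost distance $k$ from $\alpha_i$ and almost distance $n_0-k \leq (j-i)-k$ from $\alpha_j$, so the hypothesis of Lemma~\ref{lem:tightness} is met. That lemma then guarantees $\pi_{A_k}(\alpha_r) \neq \emptyset$ for every $r<i$ and every $r>j$. Since the sub-geodesics $\alpha_0,\dots,\alpha_{i-1}$ and $\alpha_{j+1},\dots,\alpha_N$ have non-empty projection at every vertex, the second bounded geodesic image theorem (Lemma~\ref{lem:bgit2}) yields a universal $B$ with $\mathrm{tw}_{A_k}(\alpha_0,\alpha_{i-1}) \leq B$ and $\mathrm{tw}_{A_k}(\alpha_N,\alpha_{j+1}) \leq B$. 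The duals $\alpha_{-1}, \alpha_{N+1}$ are not homotopic to $\sigma_k$ (they intersect $\alpha_0$, respectively $\alpha_N$, once transversely), so their projections are non-empty, and Lemma~\ref{lem:basic-twists-curves}(ii) gives $\mathrm{tw}_{A_k}(\alpha_{-1},\alpha_0), \mathrm{tw}_{A_k}(\alpha_{N+1},\alpha_N) \leq 2$. The triangle inequality then delivers
\[
\mathrm{tw}_{A_k}(\alpha_{i-1}, \alpha_{j+1}) \leq \mathrm{tw}_{A_k}(\alpha_{-1}, \alpha_{N+1}) + 2B+4.
\]

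Substituting back, padding the family by arbitrary $A_k$ for $n_0 < k \leq j-i$ (the added terms being non-negative), and absorbing all additive constants into the linear term $K(j-i)$ yields the desired inequality. The small cases where $j-i$ is $0$ or $1$, or where $i \leq 1$ or $j \geq N-1$ so that Lemma~\ref{lem:tightness} does not literally apply, are handled separately: for $j=i$ the two markings share the curve $\alpha_i$ and Lemma~\ref{lem:basic-twists-markings}(ii) directly bounds their distance by $\mathrm{tw}_{A_{\alpha_i}}(\alpha_{i-1}, \alpha_{i+1})$, to which the same twist-replacement argument is then applied; the endpoint cases are similar, replacing the missing geodesic segment by the empty one.

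The main obstacle is the bookkeeping in Step~1: one must verify that the bounds (3)--(7) of Proposition~\ref{prop:move-markings-all} combine into a sum in which each annulus appears only a bounded number of times, and that the "almost distance" information recorded in property (8) of the proposition is strong enough to feed into the hypothesis of Lemma~\ref{lem:tightness}. Once this interface between the two results is set up cleanly, the bounded geodesic image theorem produces the required universal constants and the rest is arithmetic.
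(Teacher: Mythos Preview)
Your proof is essentially the same as the paper's: apply Proposition~\ref{prop:move-markings-all}, chain the estimates (2)--(7) via the triangle inequality, then invoke Lemma~\ref{lem:tightness} together with the bounded geodesic image theorem to trade $\mathrm{tw}_{A_k}(\alpha_{i-1},\alpha_{j+1})$ for $\mathrm{tw}_{A_k}(\alpha_{-1},\alpha_{N+1})$ at bounded cost. One small simplification you can make: the padding step is unnecessary, since in fact $d_0(\alpha_i,\alpha_j)=j-i$ exactly --- the disjointness interval is a path of length $j-i$ in $\cd_0$, while $d_0\ge d_{\cd}=j-i$ because $\cd_0$ is a full subgraph and the $\alpha_k$ lie on a $\cd$-geodesic.
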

\begin{proof}
  The fact that $m_I^-, m_I^+$ are markings is immediate from the
  definition of disjointness interval and the choice of
  $\alpha_{-1}, \alpha_{N+1}$. To obtain the estimate, apply
  Proposition~\ref{prop:move-markings-all} to
  $\alpha = \alpha_i, \beta = \alpha_{i-1}, \alpha' = \alpha_j, \beta'
  = \alpha_{j+1}, n = j-i$ to obtain markings $m_k^\pm$ and
  annuli $A_k$ whose boundary have almost distance $k$ from $\alpha$
  and almost distance $n-k$ from $\alpha'$ so that
  \begin{eqnarray*}
    d(m_I^-, m_I^+) &=& d(m_0^-, m^+_n) \\
                    &\leq& \sum_{k=0}^n d(m_k^-, m_k^+) + \sum_{k=0}^{n-1} d(m_k^+, m_{k+1}^-)
  \end{eqnarray*}
We now use the estimates (2) to (7) from Proposition~\ref{prop:move-markings-all}. By (2) the second sum is bounded by $nC$. The estimates (3) to (7) provide a bound for each term of the first sum by one or two twist numbers and a constant. Putting this together, we get            
  \begin{eqnarray*}
    d(m_I^-, m_I^+) &\leq& 2\mathrm{tw}_{A_0}(\beta, \beta') +
                           \mathrm{tw}_{A_1}(\beta, \beta')                          +  \sum_{k=2}^{n-2}(\mathrm{tw}_{A_k}(\beta, \beta')+ 2B+ 6)\\
                    && +
                       \mathrm{tw}_{A_{n-1}}(\beta, \beta') + 2\mathrm{tw}_{A_n}(\beta, \beta') + 6B + 10 + nC\\
    & \leq& (C+3B+6)n + 2 \sum_{i=0}^n \mathrm{tw}_{A_k}(\beta, \beta').
  \end{eqnarray*}
  Now, by the previous Lemma~\ref{lem:tightness}, we have that
  $\pi_{A_k}(\alpha_{k'}) \neq 0$ for all $k'<i, k'>j$ and $k=0, \dots, n$. Thus, by the
  bounded geodesic image theorem and the triangular inequality for
  twists we have for every $i$ that
  \[ \mathrm{tw}_{A_k}(\beta, \beta') \leq \mathrm{tw}_{A_k}(\alpha_{-1}, \alpha_{N+1}) +2B. \]
  Together, these imply the lemma.
\end{proof}

We are now ready for the proof of the main estimate.
\begin{proof}[Proof of Lemma~\ref{lem:good-marking-bound}]
  Let $m=(\alpha, \beta)$ and $m'=(\alpha', \beta')$ be given. We
  begin by choosing a steady geodesic $(\alpha_i), i=1,\ldots, N$
  between $\alpha = \alpha_1$ and $\alpha' = \alpha_n$. Denote by
  $I_1, \ldots, I_K$ the disjointness intervals, and put
  $I_k = [i_k, j_k]$. Let $m_k^\pm$ be the markings obtained from
  Corollary~\ref{cor:move-over-disjointness-interval} applied to
  $I_k$. Note that by construction for any $k$ we have
  $m_k^+ = m_{k+1}^-$, and thus
  \[ d(m, m') \leq \sum d(m_k^-, m_k^+). \] Summing the estimate from
  Corollary~\ref{cor:move-over-disjointness-interval} then gives the
  desired formula, noting that the sum of the lengths of the
  disjointness intervals is the length of the steady geodesic.
\end{proof}

\section{Metric WPD property, Mann-Rosendal distance estimates}\label{sec:WPD}
In this short section we prove the metric WPD property, by mixing all our ingredients: the twist bound along axes (Lemma~\ref{lem:thick-axes}), the marking distance estimate in terms of twists (Lemma~\ref{lem:good-marking-bound}), and the bound on $\tilde d$ from marking distance (Corollary~\ref{cor:c0-bound}).

\subsection{A strong version}
For clarity we prove that the metric WPD property is equivalent to another, seemingly stronger, property. We do not really need this lemma here since in our context the proof of the stronger version will not be more difficult. We consider a topological group $G$ acting by isometries on a Gromov-hyperbolic metric space $X$.
\begin{lemma}\label{lem:strong-WPD}
A loxodromic  element $f$ satisfies the metric WPD property if and only if
for every $x\in X$ and every $r>0$, there exists $n_0$ and $D$ such that for every $n \geq n_0$, the set
$Z_r(x, f^n x)$ is bounded by $D$ for the maximal metric in $G$.
\end{lemma}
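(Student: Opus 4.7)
The $(\Leftarrow)$ direction is immediate: specializing the strong hypothesis to $n = n_0$ yields that $Z_r(x, f^{n_0}x)$ is bounded, which is exactly the metric WPD property.

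For the substantive direction $(\Rightarrow)$, the plan is to use fellow-traveling of quasi-geodesics to bootstrap the existence of a single ``good'' exponent $n_0$ guaranteed by WPD to a uniform bound valid for all larger $n$. Fix $x \in X$ and $r>0$.

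\textbf{Step 1 (Orbit is a quasi-geodesic).} Since $f$ acts loxodromically, I would first observe that the orbit $(f^k x)_{k \in \ZZ}$ is a $K$-quasi-geodesic for some constant $K = K(x,f)$ depending only on $f$ and the chosen basepoint. This follows from Lemma~\ref{lem:K-for-axis}(1) applied at a point minimizing $d(\cdot, f(\cdot))$ together with a bounded perturbation to $x$, and the crucial point is that $K$ depends neither on $h$ nor on $n$. For any isometry $h$, the image orbit $(h f^k x)_{k \in \ZZ}$ is then also a $K$-quasi-geodesic.

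\textbf{Step 2 (Fellow-traveling).} Suppose $h \in Z_r(x, f^n x)$. Then the two $K$-quasi-geodesics $(f^k x)_{k=0}^n$ and $(h f^k x)_{k=0}^n$ have endpoints pairwise $r$-close: $d(x, hx) \leq r$ and $d(f^n x, h f^n x) \leq r$. Since $d(x, f^m x) = d(hx, h f^m x)$ for any $m$, the arc-length parameters at the intermediate point $f^m x$ (resp.\ $h f^m x$) on each quasi-geodesic agree. Lemma~\ref{lem:mid-quasigeodesics} then yields a constant $M' = M'(K, \delta, r)$, independent of $h$ and $n$, such that for every $0 \leq m \leq n$,
\[ d(f^m x, h f^m x) \leq M'. \]

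\textbf{Step 3 (Apply WPD).} Set $r' = \max(r, M')$. By the WPD property applied to $r'$, there exists $n_0 = n_0(r')$ such that $Z_{r'}(x, f^{n_0}x)$ is bounded in $G$; by Rosendal's framework (since the ambient group is locally $(OB)$, as will be used in the sequel), this set has finite diameter $D$ in the maximal metric. Then for any $n \geq n_0$ and any $h \in Z_r(x, f^n x)$, Step~2 (applied with $m = n_0 \leq n$) gives $d(f^{n_0}x, h f^{n_0}x) \leq M' \leq r'$, while $d(x, hx) \leq r \leq r'$ is automatic. Hence $Z_r(x, f^n x) \subseteq Z_{r'}(x, f^{n_0}x)$ and its maximal-metric diameter is at most $D$, as required.

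The main obstacle — more a bookkeeping point than a genuine difficulty — is ensuring that the quasi-geodesic constant $K$ and hence the fellow-traveling constant $M'$ depend only on $x, f, r$ and not on $h$ or $n$; this is what allows the $n_0$ produced by WPD (for the enlarged parameter $r'$) to work uniformly for all $n \geq n_0$.
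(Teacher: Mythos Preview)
Your proof is correct and follows essentially the same route as the paper: both arguments use Lemma~\ref{lem:mid-quasigeodesics} applied to the two $K$--quasigeodesics $(f^k x)$ and $(hf^k x)$ (whose endpoints are $r$--close and whose arc-length parameters at $f^{n_0}x$, $hf^{n_0}x$ agree since $h$ is an isometry) to obtain the inclusion $Z_r(x,f^n x)\subseteq Z_{M'}(x,f^{n_0}x)$, then invoke WPD at the enlarged radius. One cosmetic remark: your appeal to Lemma~\ref{lem:K-for-axis}(1) in Step~1 is not quite the right citation (that lemma carries a hypothesis $\mathrm{tl}(f)\geq L$); the fact you actually need---that orbits of loxodromics are quasi-geodesics---is stated without hypothesis in Section~\ref{sec:tl-estimates}, and the paper simply invokes it directly.
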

\begin{proof}
The reverse implication is clear, so let us prove the direct one. Assume $f$ satisfies the metric WPD property. Let $x \in X$ and $r>0$, and let us choose an invariant quasigeodesic path $A$ for $f$ containing $x$. Let $M$ be the associated Morse constant, and let $M'=5r + 4\delta + 4M$ be the constant of Lemma~\ref{lem:mid-quasigeodesics}. The metric WPD property gives a number $n_0 \geq 0$ such that 
the set $Z_{M'}(x, f^{n_0} x)$ is bounded by some $D$ for the maximal metric. Let $n \geq n_0$. Now the proof will be complete if we show that $Z_r(x, f^n x) \subseteq Z_{M'}(x, f^{n_0} x)$.

To see this, let $h \in Z_r(x, f^n x)$, that is, we have $d(x, hx) \leq r$ and $d(f^{n}x, h f^n x) \leq r$. Denote $\ell = d(x, f^{n_0}x)$ and observe that $\ell = d(h x, h f^{n_0}x)$. We are in the situation of Lemma~\ref{lem:mid-quasigeodesics}, with $\alpha, \alpha', \beta, \beta', \hat \alpha, \hat \beta$ respectively set to $x, f^nx, hx, hf^nx, f^{n_0}x, h f^{n_0}x$, and $k=\ell$. We get that $d(f^{n_0}x, h f^{n_0}x) \leq M'$, that is, $h \in Z_{M'}(x, f^{n_0} x)$.
\end{proof}

\subsection{Proof of the metric WPD property}

This subsection is dedicated to the proof of the metric WPD property (Theorem~\ref{thm:metric-WPD}).
\begin{lemma}[Twist bound]
\label{lem:WPD-1}
  Suppose $f\in \mathrm{Homeo}_0(T^2)$ acts hyperbolically on the fine curve graph,and 
  let $m_0= (\alpha, \beta)$ be a marking.

Then for every $r>0$, there exists $n_0$ and  $T$ such that  for every $n \geq n_0$, 
for every $h\in Z_r(\alpha, f^n\alpha)$ and every almost embedded annulus $A$ we have
\[
\mathrm{tw}_A(m_0,  h m_0) \leq T.
\]
\end{lemma}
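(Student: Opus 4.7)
The plan is to exploit the fact that we have two invariant marking paths: one for $f$ through $m_0$, and one for the conjugate $hfh^{-1}$ through $hm_0$, both of which are $K$-quasigeodesics in $\cd(T^2)$ and have uniformly bounded twists in every annulus by Lemma~\ref{lem:thick-axes}. Let $(m_i)_{i\in\mathbb{Z}}$ with $m_0 = (\alpha,\beta)$ be an $f$-invariant marking path; it is a $K$-quasigeodesic for some $K$ depending only on $f$ and $m_0$, and Lemma~\ref{lem:thick-axes} provides a constant $T_f$ with $\mathrm{tw}_A(m_i, m_j) \leq T_f$ for every $i, j$ and every almost embedded essential annulus $A$. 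The translated path $(hm_i)$ is an invariant path for $hfh^{-1}$ with the same quasigeodesic constant $K$, and topological invariance of the twist (Lemma~\ref{lem:basic-twists-markings}~iv)) together with Lemma~\ref{lem:thick-axes} yields the analogous $\mathrm{tw}_A(hm_i, hm_j) \leq T_f$.

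Write $\alpha_i = f^i\alpha$. Because $h$ acts isometrically, the distance from $h\alpha_0$ to $h\alpha_i$ along the second axis equals the distance from $\alpha_0$ to $\alpha_i$ along the first. Thus the hypotheses $d_{\cd}(\alpha_0, h\alpha_0), d_{\cd}(\alpha_n, h\alpha_n) \leq r$ let us apply Lemma~\ref{lem:mid-quasigeodesics} with $k=\ell$, yielding a constant $R = R(r, K, \delta)$ such that $d_{\cd}(\alpha_i, h\alpha_i) \leq R$ for every $0 \leq i \leq n$. The main estimate then comes from the triangle inequality
\[
\mathrm{tw}_A(m_0, hm_0) \leq \mathrm{tw}_A(m_0, m_{i_*}) + \mathrm{tw}_A(m_{i_*}, hm_{i_*}) + \mathrm{tw}_A(hm_{i_*}, hm_0) \leq 2T_f + \mathrm{tw}_A(m_{i_*}, hm_{i_*})
\]
for a carefully chosen index $i_* \in [0, n]$.

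To bound the middle term uniformly in $A$, let $r_0 = r_0(1, \delta)$ be the constant from the first bounded geodesic image theorem (Lemma~\ref{lem:bgit1}) applied to geodesics, and call $i \in [0,n]$ \emph{$A$-good} if both $d_{\cd}(\alpha_i, \partial A)$ and $d_{\cd}(h\alpha_i, \partial A)$ exceed $r_0 + R + 3\delta$. Because $(\alpha_i)$ and $(h\alpha_i)$ are $K$-quasigeodesics, the set of indices $i$ failing either inequality is contained in an interval of length bounded by $D = D(K, r_0, R, \delta)$ independent of $A$ and of $h$. Setting $n_0 = 2D + 1$, for every $n \geq n_0$ and every annulus $A$ some $A$-good index $i_* \in [0, n]$ exists. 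At such an $i_*$, a Gromov product estimate shows that any geodesic from $\alpha_{i_*}$ to $h\alpha_{i_*}$ in $\cd(T^2)$ remains at distance exceeding $r_0$ from $\partial A$, so Lemma~\ref{lem:bgit1} gives $\mathrm{tw}_A(\alpha_{i_*}, h\alpha_{i_*}) \leq 2$.

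Finally, $\pi_A(m_{i_*})$ contains $\pi_A(\alpha_{i_*})$ (which is nonempty at $A$-good indices, since $\alpha_{i_*}$ is far from $\partial A$) and has diameter at most $2$ in $\mathcal{A}^\dagger(A)$, and similarly for $\pi_A(hm_{i_*})$. A straightforward triangle inequality in the fine arc graph then upgrades $\mathrm{tw}_A(\alpha_{i_*}, h\alpha_{i_*}) \leq 2$ to $\mathrm{tw}_A(m_{i_*}, hm_{i_*}) \leq 6$, yielding $\mathrm{tw}_A(m_0, hm_0) \leq 2T_f + 6 =: T$. The main obstacle is ensuring the choice of $i_*$ is uniform in $A$; this is handled by the quasi-geodesicity of both invariant marking paths, which forces them to come close to any fixed $\partial A$ on a uniformly bounded window of indices.
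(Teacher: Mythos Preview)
Your argument is correct and takes a genuinely different route from the paper's. Both proofs split the twist via the triangle inequality
\[
\mathrm{tw}_A(m_0, hm_0) \leq \mathrm{tw}_A(m_0, f^{i_*}m_0) + \mathrm{tw}_A(f^{i_*}m_0, hf^{i_*}m_0) + \mathrm{tw}_A(hf^{i_*}m_0, hm_0)
\]
and bound the two outer terms by Lemma~\ref{lem:thick-axes} together with topological invariance. The difference is in the choice of $i_*$. You invoke the fellow-traveling Lemma~\ref{lem:mid-quasigeodesics} to get $d(f^i\alpha, hf^i\alpha) \leq R$ for \emph{every} $i \in [0,n]$, and then pick an intermediate $i_*$ where both $f^{i_*}\alpha$ and $hf^{i_*}\alpha$ are far from $\partial A$, so that Lemma~\ref{lem:bgit1} bounds the middle term. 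The paper instead uses only the endpoints: it takes $i_* = n$ and argues that if $\mathrm{tw}_A(m_0, hm_0)$ is large then the short geodesic $[\alpha, h\alpha]$ must come close to $\partial A$, forcing $\partial A$ into $B_{r+2}(\alpha)$; since $n_0$ is chosen with $d(\alpha, f^n\alpha) > 2r + O(1)$, the other short geodesic $[f^n\alpha, hf^n\alpha] \subset B_r(f^n\alpha)$ stays away from the $2$-neighbourhood of $\partial A$, so $\mathrm{tw}_A(f^n m_0, hf^n m_0) \leq B$. The paper's route is more economical (no fellow-traveling, no good-index bookkeeping), while yours makes the geometric picture of two nearby axes more explicit.

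One notational wrinkle: you introduce $(m_i)$ as an ``$f$-invariant marking path'' but also set $\alpha_i = f^i\alpha$ and later use $\alpha_{i_*} \in m_{i_*}$. These match only if $m_i = f^i m_0$ is the orbit rather than an edge-path in the marking graph. That interpretation is fine: Lemma~\ref{lem:thick-axes}, applied to a genuine marking path connecting consecutive orbit points, still yields the uniform bound $\mathrm{tw}_A(f^jm_0, f^km_0) \leq T_f$ that you use.
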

\begin{proof}
  \begin{figure}
    \centering
    \def\svgwidth{0.7\textwidth}
\begingroup%
  \makeatletter%
  \providecommand\color[2][]{%
    \errmessage{(Inkscape) Color is used for the text in Inkscape, but the package 'color.sty' is not loaded}%
    \renewcommand\color[2][]{}%
  }%
  \providecommand\transparent[1]{%
    \errmessage{(Inkscape) Transparency is used (non-zero) for the text in Inkscape, but the package 'transparent.sty' is not loaded}%
    \renewcommand\transparent[1]{}%
  }%
  \providecommand\rotatebox[2]{#2}%
  \newcommand*\fsize{\dimexpr\f@size pt\relax}%
  \newcommand*\lineheight[1]{\fontsize{\fsize}{#1\fsize}\selectfont}%
  \ifx\svgwidth\undefined%
    \setlength{\unitlength}{279.85932117bp}%
    \ifx\svgscale\undefined%
      \relax%
    \else%
      \setlength{\unitlength}{\unitlength * \real{\svgscale}}%
    \fi%
  \else%
    \setlength{\unitlength}{\svgwidth}%
  \fi%
  \global\let\svgwidth\undefined%
  \global\let\svgscale\undefined%
  \makeatother%
  \begin{picture}(1,0.36224006)%
    \lineheight{1}%
    \setlength\tabcolsep{0pt}%
    \put(0,0){\includegraphics[width=\unitlength,page=1]{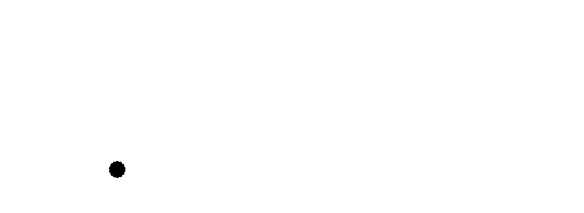}}%
    \put(0.19334004,0.01078247){\makebox(0,0)[lt]{\lineheight{1.25}\smash{\begin{tabular}[t]{l}$m_0$\end{tabular}}}}%
    \put(0,0){\includegraphics[width=\unitlength,page=2]{lem_9_2.pdf}}%
    \put(0.66998255,0.01078247){\makebox(0,0)[lt]{\lineheight{1.25}\smash{\begin{tabular}[t]{l}$f^n(m_0)$\end{tabular}}}}%
    \put(0,0){\includegraphics[width=\unitlength,page=3]{lem_9_2.pdf}}%
    \put(0.21056809,0.29408808){\makebox(0,0)[lt]{\lineheight{1.25}\smash{\begin{tabular}[t]{l}$h(m_0)$\end{tabular}}}}%
    \put(0,0){\includegraphics[width=\unitlength,page=4]{lem_9_2.pdf}}%
    \put(0.58192812,0.29791652){\makebox(0,0)[lt]{\lineheight{1.25}\smash{\begin{tabular}[t]{l}$hf^n(m_0)$\end{tabular}}}}%
    \put(0,0){\includegraphics[width=\unitlength,page=5]{lem_9_2.pdf}}%
    \put(0.11677095,0.12946456){\makebox(0,0)[lt]{\lineheight{1.25}\smash{\begin{tabular}[t]{l}$\partial A$\end{tabular}}}}%
    \put(0,0){\includegraphics[width=\unitlength,page=6]{lem_9_2.pdf}}%
  \end{picture}%
\endgroup%

    \caption{Lemma~\ref{lem:WPD-1}}
  \end{figure}

Let $m_0$ as in the statement, and let $r>0$.
Complete $m_0$ into a path of markings which is invariant under $f$. Lemma~\ref{lem:thick-axes} provides a bound $T$ for twists numbers along this path.
Choose 
\[
n_0> \frac{2r+2}{\mathrm{tl}(f)}
\]
so that for every $n\geq n_0$, $d_\cd (\alpha, f^{n} \alpha ) > 2r + 2$.
Let $h\in Z_r(\alpha, f^n\alpha)$, and $A$ be an annulus.
Let $B$ be given by the Bounded Geodesic Image Theorem~\ref{lem:bgit2}. Let us prove that 
$\mathrm{tw}_A(m_0,  h m_0) \leq B + 2T$.
Assume $\mathrm{tw}_A(m_0,  h m_0) > B$, otherwise there is nothing to do.
Then the Bounded Geodesic Image Theorem says that a geodesic $[\alpha, h\alpha]$
enters the $2$-neighborhood of a boundary curve of $A$. By the choice of $n_0$ and triangular inequality we get that any geodesic $[f^n\alpha, hf^n\alpha]$ is disjoint from the $2$-neighborhood of the boundary curve of $A$, and so the twist number
$\mathrm{tw}_A( f^n m_0,  h f^n m_0)$ is bounded by $B$. We now apply the triangular inequality for twist numbers (Lemma~\ref{lem:basic-twists-markings}, iii) to get
\[
\mathrm{tw}_A(m_0,  h m_0) \leq
\mathrm{tw}_A(m_0,  f^n m_0) + \mathrm{tw}_A( f^n m_0,  h f^n m_0) + \mathrm{tw}_A(h f^n m_0,  h m_0).
\]
The first term is bounded by $T$, the second by $B$. By topological invariance of the twists numbers, the last one equals $\mathrm{tw}_{h^{-1}A}(f^n m_0,  m_0)$ and thus is also bounded by $T$. Thus we get the wanted bound.
\end{proof}

\begin{proof}[Proof of Theorem~\ref{thm:metric-WPD}]
Let $f$ be an element of $\mathrm{Homeo}_0(T^2)$ acting loxodromically, $\alpha \in \cd(S)$, and $r>0$. We will prove the strong version of the metric WPD expressed in Lemma~\ref{lem:strong-WPD}, that is, we look for some $n_0$ such that for every $n \geq n_0$, the set $Z_r(\alpha, f^{n} \alpha)$ is bounded.

Complete $\alpha$ to get a marking $m_0 = (\alpha, \beta)$. Apply lemma~\ref{lem:WPD-1} to get some $n_0$ and the twist bound $T$. Let $n \geq n_0$ and $h \in Z_r(\alpha, f^{n} \alpha)$. We want to bound the $\tilde d$ distance from  the identity to $h$. We know that $d(\alpha, h\alpha) \leq r$, and for every annulus $A$ we have $\mathrm{tw}_A(m_0,  h m_0) \leq T$.
Lemma~\ref{lem:good-marking-bound} provides a number {$D = D(r,T)$} such that $d_{\mathcal{M}^\dagger}(m_0, h m_0 ) \leq D$.
By properness of the inclusion $\mathcal{M}_0^\dagger \subset \mathcal{M}^\dagger$ (Corollary~\ref{cor:properness-markings}), and $C^0$ bound from markings (Corollary~\ref{cor:c0-bound}), 
 we get some $R'$, depending only on $D$, such that $\tilde d (h, \mathrm{Id}) \leq R'$.
\end{proof}

The proof of the semi-continuity theorem in the next section will make use of the following form of the metric WPD property.
\begin{corollary}\label{cor:metric-WPD}
Let $b>0$, let $f$ be an element of $\mathrm{Homeo}_0(T^2)$ acting loxodromically, and let $\alpha \in \cd(S)$. Then there exists $n_0>0$, $D>0$ such that for every $g \in \mathrm{Homeo}_0(S)$, for every $\beta \in \cd(S)$, for every $n \geq n_0$ and $m\in \mathbb{Z}$, if
\[
d(\alpha, \beta)\leq b, \quad d(f^n\alpha, g^m \beta) \leq b, \quad d(f^{2n}\alpha, g^{2m} \beta) \leq b
\]
then $\tilde d (f^{-n}, g^{-m}) \leq D$.
\end{corollary}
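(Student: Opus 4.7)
The plan is to apply the strong form of metric WPD (Lemma~\ref{lem:strong-WPD}) to the element $h := f^{-n} g^m$. By right-invariance of $\widetilde d$ we have $\widetilde d(f^{-n}, g^{-m}) = \widetilde d(f^{-n}g^m, \mathrm{Id}) = \widetilde d(h, \mathrm{Id})$, so it suffices to bound $\widetilde d(h, \mathrm{Id})$ uniformly in $g, \beta, m$ for $n$ sufficiently large.

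Since $f^{\pm n}$ and $g^m$ act by isometries on $\mathcal{C}^\dagger(T^2)$, the three hypotheses combine to give, by the triangle inequality,
\[ d(\alpha, h\alpha) \;=\; d(f^n\alpha, g^m\alpha) \;\leq\; d(f^n\alpha, g^m\beta) + d(g^m\beta, g^m\alpha) \;\leq\; 2b, \]
and
\[ d(f^n\alpha, h f^n\alpha) \;=\; d(f^{2n}\alpha, g^m f^n\alpha) \;\leq\; d(f^{2n}\alpha, g^{2m}\beta) + d(g^{2m}\beta, g^m f^n\alpha) \;\leq\; 2b, \]
where in the second line we use $d(g^{2m}\beta, g^m f^n\alpha) = d(g^m\beta, f^n\alpha) \leq b$ by isometry invariance under $g^m$. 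Thus $h \in Z_{2b}(\alpha, f^n\alpha)$.

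Finally, I would invoke Proposition~\ref{prop:metric-WPD} (metric WPD for $f$) together with Lemma~\ref{lem:strong-WPD} to produce $n_0$ and $D_0$ such that for every $n \geq n_0$ the set $Z_{2b}(\alpha, f^n\alpha)$ has diameter at most $D_0$ in the Mann--Rosendal maximal metric. Since this maximal metric is quasi-isometric to $\widetilde d$ (by the Militon theorem, Theorem~\ref{thm:militon}, together with Lemma~\ref{lem:geometric-model}), boundedness transfers to $\widetilde d$, yielding a constant $D$ depending only on $f, \alpha, b$ with $\widetilde d(h, \mathrm{Id}) \leq D$, as required.

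The argument is essentially bookkeeping once the correct choice $h = f^{-n}g^m$ is made; there is no serious obstacle. The only mild subtlety is to notice that the first hypothesis $d(\alpha,\beta) \leq b$ is needed to turn the hypothesis at $g^m\beta$ into information about $g^m\alpha$ (and similarly at the $2n$ level via the isometry action of $g^m$), which is precisely what the two bounds above exploit.
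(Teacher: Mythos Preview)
Your proof is correct and follows essentially the same approach as the paper: show that a suitable element lies in $Z_{2b}(\alpha, f^n\alpha)$ and then invoke the strong form of metric WPD. The only cosmetic difference is that the paper takes $h = g^{-m}f^n$ rather than your $h = f^{-n}g^m$ (one is the inverse of the other, and $Z_{2b}(\alpha, f^n\alpha)$ is visibly closed under inversion), which leads to slightly different triangle-inequality bookkeeping; your extra remark about passing from the maximal metric to $\widetilde d$ is accurate and arguably makes explicit a step the paper leaves implicit.
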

Note that here we do not assume that $g$ is loxodromic.
\begin{proof}   
In short, we get the result by showing that $h = g^{-m}f^{n}$  belongs to $Z_{2b}(\alpha, f^n\alpha)$, and then applying the WPD.
Here are the details. Let $b,f,\alpha$ be as in the statement. By Theorem~\ref{thm:metric-WPD}, $f$ satisfies the metric WPD property. Applying this to $\alpha$ and $b$ provides a positive integer $n_0$ and a positive number $D$. Let $g \in \mathrm{Homeo}_0(S)$, $\beta \in \cd(S)$, $n \geq n_0$, $m\in \mathbb{Z}$, and consider the map $h = g^{-m}f^{n}$.
By the triangular inequality and the fact that $f$ and $g$ act by isometries, we deduce that $h \in Z_{2b}(\alpha, f^n\alpha)$: indeed,
\[
d(\alpha, h \alpha) \leq d(\alpha, \beta) + d(\beta, h \alpha) = d(\alpha, \beta) + d(g^m \beta, f^n \alpha) \leq 2b,
\]
\[
d(f^n\alpha, h f^n \alpha) \leq d(f^n \alpha, g^m \beta) + d(g^{2m} \beta, f^{2n} \alpha) \leq 2b.
\]
Then the metric WPD property in the form of Lemma~\ref{lem:strong-WPD} tells us that $\tilde d (f^{-n}, g^{-m}) = \tilde d (h, \mathrm{Id}) \leq D$.
\end{proof}

\section{Axes and rotation sets}\label{sec:main}

In this section we prove our main results relating the axis of a loxodromic element and the rotation sets, namely (1) Each end-point of the axis determines the scaled rotation set (Theorem~\ref{thm:boundary-determines-rotation-set}), (2) Two elements that are equivalent in the sense of Fujiwara have the same scaled rotation set (Theorem~\ref{thm:equivalence}), and (3) The rotation set depends semi-continuously on each end-point (Theorem~\ref{thm:semi-continuity} below).
Each result follows easily from the next one: we have already explained how to deduce Theorem~\ref{thm:boundary-determines-rotation-set} from Theorem~\ref{thm:equivalence} at the end of Section~\ref{sec:hyperbolic}, and we explain below how to deduce Theorem~\ref{thm:equivalence} from the semi-continuity. We nevertheless include a direct proof of Theorem~\ref{thm:boundary-determines-rotation-set}, firstly because we will need the main lemma of the proof to get the metric characterization of the axis end-points (in section~\ref{subsec:metric-axis}) below), and secondly as a warm-up for the more difficult proof of the semi-continuity. For the convenience of the reader we also include a sketch of a direct proof for Theorem~\ref{thm:equivalence}. Then we will prove the full semi-continuity. We end the section by proving the metric characterization of the attracting fixed points for loxodromic elements.

\subsection{Each axis end-point determines the rotation set}\label{subsec:axis-rotation-set}
Let us recall Theorem \ref{thm:boundary-determines-rotation-set} from the introduction.
\begin{theorem*}
Consider $f,g \in \mathrm{Homeo}_0(T^2)$ acting loxodromically on the fine curve graph, and assume $\xi^+(f)= \xi^+(g)$.
Then the scaled rotation sets
\[ \frac{1}{\mathrm{tl}(f)}\mathrm{Rot}(f) \text{ and } \frac{1}{\mathrm{tl}(g)}\mathrm{Rot}(g) \]
are translates of each other.
\end{theorem*}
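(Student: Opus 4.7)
The plan is to combine Corollary~\ref{cor:fixed-point-charac} (the metric characterization of attracting fixed points, which itself rests on the metric WPD property of Proposition~\ref{prop:metric-WPD}) with Lemma~\ref{lem:uc-distance}(ii), which converts $\widetilde d$-boundedness into information on rotation sets. First I would invoke the metric characterization: from the hypothesis $\xi^+(f)=\xi^+(g)$, extract a sequence of positive integers $(m_n)_{n \geq 0}$ such that $\widetilde d(f^{-n}g^{m_n}, \mathrm{id}) \leq D$ for some uniform $D$.

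Next I would identify the asymptotic ratio $m_n/n$. Right-invariance of $\widetilde d$ gives $\widetilde d(f^{-n}, g^{-m_n}) \leq D$. The orbit-map quasi-isometry of Lemma~\ref{lem:geometric-model} transfers this to a bound $d_{\mathcal{M}_0^\dagger}(f^n m_0, g^{m_n} m_0) \leq D'$ in the fine marking graph, and projecting to one of the curves of the marking yields $d_\cd(f^n\alpha, g^{m_n}\alpha) \leq D'$ for a fixed curve $\alpha$ and uniform $D'$. Combining with the asymptotic estimates $d_\cd(\alpha, f^n\alpha) = n\,\mathrm{tl}(f) + o(n)$ and $d_\cd(\alpha, g^{m_n}\alpha) = m_n\,\mathrm{tl}(g) + o(m_n)$, the triangle inequality forces
\[ \lim_{n\to\infty}\frac{m_n}{n} = \frac{\mathrm{tl}(f)}{\mathrm{tl}(g)} =: \lambda > 0. \]

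Finally I would apply Lemma~\ref{lem:uc-distance}(ii) not to $(f,g)$ directly but to the pair of inverses $(f^{-1}, g^{-1})$ with this same sequence: the lemma's hypothesis demands that the sequence $((f^{-1})^n(g^{-1})^{-m_n}) = (f^{-n}g^{m_n})$ be bounded, which is exactly what we have, with the same positive limit $\lambda$. The conclusion is that $\lambda\,\mathrm{Rot}(g^{-1})$ and $\mathrm{Rot}(f^{-1})$ are translates of each other. Using $\mathrm{Rot}(h^{-1}) = -\mathrm{Rot}(h)$ and the fact that negation preserves the translate relation, this becomes the statement that $\lambda\,\mathrm{Rot}(g)$ and $\mathrm{Rot}(f)$ are translates; dividing through by $\mathrm{tl}(f)$ yields the claim about the scaled rotation sets.

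The conceptual heavy lifting is already done in Corollary~\ref{cor:fixed-point-charac}; the present argument is essentially a careful identification of the scale $\lambda$. The one subtlety worth flagging is the mismatch between $f^{-n}g^{m_n}$ (what the metric characterization provides) and $f^n g^{-m_n}$ (what Lemma~\ref{lem:uc-distance}(ii) literally requires)---a reflection of the interplay between the left action on $\cd(T^2)$ and the right-invariant metric $\widetilde d$ on the group, as discussed in the introduction. Since these two elements are conjugate but not inverse in a non-abelian group, and $\widetilde d$ is not conjugation-invariant, we cannot just replace one by the other; applying the lemma to the inverse pair absorbs the asymmetry into the sign change $\mathrm{Rot}(h^{-1}) = -\mathrm{Rot}(h)$.
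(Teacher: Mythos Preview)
Your argument is correct and follows essentially the same route as the paper. The paper packages the first step as Lemma~\ref{lem:metric-from-axis}, which simultaneously delivers both the $\widetilde d$--bound on $(f^{-n},g^{-m_n})$ and the fine curve graph bound on $(f^n\alpha_0,g^{m_n}\alpha_0)$; you instead invoke Corollary~\ref{cor:fixed-point-charac} for the former and then recover the latter via the marking-graph quasi-isometry. This detour works, but note that the paper provides a more direct bridge (Claim~\ref{claim:d-fine-from-d-tilde}): $d_{\cd}(h\alpha_0,\alpha_0)\le 2\widetilde d(h,\mathrm{id})+2$, which would save you the passage through $\mathcal{M}_0^\dagger$. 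The computation of $\lambda$ and the application of Lemma~\ref{lem:uc-distance}(ii) to the pair $(f^{-1},g^{-1})$ match the paper exactly.
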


For $f$ as in the theorem, the scaled rotation set $\mathrm{SRot}(f)$ of $f$
is defined as an element of the set $\mathcal{K}(\mathbb{R}^2)$ of compact convex subsets of the plane up to translations.
The theorem says that the map $\mathrm{SRot}$ factorizes through $\xi^+$ and a map $\mathrm{srot} : X \to\mathcal{K}(\mathbb{R}^2)$, where $X$ is the subset of the points  in $\partial_\infty \cd (T^2)$ that are fixed points of some loxodromic element. In sub-section~\ref{subsec:semi-continuity} we will explore the continuity of the map $\mathrm{srot}$ with respect to the distance induced by the Hausdorff distance on the set $\mathcal{K}(\mathbb{R}^2)$, for the usual semi-continuity associated with the Hausdorff distance.

\begin{proof}
Let $f,g \in \mathrm{Homeo}_0(T^2)$ as in the theorem, with $\xi^+(f)= \xi^+(g)$.
We begin by proving the following boundedness property in $\mathrm{Homeo}_0(T^2)$.
\begin{lemma}\label{lem:metric-from-axis}
There exists a sequence $(m_n)$ tending to $+\infty$, such that the sequences 
\[
(d_{\cd}(f^{n}\alpha_0, g^{m_n}\alpha_0))_{n \geq 0}, \ \
(\widetilde d(f^{-n}, g^{-m_n} ) )_{n \geq 0}
\]
are bounded.
\end{lemma}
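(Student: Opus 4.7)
\smallskip
\textbf{Proof plan.} The strategy is to exploit the shared boundary point $\xi^+(f) = \xi^+(g)$ to produce fellow-travelers in the fine curve graph, then convert fine curve graph boundedness into group boundedness via the metric WPD property encoded in Corollary~\ref{cor:metric-WPD}. Recall that since $f$ and $g$ are loxodromic, the bi-infinite orbits $(f^i\alpha_0)_{i\in\mathbb{Z}}$ and $(g^j\alpha_0)_{j\in\mathbb{Z}}$ are $K$-quasigeodesics (for a common $K$) invariant respectively under $f$ and $g$, both passing through $\alpha_0$.

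First I would pick $m_n$ so that $g^{2m_n}\alpha_0$ tracks $f^{2n}\alpha_0$ within a uniform distance. Since the two quasigeodesic rays $(f^i\alpha_0)_{i\geq 0}$ and $(g^j\alpha_0)_{j\geq 0}$ converge to the same boundary point, the Gromov product $(f^{i_0}\alpha_0 \cdot g^{i_0}\alpha_0)_{\alpha_0}$ tends to infinity as $i_0 \to \infty$, so any geodesic between $f^{i_0}\alpha_0$ and $g^{i_0}\alpha_0$ is eventually at arbitrarily large distance from $\alpha_0$. Lemma~\ref{lem:fellow-travel-tech} (applied with $b_0=0$ and $N=2n$) then furnishes, for each large $n$, a uniform constant $b=b(K,\delta)$ and an integer $N'_n\geq 0$ with $d(f^{2n}\alpha_0, g^{N'_n}\alpha_0)\leq b$. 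Setting $m_n=\lfloor N'_n/2\rfloor$ and using $d(g^{N'_n}\alpha_0, g^{2m_n}\alpha_0) \leq D_g := d(\alpha_0, g\alpha_0)$, I obtain $d(f^{2n}\alpha_0, g^{2m_n}\alpha_0) \leq b + D_g$. The fact that $m_n\to+\infty$ follows from the triangle inequality $d(\alpha_0, g^{N'_n}\alpha_0) \geq d(\alpha_0, f^{2n}\alpha_0) - b \to \infty$ combined with the sub-additive upper bound $d(\alpha_0, g^{N'_n}\alpha_0) \leq N'_n D_g$.

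Next, since $\alpha_0$ lies on invariant $K$-quasi-axes for both $f$ and $g$, the midpoints Corollary~\ref{cor:midpoints} applied with constant $b+D_g$ converts the previous estimate into $d(f^n\alpha_0, g^{m_n}\alpha_0) \leq M'$ for some constant $M'=M'(\delta,K,b+D_g)$, giving boundedness of the first sequence.

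Finally, to bound $\widetilde{d}(f^{-n}, g^{-m_n})$, I would apply Corollary~\ref{cor:metric-WPD} with $\alpha = \beta = \alpha_0$ and radius $b'' := \max(M', b+D_g)$. The three required distance hypotheses $d(\alpha_0,\alpha_0)=0$, $d(f^n\alpha_0, g^{m_n}\alpha_0) \leq b''$, and $d(f^{2n}\alpha_0, g^{2m_n}\alpha_0) \leq b''$ are satisfied by the bounds above, yielding $\widetilde{d}(f^{-n}, g^{-m_n}) \leq D$ for some constant $D$ and all $n$ sufficiently large. The only mildly delicate step is the parity correction in passing from $N'_n$ to $m_n=\lfloor N'_n/2\rfloor$, which is precisely what forces the use of Corollary~\ref{cor:midpoints}: one needs closeness at step $2n$ to guarantee closeness at step $n$, as Corollary~\ref{cor:metric-WPD} requires bounds at both iterates.
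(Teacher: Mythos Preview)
Your proof is correct and follows essentially the same route as the paper: Lemma~\ref{lem:fellow-travel-tech} for fellow-traveling, Corollary~\ref{cor:midpoints} for the midpoint estimate, and Corollary~\ref{cor:metric-WPD} for the group bound. The only cosmetic difference is that the paper applies Lemma~\ref{lem:fellow-travel-tech} directly to the even-power axes $(f^{2n}\alpha_0)$ and $(g^{2n}\alpha_0)$, so the output index is automatically of the form $2m_n$ and your parity correction via $m_n=\lfloor N'_n/2\rfloor$ is avoided.
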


\begin{proof}
Fix some $\alpha_0 \in \cd(T^2)$. The hypothesis $\xi^+(f)= \xi^+(g)$ exactly means that the Gromov product
\[
(f^n(\alpha_0), g^m(\alpha_0))_{\alpha_0}
\]
tends to $+\infty$ when $n,m$ tends to $+\infty$.
We apply Lemma~\ref{lem:fellow-travel-tech} to the quasigeodesic axes $(f^{2n}(\alpha_0))_{n \in \mathbb{Z}}$, $(g^{2n}(\alpha_0))_{n \in \mathbb{Z}}$. This provides some sequence $(m_n)_{n \geq 0}$ of positive numbers such that the sequence $(d_{\cd}(f^{2n}(\alpha_0), g^{2m_n}(\beta_0)))_{n \geq 0}$ is bounded by some number $b$.

Corollary~\ref{cor:midpoints} provides some $M'$ so that for every $n$, $d_{\cd}(f^{n}\alpha_0, g^{m_n}\alpha_0) \leq M'$, which provides our first bound. We now apply the metric WPD property as expressed by Corollary~\ref{cor:metric-WPD}, with $b''=\max(b,M')$. We get some $D$ and some $n_0$, depending on $b, \alpha_0, f$, so that, assuming $n \geq n_0$, we get $\tilde d(f^{-n}, g^{-m_n})\leq D$.
\end{proof}

We can now complete the proof of the theorem.
We first note that since $\mathrm{Rot}(f^{-1}) = -\mathrm{Rot}(f)$ and $\mathrm{tl}(f^{-1}) = \mathrm{tl}(f)$, it suffices to prove that the scaled rotation sets of $f^{-1}$ and $g^{-1}$ coincides. Let $(m_n)$ be the sequence given by the lemma: the sequence 
$(d_{\cd}(f^{n}\alpha_0, g^{m_n}\alpha_0))_{n \geq 0}$
is bounded. From the definition of translation lengths it follows easily that
\[
\lim \frac{m_n}{n} = \frac{\mathrm{tl}(f)}{\mathrm{tl}(g)}.
\]
Denote this limit by $\lambda$. Since, by the above lemma, the sequence $(f^{-n} g^{m_n})$ is bounded in the topological group $\mathrm{Homeo}_0(T^2)$, we may apply the "boundedness and the rotation" Lemma~\ref{lem:uc-distance}, ii) to the maps $f^{-1}$ and $g^{-1}$. We conclude that the sets $\mathrm{Rot}(f^{-1})$ and $\lambda \mathrm{Rot}(g^{-1})$ are translates of each other. This completes the proof.
\end{proof}

\subsection{The Bestvina-Fujiwara class determines the rotation set}
The above proof of Theorem~\ref{thm:boundary-determines-rotation-set} may be adapted to prove Theorem~\ref{thm:equivalence}. We now sketch such a proof, for a full indirect proof see the end of the next subsection.

\begin{proof}[Sketch of proof of Theorem~\ref{thm:equivalence}]
Let $f,g \in \mathrm{Homeo}_0(T^2)$ acting loxodromically on the fine curve graph.
Assume $f \sim g$: there exists a sequence $(g_k) = (h_k g h_k^{-1})$ of maps conjugate to $g$ such that
$\lim \xi^-(g_k) = \xi^-(f), \lim \xi^+(g_k) = \xi^+(f)$. Choose some axes $A=(f^n\alpha_0)$ for $f$, and $B=(g^n\beta_0)$ for $g$. Let $n \geq 0$.
If $k$ is large enough, then the end-points of $h_k(B)$ are close to that of $A$, and thus $h_k(B)$ fellow travels with the part of $A$ from $\alpha_0$ to $f^{2n}\alpha_0$.
As before, using Corollary~\ref{cor:midpoints} and~\ref{cor:metric-WPD}, we get a bound $D$ (not dependent on $n$) on the distance $\widetilde d(f^{-n}, g_{k_n}^{-m_n})$. Thus we get the following property, which plays the role of Lemma~\ref{lem:metric-from-axis} in the previous proof:
  \begin{figure}
    \centering
    \def\svgwidth{0.7\textwidth}
    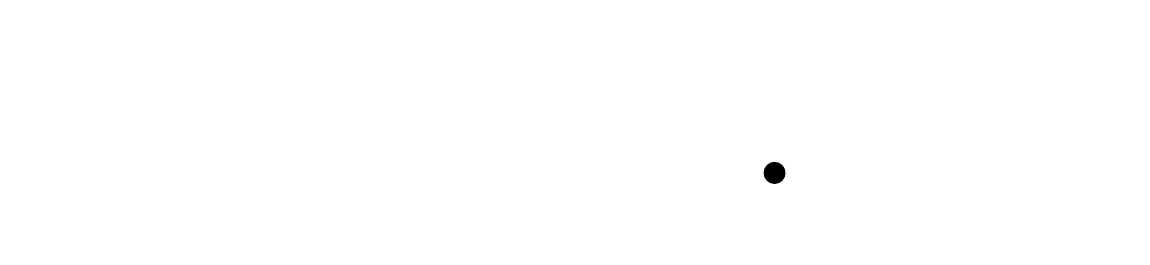
    \caption{Theorem~\ref{thm:equivalence}}
  \end{figure}

\emph{There are sequences of positive integers $(k_n)_{n \geq 0}$, $(m_n)_{n \geq 0}$ such that the sequence $\widetilde d((f^{-n}, g_{k_n}^{-m_n})_{n \geq 0}$ is bounded by some $D$. Furthermore, $\lim \frac{m_n}{n} = \frac{\mathrm{tl}(f)}{\mathrm{tl}(g)}$.}

Now let $\varepsilon>0$. With the emphasized property we can apply Lemma~\ref{lem:uc-distance}, i), and we get some $n_0$ so that for every $n \geq n_0$, the set 
\[
\frac{1}{n}\mathrm{Rot}(g_{k_n}^{-m_n}) = \frac{m_n}{n}\mathrm{Rot}(g^{-1})
\]
has a translate included in the $\varepsilon$-neighborhood of $\mathrm{Rot}(f^{-1})$.
Letting $n$ goes to $+\infty$, and then $\varepsilon$ to $0$, we get that 
$\lambda \mathrm{Rot}(g^{-1})$ has a translate included in $\mathrm{Rot}(f^{-1})$, where $\lambda$ denotes as above the quotient of the translation lengths.
We get the reverse inclusion by symmetry of the hypotheses. This completes the proof.
\end{proof}

\subsection{Semi-continuity}\label{subsec:semi-continuity}
We now turn to the more general version.
\begin{theorem}\label{thm:semi-continuity}
Let $f \in \mathrm{Homeo}_0(T^2)$ acting loxodromically on the fine curve graph. Let $(g_k)$ be a sequence in $\mathrm{Homeo}_0(T^2)$ of elements acting loxodromically such that
\begin{enumerate}
\item the sequence $(\mathrm{tl}(g_k))$ is included in $[\eta_0, \eta_1]$ for some $0  < \eta_0 < \eta_1$;
\item $(\xi^-(g_k))_{k \geq 0}$ has no subsequence converging to $\xi^+(f)$;
\item $(\xi^+(g_k))_{k \geq 0}$ converges to $\xi^+(f)$.
\end{enumerate}
Then for every $\varepsilon>0$ and every $k$ large enough, some translate of $\frac{1}{\mathrm{tl}(g_k)}\mathrm{Rot}(g_k)$ is included in the $\varepsilon$-neighborhood of $\frac{1}{\mathrm{tl}(f)}\mathrm{Rot}(f)$.
\end{theorem}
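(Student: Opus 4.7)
The plan is to follow the strategy of the proofs of Theorems~\ref{thm:boundary-determines-rotation-set} and~\ref{thm:equivalence}: produce, for each sufficiently large $k$, integers $n_k, m_k$ with $\widetilde d(f^{-n_k}, g_k^{-m_k}) \leq D$ for a $k$-independent constant $D$, and then apply Lemma~\ref{lem:uc-distance}(i). The new feature is that $\xi^+(g_k)$ only approaches $\xi^+(f)$ asymptotically, so the fellow-traveling length must grow with $k$; the scaling factor also has to be tracked carefully since $\mathrm{tl}(g_k)$ varies with $k$.

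After replacing $f$ and each $g_k$ by a uniform positive power (which preserves both the scaled rotation set and the boundary fixed points, and all three hypotheses), I may assume by Lemma~\ref{lem:K-for-axis}(1) that all the homeomorphisms have $K$-quasi-invariant axes through any chosen point, for a universal $K$. Fix $\alpha_0 \in \cd(T^2)$ on such an axis of $f$. Hypothesis~(2) implies that the Gromov products $(\xi^-(g_k) \cdot \xi^+(f))_{\alpha_0}$ are uniformly bounded (otherwise a subsequence would converge to $\xi^+(f)$); combined with hypothesis~(3) and the standard inequality $(a \cdot c)_x \geq \min((a \cdot b)_x, (b \cdot c)_x) - \delta$, this forces $(\xi^-(g_k) \cdot \xi^+(g_k))_{\alpha_0}$ to also be uniformly bounded for $k$ large. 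Hence a quasi-axis of $g_k$ passes within some uniform distance $b_0$ of $\alpha_0$; pick $\beta_k$ on it. I then apply Lemma~\ref{lem:fellow-travel-tech} to the uniform-quality quasigeodesic segments $(f^{2i}\alpha_0)_i$ and $(g_k^{2i}\beta_k)_i$: hypothesis~(3), expressed via Gromov products, ensures that for an appropriate endpoint index $i_0$, the distance $d(\alpha_0, [f^{2i_0}\alpha_0, g_k^{2i_0}\beta_k])$ becomes arbitrarily large as $k \to \infty$. This yields integers $n_k, m_k$ with $n_k \to \infty$ and $d(f^{2n_k}\alpha_0, g_k^{2m_k}\beta_k) \leq b$ for a uniform $b$; Corollary~\ref{cor:midpoints} refines this to $d(f^{n_k}\alpha_0, g_k^{m_k}\beta_k) \leq M'$, and Corollary~\ref{cor:metric-WPD} then provides a universal $D$ with $\widetilde d(f^{-n_k}, g_k^{-m_k}) \leq D$ for $k$ large. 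The upper bound $\mathrm{tl}(g_k) \leq \eta_1$ guarantees $m_k \to \infty$, and Lemma~\ref{lem:K-for-axis}(2) applied to both axes gives $|n_k\,\mathrm{tl}(f) - m_k\,\mathrm{tl}(g_k)| = O(1)$, so $\lambda_k := m_k \mathrm{tl}(g_k)/(n_k \mathrm{tl}(f)) \to 1$.

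The conclusion now follows from Lemma~\ref{lem:uc-distance}(i) applied to $f^{-1}$: for each $\varepsilon > 0$ and $k$ large, a translate of $\frac{1}{n_k}\mathrm{Rot}(g_k^{-m_k}) = -\frac{m_k}{n_k}\mathrm{Rot}(g_k)$ lies in the $\varepsilon$-neighborhood of $\mathrm{Rot}(f^{-1}) = -\mathrm{Rot}(f)$; equivalently, a translate of $\lambda_k \cdot \frac{1}{\mathrm{tl}(g_k)}\mathrm{Rot}(g_k)$ lies in the $(\varepsilon/\mathrm{tl}(f))$-neighborhood of $\frac{1}{\mathrm{tl}(f)}\mathrm{Rot}(f)$. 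This inclusion bounds $\mathrm{diam}\bigl(\frac{1}{\mathrm{tl}(g_k)}\mathrm{Rot}(g_k)\bigr)$ uniformly in $k$ (also using $\mathrm{tl}(g_k) \geq \eta_0$), so the Hausdorff distance between $\lambda_k S_k$ and $S_k$, where $S_k := \frac{1}{\mathrm{tl}(g_k)}\mathrm{Rot}(g_k)$, is at most $|\lambda_k - 1| \cdot \mathrm{diam}(S_k)$, which tends to $0$. The main obstacle will be the fellow-traveling step: one must choose $n_k$ growing with $k$ at a rate controlled by how fast $(\xi^+(g_k) \cdot \xi^+(f))_{\alpha_0}$ diverges, and then verify that the $m_k$ produced by the lemma also tends to infinity (the role of the upper bound $\mathrm{tl}(g_k) \leq \eta_1$); once the pair $(n_k, m_k)$ is in hand, the rest is a tracking of scaling factors.
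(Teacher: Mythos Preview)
Your proposal is correct and follows essentially the same strategy as the paper's proof: reduce to uniform-quality quasi-axes via Lemma~\ref{lem:K-for-axis}, use hypotheses~(2) and~(3) to ensure the $g_k$-axes pass uniformly close to $\alpha_0$, apply Lemma~\ref{lem:fellow-travel-tech} to obtain fellow-traveling powers, feed these into Corollaries~\ref{cor:midpoints} and~\ref{cor:metric-WPD} to get a uniform $\widetilde d$-bound, and conclude with Lemma~\ref{lem:uc-distance}(i). The only structural difference is quantifier ordering: the paper fixes $n$ first and shows the estimate holds for all $k\geq k_n$ (yielding a claim of the form ``for every $n\geq n_0$, $\exists k_n$, $\forall k\geq k_n$, $\exists m_{k,n}$\dots''), whereas you package this as a diagonal sequence $n_k\to\infty$; these are interchangeable for the conclusion. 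Your assertion that $(g_k^{2i}\beta_k)_i$ is a \emph{uniform}-quality quasigeodesic is exactly where hypothesis~(1) enters (via the bound on $d(\beta_k,g_k\beta_k)$ from Lemma~\ref{lem:K-for-axis}(2)); the paper makes this explicit through Corollary~\ref{cor:fellow-travel}, so you may want to cite that corollary rather than leave it implicit. The final scaling step (handling the factor $\lambda_k\to 1$) is equivalent to the paper's argument that $m_{n,k}/n$ is uniformly close to $\mathrm{tl}(f)/\mathrm{tl}(g_k)$.
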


Let us make explicit the semi-continuity property. Let $\mathcal{N}$ be any neighborhood of the diagonal in $\partial_\infty \cd(T^2)^2$, and define $\mathcal{X}_\mathcal{N}$ to be the set of $\xi \in \partial_\infty \cd(T^2)$ such that there exists $f  \in \mathrm{Homeo}_0(T^2)$ acting loxodromically such that $\xi^+(f) = \xi$ and $(\xi^-(f), \xi^+(f)) \not \in \mathcal{N}$.
Then the theorem says that the map $\mathrm{srot}$, defined in section~\ref{subsec:axis-rotation-set}, is semi-continuous when restricted to the set $\mathcal{X}_\mathcal{N}$.

We are currently unable to prove a full continuity result; in other words:
\begin{question}
Under the hypotheses of Theorem~\ref{thm:semi-continuity}, do we have, up to translation,
\[
\lim \frac{1}{\mathrm{tl}(g_k)}\mathrm{Rot}(g_k) = \frac{1}{\mathrm{tl}(f)}\mathrm{Rot}(f) ?
\]
\end{question}

\bigskip

Compared to the proofs of Theorems~\ref{thm:boundary-determines-rotation-set} and~\ref{thm:equivalence}, which essentially involved only the two maps $f$ and $g$, the main difficulty here is that we need to control the quality of the quasigeodesic axes for the $g_k$'s. This will be achieved via Lemma~\ref{lem:K-for-axis}.

\begin{proof}[Proof of theorem~\ref{thm:semi-continuity}]
Let $f$ and $(g_k)$ be as in the statement of the theorem. Since all translation lengths are assumed to be larger than $\eta_0$, up to changing $f$ to $f^N$ for some large $N$, and each $g_k$ to $g_k^N$, which enlarge all involved rotation sets by the same factor $N$, we may assume that $\mathrm{tl}(f)$ and all $\mathrm{tl}(g_k)$'s are larger than the constant $L(\delta)$ from Lemma~\ref{lem:K-for-axis}. 
The lemma provides a constant $K=K(\delta)$, and invariant paths 
\[(\alpha_i)_{i\in \mathbb{Z}} \text{ for } f, \text{ and } (\beta_i^{(k)})_{i\in \mathbb{Z}} \text{ for } g_k \text{ for every } k,
\]
which are all $K$-quasigeodesics. We also observe that by Corollary~\ref{cor:fellow-travel}, every $\beta_i^{(k)}$ is distance at most $B=B(\delta, K, \eta_1)$ from some $g_k^{2m}(\beta_0^{(k)})$.

By hypotheses (2) and (3), the sequence
$((\xi^-(g_k), \xi^+(g_k))_{\alpha_0})_{k \geq 0}$
is bounded: indeed, otherwise there is a subsequence of $(g_k)$ such that the Gromov product tends to $+\infty$, and since, by (3), the sequence  $(\xi^+(g_k))$ tends to $\xi^+(f)$ we would have a subsequence of $(\xi^-(g_k))$ also converging to $\xi^+(f)$, contrary to (2).
Using Morse lemma, we get some $b_0$ such that for every $k$, the quasigeodesic $(\beta_i^{(k)})_{i\in \mathbb{Z}}$ enter the $b_0$-neighborhood of $\alpha_0$. Up to shifting the indices, we may (and will) assume that $d(\alpha_0, \beta_0^k) \leq b'$ for every $k$.

Let $b=b(\delta, K, b_0)$ be given by Lemma~\ref{lem:fellow-travel-tech}. Fix some $n \geq 0$, and let $N$ be such that $\alpha_N = f^{2n} \alpha_0$. Applying the lemma with our $N$, we get some $L$. By the convergence hypothesis (3), for $k$ large enough we have $(\xi^+(g_k), \xi^+(f))_{\alpha_0} > L$. Let us consider such a $k$. Then $(\alpha_{i_0},\beta^k_{i_0})_{\alpha_0} \geq L$ for every large enough $i_0$.
Now Lemma~\ref{lem:fellow-travel-tech} provides some point on the quasigeodesic $(\beta_i^{(k)})$ which is distance no more than $b$ from 
$f^{2n}\alpha_0$. By the above observation from Corollary~\ref{cor:fellow-travel}, up to increasing $b$ by $B$, we may assume that this point is $g_k^{2m}\beta^k_0$
for some $m = m_{k,n}$, and we get
\[
d(f^{2n}\alpha_0, g_k^{2m}\beta^k_0) \leq b.
\]
We now apply Corollary~\ref{cor:midpoints} and get some $M' = M'(\delta, K, \max(b,b'))$ so that $d(f^{n}\alpha_0, g_k^{m}\beta^k_0) \leq M'$. Now Corollary~\ref{cor:metric-WPD} may be applied with $b''=\max(b,b', M')$ and provides some $D$ and some $n_0$, depending on $b, \alpha_0, f$, so that, assuming $n \geq n_0$, we get $\tilde d(f^{-n}, g_k^{-m})\leq D$.
For clarity, let us summarize what we have proved until now.
\begin{claim}\label{claim:semi-continuity}
There is some $n_0$ and $D$  such that for every $n \geq n_0$, there exists $k_n$ such that for every $k\geq k_n$, there is some $m_{k,n}$ such that
\[
\tilde d(f^{-n}, g_k^{-m_{k,n}})\leq D.
\]
\end{claim}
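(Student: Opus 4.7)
The plan is to prove the claim by running through the geometric setup already prepared in the preceding paragraph and checking that the hypotheses of Corollary~\ref{cor:metric-WPD} can be verified uniformly in $n$ and $k$, since that corollary is what converts distance bounds in the fine curve graph into $\tilde d$-bounds.

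My first step is to arrange uniform quality quasi-axes. Replacing $f$ and every $g_k$ simultaneously by a fixed power does not affect fixed points at infinity, only multiplies translation lengths (and rotation sets) by that integer. Using hypothesis~(1) (the lower bound $\eta_0$) I would choose a power $N$ with $N\eta_0 \geq L(\delta)$, where $L(\delta)$ is the threshold from Lemma~\ref{lem:K-for-axis}(1). That lemma then provides a common $K=K(\delta)$ and invariant $K$-quasigeodesics $(\alpha_i)$ and $(\beta_i^{(k)})$ through $\alpha_0$ and $\beta_0^{(k)}$. Then, since $\mathrm{tl}(g_k) \leq N\eta_1$ by hypothesis~(1), Corollary~\ref{cor:fellow-travel} yields a uniform $B$ such that any vertex on the $g_k$-quasi-axis lies within $B$ of an actual iterate $g_k^m \beta_0^{(k)}$.

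Next, I would use hypotheses~(2) and~(3) to put all axes near $\alpha_0$: if $(\xi^-(g_k), \xi^+(g_k))_{\alpha_0}$ were unbounded, the convergence $\xi^+(g_k) \to \xi^+(f)$ from~(3) would force a subsequence of $\xi^-(g_k)$ to also converge to $\xi^+(f)$, contradicting~(2). Boundedness of these Gromov products, together with the Morse Lemma applied to each $g_k$-axis, lets me reindex so that $d(\alpha_0, \beta_0^{(k)}) \leq b_0$ for a uniform $b_0$. Now fix $n$ with $f^{2n}\alpha_0 = \alpha_N$ and apply Lemma~\ref{lem:fellow-travel-tech} to the inputs $(\delta, K, b_0, N)$, producing constants $b$ and $L$. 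By hypothesis~(3), for all sufficiently large $k$ (which defines $k_n$) I have $(\alpha_i, \beta_i^{(k)})_{\alpha_0} \geq L$ for suitable $i_0$, so the lemma produces a point on the $g_k$-axis within $b$ of $f^{2n}\alpha_0$; using the uniform constant $B$ from the preceding step, that point can be replaced by an honest iterate $g_k^{2m}\beta_0^{(k)}$ (where $m=m_{k,n}$) at the cost of enlarging $b$ by $B$.

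To pass from this two-endpoint closeness to a midpoint bound, I would invoke Corollary~\ref{cor:midpoints} with the triple $(\alpha_0, f^{2n}\alpha_0)$ versus $(\beta_0^{(k)}, g_k^{2m}\beta_0^{(k)})$, which yields $M' = M'(\delta, K, \max(b,b_0))$ with $d(f^n \alpha_0, g_k^m \beta_0^{(k)}) \leq M'$. The three distance bounds $d(\alpha_0, \beta_0^{(k)}) \leq b_0$, $d(f^n\alpha_0, g_k^m \beta_0^{(k)}) \leq M'$, $d(f^{2n}\alpha_0, g_k^{2m}\beta_0^{(k)}) \leq b$ are then exactly the hypotheses of Corollary~\ref{cor:metric-WPD} (applied with threshold $b'' = \max(b, b_0, M')$ and to $\alpha = \alpha_0$, $\beta = \beta_0^{(k)}$). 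That corollary produces $n_0$ and $D$ depending only on $b''$, $\alpha_0$, and $f$, with $\tilde d(f^{-n}, g_k^{-m}) \leq D$ for every $n \geq n_0$ and $k \geq k_n$.

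The main subtlety is ensuring that none of the constants produced along the way secretly depend on $k$: the quasigeodesic quality $K$ is controlled by raising to a common power using the lower bound $\eta_0$, while the constants $B$, $b_0$, $b$, $M'$, and ultimately $D$ are controlled by the upper bound $\eta_1$ together with the non-degeneracy hypotheses~(2)–(3) on the boundary points. Once this uniformity is in hand the claim is immediate.
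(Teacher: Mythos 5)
Your proposal is correct and follows essentially the same route as the paper: uniformizing the quasi-axis quality via Lemma~\ref{lem:K-for-axis} and Corollary~\ref{cor:fellow-travel}, bounding the Gromov products $(\xi^-(g_k),\xi^+(g_k))_{\alpha_0}$ using hypotheses (2)--(3), fellow-traveling via Lemma~\ref{lem:fellow-travel-tech}, passing to midpoints with Corollary~\ref{cor:midpoints}, and concluding with Corollary~\ref{cor:metric-WPD}. The uniformity-in-$k$ bookkeeping you highlight is exactly the point the paper's argument also takes care of.
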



Now let $\varepsilon>0$, and $n_0$ and $D$ as above. Let $n \geq n_0$ and $k \geq k_n$ as in the claim. Since $\widetilde d(f^{-n}, g_k^{-m_{n,k}}) \leq D$,
 we may apply Lemma~\ref{lem:uc-distance}, part i), with $f^{-1}$ instead of $f$ and $g = g_k^{-m_{k,n}}$. Remembering that $\mathrm{Rot}(f^{-1}) = -\mathrm{Rot}(f)$, we get that up to enlarging $n_0$,
 \[
 \frac{m_{n,k}}{n}\mathrm{Rot}(g_k) \subseteq N_\varepsilon(\mathrm{Rot}(f)).
 \]
To get the inclusion asserted by the Theorem, it remains to see that if $n$ is large enough then the fraction on the left-hand side is arbitrarily close to $\frac{\mathrm{tl}(f)}{\mathrm{tl}(g_k)}$, uniformly in $k$. Let us check this last fact. On the one hand, since all our quasigeodesic axes come  from Lemma~\ref{lem:K-for-axis}, the second part of the lemma yields
\begin{eqnarray}
d(\alpha_0, f^{n}\alpha_0 )  &=& n \mathrm{tl}(f) + \Delta \\
d(\beta^{(k)}_0, g_k^{m_{k,n}} \beta^{(k)}_0 )  &=& m_{k,n} \mathrm{tl}(g_k) + \Delta'
\end{eqnarray}
with $|\Delta|, |\Delta'|$ bounded by the Morse constant $M(K)$ from the lemma.
On the other hand, we have
\[
d(\alpha_0, \beta_0^k) \leq b',
\ \ d(f^n \alpha_0, g_k^{m_{k,n}} \beta^{(k)}_0)  \leq M',
\] 
and thus (1) and (2) differ by at most $M'+b'$. This gives the wanted convergence.
\end{proof}

\begin{proof}[Proof of Theorem~\ref{thm:equivalence} from Theorem~\ref{thm:semi-continuity}]
Assume $f \sim g$: there exists a sequence $(g_k)$ of maps conjugate to $g$ such that
$\lim \xi^-(g_k) = \xi^-(f)$, $\lim \xi^+(g_k) = \xi^+(f)$. Note that the $g_k$'s, begin conjugate to $g$, have the same translation length and the same rotation set.
We may apply Theorem~\ref{thm:semi-continuity} to $(g_k)$. Thus we get that $\mathrm{Rot}(g)$ has some translate included in
\[
\frac{\mathrm{tl}(g)}{\mathrm{tl}(f)} \mathrm{Rot}(f)
\]
Exchanging the roles of $f$ and $g$ gives the reverse inclusion. We conclude by noting that if each of both compact subsets $C,C'$ has some translate included in the other one, then some translate of $C'$ is equal to $C$.
\end{proof}

\subsection{Metric characterization of fixed points of loxodromic}\label{subsec:metric-axis}
The following statement contains Corollary~\ref{cor:fixed-point-charac} from the introduction (when applied to $f^{-1}$).
\begin{proposition}
Let $f,g$ be two elements of $\mathrm{Homeo}_0(T^2)$ acting loxodromically on the fine curve graph.
Then the following are equivalent.
\begin{enumerate}
\item $\xi^+(f) = \xi^+(g)$,
\item There exists a sequence $(m_n)$ tending to $+\infty$, such that the sequence 
\[
(f^{-n}g^{m_n} ))_{n \geq 0}
\]
is bounded in $\mathrm{Homeo}_0(T^2)$,
\item There exist sequences $(n_i), (m_i)$ tending to $+\infty$, such that the sequence 
\[
(f^{-n_i}g^{m_i} ))_{i \geq 0}
\]
is bounded in $\mathrm{Homeo}_0(T^2)$.
\end{enumerate}  
\end{proposition}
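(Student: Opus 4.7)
The plan is to establish the cycle $(1) \Rightarrow (2) \Rightarrow (3) \Rightarrow (1)$. The implication $(2) \Rightarrow (3)$ is immediate, taking $n_i = i$. The implication $(1) \Rightarrow (2)$ is essentially Lemma~\ref{lem:metric-from-axis} proved earlier in this section: under the hypothesis $\xi^+(f) = \xi^+(g)$, that lemma produces a sequence $(m_n) \to +\infty$ with $\widetilde d(f^{-n}, g^{-m_n})$ bounded, and right-invariance of $\widetilde d$ (applied by right-multiplying both entries by $g^{m_n}$) rewrites this as $\widetilde d(\mathrm{Id}, f^{-n} g^{m_n}) \leq D$, which is precisely the assertion that $(f^{-n} g^{m_n})_{n \geq 0}$ is bounded in $\mathrm{Homeo}_0(T^2)$.

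The bulk of the work is the implication $(3) \Rightarrow (1)$. Assume there exist $(n_i), (m_i) \to +\infty$ so that $\widetilde d(\mathrm{Id}, f^{-n_i} g^{m_i}) \leq D$. The quasi-isometry between the group (equipped with the maximal/Mann--Rosendal structure incarnated by $\widetilde d$) and the marking graph $\mathcal{M}^\dagger_0$ (Lemma~\ref{lem:geometric-model}) converts this into a bound $d_{\mathcal{M}^\dagger_0}(m_0, (f^{-n_i} g^{m_i}) m_0) \leq D'$. Reading off any one of the two curves of each marking along a geodesic path in $\mathcal{M}^\dagger_0$ joining $m_0$ to $(f^{-n_i} g^{m_i}) m_0$ yields a bound $d_{\cd}(\alpha_0, f^{-n_i} g^{m_i} \alpha_0) \leq D''$ with $D''$ independent of $i$.

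The key step is then to apply the isometry $f^{n_i}$ of $\cd(T^2)$ to both arguments, which yields
\[
d_{\cd}(f^{n_i} \alpha_0, g^{m_i} \alpha_0) \leq D''.
\]
This sign flip, already commented informally after Corollary~\ref{cor:fixed-point-charac}, reconciles the left action on the curve graph with the right-invariant metric $\widetilde d$. Since $f,g$ are loxodromic and $n_i, m_i \to +\infty$, the sequences $(f^{n_i} \alpha_0)$ and $(g^{m_i} \alpha_0)$ converge in $\partial_\infty \cd(T^2)$ to $\xi^+(f)$ and $\xi^+(g)$ respectively. Two admissible sequences staying at bounded distance in a Gromov-hyperbolic space define the same point at infinity (their Gromov products differ by at most a constant depending on the bound), whence $\xi^+(f) = \xi^+(g)$.

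No step presents a genuine obstacle: the whole argument is a direct combination of the metric WPD property (already packaged in Lemma~\ref{lem:metric-from-axis}) and the Mann--Rosendal dictionary (Lemma~\ref{lem:geometric-model}) relating the group distance $\widetilde d$ to the intrinsic geometry of the marking graph. The one point meriting care is the direction of signs, which is handled transparently by the application of $f^{n_i}$ to bring the bound between $\alpha_0$ and $f^{-n_i} g^{m_i} \alpha_0$ into the form relating $f^{n_i}\alpha_0$ and $g^{m_i}\alpha_0$.
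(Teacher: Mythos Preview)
Your proof is correct and follows the same cycle $(1)\Rightarrow(2)\Rightarrow(3)\Rightarrow(1)$ as the paper, using Lemma~\ref{lem:metric-from-axis} for the first implication and the isometric action of $f^{n_i}$ to flip signs in the last. The only difference is in how you pass from a $\widetilde d$-bound to a $\cd$-bound in $(3)\Rightarrow(1)$: you route through the marking-graph quasi-isometry of Lemma~\ref{lem:geometric-model}, whereas the paper proves a direct explicit inequality $d_{\cd}(h\alpha_0,\alpha_0)\leq 2\widetilde d(h,\mathrm{Id})+2$ for a horizontal curve $\alpha_0$ (via lifts and \cite[Lemma~4.5]{Dagger2}); both routes are valid and essentially equivalent.
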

Note that the formulation of the logical equivalence between (2) and (3) does not involve the fine curve graph, but we are not aware of any argument that does not go through property (1).

\begin{proof}
Let $f,g$ be loxodromic elements. The implication (1) $\Rightarrow$ (2) is exactlty Lemma~\ref{lem:metric-from-axis}.
The implication (2) $\Rightarrow$ (3) is obvious, so it remains to prove that (3) implies (1). 
We first make the following observation. 
\begin{claim}\label{claim:d-fine-from-d-tilde}
Let $h \in \mathrm{Homeo}_0(T^2)$, and $\alpha_0$ denote a horizontal curve in $T^2 = \mathbb{R}^2/\mathbb{Z}^2$. Then
\[
d_{\cd(T^2)}(h \alpha_0, \alpha_0) \leq 2\widetilde d(h, \mathrm{Id})+2.
\]
\end{claim}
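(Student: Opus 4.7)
The plan is to bound the distance directly using Lemma~\ref{lem:cd0-distance}, which computes distances in the isotopy-constrained fine curve graph $\cd_{[\alpha_0]}(T^2)$ by counting intersections in a suitable cyclic cover. The key observation is that a small $\widetilde d$--displacement forces lifts of $h\alpha_0$ to stay in a thin strip around lifts of $\alpha_0$.

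Concretely, let $D = \widetilde d(h, \mathrm{Id})$. By the definition of $\widetilde d$ and the fact that the lifts of $h$ form a discrete $\mathbb{Z}^2$--torsor, we may choose a lift $\widetilde h$ of $h$ with $\|\widetilde h(x) - x\|_\infty \leq D$ for every $x \in \mathbb{R}^2$. Up to transposing coordinates we may write $\alpha_0 = (\mathbb{R}/\mathbb{Z}) \times \{0\}$, so that the line $\ell_0 = \mathbb{R} \times \{0\}$ is a lift of $\alpha_0$, and $\widetilde h(\ell_0)$ is a lift of $h\alpha_0$ contained in the horizontal strip $\{|y| \leq D\}$. Passing to the annular cyclic cover $A = (\mathbb{R}/\mathbb{Z}) \times \mathbb{R}$ with deck transformation $T(x,y) = (x, y+1)$ (the cover in which $\alpha_0$ lifts to a closed curve), the lifts of $\alpha_0$ are the circles $\widehat\alpha_k = (\mathbb{R}/\mathbb{Z}) \times \{k\}$ and the image $\widehat\beta$ of $\widetilde h(\ell_0)$ under $\mathbb{R}^2 \to A$ is a lift of $h\alpha_0$ which remains in $\{|y| \leq D\}$. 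Thus $\widehat\beta$ can meet $\widehat\alpha_k$ only for $|k| \leq D$, giving at most $2\lfloor D \rfloor + 1 \leq 2D+1$ values of $k$.

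Lemma~\ref{lem:cd0-distance} then yields
\[ d_0(\alpha_0, h\alpha_0) \;=\; \#\{k \in \mathbb{Z} : \widehat\beta \cap \widehat\alpha_k \neq \emptyset\} + 1 \;\leq\; 2D+2, \]
and since $\cd_{[\alpha_0]}(T^2) \subseteq \cd(T^2)$ is distance non-increasing on the inclusion, $d_{\cd}(\alpha_0, h\alpha_0) \leq d_0(\alpha_0, h\alpha_0) \leq 2\widetilde d(h, \mathrm{Id}) + 2$. There is essentially no obstacle here: once the correct cyclic cover is set up (with $\alpha_0$ lifting to a closed curve, so that the deck transformation translates transversally to $\alpha_0$), the $C^0$--bound on $\widetilde h$ immediately forces the count of intersecting lifts to be linear in $D$, and Lemma~\ref{lem:cd0-distance} converts this count into the desired bound on the graph distance.
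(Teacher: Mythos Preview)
Your proof is correct and follows essentially the same approach as the paper: bound the number of lifts of $\alpha_0$ that a lift of $h\alpha_0$ can meet using the $C^0$-displacement, then convert this count into a curve-graph distance bound. The only cosmetic difference is that the paper invokes \cite[Lemma~4.5]{Dagger2} directly for the bound on $d_{\cd}$, whereas you route through the internal Lemma~\ref{lem:cd0-distance} to bound $d_0$ and then use the trivial inequality $d_{\cd}\le d_0$; this makes your argument slightly more self-contained.
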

\begin{proof}[Proof of the claim]
Let $D = \widetilde d(h, \mathrm{Id})$, and let us fix a lift $\widetilde \alpha_0$ of $\alpha_0$ in the universal cover.
From the hypothesis it follows that $h \alpha_0$ has a lift which is between $\widetilde \alpha_0 - (0,D)$ and $\widetilde \alpha_0 + (0,D)$. This lifts meets at most $2D+1$ lifts of $\alpha_0$. Now the claim follows from~\cite[Lemma 4.5]{Dagger2}.
\end{proof}

Assume (3), and let $(m_i), (n_i), D$ be such that for every $i$, $\widetilde d (f^{-n_i}g^{m_i}, \mathrm{Id}) \leq D$. Let $\alpha_0$ be a horizontal simple closed curve in the torus.
Let $i\geq 0$. By the claim and the fact that $f$ acts by isometry on the fine curve graph, we get that the distance in $\cd(T^2)$ between $f^{n_i} \alpha_0$ and $g^{m_i} \alpha_0$ is bounded by $2D+2$. This implies at once that $\xi^+(f) = \xi^+(g)$.
\end{proof}
Note that the proof also shows, under hypothesis (3), that the quotient $m_i/n_i$ tends to the ratio of the translation lengths.


\section{Stabilizers of points at infinity}\label{sec:tits}
In this section we study the stabilizer of the fixed point at infinity of a loxodromic element, proving Corollaries~\ref{cor:fixed-point-dyn} and~\ref{cor:fixed-point-stab} and Theorem~\ref{thm:tits}.

Given $f \in \mathrm{Homeo}_0(T^2)$, we define the \emph{forward shadowing relation} on the torus defined by 
\[
x \sim_{f^+} y \Leftrightarrow \text{ the sequence } (\|\widetilde f^n (\widetilde x) - \widetilde f^n (\widetilde y) \|_\infty)_{n \geq 0} \text{ is bounded}
\]
where the norm is the sup norm in $\mathbb{R}^2$, $\widetilde f$ denotes any lift of $f$, and  $\widetilde x, \widetilde y$ any lifts of $x,y$ (the property does not depend on the choices of the lifts). In this situation we also says that the \emph{positive orbits of $\widetilde x$ and $\widetilde y$ shadows each other}.
The \emph{backward shadowing relation} is the forward shadowing relation for $f^{-1}$.
We first notice that the forward shadowing equivalence relation is highly non trivial.
\begin{lemma}
Let $f \in \mathrm{Homeo}_0(T^2)$, assume the rotation set of $f$ has interior.
Then there are uncountably many forward shadowing equivalence classes. \end{lemma}
\begin{proof}
Let us fix a lift $\widetilde f$ of $f$.
Assume $x$ and $y$ forward shadows each other, and $x$ has rotation vector $\rho = \mathrm{Rot}_{\widetilde f}(x)$, that is
\[
\lim_{n \to +\infty} \frac{1}{n}\left(\widetilde f^n(\widetilde x) - \widetilde x \right) = \rho.
\]
Then $y$ also has rotation vector $\rho$. Now the lemma follows immediately from 
 of a result of Llibre and MacKay (item (iv) of Theorem~1 in \cite{LM}), which says in particular that every element in the interior of the rotation set is the rotation vector of some point.
 
Note that the actual theorem of Llibre and MacKay is more precise: it says that every compact connected set in the interior of $\mathrm{Rot}(\widetilde f)$ is obtained as the rotation set of some point. This provides even more forward shadowing classes.
\end{proof}

\begin{proof}[Proof of Corollary~\ref{cor:fixed-point-dyn}]
Let $f,g$ be such that $\xi^-(f) = \xi^-(g)$, we want to prove that their forward shadowing classes coincide.
We fix lifts $\widetilde f, \widetilde g$ of $f$ and $g$.
 By Corollary~\ref{cor:fixed-point-charac}, there exists a sequence of positive numbers $(m_n)_{n \geq 0}$ such that the sequence  $(g^{n} f^{-m_n} )_{n \geq 0}$ is bounded in the group $\mathrm{Homeo}_0(T^2)$. This means that we can find for each $n \geq 0$ some lift $G_n$ of $g^n$ such that the sequence of sup norms $(\|G_n - \widetilde f^{m_n}\|_\infty)_{n \geq 0}$ is bounded by some number $D$.
 Let $\widetilde x, \widetilde y$ whose positive $\widetilde f$-orbits shadow each other: the sequence $(\|\widetilde f^m(\widetilde x)-\widetilde f^m(\widetilde y)\|)_{m \geq 0}$ is bounded by some number $D'$. Let us check that the positive $\widetilde g$-orbits of $\widetilde x$ and $\widetilde y$ shadow each other.
For each $n \geq 0$ we have $\widetilde g^n \widetilde x - \widetilde g^n \widetilde y = G_n \widetilde x - G_n \widetilde y$, and we deduce that this vector is bounded by $D+2D'$. 
\end{proof}


\begin{proof}[Proof of Corollary~\ref{cor:fixed-point-stab}]
Let $f, h, \widetilde f, \widetilde h$ be as in the statement. 
We first observe that if the numbers ${\|\widetilde f^{n} - \widetilde f^{n} \widetilde h \| }_\infty$ are bounded independently of $n \geq 0$ then it follows immediately that for any point $\widetilde x$ in the plane, the positive orbit of $\widetilde h  \widetilde x$ shadows that of $\widetilde x$: $h$ preserves each forward shadowing class of $f$. This proves the last sentence of the corollary.

We now prove the equivalence between the two properties.
 First assume the sup norms 
${\|\widetilde f^{n} - \widetilde f^{n} \widetilde h \| }_\infty$
are bounded independently of $n \geq 0$. Since $\widetilde h^{-1}$ is at bounded distance from the identity in the plane, the norms
${\|\widetilde f^{n} - \widetilde h^{-1}\widetilde f^{n} \widetilde h \| }_\infty$
are also bounded. Then the distances $\widetilde d(f^n, h^{-1}f^n h)$ are also bounded, 
and the sequence $(f^n h^{-1}f^{-n} h)_{n \geq 0}$ is bounded in the group $\mathrm{Homeo}_0(T^2)$. By Corollary~\ref{cor:fixed-point-charac} we get 
$\xi^-(f) = \xi^-(h^{-1}fh)$, and since $\xi^-(h^{-1}fh) = h^{-1}(\xi(f))$ we see that $h$ fixes $\xi^-(f)$. This proves that the second condition implies the first one.

Let us prove the reverse implication. We assume that $h(\xi^-(f)) = \xi^-(f)$, and we want to find a bound for${\|\widetilde f^{n} - \widetilde f^{n} \widetilde h \| }_\infty$.
The sup norm is left invariant, and $\widetilde h$ is a bounded distance from the identity. Thus it suffices to prove that the sequence
\[
(\|\widetilde h\widetilde f^{n}\widetilde h^{-1} - \widetilde f^{n}\|_\infty)_{n \geq 0}
\]
is bounded.
 By hypothesis we have $\xi^-(hfh^{-1}) = \xi^-(f)$. By Corollary~\ref{cor:fixed-point-charac}, there exists a sequence of positive numbers $(m_n)_{n \geq 0}$ such that the sequence  $(f^{n} hf^{-m_n}h^{-1} )_{n \geq 0}$ is bounded in the group $\mathrm{Homeo}_0(T^2)$.
In other words, we can find a sequence $(\tau_n)_{n \geq 0}$ in $\mathbb{Z}^2$, a sequence $(\varepsilon_n)_{n\geq 0}$ of continuous maps $\mathbb{R}^2 \to \mathbb{R}^2$ whose sequence of sup norms $(||\varepsilon_n||_\infty)$ is bounded by some constant $D_0$, such that for every 
$n \geq 0$ we have
\[
\widetilde h\widetilde f^{m_n}\widetilde h^{-1} = \widetilde f^{n} + \tau_n + \varepsilon_n \ \  (*).
\]

We will first show that $m_n$ is actually equal to $n$ up to a bounded quantity, and that the sequence $(\tau_n)$ is bounded.
For this, we will make use of the Bounded Deviation theorem of Le Calvez-Tal (\cite{LCT}, Theorem D), which says that every displacement vector under $\widetilde f^n$ is at bounded distance from $n$ times the rotation set of $\widetilde f$. Remember that the rotation set is convex, and a conjugacy invariant, and that we have the
classical upper bound (Lemma~\ref{lem:upper-bound-rotation-set}). Thus we can find $D_1, D_2$ such that the following inequalities hold for every $n \geq 1$:
\[
n \mathrm{Rot}(\widetilde{f})
\subseteq \mathrm{Conv}(\mathrm{Im}(\widetilde f^n - \mathrm{Id}))
\subseteq N_{D_1} (n \mathrm{Rot}(\widetilde f) )
\]
\[
m_n \mathrm{Rot}(\widetilde{f})
\subseteq \mathrm{Conv}(\mathrm{Im}(\widetilde h \widetilde f^{m_n}\widetilde h^{-1} - \mathrm{Id}))
\subseteq N_{D_2} (m_n \mathrm{Rot}(\widetilde f) )
\]
where $N_D(X)$ denotes the $D$-neighborhood of a subset $X$ of the plane (remember that the rotation sets are convex and compact, and note that the $D$-neighborhood of a convex set is convex).
Plugging in equality (*), we get
\[
\mathrm{Im}(\widetilde h \widetilde f^{m_n}\widetilde h^{-1} - \mathrm{Id})
\subseteq N_{D_0}( \mathrm{Im}(\widetilde f^n - \mathrm{Id}) ) + \tau_n
\]
\[
\mathrm{Im}(\widetilde f^n - \mathrm{Id})
\subseteq N_{D_0}( \mathrm{Im}(\widetilde h \widetilde f^{m_n}\widetilde h^{-1} - \mathrm{Id}) ) - \tau_n.
\]
Combining this with the first inequalities yields
\[
m_n \mathrm{Rot}(\widetilde{f})
\subseteq N_{D_0 + D_1} (n \mathrm{Rot}(\widetilde f) ) + \tau_n
\]
\[
n \mathrm{Rot}(\widetilde{f})
\subseteq N_{D_0 + D_2} (m_n \mathrm{Rot}(\widetilde f) ) - \tau_n.
\]
(we have used the general equalities $N_{D_0} (N_{D_1}(Y)) = N_{D_0 + D_1} (Y)$ and 
$\mathrm{Conv}(N_D(Y)) = N_D(\mathrm{Conv}(Y))$).
We first deduce than $m_n$ is a bounded distance from $n$.
Let $\delta$ denotes the diameter of the rotation set of $f$. Since adding $\tau_n$ does not change the diameter, we get
\[
m_n \delta \leq n \delta + 2(D_0 + D_1), \ \ n \delta \leq m_n \delta + 2(D_0 + D_2)
\]
and thus 
\[
|m_n - n | \leq 2\frac{D_0 + D_1 + D_2}{\delta} := D_3.
\]

Now let us prove that $(\tau_n)$  is bounded. 
Since $|m_n -n|$ is bounded, the sequence $(\|\widetilde f^{m_n} - \widetilde f^n\|_\infty)_{n \geq 0}$ is also bounded. This means that in equality (*) above we may now take $m_n=n$, up to increasing the bound on $\varepsilon_n$. The last above inclusion now reads
\[
n \mathrm{Rot}(\widetilde{f}) + \tau_n
\subseteq N_{D_0 + D_2} (n \mathrm{Rot}(\widetilde f) )
\]
which immediately implies that $\|\tau_n\|$ is bounded by $D_0 + D_2$.

Finally, up to increasing the bound on $\varepsilon_n$ again we may take $\tau_n=0$, and equality (*) now reads
\[
\widetilde h\widetilde f^{n}\widetilde h^{-1} = \widetilde f^{n} + \varepsilon_n \ \  (*)
\]
which means that the sequence
\[
(\|\widetilde h\widetilde f^{n}\widetilde h^{-1} - \widetilde f^{n}\|_\infty)_{n \geq 0}
\]
is bounded. This completes the proof.

\end{proof}

We now prove our Tits alternative in $\mathrm{Homeo}_0(T^2)$. This relies on Corollary~\ref{cor:fixed-point-stab} and on the following classical lemma.

\begin{lemma}\label{lem:abstract-tits}
Let $G$ be a group acting on a Gromov-hyperbolic metric space $X$, and $f\in G$ be a loxodromic element. Then one of the following holds:
\begin{enumerate}
\item $G$ fixes $\{\xi^-(f), \xi^+(f)\}$, 
\item $G$ fixes $\xi^-(f)$,
\item $G$ fixes $\xi^+(f)$,
\item $G$ contains an element $h$ such that the endpoints of the loxodromic element $hfh^{-1}$ are disjoint from those of $f$.
\end{enumerate}
\end{lemma}

\begin{proof}[Proof of theorem~\ref{thm:tits}]
Let $G$ be a subgroup of $\mathrm{Homeo}_0(T^2)$ containing a loxodromic element $f$. We explore the four possibilities of Lemma~\ref{lem:abstract-tits}.
If $G$ fixes $\xi^-(f)$, then Corollary~\ref{cor:fixed-point-stab} applies, and we get that $G$ leaves invariant each forward shadowing equivalence class.
Symmetrically, if $G$ fixes $\xi^+(f)$, then $G$ leaves invariant each backward shadowing equivalence class.
If $\{\xi^-(f), \xi^+(f)\}$ is invariant by $G$ then $G$ or an index two subgroup of $G$ fixes both points, and thus preserves each class of both the backward and the forward shadowing relation.
In the remain case, by the ping-pong lemma $G$ contains a free group generated by $f^n$ and $hf^n h^{-1}$, for $h$ given by the lemma and some $n>0$.
\end{proof}

\begin{proof}[Proof of Lemma~\ref{lem:abstract-tits}]
We denote $\xi^- = \xi^-(f), \xi^+ = \xi^+(f)$. We assume none of the three first possibilities hold, and we search for some element $h$ fulfilling the fourth condition.
By hypothesis, the $G$-orbit of $\xi^-$ contains some $x^- \not \in \{\xi^-, \xi^+\}$, and symmetrically the orbit of $\xi^+$ contains some $x^+ \not \in \{\xi^-, \xi^+\}$.
We want to find some element $h$ of $G$ such that $h \xi^\pm \neq \xi ^\pm$.

Let $g\in G$ be such that $g \xi^+ = x^+  \neq \xi^\pm$. If $g \xi^- \not = \xi^\pm$ then we can take $h=g$ and are done. It remains to consider case (1) when $g \xi^- = \xi ^+$ and case (2) when $g \xi^- = \xi ^-$.

First assume we are in case (1). Then  $g^2 \xi^- = x^+ \not \in \{\xi^-, \xi^+\}$.
In the case (1.1) where $g^2 \xi^+ \neq \xi^-$ then also $g^2 \xi^+ \neq \xi^+$ (since otherwise we would have $x^+ = g\xi^+ = g^{-1} \xi^+ = \xi^-$, contradiction). In this case we can take $h=g^2$.
In the case (1.2) when $g^2 \xi^+ = \xi^-$ then let $g'=fg$. Since $f$ has no fixed point except $\xi^\pm$, we get that 
\[
x'^+ :=g' \xi^+ = f x \neq \xi^\pm, \ \ 
g' \xi^- = \xi^+, \ \ 
\]
so we are in case (1) with $g$ replaced by $g'$, and now
$g'^2 \xi^+ \neq \xi^-$ (otherwise $g' \xi^+  =  g'^{-1}\xi^-$, that is, $x'^+ = x^+$ which is impossible since $f$ does not fix $x^+$). So we are in case (1.1) with $g'$ replaced by $g$, and we can take $h = g'^2$.

Let us consider case (2) when $g \xi^- = \xi ^-$.
Then we use the point $x^-$ defined above.
Let $g'\in G$ be such that $g' \xi^- = x^-  \neq \xi^\pm$.
If $g' \xi^+ \neq \xi ^\pm$ then we can take $h=g'$.
If $g' \xi^+ = \xi ^-$ then we are in a case symmetric to the above case (1), and we can take either $h = g'^2$ or $h = (f^{-1}g')^2$. It remains to consider the case when $g' \xi^+ = \xi^+$. Remember that we are in the case when $g$ fixes $\xi^-$.
Let $n>0$ and consider $h = g'f^n g$. We have
$h\xi^- = x^- \neq \xi^\pm$, 
and $h \xi^+ = g' f^n x^+$. This last point is not $\xi^+$ since $\xi^+$ is fixed by $g'$ and by $f$. Furthermore, we can assume that $f^n x^+$ is arbitrarily close to $\xi^+$ by taking $n$ arbitrarily large, and then $h \xi^+$
 is also close to $\xi^+$, in particular it is distinct from $\xi^-$. So again $h$ suits our needs. This completes the proof.
\end{proof}

\section{(Counter)examples}\label{sec:examples}
In this section we describes examples of torus homeomorphisms that illustrate our results. In section~\ref{ssec:parallelograms} we give formulas for homeomorphisms having a given parallelogram as a rotation set. For triangles we are not aware of any formula, thus instead we provide a geometric construction in the spirit of Kwapisz \cite{kwapisz_realise}, 
in section~\ref{ssec:triangle}. In section~\ref{ssec:twists} we define the twist numbers between two points at infinity, and the twist number of a loxodromic element, and prove some elementary properties. We illustrate this notion in section~\ref{ssec:computations} by providing an example of computation for a nice family of examples.
In section~\ref{ssec:schottky} we describe a free subgroup of $\mathrm{homeo}_0(T^2)$ made of loxodromic elements, whose scaled rotation sets behave as badly as possible: some boundary points in $\partial_\infty \cd(T^2)$ have no associated (scaled) rotation sets, and the scaled rotation set associated to the attracting fixed points of loxodromic element does not depend continuously on the element.

\subsection{Parallelograms}\label{ssec:parallelograms}
\begin{lemma}\label{lem:parallelograms}
Every parallelogram with rational vertices is the rotation set of a homeomorphism $f$ some power of which is conjugate to its inverse, and thus such that $\mathrm{scl}(f) = 0$.
\end{lemma}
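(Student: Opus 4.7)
The plan is as follows. Given a rational parallelogram $P$ with rational center $c \in \mathbb{Q}^2$, choose a positive integer $N$ with $Nc \in \mathbb{Z}^2$. The goal is to construct $f \in \mathrm{Homeo}_0(T^2)$ with $\mathrm{Rot}(f) = P$ such that $f^N$ is conjugate to its own inverse in $\mathrm{Homeo}_0(T^2)$. Bavard duality then immediately gives $\mathrm{scl}(f) = 0$, since for any homogeneous quasimorphism $\Phi$ we would have $N\Phi(f) = \Phi(f^N) = -\Phi(f^N) = -N\Phi(f)$, forcing $\Phi(f) = 0$.

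My strategy is first to build a reversible $g \in \mathrm{Homeo}_0(T^2)$ (meaning $h g h^{-1} = g^{-1}$ for some $h \in \mathrm{Homeo}_0(T^2)$) whose rotation set is the centered parallelogram $P_0 := P - c$, and then to use Lemma~\ref{lem:GAQ} to incorporate the translation by $c$ while preserving reversibility of the $N$-th power. Concretely, applying the rescaling construction from Lemma~\ref{lem:GAQ} with $B = N\,\mathrm{Id}$, I would replace $g$ by $g' = B^{-1} g^N B$: this map still has rotation set $P_0$, is still reversed by the conjugated involution $h' = B^{-1} h B$ (whose lift is bounded and $\mathbb{Z}^2$-periodic, so it descends to an element of $\mathrm{Homeo}_0(T^2)$), and additionally commutes with every translation in $\frac{1}{N}\mathbb{Z}^2$, in particular with $T_c$. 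Defining $f$ through the lift $\widetilde f = \widetilde{g'} + c$, the $T_c$-equivariance gives $\widetilde f^N = \widetilde{g'}^N + Nc$, so the torus map $f^N$ equals $g'^N$ (since $Nc \in \mathbb{Z}^2$), which is conjugate to its inverse by $h'$.

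The remaining task is to construct the reversible $g$ realizing a given centered rational parallelogram $P_0$. The $GL_2(\mathbb{Q})$ part of Lemma~\ref{lem:GAQ} preserves reversibility (if $g$ is reversed by $h$, then the conjugate $A^{-1} g^p A$ is reversed by $A^{-1} h A$, again descending to $\mathrm{Homeo}_0(T^2)$), so it suffices to handle one reference centered parallelogram, say the diamond with vertices $\pm e_1, \pm e_2$. For this reference I would adapt Kwapisz's axiom A construction equivariantly under the involution $h(x, y) = (x + 1/2, y + 1/2) \in \mathrm{Homeo}_0(T^2)$, which satisfies $h^2 = \mathrm{id}$. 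Since the vertex set of $P_0$ is symmetric under $v \mapsto -v$, one arranges the Markov partition and the periodic orbits realizing the vertices so that $h$ pairs each orbit realizing a rotation vector $v$ with one realizing $-v$, thereby conjugating $g$ to $g^{-1}$.

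The hardest step is this last equivariant realization: carrying out Kwapisz's construction with prescribed $v \mapsto -v$ symmetry while retaining the prescribed polygon as the rotation set. A simpler alternative, should it work, is to write down an explicit palindromic composition $g = g_1 g_2 g_1$ of shears $g_i$ individually reversed by $h = T_{(1/2, 1/2)}$ (which is automatically reversed by $h$ without any commutativity assumption); the obstacle then shifts to identifying the rotation set of such a composition as exactly the target parallelogram, which would be a separate direct computation.
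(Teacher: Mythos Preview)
Your reduction strategy (center the parallelogram, use Lemma~\ref{lem:GAQ} to reduce to a single reference example, and check that the operations of that lemma preserve reversibility of a suitable power) is correct and is exactly what the paper does. The gap is that you never actually build the reversible base example: you defer it to an ``equivariant Kwapisz construction'' or a palindromic shear composition, and you explicitly flag both as unfinished (``should it work'', ``the obstacle then shifts to\dots''). Without that, the proof is incomplete.

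The paper closes this gap with a one-line explicit formula rather than an abstract equivariant construction. It takes
\[
\tilde f(x,y)=(x+\sin^2(\pi y),\,y),\qquad \tilde g(x,y)=(x,\,y+\sin^2(\pi x)),\qquad \tilde h=\tilde g\tilde f,
\]
whose rotation set is the unit square $[0,1]^2$ (a standard computation, cited in the paper). The key identity $\sin^2(\pi(t+\tfrac12))=1-\sin^2(\pi t)$ shows that the half-translation $T=T_{(1/2,1/2)}$ conjugates each of $\tilde f,\tilde g$ to an integer translate of its inverse; hence on the torus $h$ is conjugate to $h^{-1}$. This is precisely the ``factors individually reversed by $T_{(1/2,1/2)}$'' idea you were reaching for, but with the rotation set already known and no need for palindromes or for redoing Kwapisz equivariantly. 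Note also that the paper uses $[0,1]^2$ (not a centered diamond) as the reference shape; since Lemma~\ref{lem:GAQ} provides the full rational affine group, centering first is unnecessary and only adds bookkeeping.
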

\begin{proof}
We first explain how to realize the square $[0,1]^2$ by a homeomorphism $h$ which is conjugate to its inverse, from which we will deduce the general case. 
Let $\tilde f (x,y) = (x + \sin^2(\pi y), y )$, $\tilde g (x,y) = (x, y + \sin^2(\pi x) )$, $\tilde h = \tilde g \tilde f$. Then the rotation set of $\tilde h$ is the square $[0, 1]^2$ (see for instance \cite{LRPSW}). Furthermore, denoting $T_u$ the translation by vector $u$, we note that the map $T_{(\frac{1}{2},0)}$ commutes with $\tilde f$ and conjugate $\tilde g$ to $(x, y + 1 - \sin^2(\pi x) )$. Denoting $T = T_{(\frac{1}{2},\frac{1}{2})}$, we get
\[
T \tilde h T^{-1}
= T_{(1,1)}\tilde h^{-1}.
\]
Hence the torus homeomorphism $h$ induced by $\tilde h$ is conjugate to its inverse under the translation $T$.

Now let us deduce the general case.
Any rational parallelogram $P$ is the image of the square $[0,1]^2$ under a rational affine transformation $\Phi$. Then Lemma~\ref{lem:GAQ} provides a torus homeomorphism $h'$ whose rotation set is $P$, with a lift of the form $\tilde h' = T_v A^{-1} \tilde h^p A$, where $\tilde h$ is as before, for some rational vector $v$ and integer matrix $A$. Denoting $T' = A^{-1} T A$, we get $T' \tilde h' T'^{-1} = T_{A^{-1}(p,p)} \tilde h'^{-1}$. If we choose $q$ such that the matrix $pqA^{-1}$ has integer coefficients, then the map $T'$ conjugate $h'^q$ to its inverse. 

Let us now explain the well-known fact that a map $h$ having some power conjugate to its inverse has vanishing stable commutator length. Since $\mathrm{scl}(h^p)= p \mathrm{scl}(h)$, it suffices to show that the relation $h=\Phi h^{-1}\Phi^{-1}$ implies $\mathrm{scl}(h) =0$. From this relation we get $h^n=\Phi h^{-n} \Phi^{-1}$, and multiplying by $h^n$ displays $h^{2n}$ as the commutator of $h^{-n}$ and $\Phi$. In particular $\mathrm{cl}(h^{2n}) = 1$, and thus $\mathrm{scl}(h)=0$. \end{proof}

Motivated by this, we ask
\begin{question}
  Is every symmetric rational polygon realized as the rotation set of
  a torus homeomorphism which has a power conjugate to its inverse?
\end{question}
Up to our knowledge, there is no known example of a rotation set which
is a finite-sided polygon that is not rational.

\subsection{Triangles}\label{ssec:triangle}
We now describe a torus homeomorphism (which can be realised as a $C^\infty$-diffeomorphism) whose rotation set is the triangle $\mathcal{T}$ with vertices $(0,0)$, $(1,0)$, $(0,1)$. From this particular example we get all rational triangles by applying Lemma~\ref{lem:GAQ}.

To get our example we follow the construction by Kwapisz (\cite{kwapisz_realise}), as exposed by Béguin in \cite{beguin},
 except that we do not require $f$ to be Axiom A (we observe that Markov partitions are not needed for computing the rotation set). We include the construction since the more general construction of Kwapisz, which allows him to realize every rational polygon, is much more difficult to follow.

\begin{figure}[h]\label{fig:triangle}
  \centering
  \def\svgwidth{0.9\textwidth}
  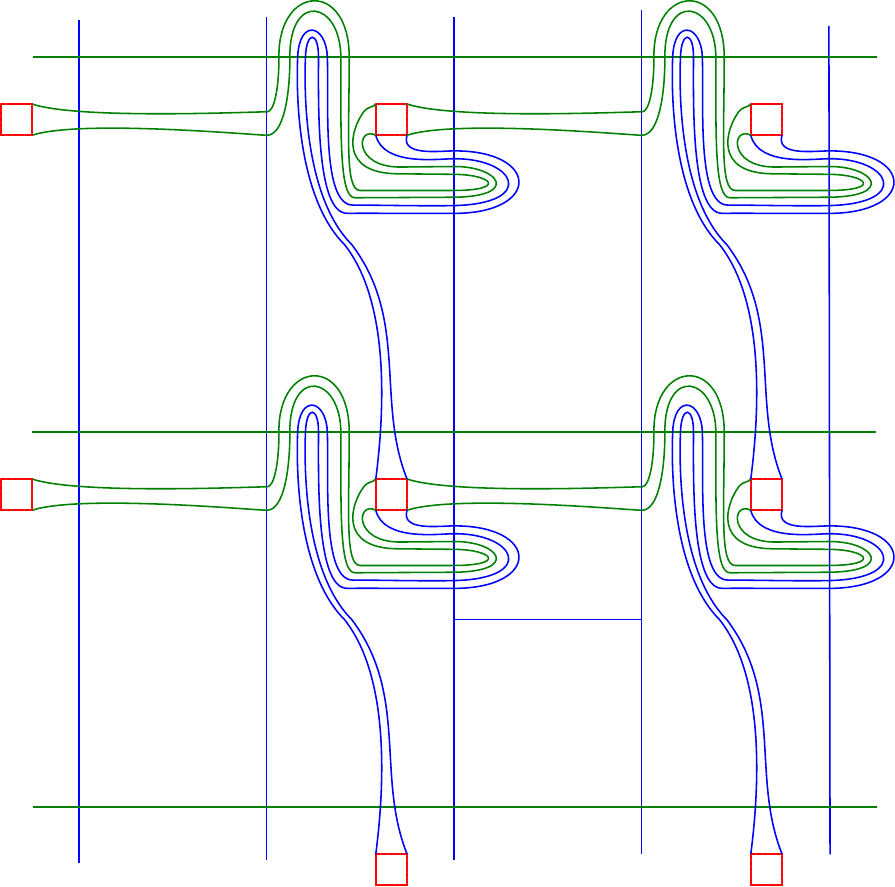
  \caption{Construction of $f$}
\end{figure}
 Let us partition the square $[0,1]^2$ into four squares $\widetilde H, \widetilde A, \widetilde R, \widetilde V$ as on Figure~\ref{fig:triangle}, and $H,A,V,R$ their projection in the torus $\mathbb{R}^2/\mathbb{Z}^2$. We want to define $\widetilde{f} \in \mathrm{Homeo}(\mathbb{R}^2, \mathbb{Z}^2)$ such that, roughly speaking, $A$ will be an attractor, $R$ a repulsor, points in $H$ will go to the right, and points of $V$ will go up. More precisely, denote $S,T$ the unit horizontal and vertical translations respectively. We demand the following condition (see Figure~\ref{fig:triangle}):
\begin{enumerate}
\item $\widetilde{f}(\widetilde A) \subset \inte \widetilde A$,
\item $\widetilde R \subset \inte \widetilde{f}(\widetilde R)$,
\item $\widetilde{f}(\widetilde H)$ meets $S^{-1} \widetilde A, \widetilde H, \widetilde A, T\widetilde V, S\widetilde H$ and is included in their unions,
\item $\widetilde{f}(\widetilde V)$ meets $T^{-1}\widetilde A, \widetilde V, \widetilde A, T\widetilde V, S\widetilde H$ and is included in their unions.
\end{enumerate}
These conditions are easily achieved, as the figure shows.
Note that such an $\widetilde{f}$ has a fixed point in $\widetilde A$ (and in $\widetilde R$), so that $\mathrm{Rot}(\widetilde{f})$ contains $(0,0)$. Denote $f$ the homeomorphism of the torus induced by $\widetilde{f}$. 
By pre-composing $f$ with a homeomorphism supported on $H$ and with a homeomorphism supported on $V$, we can further assume that 
\begin{enumerate}
\item[(3b)] $f$ has a fixed point in $H$ whose rotation vector is $(1,0)$,
\item[(4b)] $f$ has a fixed point in $V$ whose rotation vector is $(0,1)$.
\end{enumerate}
With these conditions, by convexity the rotation set of $\widetilde f$ contains the triangle $\mathcal{T}$. It remains to check that it is also included in $\mathcal{T}$. Let $x \in T^2$, and consider the positive orbit $(f^n(x))_{n \geq 0}$. If it is entirely included in $R$, then it has rotation vector $0$. If it enters $A$ then it is trapped in $A$, and again it has rotation vector $0$. Let us examine the remaining case, namely when the orbit is not entirely in $R$ and is disjoint from $A$. Since $R$ is a repulsor, up to changing $x$ to some iterate we may assume that the orbit is included in $H \cup V$. The following observation is not crucial but makes the argument easier: we have
\[TV = H + \left(\frac{1}{2}, \frac{1}{2}\right), SH = V + \left(\frac{1}{2}, \frac{1}{2}\right).\]
Now if $y$ is a point of $C = H \text{ or } V$ whose image $f(y)$ also belongs to $H \cup V$, then the image of a lift $\widetilde y$ belongs to some translate $\widetilde C$ of $\widetilde H$ or $\widetilde V$, and according to properties (3) and (4),
\[
\widetilde{f}(\widetilde{y}) \in \widetilde C + v, \text{ with } v \in E :=\left\{(0,0), (1,0), (0,1), \left(\frac{1}{2}, \frac{1}{2}\right)\right\}.
\]
Choose a lift $\widetilde x$ of $x$ in $\widetilde C = \widetilde H \text{ or } \widetilde V$.
An immediate induction yields, for every $n\geq 0$, 
\[
\widetilde{f}^n(\widetilde x) \in C + nv
\]
for some $v$ which is a convex combination of elements of $E$, and thus also of the three vertices of the triangle $\mathcal{T}$. We deduce that the $\mathrm{Rot}(\widetilde{f}) \subset \mathcal{T}$, and conclude that $\mathrm{Rot}(\widetilde{f}) = \mathcal{T}$.

\subsection{Twists numbers for boundary points, and for maps}\label{ssec:twists}
In this subsection we work in the fine curve graph $\cd(S)$ of any surface $S$, with the only assumption that this graph is unbounded and hyperbolic. We will use twist numbers to define a new conjugacy invariant for loxodromic maps, and thus in particular for elements of our group $\mathrm{Homeo}_0(T^2)$ with rotation set having non-empty interior. Elements with sufficiently distinct twist numbers are not Bestvina-Fujiwara-equivalent, and thus this also provides a tool for getting positive scl.
In the next subsection we will compute this invariant on relevant examples, and show that it is not a function of the rotation set and the translation length.

Let us denote $r \geq 2$ the constant given by the first Bounded Geodesic Image Theorem, Lemma~\ref{lem:bgit1}, for geodesics: for every curve $\alpha$, for any two points $x,y$ that may be joined by a geodesic that does not enter the $r$-ball around $\alpha$, the diameter of $\pi_\alpha(x) \cup \pi_\alpha(y)$ in the arc graph is at most $2$, where $\pi_\alpha$ denotes the projection in the annulus $A_\alpha$. Furthermore, none of the projections $\pi_\alpha(x), \pi_\alpha(y)$ is empty.
This implies that every arc in $\pi_\alpha(x)$ is distance at most two of every arc in $\pi_\alpha(y)$. Thus the number $\mathrm{tw}_\alpha(x,y)$ is well defined and no more than $2$. Note that this situation happens in particular if the Gromov product $(x \cdot y)_\alpha$ is larger than $r$ (see section~\ref{ssec:hyperbolic-boundary}).

We now fix two boundary points $\xi, \xi' \in \partial \cd(S)$. Let $(x_i), (x'_i)$ be admissible sequences that respectively converge to $\xi, \xi'$. Fix another curve $\alpha$. We can find $i_0$ such that for all $i,j \geq i_0$, $(x_i \cdot x_j)_\alpha \geq r$, and $(x'_i \cdot x'_j)_\alpha \geq r$. Thus

\begin{enumerate}
\item $\forall i,j \geq i_0$, $\mathrm{tw}_\alpha(x_i, x_j) \leq 2$ and $\mathrm{tw}_\alpha(x'_i, x'_j) \leq 2$.
\end{enumerate}
Using the triangle inequality for twist numbers (Lemma~\ref{lem:basic-twists-curves}, iv)), we get

\begin{enumerate}
\item[(2)] the sequence $(\mathrm{tw}_\alpha(x_i, x'_i) )$ is bounded by $\mathrm{tw}_\alpha(x_{i_0}, x'_{i_0}) + 4$.
\end{enumerate}
Therefore the formula
\[
\mathrm{tw}_\alpha(\xi, \xi') = \sup \{ t , \exists (x_i) \to \xi, \exists (x'_i) \to \xi', \forall i, \tw_\alpha(x_i, x'_i) = t \}
\]
defines a natural number.
Furthermore, for every admissible sequences $(x_i), (x'_i)$ converging to $\xi, \xi'$, for every $i$ large enough, $\mathrm{tw}_\alpha(x_i, x'_i)$ differs from $\mathrm{tw}_\alpha(\xi, \xi')$ by at most $4$.

\begin{definition}
The \emph{twist number between $\xi$ and $\xi'$} is
\[
\mathrm{tw}(\xi, \xi') = \sup_{\alpha \in \cd(S)} \tw_\alpha(\xi, \xi').
\]
This is either a natural number or $+\infty$. 
\end{definition}

We have the following useful triangular inequalities.

\begin{lemma}[Triangular inequalities]~
\begin{enumerate}
\item For all $\xi, \xi', \xi'', \alpha$, we have  $\mathrm{tw}_\alpha(\xi, \xi'') \leq \mathrm{tw}_\alpha(\xi, \xi') + \mathrm{tw}_\alpha(\xi', \xi'')$.
\item For all $\xi, \xi', \xi''$,
$\mathrm{tw}(\xi, \xi'') \leq \mathrm{tw}(\xi, \xi') + \mathrm{tw}(\xi', \xi'')$.
\end{enumerate}
\end{lemma}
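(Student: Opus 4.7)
The plan is to derive (1) from the exact Hausdorff-distance triangle inequality that holds at the level of curves (Lemma~\ref{lem:basic-twists-curves}, iv), and then to deduce (2) from (1) by taking suprema. The only subtlety is that the definition of $\tw_\alpha(\xi,\xi')$ restricts attention to sequences with \emph{constant} twist value, so some bookkeeping is needed when passing back and forth between sequences.

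To prove (1), set $t_0 = \tw_\alpha(\xi,\xi'')$. Since the set of $t \in \NN$ witnessing the sup is nonempty and bounded above (by the ``furthermore'' clause preceding the definition), and is a subset of $\NN$, the sup is a maximum. Hence we may choose admissible sequences $(x_i) \to \xi$, $(x''_i) \to \xi''$ with $\tw_\alpha(x_i, x''_i) = t_0$ for every $i$. Pick any admissible sequence $(x'_i) \to \xi'$. For $i$ large enough, all three projections $\pi_\alpha(x_i), \pi_\alpha(x'_i), \pi_\alpha(x''_i)$ are nonempty (since $d_\cd(x_i, \alpha) \to \infty$, and similarly for the other two, forbidding $x_i$ etc. from being close to $\alpha$ where projection could be empty by Lemma~\ref{lem:basic-twists-curves} i). Lemma~\ref{lem:basic-twists-curves} iv then yields
\[
t_0 \;=\; \tw_\alpha(x_i, x''_i) \;\leq\; \tw_\alpha(x_i, x'_i) + \tw_\alpha(x'_i, x''_i).
\]

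By the ``furthermore'' clause, $\tw_\alpha(x_i,x'_i)$ and $\tw_\alpha(x'_i,x''_i)$ take only finitely many integer values for $i$ large (each being within $4$ of a fixed number). A diagonal / pigeonhole argument produces a subsequence $(i_k)$ on which both are constant, say $\tw_\alpha(x_{i_k}, x'_{i_k}) = s_1$ and $\tw_\alpha(x'_{i_k}, x''_{i_k}) = s_2$, with $t_0 \leq s_1 + s_2$. The subsequences $(x_{i_k}), (x'_{i_k}), (x''_{i_k})$ are still admissible and converge to $\xi, \xi', \xi''$ respectively, so by definition of $\tw_\alpha(\xi,\xi')$ and $\tw_\alpha(\xi',\xi'')$ we have $s_1 \leq \tw_\alpha(\xi,\xi')$ and $s_2 \leq \tw_\alpha(\xi',\xi'')$, and (1) follows.

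For (2), take the supremum over $\alpha \in \cd(S)$ of the inequality in (1). Since $\sup_\alpha(a_\alpha + b_\alpha) \leq \sup_\alpha a_\alpha + \sup_\alpha b_\alpha$ in $\NN \cup \{+\infty\}$, this yields the desired inequality directly. The main obstacle, such as it is, lies in the subsequence extraction in (1); once this device is in place, everything is a direct application of the exact curve-level triangle inequality.
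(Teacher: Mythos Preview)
Your proof is correct and follows essentially the same approach as the paper: choose approximants $x,x''$ near $\xi,\xi''$ realizing the sup, insert any $x'$ near $\xi'$, and apply the curve-level triangle inequality (Lemma~\ref{lem:basic-twists-curves}~iv). Your version is more careful than the paper's one-line sketch in explicitly handling the ``constant twist value'' requirement in the definition of $\tw_\alpha(\xi,\xi')$ via the pigeonhole/subsequence extraction, which the paper leaves implicit.
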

\begin{proof}
  The first one is proved by taking $x, x''$ close to $\xi, \xi''$ where the sup is (almost) attained, and any $x'$ close to $\xi'$. The second follows.
\end{proof}

Let us briefly explore the infinite case; we want to show that the twist number $\mathrm{tw}(\xi, \xi')$ being infinite is a property of the individual points at infinity $\xi$ and/or $\xi'$ rather than of the pair $\{\xi, \xi'\}$. Fix $\xi, \xi'$. Fix some $\alpha_0$ and some radius $R$. We claim that $\{\mathrm{tw}_\alpha(\xi, \xi'), \alpha \in B_R(\alpha_0)\}$ is bounded, where $B_R(\alpha_0)$ denotes the $R$ -ball around $\alpha_0$. To prove the claim, take small neighborhoods $N,N'$ of $\xi, \xi'$ such that any geodesic segment joining two points in $N$ or two points in $N'$ is outside $B_{R+r}(\alpha_0)$. Fix some curves $x,x'$ in $N,N'$. Up to replacing them by nearby curves, we may assume they have finitely many intersection points. Let $\alpha \in B_R(\alpha_0)$. Then 
$\mathrm{tw}_\alpha(\xi, \xi') \leq \mathrm{tw}_\alpha(x,x') + 4$. On the other hand, 
$\mathrm{tw}_\alpha(x,x')$ is bounded by the cardinal of $x \cap x'$, since the number of intersection points is clearly an upper bound for the distance in the arc graph. Thus we get a bound on  $\mathrm{tw}_\alpha(\xi, \xi')$ for all $\alpha \in B_R(\alpha_0)$.

Now let $(x_i)_{i \in \mathbb{Z}}$ be a $K$-quasigeodesic path joining $\xi$ to $\xi'$ for some $K$ (for the existence of quasigeodesics connecting boundary points, see for instance~\cite{LongTan}, Proposition 2.3). For every $\alpha$ outside the $r(K)$-neighborhood of this path, $\mathrm{tw}_\alpha(\xi, \xi') \leq 2$; here $r(K)$ is the constant from the Bounded Geodesic Image Theorem.
We now assume that $\mathrm{tw}(\xi, \xi') = +\infty$: by definition there is a sequence $(\alpha_i)$ such that $\mathrm{tw}_{\alpha_i}(\xi, \xi')$ tends to $+\infty$.
According to the previous observations, such a sequence lies inside the $r(K)$ neighborhood of our path, and escape every ball. Up to extracting, such a sequence converges either to $\xi$ or $\xi'$.
There are two (overlapping) cases: (i) there are such sequences converging to $\xi$, or (ii)  there are such sequences converging to $\xi'$.
\begin{definition}
We say that $\xi$ has \emph{infinite twist} (relative to $\xi'$) if there exists a sequence $(\alpha_i)$, converging to $\xi$, such that the twist numbers $\mathrm{tw}_{\alpha_i}(\xi, \xi')$ tends to $+\infty$.
\end{definition}
So the preceding discussion proves that if $\mathrm{tw}(\xi, \xi')= +\infty$ then at least one of the two points $\xi, \xi'$ has infinite twist. Furthermore, it turns out that $\xi$ having infinite twist (relative to $\xi'$) has nothing to do with $\xi'$.
\begin{lemma}
If $\xi$ has infinite twist relative to $\xi'$, then it has infinite twist relative to all other boundary points.
\end{lemma}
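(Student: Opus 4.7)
The plan is to reduce the problem to the triangular inequality for twist numbers at a fixed curve, together with the observation made just above the statement that a curve $\alpha$ lying far from a quasigeodesic joining two boundary points has uniformly bounded twist between them.

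More precisely, I would start from a witness sequence $(\alpha_i) \to \xi$ with $\mathrm{tw}_{\alpha_i}(\xi, \xi') \to +\infty$, fix another boundary point $\xi'' \neq \xi$, and show that the same sequence witnesses infinite twist relative to $\xi''$. The case $\xi'' = \xi'$ is trivial, so I may assume $\xi', \xi''$ are distinct. Then I would choose a bi-infinite $K$--quasigeodesic $\gamma$ joining $\xi'$ to $\xi''$ (whose existence follows from standard hyperbolic geometry, as noted in the excerpt). Since $\alpha_i \to \xi$ and $\xi$ differs from both endpoints of $\gamma$, the basepoint Gromov products $(\alpha_i \cdot c)_b$ stay uniformly bounded as $c$ ranges over $\gamma$, while $d(b, \alpha_i) \to \infty$; by $\delta$--hyperbolicity this forces $d(\alpha_i, \gamma) \to \infty$. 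Hence for every fixed radius $r$, the quasigeodesic $\gamma$ is eventually disjoint from $B_r(\alpha_i)$.

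Taking $r = r(K)$ to be the constant from the first bounded geodesic image theorem (Lemma~\ref{lem:bgit1}), the observation recalled just before the statement applies and yields $\mathrm{tw}_{\alpha_i}(\xi', \xi'') \leq 2$ for all sufficiently large $i$. Combining this with the triangular inequality for twists (the lemma just proved),
\[ \mathrm{tw}_{\alpha_i}(\xi, \xi') \;\leq\; \mathrm{tw}_{\alpha_i}(\xi, \xi'') + \mathrm{tw}_{\alpha_i}(\xi'', \xi'), \]
gives $\mathrm{tw}_{\alpha_i}(\xi, \xi'') \geq \mathrm{tw}_{\alpha_i}(\xi, \xi') - 2$, so the left-hand side also tends to $+\infty$. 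Thus $(\alpha_i)$ witnesses that $\xi$ has infinite twist relative to $\xi''$.

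I do not expect a real obstacle here: the only subtle point is verifying that a quasigeodesic from $\xi'$ to $\xi''$ really does escape every neighbourhood of $\alpha_i$, and this is a direct consequence of $\xi$ being distinct from both $\xi'$ and $\xi''$ via the usual Gromov-product criterion. Everything else is an application of tools already established in the excerpt (the triangular inequality for twists and the bounded geodesic image theorem).
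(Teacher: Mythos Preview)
Your proof is correct and follows essentially the same approach as the paper: both arguments fix a quasigeodesic joining $\xi'$ to $\xi''$, use that the $\alpha_i$ eventually lie outside its $r(K)$--neighbourhood (since $\alpha_i \to \xi$ and $\xi$ is distinct from both endpoints), apply the bounded geodesic image theorem to get $\mathrm{tw}_{\alpha_i}(\xi',\xi'') \leq 2$, and conclude via the triangular inequality for twists. The only cosmetic difference is that the paper phrases the final step as a contrapositive while you argue directly.
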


\begin{proof}
Fix a quasigeodesic sequence $\gamma$ from $\xi'$ to another boundary point, distinct from $\xi$,  denoted $\xi''$. We want to check that if $\xi$ does not have infinite twist relative to $\xi''$, then it does not have infinite twist relative to $\xi'$.
Let $(\alpha_i)$ be a sequence converging to $\xi$.
For every $i$ large enough, the quasigeodesic $\gamma$ is distance more that $r(K)$ of $\alpha_i$, and thus $\mathrm{tw}_{\alpha_i}(\xi', \xi'') \leq 2$. Now by the triangular inequality, 
if the sequence $(\mathrm{tw}_{\alpha_i}(\xi, \xi''))$ is bounded, then so is $(\mathrm{tw}_{\alpha_i}(\xi, \xi'))$.
\end{proof}

\bigskip

Now we want to define the twist number of a map.
Let $f$ acts loxodromically on $\cd(S)$, and denote $\xi^-(f), \xi^+(f)$ its repulsive and attracting boundary fixed points. We can set, for each $\alpha \in \cd(S)$, $\mathrm{tw}_\alpha(f) = \mathrm{tw}_\alpha(\xi^-(f), \xi^+(f))$.
\begin{definition}
The \emph{twist of $f$} is defined by
\[\mathrm{tw}(f) = \mathrm{tw}(\xi^-(f), \xi^+(f)) = \sup_{\alpha \in \cd(S)} \tw_\alpha(\xi^-(f), \xi^+(f)).\]
\end{definition}
Note that, at least on the torus (but this is most probably more general), $\mathrm{tw}(f)$ cannot be infinite, since twist numbers are bounded along axes (Lemma~\ref{claim:d-fine-from-d-tilde}).
 In other words, boundary points with infinite twist cannot be fixed by any loxodromic element.

The Gromov boundary $\partial \cd(S)$ is equipped with a topology (see e.g. \cite{BH} for a general discussion of boundaries, and \cite{Dagger3, LongTan} specifically for the fine curve graph).
\begin{lemma}
The map $f \mapsto (\xi^-(f), \xi^+(f))$
which is defined on the subset of $\mathrm{Homeo}(S)$ of elements acting loxodromically, is continuous.
\end{lemma}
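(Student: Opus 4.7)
The plan is to prove continuity of $f \mapsto \xi^+(f)$; continuity of $f \mapsto \xi^-(f)$ then follows by applying this to $f^{-1}$. So assume $f_n \to f$ in the $C^0$ topology, with every $f_n$ and $f$ loxodromic, and fix a reference curve $\alpha_0 \in \cd(S)$. I will show $(\xi^+(f_n) \cdot \xi^+(f))_{\alpha_0} \to \infty$, which by definition of the topology on the Gromov boundary gives $\xi^+(f_n) \to \xi^+(f)$.

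The first step is the observation that $C^0$-convergence $f_n \to f$ implies, for each fixed integer $k$, that $d_{\cd}(f_n^k\alpha_0, f^k\alpha_0) \leq 2$ for all $n$ large enough (depending on $k$), since two $C^0$-close essential curves share a common disjoint curve. The heart of the proof is then to turn this pointwise control into uniform quasigeodesic control for the orbits of $f_n$. For this I would pass to a large power: choose $M$ so large that $d(\alpha_0, f^M\alpha_0)$ exceeds the threshold $L$ in the local-to-global Lemma~\ref{lem:local-to-global} and the Gromov product $(\alpha_0 \cdot f^{2M}\alpha_0)_{f^M\alpha_0} = d(\alpha_0, f^M\alpha_0) - \tfrac{1}{2}d(\alpha_0, f^{2M}\alpha_0)$ is bounded (which is possible because $f$ is loxodromic, using Lemma~\ref{lem:K-for-axis} to control this second quantity uniformly in $M$). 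By the pointwise step, the same two hypotheses hold for $f_n$ with essentially the same constants provided $n$ is large enough. Thus Lemma~\ref{lem:local-to-global} yields that the orbit $(f_n^{jM}\alpha_0)_{j \in \mathbb{Z}}$ is a $K$-quasigeodesic, with $K$ depending only on $f$ and $\alpha_0$, not on $n$.

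Once both $f$ and the $f_n$ (for $n$ large) admit $K$-quasi-axes through $\alpha_0$ with the same $K$, the argument concludes by a Gromov product calculation. For any $J$ and $n \geq n_1(J)$, the pointwise step gives $d_{\cd}(f_n^{JM}\alpha_0, f^{JM}\alpha_0) \leq 2$, hence $(f_n^{JM}\alpha_0 \cdot f^{JM}\alpha_0)_{\alpha_0} \geq d(\alpha_0, f^{JM}\alpha_0) - 2$. By the Morse lemma applied to the two quasi-axes, there is a constant $C=C(K,\delta)$ with
\[
(\xi^+(f) \cdot f^{JM}\alpha_0)_{\alpha_0} \geq d(\alpha_0, f^{JM}\alpha_0) - C \quad \text{and} \quad (\xi^+(f_n) \cdot f_n^{JM}\alpha_0)_{\alpha_0} \geq d(\alpha_0, f_n^{JM}\alpha_0) - C.
\]
Iterating the triangle inequality for Gromov products with the intermediate points $f_n^{JM}\alpha_0$ and $f^{JM}\alpha_0$ then gives
\[
(\xi^+(f_n) \cdot \xi^+(f))_{\alpha_0} \geq d(\alpha_0, f^{JM}\alpha_0) - C'
\]
for a constant $C'$ independent of $n$ and $J$. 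Given any $R>0$, since $\mathrm{tl}(f)>0$ we pick $J$ large enough that the right-hand side exceeds $R$, and then $n$ large enough (depending on $J$) to conclude that $(\xi^+(f_n)\cdot\xi^+(f))_{\alpha_0}>R$.

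The main obstacle is the uniform quasigeodesic control in the second step: without it the Morse constant in the final estimate would depend on $n$ and could blow up as the translation lengths $\mathrm{tl}(f_n)$ degenerate, preventing the estimate from closing. The large-power trick combined with Lemma~\ref{lem:local-to-global} overcomes this by reducing uniform global quasi-geodesicity to continuity of purely local data (the one-step displacement $d(\alpha_0, f_n^M\alpha_0)$ and the one-step Gromov turning product), which in turn is automatic from $C^0$-convergence via the pointwise step.
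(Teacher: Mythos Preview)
Your proof is correct and follows essentially the same strategy as the paper: reduce to coarse simple convergence $d(f_n^k\alpha_0, f^k\alpha_0) \leq 2$, then obtain uniform quasigeodesic quality for the $f_n$-orbits by passing to a fixed large power, and conclude via standard Gromov-product estimates. The paper's proof is more of a sketch and outsources the uniform-quasigeodesic step to \cite[Lemma~3.8]{Dagger2}, whereas you derive it directly from the local-to-global Lemma~\ref{lem:local-to-global} and Lemma~\ref{lem:K-for-axis}; this makes your argument more self-contained within the present paper, but the underlying mechanism is the same.
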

\begin{proof}  
Let $(f_m)$ be a sequence of loxodromic isometries in some Gromov-hyperbolic metric space $X$, and $f$ be another loxodromic isometry. Assume the following "simple" convergence: for every $x\in X$, the sequence $(f_m x)$ converges to $fx$. Then the sequence of boundary fixed points $(\xi^+(f_m))$ converges to $(\xi^+(f))$. This is a classical property. 

Let us go back to our specific context. Let $(f_m)$ be a sequence in $\mathrm{Homeo}(S)$ which converges to $f$ for the $C^0$ distance.
In our context, the space $X= \cd(S)$ is a graph, the distance takes only integer values, and the "simple" convergence does not hold. Instead, we have the following "coarse simple convergence" (cf \cite[Lemma 3.5]{Dagger2}): for every $\alpha$, and every $n$ large enough, we have $d(f_m \alpha, f \alpha) \leq 2$.
With this property, the convergence of $(\xi^+(f_m))$ to $(\xi^+(f))$ may be proved along the same lines as the continuity of the translation length (\cite[Theorem 3.4]{Dagger2}).

Here is a sketch of proof, details are left to the reader. According to \cite[Lemma~3.8]{Dagger2}, up to replacing each map with a fixed power, we may assume the following. There is a curve $\alpha$ such that for $m$ large enough, for every $n$, and for every $i$ with $-n \leq i \leq n$, the point $f_m^i\alpha$ is within $4\delta$ of the geodesic $[f_m^{-n} \alpha, f_m^n \alpha ]$. Fix $i_0$ so that $f^{i_0} \alpha$ is close to $\xi^+(f)$, and $m$ large enough so that the previous property holds, and $d(f_m^{i_0} \alpha, f^{i_0} \alpha) \leq 2$: then the set 
$N = \{x \in \cd(S), (x,f_m^{i_0} \alpha)_\alpha \geq d(\alpha, f_m^{i_0} \alpha) - 100\delta \}$ is a small neighborhood of $\xi^+(f)$.
For every $n \geq i_0$, the geodesic $[f_m^{-n} \alpha, f_m^n \alpha ]$ passes close to $f_m^{i_0} \alpha$, and thus the point $f_m^n \alpha$ belongs to $N$. We conclude that $\xi^+(f_m)$ belongs to the closure of $N$, which is a small neighborhood of $\xi^+(f)$ in the $\cd(S) \cup \partial \cd(S)$.
\end{proof}

\begin{lemma}\label{lem:semi-continuity-twist}
 Let $\xi, \xi'$ be two distinct boundary points. 
 
 Assume $\mathrm{tw}(\xi, \xi')$ is finite. Then if $(\eta, \eta')$ is sufficiently close to $(\xi, \xi')$ then $\mathrm{tw}(\eta, \eta') \geq \mathrm{tw}(\xi, \xi') -4$.

  Assume $\mathrm{tw}(\xi, \xi')$ is infinite. Then $\lim_{(\eta, \eta') \to \xi, \xi')}\mathrm{tw}(\eta, \eta') = + \infty$.
\end{lemma}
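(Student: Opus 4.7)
The plan is to prove lower semi-continuity of $\mathrm{tw}$ by a single uniform argument covering both regimes. The two main tools are the first Bounded Geodesic Image Theorem (Lemma~\ref{lem:bgit1}), which bounds the twists of endpoints of a geodesic that avoids a ball around $\alpha$, and the triangle inequality for $\mathrm{tw}_\alpha$ (Lemma~\ref{lem:basic-twists-curves}~iv). Given a curve $\alpha$ at which $\mathrm{tw}_\alpha(\xi,\xi')$ is large, I will construct, for $(\eta,\eta')$ in a suitable neighborhood of $(\xi,\xi')$ in $\partial_\infty\cd(S)^2$, admissible sequences for $\eta,\eta'$ that realize a twist close to that of $(\xi,\xi')$ at the \emph{same} annulus $\alpha$.

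\textbf{Finite case.} Set $N = \mathrm{tw}(\xi,\xi')$. Since $N$ is an attained supremum of natural numbers, pick $\alpha$ with $\mathrm{tw}_\alpha(\xi,\xi') = N$ and admissible sequences $(x_i)\to\xi$, $(x'_i)\to\xi'$ realising $\mathrm{tw}_\alpha(x_i,x'_i) = N$ for every $i$. Take $\alpha$ itself as basepoint for Gromov products. Let $r\geq\max(3,r_0)$, where $r_0$ is the constant of Lemma~\ref{lem:bgit1} applied to $K=1$, and fix $i_0$ large enough that $(x_{i_0}\cdot x_j)_\alpha, (x'_{i_0}\cdot x'_j)_\alpha \geq R := r+3\delta$ for large $j$. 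Set
\[ U = \left\{ (\eta,\eta') : \exists (y_j)\to\eta,\ \exists (y'_j)\to\eta',\ (x_{i_0}\cdot y_j)_\alpha, (x'_{i_0}\cdot y'_j)_\alpha \geq R \text{ for large } j \right\}. \]
That $U$ is a neighborhood of $(\xi,\xi')$ in the Gromov boundary topology is a routine consequence of the four-point inequality in $\delta$-hyperbolic spaces. For $(\eta,\eta')\in U$ with witnessing sequences $(y_j),(y'_j)$: the condition $(x_{i_0}\cdot y_j)_\alpha\geq r+3\delta$ forces every geodesic $[x_{i_0},y_j]$ to stay outside $B_r(\alpha)$, and in particular $d(y_j,\alpha)>r\geq 3$, which guarantees $\pi_\alpha(y_j)\neq\emptyset$. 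Lemma~\ref{lem:bgit1} then yields $\mathrm{tw}_\alpha(x_{i_0},y_j)\leq 2$, and symmetrically $\mathrm{tw}_\alpha(x'_{i_0},y'_j)\leq 2$. The triangle inequality for twists gives
\[ N - 4 \;\leq\; \mathrm{tw}_\alpha(y_j,y'_j) \;\leq\; N + 4. \]
Since these values are integers lying in a finite range, we pass to a subsequence on which $\mathrm{tw}_\alpha(y_j,y'_j) = t$ is constant, with $t\geq N-4$. By definition of $\mathrm{tw}_\alpha(\eta,\eta')$ as a supremum, this subsequence certifies $\mathrm{tw}(\eta,\eta') \geq \mathrm{tw}_\alpha(\eta,\eta') \geq t \geq N-4$.

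\textbf{Infinite case and main obstacle.} If $\mathrm{tw}(\xi,\xi') = +\infty$, the preceding construction carried out for each $M>0$ (picking $\alpha_M$ with $\mathrm{tw}_{\alpha_M}(\xi,\xi')\geq M$) produces a neighborhood $U_M$ of $(\xi,\xi')$ on which $\mathrm{tw}(\eta,\eta')\geq M-4$. This forces $\mathrm{tw}(\eta,\eta')\to+\infty$ as $(\eta,\eta')\to(\xi,\xi')$. The only real technical point is verifying that the ``Gromov product large at $\alpha$'' condition genuinely describes an open neighborhood in the intrinsic topology of $\partial_\infty\cd(S)^2$, rather than merely a set defined at some fixed different basepoint; this follows from the quadrilateral inequality $(x\cdot y)_\alpha \geq \min\{(x\cdot\xi)_\alpha,(\xi\cdot\eta)_\alpha,(\eta\cdot y)_\alpha\} - O(\delta)$ together with the fact that $(\xi\cdot\eta)_\alpha$ grows unboundedly as $\eta\to\xi$. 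Maintaining non-emptiness of the relevant projections is automatic once $r\geq 3$, so no further case analysis is needed.
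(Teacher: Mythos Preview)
Your proof is correct and takes essentially the same approach as the paper: pick a curve $\alpha$ where $\mathrm{tw}_\alpha(\xi,\xi')$ is large, use the Bounded Geodesic Image Theorem to bound $\mathrm{tw}_\alpha$ between $\xi$ and any nearby $\eta$ by $2$ (and likewise for $\xi',\eta'$), then apply the triangle inequality for twists to conclude. The paper's version is terser because it invokes the triangle inequality already established at the level of boundary points (the lemma just before), writing directly $\mathrm{tw}_\alpha(\xi,\eta)\leq 2$ and $\mathrm{tw}_\alpha(\eta',\xi')\leq 2$ rather than unwinding these into statements about approximating sequences; your explicit pass to a constant-value subsequence is the correct way to match the paper's definition of $\mathrm{tw}_\alpha$ for boundary points, but the underlying mechanism is identical.
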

\begin{proof}
Assume $\mathrm{tw}(\xi, \xi')$ is finite, and let $\alpha$ so that $\mathrm{tw}(\xi, \xi') = \mathrm{tw}_\alpha(\xi, \xi')$. For $(\eta, \eta')$ sufficiently close to $(\xi, \xi')$ we have $\mathrm{tw}_\alpha(\xi, \eta) \leq 2$ and $\mathrm{tw}_\alpha(\eta', \xi') \leq 2$. By the triangular inequality we get $\mathrm{tw}_\alpha(\eta, \eta') \geq \mathrm{tw}_\alpha(\xi, \xi') -4$, and thus $\mathrm{tw}(\eta, \eta') \geq \mathrm{tw}(\xi, \xi') -4$. The infinite case is analogous.
\end{proof}

Remember that the map $f \mapsto \mathrm{tl}(f)$ is also continuous (\cite[Theorem 3.4]{Dagger2}). In particular being loxodromic is an open property.
By chaining the two lemmas we immediately get the following.
\begin{corollary}[Semi-continuity of twist numbers]\label{cor:semi-continuity-twists}
Given $f \in \mathrm{Homeo}(S)$ acting loxodromically on $\cd(S)$, if $g \in \mathrm{Homeo}(S)$ is sufficiently close to $f$, then $\mathrm{tw}(g) \geq \mathrm{tw}(f) -4$.
\end{corollary}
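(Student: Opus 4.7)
The plan is to simply combine the three continuity-type ingredients that were established just above: continuity of the translation length $\mathrm{tl}$, continuity of the map $f \mapsto (\xi^-(f),\xi^+(f))$ on the open subset of loxodromic elements, and the semi-continuity of $\mathrm{tw}$ on $\partial\cd(S)^2$ from Lemma~\ref{lem:semi-continuity-twist}.

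First, I would note that since $f$ is loxodromic we have $\mathrm{tl}(f)>0$. By \cite[Theorem~3.4]{Dagger2} the translation length is continuous on $\mathrm{Homeo}(S)$, so there is a $C^0$-neighborhood $\mathcal{U}$ of $f$ consisting entirely of loxodromic elements. This guarantees that, for $g\in\mathcal{U}$, the boundary points $\xi^{\pm}(g)\in\partial\cd(S)$ are defined and $\mathrm{tw}(g)=\mathrm{tw}(\xi^-(g),\xi^+(g))$ makes sense.

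Next, I would apply the preceding lemma on continuity of the map $f\mapsto(\xi^-(f),\xi^+(f))$: given any neighborhoods $V^{\pm}$ of $\xi^{\pm}(f)$ in $\partial\cd(S)$, if $g\in\mathcal{U}$ is close enough to $f$ (in the $C^0$ distance), then $\xi^{-}(g)\in V^-$ and $\xi^+(g)\in V^+$. At this point I would split into two cases according to whether $\mathrm{tw}(f)$ is finite or infinite. In the finite case, Lemma~\ref{lem:semi-continuity-twist} gives a neighborhood of $(\xi^-(f),\xi^+(f))$ on which $\mathrm{tw}(\eta,\eta')\geq \mathrm{tw}(f)-4$. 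Choosing $V^{\pm}$ inside this neighborhood and then $g$ sufficiently close to $f$ so that $(\xi^-(g),\xi^+(g))\in V^-\times V^+$ yields $\mathrm{tw}(g)\geq\mathrm{tw}(f)-4$, as desired. In the infinite case, the same lemma gives $\lim_{(\eta,\eta')\to(\xi^-(f),\xi^+(f))}\mathrm{tw}(\eta,\eta')=+\infty$, so by the same argument $\mathrm{tw}(g)$ can be made arbitrarily large (hence in particular $\geq\mathrm{tw}(f)-4$, vacuously interpreted).

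There is essentially no obstacle here: the work has all been done in the two preceding lemmas and in the continuity statements of \cite{Dagger2}. The only mild subtlety is that $\mathrm{tw}(\xi,\xi')$ was defined via a supremum over all curves $\alpha\in\cd(S)$, so one must be careful that the inequality $\mathrm{tw}_\alpha(\eta,\eta')\geq \mathrm{tw}_\alpha(\xi,\xi')-4$ coming from the triangle inequality in Lemma~\ref{lem:semi-continuity-twist} is applied with a single fixed $\alpha$ achieving (or nearly achieving) the supremum for $f$, which is exactly how that lemma is set up. Once this is noted, the corollary follows as the author says: by immediately chaining the two lemmas.
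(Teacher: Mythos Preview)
Your proof is correct and follows exactly the paper's approach: the paper simply says ``by chaining the two lemmas we immediately get the following,'' referring to the continuity of $f\mapsto(\xi^-(f),\xi^+(f))$ and Lemma~\ref{lem:semi-continuity-twist}, together with the observation that being loxodromic is an open condition.
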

In the next subsection we will see that there are examples with arbitrarily large twist number. Together with the semi-continuity, this implies the following non density result. Remember that $\mathrm{Homeo}_0(S)$ acts minimally on the boundary, see~\cite{DaggerSurvey,LongTan}.
\begin{corollary}
If $\mathrm{tw}(\xi, \xi') < +\infty$ then the orbit of $(\xi, \xi')$ under the action of $\mathrm{Homeo}(S)$ on $\partial \cd(S) \times \partial \cd(S)$ is not dense.
\end{corollary}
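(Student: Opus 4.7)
The plan is to use the semi-continuity of twist numbers (Lemma~\ref{lem:semi-continuity-twist}) combined with their invariance under the action of $\mathrm{Homeo}(S)$, and pit this against the existence of pairs with arbitrarily large twist promised in the next subsection.

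First I would verify that the twist number of a pair of boundary points is invariant under the $\mathrm{Homeo}(S)$-action. This follows directly from the topological invariance of the twist numbers of curves (Lemma~\ref{lem:basic-twists-curves}, v)), applied to the defining formula $\mathrm{tw}(\xi, \xi') = \sup_\alpha \mathrm{tw}_\alpha(\xi, \xi')$: for any $h \in \mathrm{Homeo}(S)$, the map $\alpha \mapsto h\alpha$ is a bijection of $\cd(S)$, and the corresponding twists match, so $\mathrm{tw}(h\xi, h\xi') = \mathrm{tw}(\xi, \xi')$.

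Next, suppose for contradiction that $\mathrm{tw}(\xi, \xi') = T_0 < +\infty$ but the $\mathrm{Homeo}(S)$-orbit of $(\xi, \xi')$ is dense in $\partial \cd(S) \times \partial \cd(S)$. By the construction outlined in the next subsection, for every $T > 0$ there exists a loxodromic element $f \in \mathrm{Homeo}(S)$ with $\mathrm{tw}(f) \geq T$; so the pair $(\eta, \eta') = (\xi^-(f), \xi^+(f))$ has $\mathrm{tw}(\eta, \eta') \geq T$. Choose $T = T_0 + 5$. By Lemma~\ref{lem:semi-continuity-twist}, every pair $(\eta_1, \eta_1')$ in a sufficiently small neighborhood of $(\eta, \eta')$ satisfies $\mathrm{tw}(\eta_1, \eta_1') \geq T - 4 = T_0 + 1$. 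By the density assumption, we may find $h \in \mathrm{Homeo}(S)$ such that $(h\xi, h\xi')$ lies in this neighborhood, whence $\mathrm{tw}(h\xi, h\xi') \geq T_0 + 1$. But by the invariance established above, $\mathrm{tw}(h\xi, h\xi') = \mathrm{tw}(\xi, \xi') = T_0$, a contradiction.

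The main (very mild) subtlety is ensuring that the semi-continuity statement of Lemma~\ref{lem:semi-continuity-twist} applies to arbitrary pairs of boundary points and not only to pairs of the form $(\xi^-(f), \xi^+(f))$; inspection of its statement and proof shows no such restriction. The only substantive external input is the existence of pairs with arbitrarily large twist, which is demonstrated in the following subsection via an explicit family of loxodromic examples.
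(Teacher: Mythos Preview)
Your proof is correct and follows essentially the same approach as the paper, which simply remarks that the corollary follows from the semi-continuity (Lemma~\ref{lem:semi-continuity-twist}) together with the existence of examples with arbitrarily large twist number from the following subsection. You have merely spelled out the details, including the invariance of the twist number under the $\mathrm{Homeo}(S)$-action, which the paper leaves implicit.
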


Furthermore, the following consequence may be useful, e.g. to construct non trivial quasi-morphisms.
\begin{corollary}\label{cor:twist-fujiwara}
Assume that $|\mathrm{tw}(f) - \mathrm{tw}(g) | > 4$. Then $f \not \sim g$ and $f \not \sim g^{-1}$ in the sense of Bestvina-Fujiwara. In particular, the four points $\xi^-(f), \xi^+(f), \xi^-(g), \xi^+(g)$ are distinct.
\end{corollary}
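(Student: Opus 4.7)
The plan is to combine the semi-continuity of twist numbers (Lemma~\ref{lem:semi-continuity-twist}, Corollary~\ref{cor:semi-continuity-twists}) with the observation that $\mathrm{tw}$ is a conjugacy invariant of loxodromic elements.

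First I would verify conjugacy invariance. The topological invariance of twist numbers between curves (Lemma~\ref{lem:basic-twists-curves}~(v)) immediately propagates to admissible sequences and hence to boundary points: for every $h \in \mathrm{Homeo}(T^2)$ and all $\xi,\xi' \in \partial_\infty \cd(T^2)$,
\[ \mathrm{tw}_{h\alpha}(h\xi, h\xi') = \mathrm{tw}_\alpha(\xi, \xi'), \]
and taking the supremum over $\alpha$ gives $\mathrm{tw}(h\xi, h\xi') = \mathrm{tw}(\xi, \xi')$. Since $\xi^\pm(hgh^{-1}) = h\xi^\pm(g)$, this yields $\mathrm{tw}(hgh^{-1}) = \mathrm{tw}(g)$.

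Next, suppose toward a contradiction that $f \sim g$. By the definition recalled in Section~\ref{ssec:Fujiwara}, there is a sequence of conjugates $g_n = h_n g h_n^{-1}$ with $\xi^\pm(g_n) \to \xi^\pm(f)$. By Lemma~\ref{lem:semi-continuity-twist} (in either the finite or infinite case), for $n$ large we have $\mathrm{tw}(g_n) \geq \mathrm{tw}(f) - 4$. Since $\mathrm{tw}(g_n) = \mathrm{tw}(g)$ by the previous step, this gives $\mathrm{tw}(g) \geq \mathrm{tw}(f) - 4$. The Bestvina--Fujiwara relation is symmetric, so exchanging the roles of $f$ and $g$ yields $\mathrm{tw}(f) \geq \mathrm{tw}(g) - 4$. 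Together $|\mathrm{tw}(f) - \mathrm{tw}(g)| \leq 4$, contradicting the hypothesis. The same argument applied to $g^{-1}$ handles the other case, after noting that $\mathrm{tw}(\xi, \xi')$ is visibly symmetric in its two arguments and $\xi^\pm(g^{-1}) = \xi^\mp(g)$, so $\mathrm{tw}(g^{-1}) = \mathrm{tw}(g)$ and the hypothesis applies equally to $g^{-1}$.

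For the final assertion, suppose two of the four points $\xi^-(f), \xi^+(f), \xi^-(g), \xi^+(g)$ coincide (the cases $\xi^-(f) = \xi^+(f)$ and $\xi^-(g) = \xi^+(g)$ being already excluded by loxodromicity). If $\xi^+(f) = \xi^+(g)$ then Lemma~\ref{lem:bf} gives $f \sim g$, contradicting what we just proved. The coincidence $\xi^+(f) = \xi^-(g) = \xi^+(g^{-1})$ likewise forces $f \sim g^{-1}$. The remaining two coincidences, involving the repelling fixed points, reduce to these via the elementary observation that $f \sim g$ iff $f^{-1} \sim g^{-1}$ (obtained by swapping the roles of the neighborhoods $V^-$ and $V^+$ in the definition), combined with $\xi^\pm(f^{-1}) = \xi^\mp(f)$. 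The main (minor) obstacle is keeping track of these symmetries; the semi-continuity machinery does all the real work.
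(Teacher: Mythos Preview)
Your proof is correct and follows essentially the same approach as the paper: semi-continuity of the twist number (Lemma~\ref{lem:semi-continuity-twist}) combined with its conjugacy invariance, then Lemma~\ref{lem:bf} for the distinctness of fixed points. The only difference is that the paper assumes without loss of generality that $\mathrm{tw}(g) < \mathrm{tw}(f) - 4$, which immediately gives the contradiction from a single application of semi-continuity and avoids your appeal to the symmetry of $\sim$; your route is fine but slightly longer.
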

\begin{proof}
Up to exchanging $f$ and $g$ we may assume that $\mathrm{tw}(g) < \mathrm{tw}(f) -4$, and thus by Lemma~\ref{lem:semi-continuity-twist}, the couple $(\xi^-(g)), \xi^+(g))$ may not be sent arbitrarily close to $(\xi^-(f)), \xi^+(f))$ by another element $h$. In other words, $f \not \sim g$. Since $\mathrm{tw}(g^{-1}) = \mathrm{tw}(g)$, the second inequivalence follows. The last part of the corollary follows from Lemma~\ref{lem:bf}.
\end{proof}

\subsection{Computation of twist and translation length}\label{ssec:computations}
Let us get back to the torus.
We will illustrate twist numbers by showing how computations can be done on our favorite family of examples. The idea is that first a lower bound is obtained by using explicit annuli. Then we can use a particular quasigeodesic axis and the Bounded Geodesic Image Theorem to show that all annuli in which the twist $\mathrm{tw}_A (f)$ is big have their boundary close to the quasi-axis, thus close to one of our explicit annulus, and then use this information to give an upper bound.
As a by-product, we are able to show that this quasi-axis is actually a geodesic axis, and thus to compute the precise value of the translation length on these examples.

We consider the maps $f$ and $g$ from the proof of Lemma~\ref{lem:parallelograms}. Let us fix some $p, q >0$ and denote $h = h_{p,q} = g^q f^p$.
Let $\alpha, \beta$ be respectively the horizontal and vertical curves that bound the fundamental domain $[0,1]^2$. Roughly speaking, $h$ twists with length $p$ in the complement of $\alpha$, and then twists with length $q$ in the complement of $\beta$.
Note that $f \alpha = \alpha, g \beta = \beta$, $h \alpha = g^q \alpha$ intersects $\beta$ exactly once and transversely, so that the bi-infinite path 
\[
\gamma = (\dots, h ^{-1} \alpha, h^{-1} \beta, \alpha, \beta, h \alpha, h \beta, \dots)
\]
is an invariant quasi-axis for $h$. This already tells us that $\mathrm{tl}(h) \leq 2$, as noted in~\cite{LRPSW}.
\begin{figure}[h!]
  \centering
  \def\svgwidth{0.8\textwidth}
  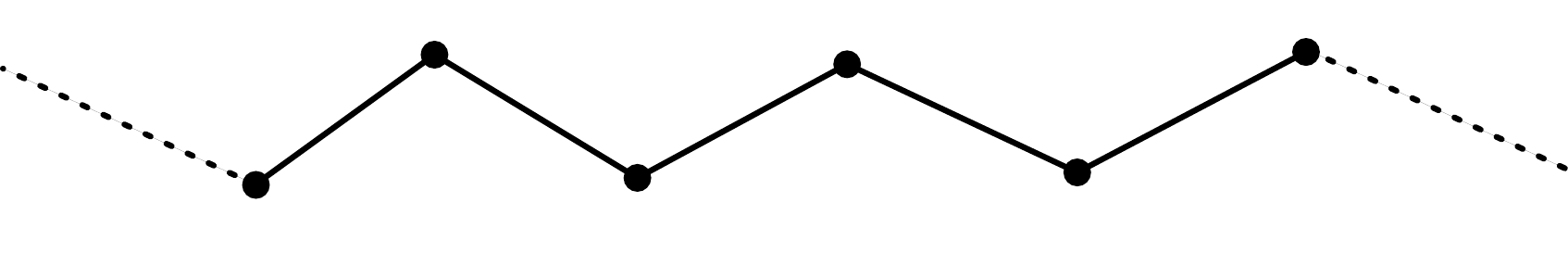
  \caption{Building an axis for $h$. The homeomorphisms $f^p, g^q$ act
    as elliptic isometries which ``rotate'' around $\alpha, \beta$.}
\end{figure}

Let $B$ denotes the constant from the second Bounded Geodesic Image Theorem for geodesics. We may assume that $B \geq 2$.
\begin{proposition}\label{prop:tl}
Assume $p,q > B$. Then the path $\gamma$ is an invariant geodesic for $h$.
\end{proposition}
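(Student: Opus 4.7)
The goal is to prove $d(\alpha, h^n\alpha) = 2n$ (and the analogous equalities for other vertex pairs on $\gamma$) for all $n \geq 0$, which combined with $h$-equivariance is equivalent to the statement that $\gamma$ is a geodesic.

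The upper bound is immediate once we verify $\gamma$ is an edge path. For each $k$, the pair $(h^k\alpha, h^k\beta)$ is adjacent since $h^k$ is a homeomorphism and $\alpha,\beta$ intersect transversely once; the pair $(h^k\beta, h^{k+1}\alpha)$ is the $h^k$-image of $(\beta, g^q\alpha)$, which is adjacent because $g$ fixes $\beta$ pointwise, so the single transverse intersection $\alpha \cap \beta$ persists as the single transverse intersection $g^q\alpha \cap \beta$ (modulo a harmless perturbation of the model $g$ to ensure transversality). This yields $d(\alpha, h^n\alpha) \leq 2n$.

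For the lower bound, I first establish the key twist estimates
\[ \mathrm{tw}_{A_\beta}(\alpha, g^q\alpha) \geq q - C_0 \quad \text{and} \quad \mathrm{tw}_{A_\alpha}(\beta, f^p\beta) \geq p - C_0 \]
for some small constant $C_0$, directly from the explicit formulas for $f$ and $g$: after lifting to the universal cover of $A_\beta$, the arc of $g^q\alpha$ winds $q$ times along the $S^1$-factor and crosses roughly $q$ lifts of $\alpha$, and symmetrically for the other estimate. Using the topological invariance of twist numbers under $h$, this yields $\mathrm{tw}_{A_{h^k\beta}}(h^k\alpha, h^{k+1}\alpha) \geq q - C_0$ at each step $k$ of $\gamma$, and analogously $\mathrm{tw}_{A_{h^k\alpha}}(h^k\beta, h^{k+1}\beta) \geq p - C_0$.

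Now suppose for contradiction that some geodesic $\sigma = (\sigma_0, \ldots, \sigma_L)$ from $\alpha$ to $h^n\alpha$ has $L < 2n$. For each $k \in \{0, \ldots, n-1\}$ and each of the $2n$ annuli $A_{h^k\beta}, A_{h^k\alpha}$, apply the second Bounded Geodesic Image Theorem (Lemma~\ref{lem:bgit2}). Assuming $p,q > B$, one checks that $\mathrm{tw}_A(\alpha, h^n\alpha) > B$ for each such annulus $A$: the step-$k$ contribution is $\geq q - C_0$ (resp.\ $\geq p - C_0$) by the previous paragraph, while the other steps of $\gamma$ contribute only bounded twist in $A$ by the first BGIT (Lemma~\ref{lem:bgit1}) applied to the sub-quasigeodesics of $\gamma$ that stay outside a neighborhood of the boundary curves of $A$. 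Hence $\sigma$ must contain a vertex with empty projection to $A$, which lies within $\cd$-distance $2$ of the boundary of $A$ by Lemma~\ref{lem:basic-twists-curves}~i. This forces $\sigma$ to make $2n$ close visits, one to each of the curves $h^k\beta$ and $h^k\alpha$ for $0 \leq k \leq n-1$.

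The main obstacle is converting these $2n$ forced close visits into the length bound $L \geq 2n$. My plan is an induction on $n$: using BGIT once more, one shows that the close visits appear along $\sigma$ in the order prescribed by $\gamma$, so that the initial segment of $\sigma$ up to the first close visit to an interior curve of $\gamma$ is itself a subgeodesic ending within distance $2$ of, say, $h^{n-1}\beta$, and hence has length at least $d(\alpha, h^{n-1}\beta) - 2 = 2n - 3$ by the inductive hypothesis. Combined with BGIT-based control on the remaining subgeodesic from that visit to $h^n\alpha$ (which itself must realize the base-case bound of at least $3$ by a symmetric argument starting from the other endpoint), this yields the contradiction $L \geq 2n$ and completes the proof that $\gamma$ is a geodesic.
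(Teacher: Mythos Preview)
Your argument has two genuine gaps.

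\textbf{The endpoint twist bound.} To get $\mathrm{tw}_A(\alpha, h^n\alpha) > B$ you invoke BGIT on sub-quasigeodesics of $\gamma$. But the constant in either version of BGIT depends on the quasigeodesic quality $K$, and for $\gamma$ this $K$ depends on $\mathrm{tl}(h_{p,q})$, which is precisely what the proposition is meant to pin down; nothing in the hypothesis $p,q>B$ controls $K$ a priori. More concretely, to even know which subpaths of $\gamma$ ``stay outside a neighborhood'' of $\partial A$ you would already need the distance estimates along $\gamma$ that you are trying to prove. (A minor side point: $A_{h^0\alpha}=A_\alpha$ should not be in your list, since $\pi_{A_\alpha}(\alpha)=\emptyset$; the correct list has $2n-1$ annuli, namely $A_{h^k\beta}$ for $0\le k\le n-1$ and $A_{h^k\alpha}$ for $1\le k\le n-1$.) The paper bypasses this entirely by a direct explicit computation: tracking ``good $\alpha$- and $\beta$-arcs'' through $f^p$ and $g^q$ (Lemma~\ref{lem:V-curve}) yields $\mathrm{tw}_\beta(\alpha,h^n\alpha)=q+2$ and $\mathrm{tw}_\beta(\alpha,h^{-n}\alpha)=2$ for every $n>0$, and the triangle inequality then gives $\mathrm{tw}_{h^k\beta}(\alpha,h^n\alpha)\ge q$ (and analogously $\ge p$ at the $h^k\alpha$-annuli), with no reference whatsoever to the geometry of $\gamma$.

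\textbf{From close visits to the length bound.} Your induction does not close. Splitting $\sigma$ at a vertex $v$ with $d(v,h^{n-1}\beta)\le 2$ gives $|\sigma_1|\ge d(\alpha,h^{n-1}\beta)-2=2n-3$ from the inductive hypothesis, but the remaining piece only yields $|\sigma_2|\ge d(h^{n-1}\beta,h^n\alpha)-2=d(\beta,h\alpha)-2=-1$; any other choice of splitting vertex suffers the same accumulating $-2$ penalties, and no ordering argument repairs this. The paper's argument is different and sharper: it shows that the $2n-1$ vertices of the competing geodesic that lose projection (one per annulus) are \emph{pairwise distinct}. A vertex losing projection at $A_{h^k\beta}$ must be isotopic to $\beta$, and one losing projection at $A_{h^j\alpha}$ isotopic to $\alpha$, so the two families never overlap. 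That the $\alpha$-type vertices are themselves pairwise distinct is a short lifting computation (the ``second claim'' in the paper): when $q>2$, no single curve can have empty projection to both $A_\alpha$ and $A_{h^k\alpha}$ for $k>0$. Since the endpoints $\alpha,h^n\alpha$ have nonempty projection to all $2n-1$ annuli, this produces $2n+1$ distinct vertices on the geodesic, hence length $\ge 2n$.
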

As an immediate corollary we get the translation length.
\begin{corollary}
 $\mathrm{tl}(h) = 2$.
\end{corollary}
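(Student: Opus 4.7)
The plan is to combine an explicit twist computation with the Bounded Geodesic Image Theorem and the translation-length estimate of Lemma~\ref{lem:K-for-axis}. First I would verify that $\gamma$ is a legitimate path in $\cd(T^2)$: each consecutive pair of vertices meets at most once, transversely. This uses that $f$ fixes $\alpha$ pointwise and $g$ fixes $\beta$ pointwise, together with the fact that $\alpha$ and $\beta$ are transverse at their unique intersection $(0,0)$.

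The key local computation is the twist at $A_\beta$ between $\alpha$ and $h\alpha = g^q\alpha$. Both curves meet $\beta$ only at $(0,0)$, so their projections to $A_\beta$ are singletons. Using $\tilde g^q(x,0) = (x, q\sin^2(\pi x))$, one lift of $\pi_{A_\beta}(h\alpha)$ in the universal cover $(0,1) \times \mathbb{R}$ of $A_\beta$ oscillates from height $0$ up to $q$ and back, crossing exactly $q+1$ lifts of $\pi_{A_\beta}(\alpha)$ (the horizontal segments at integer heights). Lemma~\ref{lem:distance-arc-graph} then yields $\mathrm{tw}_{A_\beta}(\alpha, h\alpha) = q+2$; the symmetric argument gives $\mathrm{tw}_{A_\alpha}(h^{-1}\beta, \beta) = p+2$, and topological invariance of the twist (Lemma~\ref{lem:basic-twists-markings} iv) propagates these to $\mathrm{tw}_{A_{\gamma_k}}(\gamma_{k-1}, \gamma_{k+1}) \in \{p+2, q+2\}$ for every $k$. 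In particular $d(\gamma_i, \gamma_{i+2}) = 2$: the upper bound is from the path, and if the distance were $\leq 1$ the curves $\gamma_i, \gamma_{i+2}$ would be adjacent, giving twist at most $2$ by Lemma~\ref{lem:basic-twists-curves} ii), contradicting $p, q > B \geq 2$.

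To obtain the global statement $d(\gamma_i, \gamma_j) = j - i$ I would argue in two stages. First, by strong induction on $j - i$: the triangle inequality for twists together with the First BGIT (Lemma~\ref{lem:bgit1}) applied to the sub-paths of $\gamma$ already known to be geodesics gives $\mathrm{tw}_{A_{\gamma_k}}(\gamma_i, \gamma_j) > B$ for every intermediate vertex $\gamma_k$. The Second BGIT (Lemma~\ref{lem:bgit2}) then forces any geodesic between $\gamma_i$ and $\gamma_j$ to enter the $2$-neighborhood of each such $\gamma_k$; fellow-traveling with the quasi-axis $\gamma$ ensures these landmarks are visited in order along the geodesic, yielding a lower bound of the form $d(\gamma_0, \gamma_{2n}) \geq 2n - C$ for some constant $C$ independent of $n$. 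Feeding this into Lemma~\ref{lem:K-for-axis}~(2) gives $\mathrm{tl}(h) \geq (2n - C - 2M)/n$, which tends to $2$ as $n \to \infty$ and hence forces $\mathrm{tl}(h) = 2$. By sub-additivity $d(\alpha, h^n\alpha) \geq n \cdot \mathrm{tl}(h) = 2n$, and combined with the path upper bound this gives $d(\alpha, h^n\alpha) = 2n$; all other equalities $d(\gamma_i, \gamma_j) = j - i$ follow by the triangle inequality applied to $d(\alpha, h^m\alpha)$ and the unit distances on $\gamma$. The most delicate point is the ordering of the landmark indices along the candidate geodesic, which is the technical content of fellow-traveling in the $\delta$-hyperbolic space $\cd(T^2)$, and where the precise threshold $p, q > B$ may need to be strengthened by additive constants (absorbed into $B$) coming from the First BGIT.
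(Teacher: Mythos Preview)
Your approach shares the same skeleton as the paper's: compute the twist $\mathrm{tw}_{A_{\gamma_k}}(\gamma_{k-1},\gamma_{k+1}) \in \{p+2, q+2\}$, invoke the BGIT to produce ``landmark'' vertices $v_k$ on any geodesic from $\gamma_0$ to $\gamma_{2n}$ with $\pi_{A_{\gamma_k}}(v_k) = \emptyset$ (hence $d(v_k,\gamma_k)\le 2$), and count. But the step where you write ``fellow-traveling with the quasi-axis $\gamma$ ensures these landmarks are visited in order, yielding $d(\gamma_0,\gamma_{2n})\ge 2n - C$'' is a genuine gap, and it is precisely where the content lies.

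The problem is circularity. Fellow-traveling only tells you that the geodesic stays within a Morse constant $M(K)$ of $\gamma$, where $K$ is the quasi-geodesic constant of $\gamma$. But $K$ depends on $\mathrm{tl}(h)$, which is the quantity you are trying to pin down; a priori you only know $\mathrm{tl}(h)>0$ from the BHMMW theorem, so $K$ could be large. Concretely, the only way to locate $v_k$ on the geodesic via fellow-traveling is through $d(\gamma_0,\gamma_k)$, and quasi-geodesicity only gives $d(\gamma_0,\gamma_k)\ge k/K - K$, hence length $\ge 2n/K - K - 2$, i.e.\ $\mathrm{tl}(h)\ge 2/K$, not $\ge 2$. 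Your strong induction does not close either: if you assume $d(\gamma_a,\gamma_b)=b-a$ for $b-a<N$, then the landmark $v_{N-1}$ satisfies $p_{N-1}\in[N-3,N+1]$ and $L-p_{N-1}\in[-1,3]$, which only gives $L\ge N-4$, not $L=N$; the additive loss prevents the induction from propagating. (Your twist bound via triangle inequality plus BGIT on sub-paths has the same issue: BGIT only gives a bound of $B$, not $2$, on the side terms, so you need $p,q>3B$ rather than $p,q>B$---you flag this, but it is a symptom of the same looseness.)

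The paper avoids this entirely by proving the $v_k$'s are \emph{pairwise distinct}, not that they are ordered. First, the twist bounds $\mathrm{tw}_{h^k\beta}(\alpha,h^n\alpha)\ge q$ and $\mathrm{tw}_{h^k\alpha}(\alpha,h^n\alpha)\ge p$ are obtained directly (Lemma~\ref{lem:explicit-twists}), with no induction and no BGIT on sub-paths. Then the landmarks split into those coming from $h^k\beta$-annuli (isotopic to $\beta$) and those from $h^k\alpha$-annuli (isotopic to $\alpha$), so the two families are automatically disjoint. Within a family, say the $\alpha$-family, the paper shows by a short argument in the universal cover that no single curve can have empty projection to both $A_\alpha$ and $A_{h^k\alpha}$ when $q>2$: lifting, such a curve would have to live in a horizontal strip of height $2$ and simultaneously in an $h^k$-image of such a strip, but these are incompatible because $h^k\tilde\alpha$ meets $\tilde\alpha+(0,q)$. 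This gives $2n-1$ distinct interior vertices on any geodesic, hence length exactly $2n$, and the corollary $\mathrm{tl}(h)=2$ is immediate. This topological distinctness argument is the idea your outline is missing.
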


%

We denote $\tilde{\alpha}= \mathbb{R}\times \{0\}, \tilde{\beta} = \{0\} \times \mathbb{R}$ our favorite lifts of $\alpha, \beta$. At this point, as a motivation for the next definitions, we suggest that the reader draws a picture of $h \beta$ (and maybe $h^2 \beta$; compare also Figure~\ref{fig:h-iterates}).
For every $k \geq 1$, let us call an arc in the plane a \emph{good $\beta$-arc of size $k$} if it has one end-point in $(0,0)$ and the other on the top side on the rectangle $[0,1] \times [0,k]$ and is otherwise included in the interior of this rectangle. We define the \emph{good $\alpha$-arcs} by exchanging the $x$ and $y$ coordinates. Note that every good $\beta$-arc contains a good $\beta$-arc of size $1$.
Also note that $f$ fixes $\alpha$ pointwise, $g$ fixes $\beta$ pointwise, and in particular the point $(0,0)$ is fixed by both $f$ and $g$. The reader can easily check, by drawing pictures, the following properties:
\begin{enumerate}
\item $h(\alpha) = g^q \alpha$ contains a good $\beta$-arc of size $q$;
\item the image of a good $\beta$-arc under $f^p$ contains a good $\alpha$-arc of size $p$;
\item the image of a good $\alpha$-arc under $g^q$ contains a good $\beta$-arc of size $q$.
\end{enumerate}
By an induction chaining these  properties, we get that for every $n>0$, $h^n\alpha$ contains a good $\beta$-arc of size $q$. We can be a little more precise.
\begin{lemma}\label{lem:V-curve}
For every $n>0$, $h^n \alpha$ contains an arc included in the rectangle $[0,1] \times [0, q+1]$, connecting the fixed point $(0,0)$ to the right-hand side of the rectangle, and containing a $\beta$-arc of size $q$.
\end{lemma}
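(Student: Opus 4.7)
The plan is to argue by induction on $n$. For the base case $n=1$, note that $\tilde f$ fixes $\tilde \alpha$ pointwise, so the lift of $h\alpha = g^q\alpha$ through $(0,0)$ is simply $\tilde g^q\tilde\alpha = \{(x, q\sin^2(\pi x)) : x\in\mathbb{R}\}$; its restriction to $x\in[0,1]$ is an arc lying in $[0,1]\times[0,q]$, connecting $(0,0)$ to the right-side point $(1,0)$, and the sub-arc over $x\in[0,1/2]$ is a good $\beta$-arc of size $q$.

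For the inductive step, I propagate a parametric description. Let $L_n$ be the lift of $h^n\alpha$ through $(0,0)$, parameterized by $x\in\mathbb{R}$ from $L_1$ via $L_{n+1} = \tilde h \circ L_n$, and write $L_n(x) = (\phi_n(x),\psi_n(x))$. Using the explicit formulas for $\tilde f,\tilde g$, one obtains the recursion
\[
\phi_{n+1}(x) = \phi_n(x) + p\sin^2(\pi\psi_n(x)), \qquad \psi_{n+1}(x) = \psi_n(x) + q\sin^2(\pi \phi_{n+1}(x)),
\]
and the parameter values $x=0,1$ always give $(0,0)$ and $(1,0)$. Let $x_n^*\in(0,1]$ be the smallest positive $x$ with $\phi_n(x)=1$. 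I will prove the strengthened claim: on $[0,x_n^*]$ one has $\phi_n\in[0,1]$ and $\psi_n\in[0,q+1]$, with some $x_n\in(0,x_n^*)$ where $\psi_n(x_n)=q$ and $\psi_n<q$ on $[0,x_n)$. The arc required by the lemma is then $\gamma_n := L_n([0,x_n^*])$, with good $\beta$-arc $L_n([0,x_n])$.

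The main obstacle is establishing the two bounds in the inductive step. The existence of $x_n$ with $\psi_n(x_n)=q$ inside $[0,x_n^*]$ follows from chaining properties~(1)-(3) that precede the lemma: one verifies that the good $\beta$-arc of size $q$ in $h^n\alpha$ produced by the chain corresponds, in our parameterization, to a sub-arc supported in the initial segment $[0, x_n^*]$. The upper bound $\psi_n\le q+1$ is the delicate point: the naive bound $\psi_{n+1}\le \psi_n+q$ only gives $\psi_n\le nq$, so one must exploit the geometry of the recursion. The key observation is that on the initial segment $[0,x_n^*]$ the $x$-coordinate $\phi_n$ executes exactly one monotone-ish "trip" from $0$ to $1$ driven primarily by the horizontal shear $\tilde f^p$ acting on a piece of $L_{n-1}$ near the origin, so that $\sin^2(\pi\phi_n)$ reaches its maximal value essentially once, giving rise to a single peak of $\psi_n$ at height at most $q + $ (the small value of $\psi_{n-1}$ carried into the argument), which is bounded by $q+1$. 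Making this geometric picture rigorous — in particular, showing that $\phi_n$ first reaches value $1$ before the curve can make a second ascent — is the technical heart of the argument.
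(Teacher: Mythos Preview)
Your base case is fine, but the inductive step has a genuine gap: you never prove the bound $\psi_n\le q+1$, and your heuristic that $\phi_n$ makes ``exactly one monotone-ish trip from $0$ to $1$'' is not correct as stated. Already for $n=2$ one has $\phi_2(x)=x+p\sin^2(\pi q\sin^2(\pi x))$, and as $x$ runs over a small initial interval the inner function $q\sin^2(\pi x)$ sweeps through $[0,1]$, so $\phi_2$ rises to roughly $p$ and then falls back near $x$; for larger $n$ the oscillations only get worse. So the monotonicity picture is wrong, and the ``technical heart'' you allude to is not supplied.

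The real issue is that you are tracking far too much of the curve. The paper's argument avoids the difficulty entirely by following only a \emph{tiny} sub-arc through one application of $h=g^qf^p$. From the chaining properties (1)--(3) preceding the lemma, $h^{n-1}\alpha$ contains a good $\beta$-arc of size $q$, hence a good $\beta$-arc $b_0$ of size~$1$ (an arc from $(0,0)$ to the top of $[0,1]^2$). Apply $f^p$: since $f^p$ adds $p\sin^2(\pi y)$ to the $x$-coordinate, $f^p(b_0)$ contains a good $\alpha$-arc $a_0$ of size~$1$, which by definition sits inside $[0,1]^2$. Now apply $g^q$: the $x$-coordinate is preserved (so stays in $[0,1]$), the $y$-coordinate increases by at most $q$ (so stays in $[0,q+1]$), the point $(0,0)$ is fixed, and the right endpoint stays on $\{x=1\}$. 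Thus $g^q(a_0)\subset h^n\alpha$ is an arc in $[0,1]\times[0,q+1]$ from $(0,0)$ to the right side, and it contains a $\beta$-arc of size $q$ because the $\alpha$-arc passes through a point with $x=1/2$. No delicate estimate is needed: restricting to the size-$1$ sub-arc before applying $h$ is exactly what makes the $q+1$ bound automatic.
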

\begin{proof}
  \begin{figure}[h!]
    \centering
    \def\svgwidth{0.8\textwidth}
\begingroup%
  \makeatletter%
  \providecommand\color[2][]{%
    \errmessage{(Inkscape) Color is used for the text in Inkscape, but the package 'color.sty' is not loaded}%
    \renewcommand\color[2][]{}%
  }%
  \providecommand\transparent[1]{%
    \errmessage{(Inkscape) Transparency is used (non-zero) for the text in Inkscape, but the package 'transparent.sty' is not loaded}%
    \renewcommand\transparent[1]{}%
  }%
  \providecommand\rotatebox[2]{#2}%
  \newcommand*\fsize{\dimexpr\f@size pt\relax}%
  \newcommand*\lineheight[1]{\fontsize{\fsize}{#1\fsize}\selectfont}%
  \ifx\svgwidth\undefined%
    \setlength{\unitlength}{336.55978225bp}%
    \ifx\svgscale\undefined%
      \relax%
    \else%
      \setlength{\unitlength}{\unitlength * \real{\svgscale}}%
    \fi%
  \else%
    \setlength{\unitlength}{\svgwidth}%
  \fi%
  \global\let\svgwidth\undefined%
  \global\let\svgscale\undefined%
  \makeatother%
  \begin{picture}(1,0.32850693)%
    \lineheight{1}%
    \setlength\tabcolsep{0pt}%
    \put(0,0){\includegraphics[width=\unitlength,page=1]{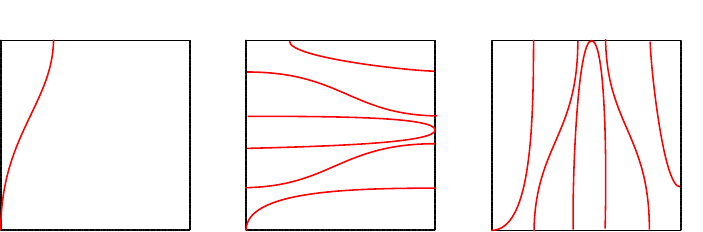}}%
    \put(0.03993412,0.11224168){\makebox(0,0)[lt]{\lineheight{1.25}\smash{\begin{tabular}[t]{l}$b_0$\end{tabular}}}}%
    \put(0.41621716,0.29421001){\makebox(0,0)[lt]{\lineheight{1.25}\smash{\begin{tabular}[t]{l}$f^p(b_0)$\end{tabular}}}}%
    \put(0.4317724,0.02056305){\makebox(0,0)[lt]{\lineheight{1.25}\smash{\begin{tabular}[t]{l}$a_0$\end{tabular}}}}%
    \put(0.73049897,0.29421001){\makebox(0,0)[lt]{\lineheight{1.25}\smash{\begin{tabular}[t]{l}$g^q(a_0)$\end{tabular}}}}%
  \end{picture}%
\endgroup%

    \caption{The proof of Lemma~\ref{lem:V-curve}.
      $b_0 \subset h^n\alpha$ is the $\beta$-arc of size $1$. The
      middle picture shows the image under $f^p$, with the initial
      segment $a_0$ which is an $\alpha$-arc of size $1$. The right
      picture shows the image of that subarc under $g^q$.}\label{fig:h-iterates}
  \end{figure}
For $n=1$ this is easy. Let us assume $n>2$. We know that $h^{n-1}\alpha$ contains a $\beta$-arc of size $q$, and in particular a $\beta$-arc of size $1$.
The image of this arc under $f^p$ contains an $\alpha$-arc of size $1$, and the image of such an arc under $g^q$ contains the wanted curve.
\end{proof}
The arc provided by the lemma is distance $q+2$ from $\alpha$ in the arc graph $\mathcal{A}^\dagger(A_\beta)$, and no arc disjoint from it is distance $q+3$.
Thus we get the twist number
$\mathrm{tw}_\beta(\alpha, h^n \alpha) = q+2$.

Analogous considerations give, for every $n>0$, $\mathrm{tw}_\beta(\alpha, h^{-n} \alpha) = 2$.
Applying the triangular inequality for twists gives the first part of the following lemma. The second part follows from analogous considerations.
\begin{lemma}\label{lem:explicit-twists}~
\begin{enumerate}
\item For every $m\geq 0, n>0$,
$\mathrm{tw}_\beta(h^{-m}\alpha, h^{n} \alpha) \geq q$.
\item For every $m, n>0$, the curves $h^{-m}\alpha, h^{n} \alpha$ have non-empty projection under the map $\pi_{A_\alpha}$, and  $\mathrm{tw}_\alpha(h^{-m}\alpha, h^{n} \alpha) \geq p$.
\end{enumerate}
\end{lemma}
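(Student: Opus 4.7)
The plan is to deduce both parts from the triangular inequality for twist numbers (Lemma~\ref{lem:basic-twists-curves}, (iv)) combined with the bounds established in the paragraphs immediately preceding the lemma. The twist numbers $\mathrm{tw}_\beta(\alpha, h^n\alpha) = q+2$ (for $n>0$) and $\mathrm{tw}_\beta(\alpha, h^{-n}\alpha) = 2$ (for $n>0$) are already in hand; part~(1) is then just algebraic bookkeeping, and part~(2) asks for the same argument ``rotated by $90^\circ$''.

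For part~(1), applying the triangular inequality in $A_\beta$ with $\alpha_1=\alpha$, $\alpha_2=h^{-m}\alpha$, $\alpha_3=h^n\alpha$ gives
$$
\mathrm{tw}_\beta(\alpha,h^n\alpha)\ \leq\ \mathrm{tw}_\beta(\alpha,h^{-m}\alpha)+\mathrm{tw}_\beta(h^{-m}\alpha,h^n\alpha).
$$
Since the first term on the right is at most $2$ (trivially $0$ when $m=0$, equal to $2$ when $m>0$), rearranging yields
$\mathrm{tw}_\beta(h^{-m}\alpha,h^n\alpha)\geq (q+2)-2=q$.

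For part~(2), the plan is to run the mirror image of the whole preceding discussion under the substitution $\alpha\leftrightarrow\beta$, $f\leftrightarrow g$, $p\leftrightarrow q$, combined with a reversal of iteration. Concretely, using that $g^{-q}\alpha$ contains a downward ``good $\beta$-arc'' of size $q$ and that $f^{-p}$ maps any such downward arc to a curve containing a leftward good $\alpha$-arc of size $p$ (the symmetric analogs of properties~(1)--(3) before Lemma~\ref{lem:V-curve}), the same induction shows that for every $m>0$, the curve $h^{-m}\alpha$ contains an arc inside the rectangle $[-(p+1),0]\times[0,1]$ joining $(0,0)$ to its left-hand side and containing a (negative) good $\alpha$-arc of size $p$. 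This forces $\mathrm{tw}_\alpha(\beta,h^{-m}\alpha)\geq p+2$. The symmetric version of the ``$\mathrm{tw}_\beta(\alpha,h^{-n}\alpha)=2$'' estimate then gives $\mathrm{tw}_\alpha(\beta,h^n\alpha)\leq 2$ for $n>0$. The non-emptiness of $\pi_{A_\alpha}(h^{\pm m}\alpha)$ is built in (positive twist forces non-empty projection, and geometrically the arcs above cross $\alpha$). Applying the triangular inequality in $A_\alpha$ with $\beta, h^n\alpha, h^{-m}\alpha$ then gives
$$
\mathrm{tw}_\alpha(h^{-m}\alpha,h^n\alpha)\ \geq\ \mathrm{tw}_\alpha(\beta,h^{-m}\alpha)-\mathrm{tw}_\alpha(\beta,h^n\alpha)\ \geq\ (p+2)-2\ =\ p.
$$

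The only non-bookkeeping work is in checking that the mirror of Lemma~\ref{lem:V-curve} and the symmetric version of the "$=2$" claim really do go through. The main obstacle is that the reversal of direction is not quite a formal symmetry of $h=g^q f^p$ (since $\alpha$ is fixed by $f$ but not by $g$, so positive and negative iterates of $h$ behave slightly differently at the first step); however, one handles this exactly as for positive iterates by computing that $h^{-1}\alpha=f^{-p}g^{-q}\alpha$ does already contain a leftward good $\alpha$-arc of size $p$, after which the induction step is identical to the original.
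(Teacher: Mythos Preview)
Your argument is correct and is exactly what the paper does: part~(1) is the triangular inequality for twists combined with the already-established values $\mathrm{tw}_\beta(\alpha,h^n\alpha)=q+2$ and $\mathrm{tw}_\beta(\alpha,h^{-m}\alpha)\le 2$, and your mirror argument for part~(2) is precisely the ``analogous considerations'' the paper invokes without further detail. One small slip: since the lift of $h^{-m}\alpha$ through the origin lies in $\{y\le 0\}$, the rectangle containing your leftward $\alpha$-arc should have its $y$-range in $[-1,0]$ rather than $[0,1]$, but this does not affect the twist computation in $A_\alpha$.
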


\begin{proof}[Proof of Proposition~\ref{prop:tl}]
Let $n>0$. Consider a geodesic path $\gamma'$ from $\alpha$ to $h^n \alpha$. 
For every $k = 0, \dots, n-1$, we have, by point (1) of the lemma,
\[
\mathrm{tw}_{h^k \beta}(\alpha, h^{n} \alpha) = \mathrm{tw}_{\beta}(h^{-k}\alpha, h^{n-k} \alpha) \geq q > B
\]
and likewise, for $k = 1, \dots, n-1$, $\mathrm{tw}_{h^k \alpha}(\alpha, h^{n} \alpha) \geq p >B$.
By the Bounded Geodesic Image Theorem, the geodesic $\gamma'$ has to "lose projection" on each of the annuli determines by the following curves, which constitutes a segment of our bi-infinite path $\gamma$:
\[
\beta, h \alpha, h \beta, \dots, h^{n-1} \alpha, h^{n-1} \beta.
\]
This means that we can find a curves $c_0, d_1, \dots , d_{n-1}, c_{n-1}$ which are vertices of the geodesic $\gamma'$ and such that 
$\pi_{h^k \beta}(c_k) = 0$, $\pi_{h^k \alpha}(d_k) = 0$.

\begin{claim}
The $c_k$'s and $d_k$'s are pairwise distinct.
\end{claim}
Let us prove the claim.
The curve $c_k$ does not project in the annulus determined by $h^k \beta$, thus it is homotopic to $h^k \beta$, which is homotopic to $\beta$. Likewise, each $d_j$ is homotopic to $\alpha$. Thus the $c_k$'s are certainly distinct from the $d_j$'s.
Let us check that the $d_j$'s are also pairwise disjoint (the proof for the $c_k$'s is similar). By applying a power of $h$ it suffices to prove the following second claim.
\begin{claim}
Let $k>0$. If $q>2$ then there is no curve $\delta$ such that both $\pi_{A_\alpha}(\delta) = \emptyset$ and 
$\pi_{h^k(A_\alpha)}(\delta) = \emptyset$.
\end{claim}
Assume for a contradiction that such a $\delta$ exists. By assumption $\delta$ does not cross the annulus determined by $\alpha$. Thus there is a lift $\tilde{\delta}$ which is between $\tilde{\alpha}$ and $\tilde{\alpha} + (0, 2)$, which we denote 
\[
\tilde{\alpha} < \tilde{\delta} < \tilde{\alpha} + (0, 2).
\]
Likewise, there is an integer $\tau$ such that 
$\tilde{h}^k\tilde{\alpha} < \tilde{\delta} + (0, \tau) < \tilde{h}^k\tilde{\alpha} + (0, 2)$, and thus
\[
\tilde{h}^k\tilde{\alpha} -(0, \tau) < \tilde{\delta} < \tilde{h}^k\tilde{\alpha} + (0, 2 - \tau).
\]
 We first deduce that
$\tilde{\alpha} < \tilde{h}^k\tilde{\alpha} + (0, 2 - \tau)$ which implies $2-\tau > 0$ since $\tilde{h}^k\tilde{\alpha}$ meets $\tilde{\alpha}$ at the point $(0,0)$.
And also 
$\tilde{h}^k\tilde{\alpha} -(0, \tau) < \tilde{\alpha} + (0, 2)$,
and since $\tilde{h}^k\tilde{\alpha}$ meets $\tilde{\alpha} + (0, q)$,
this implies $2+\tau > q$. We conclude that $q-2 < \tau < 2$, and since all these numbers are integers, $q \leq 2$. This contradicts the assumption on $q$. This completes the proof of both claims.

\bigskip

From the first claim we conclude that the length of $\gamma'$ is at least $2n$. Since this is exactly the length of the path $\gamma$ which has the same end-points, we conclude that $d(\alpha, h^n \alpha ) = 2n$. Thus $\gamma$ is a geodesic.
\end{proof}
In the above proof we seemingly were lucky to find exactly the good number of distinct annuli in which our geodesic $\gamma$ does not project. Actually the proof is a little more robust than this would make you believe: indeed, assume we had only found, say, $2n-10$ such vertices; then we first conclude that $d(x, h^n(x)) \geq 2n-9$, but then $\mathrm{tl}(h) \geq 2$, and from this it follows that $d(x, h^n(x)) = 2n$, since any smaller number would give a smaller translation length.

\bigskip

We finally search for an estimate on the twist number of $h$, showing that it is coarsely equal to $\max(p,q)$. We denote $B$ the constant from the second Bounded Geodesic Image Theorem, as above.
\begin{proposition}
Assume $p,q > B$. Then
\[
\mathrm{tw}(h) \in [\max(p,q), \max(p, q)+4].
\]
\end{proposition}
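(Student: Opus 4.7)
Lemma~\ref{lem:explicit-twists} gives $\mathrm{tw}_\alpha(h^{-m}\alpha, h^n\alpha) \geq p$ for all $m,n>0$ and $\mathrm{tw}_\beta(h^{-m}\alpha, h^n\alpha) \geq q$ for all $m \geq 0, n>0$. Lemma~\ref{lem:thick-axes}, applied to an $h$-invariant marking path extending the geodesic axis $\gamma$ of Proposition~\ref{prop:tl}, gives a uniform upper bound on these twists. Since twist numbers are natural numbers, one can extract a subsequence $(m_k,n_k) \to (\infty,\infty)$ along which $\mathrm{tw}_\alpha(h^{-m_k}\alpha, h^{n_k}\alpha)$ is constantly equal to some $t_\alpha \geq p$ and $\mathrm{tw}_\beta(h^{-m_k}\alpha, h^{n_k}\alpha)$ constantly equal to some $t_\beta \geq q$. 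Since $h^{-m_k}\alpha \to \xi^-(h)$ and $h^{n_k}\alpha \to \xi^+(h)$, the definition of $\mathrm{tw}_A(\xi,\xi')$ as a supremum gives $\mathrm{tw}_\alpha(\xi^-(h),\xi^+(h)) \geq p$ and $\mathrm{tw}_\beta(\xi^-(h),\xi^+(h)) \geq q$, whence $\mathrm{tw}(h) \geq \max(p,q)$.

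\textbf{Upper bound.} Let $A$ be an almost embedded essential annulus with boundary $\gamma_A$. Since $[\alpha] \neq [\beta]$ as isotopy classes, $[\gamma_A]$ differs from at least one of them; by symmetry we may assume $[\gamma_A] \neq [\beta]$. Then by Lemma~\ref{lem:basic-twists-curves}(i), every $h^k\beta$ has nonempty projection to $A$. The subsequence $(h^k\beta)_{k \in \mathbb{Z}}$ of the geodesic axis $\gamma$ is a $(2,0)$-quasi-geodesic in $\cd(T^2)$, so Lemma~\ref{lem:bgit1} applies: outside a $\cd$-ball of uniformly bounded radius around $\gamma_A$, the two tails of this quasi-geodesic have projection of diameter at most $2$; inside this ball, the geodesic property of $\gamma$ confines the relevant $h^k\beta$'s to a uniformly bounded window, whose contribution to the projection is bounded via Lemma~\ref{lem:basic-twists-curves}(iii). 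Combining these yields a universal constant $B'$ depending only on the hyperbolicity of $\cd(T^2)$ with $\mathrm{diam}(\bigcup_k \pi_A(h^k\beta)) \leq B'$. Taking admissible sequences $\mu_i = h^{-i}\beta \to \xi^-(h)$ and $\nu_i = h^i\beta \to \xi^+(h)$, we have $\mathrm{tw}_A(\mu_i,\nu_i) \leq B'$, whence $\mathrm{tw}_A(\xi^-(h),\xi^+(h)) \leq B' + 4$ by the near-constancy property of twists at infinity. Interpreting the constant $B$ of the hypothesis as large enough to dominate $B'$, we obtain $\mathrm{tw}_A(\xi^-(h),\xi^+(h)) \leq \max(p,q) + 4$ for every $A$, so $\mathrm{tw}(h) \leq \max(p,q) + 4$.

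\textbf{Main obstacle.} The principal subtlety is the case $[\gamma_A]=[\alpha]$, in which the interleaved axis vertices $h^k\alpha$ may have empty projection to $A$, so the second BGIT (Lemma~\ref{lem:bgit2}) cannot be applied to the entire axis $\gamma$. Restricting to the $\beta$-subsequence circumvents this, but at the price of working with a proper quasi-geodesic (rather than an edge-path geodesic), so only the weaker Lemma~\ref{lem:bgit1} is available; its deployment requires the additional combinatorial observation that, by the geodesicity of $\gamma$, the vertices $h^k\beta$ lying close to $\gamma_A$ are confined to a uniformly bounded window along the axis.
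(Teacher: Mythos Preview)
Your lower bound is correct and matches the paper's, though the subsequence extraction is unnecessary: the paper simply cites Lemma~\ref{lem:explicit-twists} to get $\mathrm{tw}_\alpha(\xi^-(h),\xi^+(h)) \geq p$ and $\mathrm{tw}_\beta(\xi^-(h),\xi^+(h)) \geq q$ directly.

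Your upper bound contains a genuine error. You claim that the middle window of the $\beta$-subsequence contributes a \emph{universally bounded} amount $B'$ via Lemma~\ref{lem:basic-twists-curves}(iii). But that lemma applies only to edge paths, and $(h^k\beta)_k$ is not one: consecutive terms are at $\cd$-distance $2$, connected through $h^{k+1}\alpha$. More to the point, for the annulus $A=A_\alpha$ one computes $h^{-1}\beta = f^{-p}g^{-q}\beta = f^{-p}\beta$, so $\mathrm{tw}_\alpha(h^{-1}\beta,\beta)\approx p$: a single step of your subsequence already picks up twist of order $p$. Thus no universal $B'$ exists, and indeed if it did, your argument would yield $\mathrm{tw}(h)\leq B'+4$ independently of $p,q$, contradicting the lower bound you just proved. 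The phrase ``interpreting the constant $B$ as large enough to dominate $B'$'' cannot rescue this, since $B$ is the fixed BGIT constant and not at your disposal.

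The paper's argument is substantially different. It first assumes $\mathrm{tw}_A(\xi^-,\xi^+)>B+4$, so BGIT forces the axis $\gamma$ to lose projection at some vertex $h^k\alpha$ or $h^k\beta$. After translating by $h^{-k}$, the boundary curve $c'$ of $A$ fails to project to $A_\alpha$ (say), hence a lift $\tilde c'$ is trapped between $\tilde\alpha$ and $\tilde\alpha+(0,2)$. The twist in $A_{c'}$ is then dominated by a twist in the width-$3$ annulus cut by $\alpha$ in the triple cover $\mathbb{R}^2/(\mathbb{Z}\times 3\mathbb{Z})$, and this last twist is bounded by $p+4$ via the explicit geometry of the good arcs from Lemma~\ref{lem:V-curve}. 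The dependence on $p$ (resp.\ $q$) enters precisely here, through the concrete shape of $h^{\pm n}\alpha$, not through any universal BGIT constant.
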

\begin{proof} 
From Lemma~\ref{lem:explicit-twists} we get that 
\[
\mathrm{tw}_\beta(\xi^-(h), \xi^+(h)) \geq q, \mathrm{tw}_\alpha(\xi^-(h), \xi^+(h)) \geq p
\]
and thus $\mathrm{tw}(h) \geq \max(p,q)$.

It remains to prove the upper bound.
Assume that $\mathrm{tw}(h) > B+4$, otherwise there is nothing to prove. It suffices to prove that under this assumption, $\mathrm{tw}(h) \leq \max(p,q)+4$.
By definition, there is some annulus $A = A_c$ for some curve $c$ such that
$\mathrm{tw}_A(\xi^-(h), \xi^+(h)) > B+4$. By the usual argument we may pick $n$ large enough so that $\mathrm{tw}_A(h^{-n}\alpha,h^n\alpha) \geq \mathrm{tw}_A(\xi^-(h), \xi^+(h)) -4 > B$. By the Bounded Geodesic Image Theorem, the geodesic $\gamma$ must lose projection in $A$: there is some $k$ such that $c$ does not project either in the annulus bounded by $h^k\alpha$ or in the annulus bounded by $h^k\beta$. We treat the first case, the second one is similar. The curve $c' = h^{-k} c$ does not project in $A_\alpha$, and we have $\mathrm{tw}_{c'}(h^{-n-k}\alpha,h^{n-k}\alpha) > B$.
We can have chosen $n$ larger than $|k|+1$, so that $-n-k$ and $n-k$ are still respectively negative and positive.
Since $\pi_\alpha(c') = \emptyset$, there is a lift $\tilde{c'}$ which is between $\tilde{\alpha}$ and $\tilde{\alpha}+(0,2)$. The strip $\tilde{A'}$ bounded by $\tilde{c'}$ and $\tilde{c'} + (0,1)$ is a universal cover of the annulus $A' = A_c$, and it is included in the strip $\tilde{A}$ bounded by $\tilde{\alpha}$ and $\tilde{\alpha} + (3,0)$. Let $A$ denote the annulus obtained by quotienting $\tilde{A}$ by the horizontal unit translation. Consider the torus $\mathbb{R}^2/(\mathbb{Z} \times 3\mathbb{Z})$, which is a degree 3 cover of our initial torus, and denote $\bar h$ a lift of $h$, and $\bar \alpha$ the projection of $\tilde{\alpha}$ in this cover. The annulus cut by $\bar \alpha$ in this cover identifies with $A$.
The twist number $\mathrm{tw}_{A}(\bar h^{-n-k}\bar \alpha,\bar h^{n-k}\bar \alpha)$ 
is, by definition, the Hausdorff distance in the fine arc graph $\mathcal{A}^\dagger(A)$ between the projections of the curves $\bar h^{-n-k}\bar \alpha$ and $\bar h^{n-k}\bar \alpha$.
The inclusion of strips implies that 
$\mathrm{tw}_{A'}(h^{-n-k}\alpha,h^{n-k}\alpha) \leq \mathrm{tw}_{A}(\bar h^{-n-k}\bar \alpha,\bar h^{n-k}\bar \alpha)$.

It remains to check that, given $i,j >0$,
$\mathrm{tw}_{A}(\bar h^{-i}\bar \alpha,\bar h^{j}\bar \alpha)  \leq p+4$. Here we use the hypothesis that $q \geq 3$.
The curve $\tilde{h}^j\tilde{\alpha}$ contains the image under $h$ of a good $\beta$-arc of size $1$, and this contains an arc $a^+$ originated in $(0,0)$ that crosses the strip $\tilde{A}$ and is included in $[0,1] \times [0,3]$. 
The curve $\tilde{h}^{-i}(\tilde{\alpha}+ (0,3))$ contains an arc $a^-$ originated from $(0,3)$ that crosses the strip $\tilde{A}$ and included in $[-p-1, 0] \times [0,3]$. The arc $a^-$ meet at most $p+1$ translates of the arc $a^+$, thus their distance in the arc graph is at most $p+2$. Any arc disjoint from $a^-$ is at distance at most $p+4$ from any arc disjoint from $a^+$. Thus we have $\mathrm{tw}_{A}(h^{-i}\alpha,h^{j}\alpha)  \leq p+4$, as wanted.
\end{proof}

\bigskip

Finally, we consider a variation on the family $h_{p,q}$ to provide an example where the twist number are useful to prove non conjugacy.
Let $p,q,p',q'>0$, and consider $h = h_{p,q,p',q'} = g^{q'}f^{p'}g^q f^p$.

\begin{proposition}\label{prop:4-param}
Assume $p,p',q,q' >B$. Then
\[
\mathrm{Rot}(h) = [0, p+p'] \times [0, q+q'],
\]
\[
\mathrm{tl}(h) = 4,
\]
\[
\mathrm{tw}(h) \in [\max(p,p',q,q'), \max(p,p',q,q') + 4]
\]
\end{proposition}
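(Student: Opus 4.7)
The proof parallels Proposition~\ref{prop:tl} and the accompanying twist computation for $h_{p, q}$, exploiting the decomposition $h = h_{p', q'} \circ h_{p, q}$ and the fact that $f$ fixes $\alpha$ pointwise while $g$ fixes $\beta$ pointwise. For the rotation set, the upper bound $\mathrm{Rot}(\tilde h) \subseteq [0, p+p'] \times [0, q+q']$ follows from Lemma~\ref{lem:upper-bound-rotation-set} with $n=1$: each of $\tilde f^p, \tilde f^{p'}$ contributes only a nonnegative horizontal displacement bounded by $p, p'$, and each $\tilde g^q, \tilde g^{q'}$ only a nonnegative vertical one bounded by $q, q'$. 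For the lower bound, the four points $(0,0), (1/2, 0), (0, 1/2), (1/2, 1/2)$ are fixed by $h$ with rotation vectors $(0, 0), (0, q+q'), (p+p', 0), (p+p', q+q')$ respectively, obtained by tracking each factor using $p, p', q, q' \in \mathbb{Z}$ (so $\sin^2(\pi k) = 0$ and $\sin^2(\pi(\tfrac12+k)) = 1$). Convexity then yields the full rectangle.

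For the translation length, the plan is to construct an $h$-invariant path $\gamma$ in $\cd(T^2)$ of period $4$, giving $\mathrm{tl}(h) \leq 4$. The fundamental period is $(\alpha, \beta, g^q\alpha, \zeta)$, where the first two edges $\alpha$--$\beta$ and $\beta$--$g^q\alpha$ come from the $h_{p, q}$-axis construction of Proposition~\ref{prop:tl}, and $\zeta$ is a curve edge-connected to both $g^q\alpha$ and $h\alpha$---a natural candidate being $\zeta = h\beta$, which is automatically edge-connected to $h\alpha$ as the image under $h$ of the $\alpha$--$\beta$ edge. The delicate verification is the $g^q\alpha$--$\zeta$ edge, which should hold because both curves share the common fixed point $(0,0)$ as their only transverse intersection, analogous to the $\beta$--$g^q\alpha$ check in Proposition~\ref{prop:tl}.

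For the matching lower bound $\mathrm{tl}(h) \geq 4$, we adapt Proposition~\ref{prop:tl}: along one period of the axis appear four distinct annulus types $A_\alpha, A_\beta, A_{g^q\alpha}, A_\zeta$, for which we establish twist lower bounds
\[ \mathrm{tw}_{A_\alpha}(h^{-m}\alpha, h^n\alpha) \geq p, \quad \mathrm{tw}_{A_\beta} \geq q, \quad \mathrm{tw}_{A_{g^q\alpha}} \geq p', \quad \mathrm{tw}_{A_\zeta} \geq q', \]
all exceeding $B$ by hypothesis, via explicit good-arc constructions analogous to Lemma~\ref{lem:V-curve} that isolate the twist contribution of each individual factor of $h$. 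The second Bounded Geodesic Image Theorem (Lemma~\ref{lem:bgit2}) then forces any geodesic from $\alpha$ to $h^n\alpha$ to lose projection at some vertex in each of the $4n$ annuli along the axis; the pairwise distinctness argument of Proposition~\ref{prop:tl}, adapted to the period-$4$ setting, yields $d(\alpha, h^n\alpha) \geq 4n$ and thus $\mathrm{tl}(h) = 4$. The twist lower bound $\mathrm{tw}(h) \geq \max(p, p', q, q')$ then follows immediately from the four annulus bounds; the upper bound $\mathrm{tw}(h) \leq \max(p, p', q, q') + 4$ is obtained by mimicking the twist analysis for $h_{p, q}$: if $\mathrm{tw}_A(\xi^-(h), \xi^+(h)) > B + 4$, the BGIT forces a boundary of $A$ to lie close to the axis, hence (up to a power of $h$) close to one of the four axis annulus types, to which the per-type bounds $p, q, p', q'$ apply via the triangular inequality for twists.

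The main obstacle is the careful verification of the axis---particularly the $g^q\alpha$--$\zeta$ edge---and the distribution of twist contributions across the four annulus types. The subtle point is that the bounds must read $p, q, p', q'$ at the four distinct annulus types rather than aggregating as, say, $\max(p+p', q+q')$ at $A_\alpha, A_\beta$; the mechanism is that the intermediate factor $f^{p'}$ between $g^q$ and $g^{q'}$ introduces many additional $\beta$-crossings into $h\alpha$, breaking its arcs in $A_\beta$ into pieces of individually small width, so that the $g^{q'}$-twist contribution becomes visible only at the further annulus $A_\zeta$ and not at $A_\beta$ itself.
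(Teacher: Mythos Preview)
The paper's own ``proof'' is a single sentence: ``The proof is very similar to the previous proofs, and is left to the reader.'' Your outline correctly identifies the intended strategy (period-$4$ invariant path, four annulus types, BGIT argument for the lower bound on $\mathrm{tl}$, explicit good-arc estimates for the twist bounds), and your rotation-set computation via the four fixed points $(0,0),(1/2,0),(0,1/2),(1/2,1/2)$ together with Lemma~\ref{lem:upper-bound-rotation-set} is correct.

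There is one concrete gap. Your candidate $\zeta=h\beta$ for the fourth axis curve almost certainly fails the edge check with $g^q\alpha$: sharing the fixed point $(0,0)$ is not sufficient, and $h\beta=g^{q'}f^{p'}g^qf^p\beta$ is a heavily wrapped curve that will intersect $g^q\alpha$ many times. The natural choice is instead $\zeta=h_{p,q}\beta=g^qf^p\beta$, exploiting precisely the decomposition $h=h_{p',q'}\circ h_{p,q}$ you noted at the outset. The axis
\[
\bigl(\ldots,\ \alpha,\ \beta,\ h_{p,q}\alpha,\ h_{p,q}\beta,\ h\alpha,\ h\beta,\ \ldots\bigr)
\]
then has all edges for free: $\alpha$--$\beta$ and $\beta$--$h_{p,q}\alpha$ are the edges from the $h_{p,q}$ axis, $h_{p,q}\alpha$--$h_{p,q}\beta$ is the $h_{p,q}$-image of $\alpha$--$\beta$, and $h_{p,q}\beta$--$h\alpha$ is the $h_{p,q}$-image of $\beta$--$h_{p',q'}\alpha=\beta$--$g^{q'}\alpha$, which is the familiar edge from the $h_{p',q'}$ axis. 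With this correction the four annulus types are those bounded by $\alpha,\beta,h_{p,q}\alpha,h_{p,q}\beta$, and your intuition in the last paragraph (that $f^{p'}$ fragments the arcs in $A_\beta$ so that only the $g^q$-contribution survives there, while the $g^{q'}$-contribution appears at $A_{h_{p,q}\beta}$) is exactly what makes the four twist bounds come out as $p,q,p',q'$ separately.
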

The proof is very similar to the previous proofs, and is left to the reader. As a corollary, we see that this family contains homeomorphisms that have the same rotation set and the same translation length but whose twist numbers differ by more than 4; in particular they are not conjugate, and by Corollary~\ref{cor:twist-fujiwara} they have pairwise distinct fixed points at infinity.

\bigskip

Let us end this subsection with more free groups. The rotation set of $h_{n,n} = g^n f^n$ is $[0,n]^2$ and its translation length is $2$. Thus all the $h_{n,n}$'s have distinct scaled rotation sets. By Theorem~\ref{thm:boundary-determines-rotation-set} they have distinct boudary fixed points, though they have homothetic rotation sets (and thus Theorem~\ref{thm:free-group} does not directly apply). The ping-pong argument  tells us that up to taking powers, any two of them generate a free group. Likewise, Proposition~\ref{prop:4-param} provides plenty of examples of pairs $h,h'$ of maps sharing the same scaled rotation sets, but with distinct twist numbers. Choose $h, h'$ so that their twists numbers are more than 4 apart. Then by Corollary~\ref{cor:twist-fujiwara} $h$ and $h'$ have no boundary fixed points in common. Again, they have powers generating a free group.

\subsection{Schottky groups}\label{ssec:schottky}
We now collect some examples which
show that some generalizations of our main results which one might
hope for are in fact false.

All of these examples rely on a Schottky group acting on the fine
curve graph. We begin with two homeomorphisms
$f,g \in \mathrm{Homeo}_0(T)$ with distinct rotation sets and
independent axes in the fine curve graph (note that this is implied by
our main result). A standard ping-pong argument (or by using
Lemma~\ref{lem:local-to-global}) one shows that by replacing $f, g$ by
large enough powers we may assume that $f,g$ generate a free group
$F_2 < \mathrm{Homeo}_0(T)$ so that the orbit map
\[ F_2 \to \mathcal{C}^\dagger(T) \]
is a quasi-isometric embedding. For our examples to work in this general setting, we would need to know that the scaled rotation set of $f g^n$ converges to that of $g$ when $n$ tends to $+\infty$ (see question~\ref{ques:projective-cv}).
We now specify $f$ and $g$ so that this kind of convergence holds. We will assume that
\begin{enumerate}
\item There are two disjoint topological closed disks $C(f), D(f)$ of $T^2$, such that $f$ is the identity on $D(f)$, and every rational vector in $\mathrm{Rot}(f)$ is the rotation vector of some periodic point in $C(f)$,
\item Likewise for $g$,
\item $C(f) \subset D(g), C(g) \subset D(f)$.
\end{enumerate}
Examples with these properties may be constructed  the following way. Let $P$ be a rational polygon containing $0$. Kwapisz \cite{kwapisz_realise} constructs an Axiom A diffeomorphisms $f$ whose rotation set is $P$, whose periodic orbits belong to some Cantor set, except for some sinks and sources that have rotation vector $0$, and $0$ is also the rotation vector of some fixed point in the Cantor set. One may blow-up one of the source or sink to get a disk $D(f)$ of fixed points disjoint from the Cantor set. Now one can choose $C(f)$ to be another disc that contains the Cantor set and is disjoint from $D(f)$. The map $g$ is constructed analogously, and then conjugated to get property 3.

These properties have the following consequence. Let $h$ be some element of the Schottky group generated by $f$ and $g$, and respectively denote $n,m$ the sum of powers of $f$ and $g$ in $h$. Iterating $h$ on a point of $C(f)$ amounts to iterating just $f$, and likewise for $g$. Thus we get the lower bound
\[
\mathrm{Rot}(h) \supseteq \frac{1}{n+m}\left(n\mathrm{Rot}(f) + m \mathrm{Rot}(g)\right).
\]
Combining this with the upper bound lemma~\ref{lem:upper-bound-rotation-set}, we get that for every $k$,
\[  
\frac{1}{m}\mathrm{Rot}(f^kg^m) \xrightarrow{m\to\infty} \mathrm{Rot}(g).
\]

We can now construct our examples. 
\begin{itemize}
\item Consider the element $\varphi_{k,m} = f^kg^m$. By choosing $k$
  large, a quasi-axis for $\varphi_{k,m}$ can be made to fellow-travel
  a quasi-axis for $f$ for an arbitrarily long time (since this is
  true in the Cayley graph of the free group, and the fact that it
  quasi-isometrically embeds into the fine curve graph). On the other
  hand, we have that the scaled rotation set of $f^kg^m$ converges to the
  scaled rotation set of $g$ if $k$ is fixed and $m\to\infty$.

\item Consider an infinite path in $F_2$ of the form
  \[ P = f^{n_1}g^{m_1}f^{n_2}g^{m_2}\cdots \]
  Observe that for any homeomorphisms $A, B$ of the plane, by the Misiurewicz-Ziemian theorem we have
  \[ \frac{1}{m}AB^m([0,1]^2) = \frac{1}{m}(ABA^{-1})^m(A[0,1]^2) \xrightarrow{m\to\infty} \mathrm{Rot}(B). \]
  Thus, given $n_i,m_i, i\leq N$ we can choose $n_{i+1},m_{i+1}$ so that
  \[ f^{n_1}g^{m_1}\cdots f^{n_i}g^{m_i}f^{n_{i+1}}[0,1]^2 \]
  is projectively close to the rotation set of $f$, and
  \[ f^{n_1}g^{m_1}\cdots f^{n_i}g^{m_i}f^{n_{i+1}}g^{m_{i+1}}[0,1]^2 \]
  is projectively close to the rotation set of $g$.

  As a consequence, the infinite product of $f,g$ given by $P$ does
  not have a well-defined projective rotation set. Hence, there is
  also no obvious way to associate a (projective) rotation set to an arbitrary
  boundary point of the fine curve graph.
\end{itemize}

\bibliographystyle{alpha}
\bibliography{ref-semicontinuity}

@unpublished{beguin,
 author = {Béguin, François},
 title = {Ensembles de rotations des homéomorphismes du tore ${T}^2$},
  note = {Grenoble lecture notes},
 year = {2007},
 language = {French},
}

@article{bavard,
 author = {Bavard, Christophe},
 title = {Stable length of commutators},
 fjournal = {L'Enseignement Math{\'e}matique. 2e S{\'e}rie},
 journal = {Enseign. Math. (2)},
 issn = {0013-8584},
 volume = {37},
 number = {1-2},
 pages = {109--150},
 year = {1991},
 language = {French},
 keywords = {20F12,20F05,20E05,20F16,20F38,20F10,20J05},
 zbMATH = {24966},
 Zbl = {0810.20026}
}

@article{kwapisz_realise,
 author = {Kwapisz, Jaros{\l}aw},
 title = {Every convex polygon with rational vertices is a rotation set},
 fjournal = {Ergodic Theory and Dynamical Systems},
 journal = {Ergodic Theory Dyn. Syst.},
 issn = {0143-3857},
 volume = {12},
 number = {2},
 pages = {333--339},
 year = {1992},
 language = {English},
 doi = {10.1017/S0143385700006787},
 keywords = {37A99,54C99},
 zbMATH = {120193},
 Zbl = {0774.58022}
}

@article{Dagger1,
 author = {Bowden, Jonathan and Hensel, Sebastian Wolfgang and Webb, Richard},
 title = {Quasi-morphisms on surface diffeomorphism groups},
 fjournal = {Journal of the American Mathematical Society},
 journal = {J. Am. Math. Soc.},
 issn = {0894-0347},
 volume = {35},
 number = {1},
 pages = {211--231},
 year = {2022},
 language = {English},
 doi = {10.1090/jams/981},
 keywords = {20F65,57S05,57K20},
 url = {www.research.manchester.ac.uk/portal/en/publications/quasimorphisms-on-surface-diffeomorphism-groups(20e6cc36-a90d-4730-9b64-4b067bf051f2).html},
 zbMATH = {7420183},
 Zbl = {1511.20151}
}

@misc{Dagger3,
      title={Towards the boundary of the fine curve graph}, 
      author={Jonathan Bowden and Sebastian Hensel and Richard Webb},
      year={2024},
      eprint={2402.18948},
      archivePrefix={arXiv},
      primaryClass={math.GT},
      url={https://arxiv.org/abs/2402.18948}, 
}

@article{LW,
 author = {Le Roux, Fr{\'e}d{\'e}ric and Wolff, Maxime},
 title = {Automorphisms of some variants of fine graphs},
 fjournal = {Algebraic \& Geometric Topology},
 journal = {Algebr. Geom. Topol.},
 issn = {1472-2747},
 volume = {24},
 number = {8},
 pages = {4697--4730},
 year = {2024},
 language = {English},
 doi = {10.2140/agt.2024.24.4697},
 keywords = {37E30,57K20,57M60},
 zbMATH = {7965303}
}

@misc{LMPVY,
 author = {Long, Adele and Margalit, Dan and Pham, Anna and Verberne, Yvon and Yao, Claudia},
 title = {Automorphisms of the fine curve graph},
 year = {2021},
 howpublished = {Preprint, {arXiv}:2108.04872 [math.{GT}] (2021)},
 keywords = {57K20,20F65,57M07},
 url = {https://arxiv.org/abs/2108.04872},
 arXiv = {arXiv:2108.04872}
}

@article{Dagger2,
title = {Rotation sets and actions on curves},
journal = {Advances in Mathematics},
volume = {408},
pages = {108579},
year = {2022},
issn = {0001-8708},
doi = {https://doi.org/10.1016/j.aim.2022.108579},
url = {https://www.sciencedirect.com/science/article/pii/S0001870822003966},
author = {Jonathan Bowden and Sebastian Hensel and Kathryn Mann and Emmanuel Militon and Richard Webb}
}

@misc{GM,
      title={Hyperbolic isometries of the fine curve graph of higher genus surfaces}, 
      author={Pierre-Antoine Guihéneuf and Emmanuel Militon},
      year={2023},
      eprint={2311.01087},
      archivePrefix={arXiv},
      primaryClass={math.DS},
      url={https://arxiv.org/abs/2311.01087}, 
}

@article{BF,
 author = {Bestvina, Mladen and Fujiwara, Koji},
 title = {Bounded cohomology of subgroups of mapping class groups},
 fjournal = {Geometry \& Topology},
 journal = {Geom. Topol.},
 issn = {1465-3060},
 volume = {6},
 pages = {69--89},
 year = {2002},
 language = {English},
 doi = {10.2140/gt.2002.6.69},
 keywords = {57M07,57N05,57M99},
 url = {https://eudml.org/doc/122903},
 zbMATH = {1928904},
 Zbl = {1021.57001}
}

@book{Rosendal,
 author = {Rosendal, Christian},
 title = {Coarse geometry of topological groups},
 fseries = {Cambridge Tracts in Mathematics},
 series = {Camb. Tracts Math.},
 issn = {0950-6284},
 volume = {223},
 isbn = {978-1-108-84247-1; 978-1-108-90354-7},
 year = {2021},
 publisher = {Cambridge: Cambridge University Press},
 language = {English},
 doi = {10.1017/9781108903547},
 keywords = {22-02,22A05,54H11},
 zbMATH = {7388048},
 Zbl = {1504.22001}
}

@article{osin,
 author = {Osin, D.},
 title = {Acylindrically hyperbolic groups},
 fjournal = {Transactions of the American Mathematical Society},
 journal = {Trans. Am. Math. Soc.},
 issn = {0002-9947},
 volume = {368},
 number = {2},
 pages = {851--888},
 year = {2016},
 language = {English},
 doi = {10.1090/tran/6343},
 keywords = {20F67,20F65,20F05,05C25,57M07},
 zbMATH = {6560446},
 Zbl = {1380.20048}
}

@article{MR,
 author = {Mann, Kathryn and Rosendal, Christian},
 title = {Large-scale geometry of homeomorphism groups},
 fjournal = {Ergodic Theory and Dynamical Systems},
 journal = {Ergodic Theory Dyn. Syst.},
 issn = {0143-3857},
 volume = {38},
 number = {7},
 pages = {2748--2779},
 year = {2018},
 language = {English},
 doi = {10.1017/etds.2017.8},
 keywords = {37C85,58D05},
 zbMATH = {6943897},
 Zbl = {1404.37028}
}

@article{Militon,
 author = {Militon, Emmanuel},
 title = {Distortion elements for surface homeomorphisms},
 fjournal = {Geometry \& Topology},
 journal = {Geom. Topol.},
 issn = {1465-3060},
 volume = {18},
 number = {1},
 pages = {521--614},
 year = {2014},
 language = {English},
 doi = {10.2140/gt.2014.18.521},
 keywords = {37E30,37E35,37C85},
 zbMATH = {6252351},
 Zbl = {1291.37051}
}

@misc{Choi,
      title={WPD of some homeomorphisms}, 
      author={Inhyeok Choi},
      year=2025,
      eprint={in preparation},
}

@article{MM2,
 author = {Masur, H. A. and Minsky, Y. N.},
 title = {Geometry of the complex of curves. {II}: {Hierarchical} structure},
 fjournal = {Geometric and Functional Analysis. GAFA},
 journal = {Geom. Funct. Anal.},
 issn = {1016-443X},
 volume = {10},
 number = {4},
 pages = {902--974},
 year = {2000},
 language = {English},
 doi = {10.1007/PL00001643},
 keywords = {32G15,32Q45},
 zbMATH = {1545126},
 Zbl = {0972.32011}
}

@misc{Videos,
      title={Torus homeomorphisms and the fine curve graph}, 
      author={{Le~Roux}, Frédéric},
      year = {2024},
      howpublished={CIRM online video},
      url={https://library.cirm-math.fr/Record.htm?idlist=1&record=19393658124911118309},       
}

@article{MZ,
 author = {Misiurewicz, Micha{\l} and Ziemian, Krystyna},
 title = {Rotation sets for maps of tori},
 fjournal = {Journal of the London Mathematical Society. Second Series},
 journal = {J. Lond. Math. Soc., II. Ser.},
 issn = {0024-6107},
 volume = {40},
 number = {3},
 pages = {490--506},
 year = {1990},
 language = {English},
 doi = {10.1112/jlms/s2-40.3.490},
 keywords = {37A99,28D99},
 zbMATH = {4084609},
 Zbl = {0663.58022}
}

@book{BH,
 author = {Bridson, Martin R. and Haefliger, Andr{\'e}},
 title = {Metric spaces of non-positive curvature},
 fseries = {Grundlehren der Mathematischen Wissenschaften},
 series = {Grundlehren Math. Wiss.},
 issn = {0072-7830},
 volume = {319},
 isbn = {3-540-64324-9},
 year = {1999},
 publisher = {Berlin: Springer},
 language = {English},
 keywords = {53-02,53C23,53C70,53C45,20F65,57M07},
 zbMATH = {1385418},
 Zbl = {0988.53001}
}

@misc{LongTan,
      title={Connectedness of the Gromov boundary of fine curve graphs}, 
      author={Yusen Long and Dong Tan},
      year={2025},
      eprint={2401.15383},
      archivePrefix={arXiv},
      primaryClass={math.GT},
      url={https://arxiv.org/abs/2401.15383}, 
}

@article{Min05,
 author = {Minasyan, Ashot},
 title = {On residualizing homomorphisms preserving quasiconvexity.},
 fjournal = {Communications in Algebra},
 journal = {Commun. Algebra},
 issn = {0092-7872},
 volume = {33},
 number = {7},
 pages = {2423--2463},
 year = {2005},
 language = {English},
 doi = {10.1081/AGB-200058383},
 keywords = {20F67,20E36,20F06,20F05,20F65,20E26,20E07},
 url = {eprints.soton.ac.uk/46727/1/gsbgps_f.pdf},
 zbMATH = {2198712},
 Zbl = {1120.20047}
}

@article{Rasmussen,
 author = {Rasmussen, Alexander J.},
 title = {Uniform hyperbolicity of the graphs of nonseparating curves via bicorn curves},
 fjournal = {Proceedings of the American Mathematical Society},
 journal = {Proc. Am. Math. Soc.},
 issn = {0002-9939},
 volume = {148},
 number = {6},
 pages = {2345--2357},
 year = {2020},
 language = {English},
 doi = {10.1090/proc/14880},
 keywords = {57K20,20F65,57M07},
 zbMATH = {7186904},
 Zbl = {1481.57022}
}

@article{BFP,
 author = {Broaddus, Nathan and Farb, Benson and Putman, Andrew},
 title = {Irreducible {Sp}-representations and subgroup distortion in the mapping class group},
 fjournal = {Commentarii Mathematici Helvetici},
 journal = {Comment. Math. Helv.},
 issn = {0010-2571},
 volume = {86},
 number = {3},
 pages = {537--556},
 year = {2011},
 language = {English},
 doi = {10.4171/CMH/233},
 keywords = {57M99,57N05,20F65,20F69},
 zbMATH = {5919457},
 Zbl = {1295.57021}
}

@misc{DaggerSurvey,
      title={An invitation to fine curve graphs}, 
      author={Jonathan Bowden and Sebastian Hensel},
      year={2025},
      eprint={in preparation},
}

@misc{LRPSW,
      title={A note on weak conjugacy for homeomorphisms of surfaces}, 
      author={Frédéric {Le Roux} and Alejandro Passeggi and Martin Sambarino and Maxime Wolff},
      year={2024},
      eprint={2407.01042},
      archivePrefix={arXiv},
      primaryClass={math.DS},
      url={https://arxiv.org/abs/2407.01042}, 
}

@article{Anderson,
 author = {Anderson, R. D.},
 title = {The algebraic simplicity of certain groups of homeomorphisms},
 fjournal = {American Journal of Mathematics},
 journal = {Am. J. Math.},
 issn = {0002-9327},
 volume = {80},
 pages = {955--963},
 year = {1958},
 language = {English},
 doi = {10.2307/2372842},
 zbMATH = {3148090},
 Zbl = {0090.38802}
}

@article{Homma,
 author = {Homma, Tatsuo},
 title = {An extension of the {Jordan} curve theorem},
 fjournal = {Yokohama Mathematical Journal},
 journal = {Yokohama Math. J.},
 issn = {0044-0523},
 volume = {1},
 pages = {125--129},
 year = {1953},
 language = {English},
 zbMATH = {3082249},
 Zbl = {0051.40104}
}

@article{Siebenmann,
 author = {Siebenmann, L. C.},
 title = {The {Osgood}-{Schoenflies} theorem revisited},
 fjournal = {Russian Mathematical Surveys},
 journal = {Russ. Math. Surv.},
 issn = {0036-0279},
 volume = {60},
 number = {4},
 pages = {645--672},
 year = {2005},
 language = {English},
 doi = {10.1070/RM2005v060n04ABEH003672},
 keywords = {57N50,57Q25,57Q15,57N05,57-03,01A60},
 zbMATH = {5268219},
 Zbl = {1138.57029}
}

@book{delaHarpe,
 author = {de la Harpe, Pierre},
 title = {Topics in geometric group theory},
 isbn = {0-226-31721-8; 0-226-31719-6},
 year = {2000},
 publisher = {Chicago: The University of Chicago Press},
 language = {English},
 keywords = {20F65,20-02,20F05,20E07},
 zbMATH = {1544074},
 Zbl = {0965.20025}
}

@article{Webb,
 author = {Webb, Richard C. H.},
 title = {Uniform bounds for bounded geodesic image theorems},
 fjournal = {Journal f{\"u}r die Reine und Angewandte Mathematik},
 journal = {J. Reine Angew. Math.},
 issn = {0075-4102},
 volume = {709},
 pages = {219--228},
 year = {2015},
 language = {English},
 doi = {10.1515/crelle-2013-0109},
 keywords = {57M50,57M20},
 url = {pure.manchester.ac.uk/ws/files/134332516/uniformbgits3.pdf},
 zbMATH = {6516665},
 Zbl = {1335.57029}
}

@book{kerekjarto,
 author = {von Ker{\'e}kj{\'a}rt{\'o}, B.},
 title = {Vorlesungen {\"u}ber {Topologie}. {I}.: {Fl{\"a}chentopologie}. {Mit} 80 {Textfiguren}.},
 fseries = {Grundlehren der Mathematischen Wissenschaften},
 series = {Grundlehren Math. Wiss.},
 issn = {0072-7830},
 volume = {8},
 year = {1923},
 publisher = {Springer, Cham},
 language = {German},
 zbMATH = {2598767},
 JFM = {49.0396.07}
}

@article {LM,
    AUTHOR = {Llibre, J. and MacKay, R. S.},
     TITLE = {Rotation vectors and entropy for homeomorphisms of the torus
              isotopic to the identity},
   JOURNAL = {Ergodic Theory Dynam. Systems},
  FJOURNAL = {Ergodic Theory and Dynamical Systems},
    VOLUME = {11},
      YEAR = {1991},
    NUMBER = {1},
     PAGES = {115--128},
      ISSN = {0143-3857,1469-4417},
   MRCLASS = {58F20},
  MRNUMBER = {1101087},
MRREVIEWER = {Hu\ Yi\ Hu},
       DOI = {10.1017/S0143385700006040},
       URL = {https://doi.org/10.1017/S0143385700006040},
}

@article{LCT,
 author = {Le Calvez, Patrice and Tal, Fabio},
 title = {Topological horseshoes for surface homeomorphisms},
 fjournal = {Duke Mathematical Journal},
 journal = {Duke Math. J.},
 issn = {0012-7094},
 volume = {171},
 number = {12},
 pages = {2519--2626},
 year = {2022},
 language = {English},
 doi = {10.1215/00127094-2022-0057},
 keywords = {37E30,37E35,37E45,37B40},
 zbMATH = {7600545},
 Zbl = {1518.37054}
}

@article {HX,
    AUTHOR = {Hurtado, Sebastian and Xue, Jinxin},
     TITLE = {Global rigidity of some abelian-by-cyclic group actions on
              {$\Bbb T^2$}},
   JOURNAL = {Geom. Topol.},
  FJOURNAL = {Geometry \& Topology},
    VOLUME = {25},
      YEAR = {2021},
    NUMBER = {6},
     PAGES = {3133--3178},
      ISSN = {1465-3060,1364-0380},
   MRCLASS = {37C85 (37B05 37E30)},
  MRNUMBER = {4347313},
MRREVIEWER = {Lei\ Chen},
       DOI = {10.2140/gt.2021.25.3133},
       URL = {https://doi-org.accesdistant.sorbonne-universite.fr/10.2140/gt.2021.25.3133},
}

\end{document}